\let\amslrcorner\lrcorner
\newcolumntype{N}{c@{}S}
\pgfplotsset{compat=1.15}
\def\centerarc[#1](#2)(#3:#4:#5)
\newtheorem{theorem}{Theorem}[section]
\newtheorem{lemma}[theorem]{Lemma}
\newtheorem{corollary}[theorem]{Corollary}
\newtheorem{proposition}[theorem]{Proposition}
\theoremstyle{definition}
\newtheorem{definition}[theorem]{Definition}
\theoremstyle{remark}
\newtheorem{remark}[theorem]{Remark}
\newcommand{\VV}{{\mathcal{V}}}
\newcommand{\Vo}{\mathring{\VV}}
\newcommand{\om}{\varOmega}
\renewcommand{\Omega}{\om}
\def\d{\partial}
\newcommand{\vphi}{\varphi}
\newcommand{\veps}{\varepsilon}
\newcommand{\eveps}{\hat{\veps}}
\newcommand{\gveps}{\varepsilon}
\newcommand{\Xm}[1]{\mathfrak{X}({#1})}
\renewcommand{\forall}{\text{ for all }}
\newcommand{\D}{\ensuremath{\mathcal{D}}}
\newcommand{\Sc}{\ensuremath{\mathcal{S}}}
\newcommand{\E}{\ensuremath{\mathscr{E}}}
\newcommand{\Eint}{\ensuremath{\mathring{\mathscr{E}}}}
\newcommand{\Ebnd}{\ensuremath{\mathscr{E}_\partial}}
\newcommand{\Fint}{\ensuremath{\mathring{\mathscr{F}}}}
\newcommand{\Fbnd}{\ensuremath{\mathscr{F}_\partial}}
\newcommand{\dhat}[1]{{\bar{{#1}}}}
\newcommand{\gex}{{\dhat{g}}}  
\newcommand{\gpar}{{g}}  
\newcommand{\gappr}{{g_h}}
\renewcommand{\div}{\mathrm{div}}
\DeclareMathOperator{\curl}{\mathrm{curl}}
\DeclareMathOperator{\inc}{\mathrm{inc}}
\DeclareMathOperator{\INC}{\mathrm{Inc}}
\DeclareMathOperator{\grad}{\mathrm{grad}} 
\newcommand{\W}{\mathord{\adjustbox{valign=B,totalheight=.6\baselineskip}{$\bigwedge$}}}
\newcommand{\TT}{\mathcal{T}}
\newcommand{\og}{\omega}
\newcommand{\mt}[1]{[{#1}]}
\newcommand{\TM}[2]{\mathcal{T}_{{#1}}^{{#2}}(\om)}
\newcommand{\XM}{\mathfrak{X}(\om)}
\newcommand{\WM}[1]{\W^{{#1}}(\om)}
\newcommand{\ip}[1]{\langle{#1}\rangle}
\newcommand{\jump}[2][]{\ensuremath{\ifthenelse{\equal{#1}{}}{\llbracket #2 \rrbracket}{\llbracket #2 \rrbracket}_{#1}}}
\newcommand{\jmp}[1]{\ensuremath{\llbracket #1 \rrbracket}}
\newcommand{\vol}[2][]{\omega_{{#2}}(\ifthenelse{\equal{#1}{}}{g}{#1})}
\newcommand{\vo}[1]{\omega_{{#1}}}
\newcommand{\volform}{\omega}
\newcommand{\Rog}{\widetilde{\Riemann\omega}}
\newcommand{\RogU}{\widetilde{\Curvature\omega}}
\newcommand{\RogA}{\widetilde{\Curvature\omega}}
\newcommand{\RoggA}[1]{\widetilde{\Curvature\omega}({#1})}
\newcommand{\x}{\ensuremath{\times}}								
\newcommand{\N}{\ensuremath{\mathbb{N}}}
\newcommand{\R}{\ensuremath{\mathbb{R}}}
\newcommand{\T}{\ensuremath{\mathscr{T}}} 
\newcommand{\F}{\ensuremath{\mathscr{F}}} 
\newcommand{\Teuc}{\tilde{T}}
\newcommand{\Ltwo}[1][]{\ensuremath{L^2\ifthenelse{\equal{#1}{}}{}{(#1)}}}
\newcommand{\Linf}[1][]{\ensuremath{L^{\infty}\ifthenelse{\equal{#1}{}}{}{(#1)}}}
\newcommand{\Lp}[1][]{\ensuremath{L^{p}\ifthenelse{\equal{#1}{}}{}{(#1)}}}
\newcommand{\Winf}[1][]{\ensuremath{W^{1,\infty}\ifthenelse{\equal{#1}{}}{}{(#1)}}}
\newcommand{\Winfh}[1][]{\ensuremath{W_h^{1,\infty}\ifthenelse{\equal{#1}{}}{}{(#1)}}}
\newcommand{\Wtinf}[1][]{\ensuremath{W^{2,\infty}\ifthenelse{\equal{#1}{}}{}{(#1)}}}
\newcommand{\Wtinfh}[1][]{\ensuremath{W_h^{2,\infty}\ifthenelse{\equal{#1}{}}{}{(#1)}}}
\newcommand{\Wsp}[1][]{\ensuremath{W^{s,p}\ifthenelse{\equal{#1}{}}{}{(#1)}}}
\newcommand{\Wsph}[1][]{\ensuremath{W_h^{s,p}\ifthenelse{\equal{#1}{}}{}{(#1)}}}
\newcommand{\Hone}[1][]{\ensuremath{H^1\ifthenelse{\equal{#1}{}}{}{(#1)}}}
\newcommand{\Honeh}[1][]{\ensuremath{H^1_h\ifthenelse{\equal{#1}{}}{}{(#1)}}}
\newcommand{\Honez}[1][]{\ensuremath{H^1_0\ifthenelse{\equal{#1}{}}{}{(#1)}}}
\newcommand{\Hmone}[1][]{\ensuremath{H^{-1}\ifthenelse{\equal{#1}{}}{}{(#1)}}}
\newcommand{\Hsh}[1][]{\ensuremath{H^s_h\ifthenelse{\equal{#1}{}}{}{(#1)}}}
\newcommand{\Htwo}[1][]{\ensuremath{H^2\ifthenelse{\equal{#1}{}}{}{(#1)}}}
\newcommand{\Htwoz}[1][]{\ensuremath{H^2_0\ifthenelse{\equal{#1}{}}{}{(#1)}}}
\newcommand{\Hmtwo}[1][]{\ensuremath{H^{-2}\ifthenelse{\equal{#1}{}}{}{(#1)}}}
\newcommand{\HDiv}[1][]{\ensuremath{H(\mathrm{div}\ifthenelse{\equal{#1}{}}{}{,#1})}}
\newcommand{\HDivz}[1][]{\ensuremath{H_0(\mathrm{div}\ifthenelse{\equal{#1}{}}{}{,#1})}}
\newcommand{\HCurl}[1][]{\ensuremath{H(\mathrm{curl}\ifthenelse{\equal{#1}{}}{}{,#1})}}
\newcommand{\HDivDiv}[1][]{\ensuremath{H(\mathrm{divdiv}\ifthenelse{\equal{#1}{}}{}{,#1})}}
\newcommand{\HCurlCurl}[1][]{\ensuremath{H( \mathrm{curl curl}\ifthenelse{\equal{#1}{}}{}{,#1})}}
\newcommand{\HCurlDiv}[1][]{\ensuremath{H( \mathrm{curl div}\ifthenelse{\equal{#1}{}}{}{,#1})}}
\newcommand{\Hinc}[1][]{\ensuremath{H(\inc\ifthenelse{\equal{#1}{}}{}{,#1})}}
\newcommand{\Cone}[1][]{\ensuremath{C^1\ifthenelse{\equal{#1}{}}{}{(#1)}}}
\newcommand{\Czero}[1][]{\ensuremath{C^0\ifthenelse{\equal{#1}{}}{}{(#1)}}}
\newcommand{\Ctwo}[1][]{\ensuremath{C^2\ifthenelse{\equal{#1}{}}{}{(#1)}}}
\newcommand{\Ck}[1][]{\ensuremath{C^k\ifthenelse{\equal{#1}{}}{}{(#1)}}}
\newcommand{\TF}[2][]{\ensuremath{C_0^{\infty}(#2\ifthenelse{\equal{#1}{}}{}{,#1})}}
\newcommand{\Cinf}[1][]{\ensuremath{C^{\infty}\ifthenelse{\equal{#1}{}}{}{(#1)}}}
\newcommand{\DF}[1][]{\ensuremath{C_0^{\infty,*}\ifthenelse{\equal{#1}{}}{}{(#1)}}}
\newcommand{\Riemann}{\mathcal{R}}
\newcommand{\Scalar}{S}
\newcommand{\Einstein}{G}
\newcommand{\Ricci}{\mathrm{Ric}}
\newcommand{\sff}{\mathrm{I\!I}}
\newcommand{\Curvature}{\mathcal{Q}}
\newcommand{\CurvatureOther}{\mathfrak{Q}}
\newcommand{\Gauss}{K}
\newcommand{\GeodCurv}{\kappa}
\newcommand{\nv}{\hat{\nu}}
\newcommand{\cnv}{\hat{\mu}}
\newcommand{\tv}{{\hat{\tau}}}       
\newcommand{\gn}{{{{\nu}}}}  
\newcommand{\gcn}{{{{\mu}}}}  
\newcommand{\gt}{{{{\tau}}}} 
\newcommand{\Regge}[1][]{\ensuremath{\mathrm{Reg}\ifthenelse{\equal{#1}{}}{}{(#1)}}}
\newcommand{\oRegge}[1][]{\ensuremath{\mathring{\mathrm{Reg}}\ifthenelse{\equal{#1}{}}{}{(#1)}}}
\newcommand{\RR}{\Regge}
\newcommand{\idop}{\mathrm{id}}
\newcommand{\pder}[2]{\ensuremath{\partial_{#2}{#1}}}
\DeclareMathOperator{\tro}{\mathrm{tr}}
\newcommand{\tr}[2][]{\ensuremath{\,\mathrm{tr}\ifthenelse{\equal{#1}{}}{}{_{#1}}(#2)}}
\newcommand{\norm}[2][]{\ensuremath{\ifthenelse{\equal{#1}{}}{\left\|#2\right\|}{\left\|#2\right\|_{#1}}}}
\newcommand{\lnorm}[2][]{\ensuremath{\left\|#2\right\|_{L^2\ifthenelse{\equal{#1}{}}{}{(#1)}}}}
\newcommand{\hnorm}[2][]{\ensuremath{\left\|#2\right\|_{H^1\ifthenelse{\equal{#1}{}}{}{(#1)}}}}
\newcommand{\Pol}{\ensuremath{\mathcal{P}}}
\newcommand{\RegInt}[1][]{\mathcal{I}_{h}^{\Regge\ifthenelse{\equal{#1}{}}{}{,#1}}}
\newcommand{\OptInt}[1][]{\mathcal{I}_{h}^{\ifthenelse{\equal{#1}{}}{}{#1}}}
\newcommand{\HoneInt}[1][]{\mathcal{I}_{h}^{\VV,\ifthenelse{\equal{#1}{}}{}{#1}}}
\newcommand{\Chrtwo}[1][]{\ensuremath{\Gamma\ifthenelse{\equal{#1}{}}{_{\bullet\bullet}^{\,\,\,\,\,\bullet}}{_{#1 \bullet}^{\hphantom{#1 \bullet}\bullet}}}}
\newcommand{\Eucl}{\delta}
\newcommand{\Proj}{Q}
\newcommand{\Fr}{\textcolor{black}{F}}
\newcommand{\Dr}{\textcolor{black}{D}}
\newcommand{\Er}{\textcolor{black}{E}}
\newcommand{\Fp}{\textcolor{black}{\perp}}
\newcommand{\nablaF}{\nabla^{\Fp}}
\newcommand{\nablaFR}{\nabla^{\Fr}}
\newcommand{\nablans}[2]{(\nabla {{#1}})_{{#2}}}
\newcommand{\divFR}{\div^{\Fr}}
\newcommand{\tnnt}[1]{{({#1})}_{\Fr\gn\gn \Fr}}
\newcommand{\Atnnt}{A_{\Fr\gn\gn \Fr}}   
\newcommand{\Psitnnt}{\Psi_{\Fr\gn\gn \Fr}}   
\newcommand{\Btnnt}{B_{\Fr\gn\gn \Fr}}   
\newcommand{\dotAtnnt}{\dot{A}_{\Fr\gn\gn \Fr}}   
\newcommand{\Atest}{\mathcal{A}}
\newcommand{\Atesto}{\mathring{\mathcal{A}}}
\newcommand{\Ao}{\Atesto}
\newcommand{\Utest}{\mathcal{U}}
\newcommand{\Utesth}{\mathcal{U}(\T_h)}
\newcommand{\Utesto}{\mathring{\Utest}}
\newcommand{\Btest}{\mathcal{B}}
\newcommand{\Btesto}{\mathring{\mathcal{B}}}
\newcommand{\mapUA}{\mathbb{A}}
\newcommand{\gdot}{{\dot{g}}}
\newcommand{\Perm}{\mathfrak{S}}
\newcommand{\Lsigma}{L_{\sigma}}
\newcommand{\Ldotg}{L_{\gdot}}
\newcommand{\LsigmaD}{L_{\sigma_D}}
\newcommand{\LdotgF}{L_{\gdot_F}}
\newcommand{\nrm}[1]{\left\vvvert{{#1}}\right\vvvert}
\newcommand{\nrmt}[1]{\left\|\!\left\|{#1}\right\|\!\right\|}
\newcommand{\change}[1]{{\color{black} #1}}
\let\lrcorner\amslrcorner
\title[Generalized Riemann curvature]
{Generalizing Riemann curvature to Regge metrics}
\author[J.~Gopalakrishnan]{Jay~Gopalakrishnan}
\address{Portland State University, PO Box 751, Portland OR 97201, USA }
\email{gjay@pdx.edu}
\author[M.~Neunteufel]{Michael~Neunteufel}
\address{Institute for Analysis and Scientific Computing, TU Wien, Wiedner Hauptstr. 8-10, 1040 Wien, Austria}
\email{michael.neunteufel@tuwien.ac.at}
\author[J.~Sch\"oberl]{Joachim~Sch\"oberl}
\address{Institute for Analysis and Scientific Computing, TU Wien, Wiedner Hauptstr. 8-10, 1040 Wien, Austria}
\email{joachim.schoeberl@tuwien.ac.at}
\author[M.~Wardetzky]{Max~Wardetzky}
\address{Institute of Numerical and Applied Mathematics, University of G\"ottingen, Lotzestr. 16-18, 37083 G\"ottingen, Germany}
\email{email: wardetzky@math.uni-goettingen.de}
\begin{document}

\begin{abstract}

  In this paper, we propose a generalization of the Riemann curvature tensor on manifolds (of dimension two or higher) endowed with a Regge metric. Specifically, while all components of the metric tensor are assumed to be smooth within elements of a triangulation of the manifold, they need not be smooth across element interfaces, where only continuity of the tangential components is assumed. While linear derivatives of the metric can be generalized to Schwartz distributions, similarly generalizing the classical Riemann curvature tensor, a nonlinear second-order derivative of the metric, requires more care. We propose a generalization that combines the classical angle defect and jumps in the second fundamental form across element interfaces, and 
  \change{argue its correctness.} Specifically, if a piecewise smooth metric approximates a globally smooth metric, then our generalized Riemann curvature tensor approximates the classical Riemann curvature tensor associated with the latter. Moreover, we show that if the metric approximation converges at some rate in a \change{mesh-dependent norm equivalent to the $L^2$ norm,} then the curvature approximation converges in the negative Sobolev space \change{$H^{-2}$, the dual space of $H^2_0$,} at the same rate, under additional assumptions. By appropriate contractions of the generalized Riemann curvature tensor, this work also provides generalizations of scalar curvature, the Ricci curvature tensor, and the Einstein tensor in any dimension.
	\\
	\vspace*{0.25cm}
	\\
	{\bf{Keywords:}} Riemann curvature, finite element method, Regge calculus, differential geometry, Ricci curvature.  \\
	
	\noindent
	\textbf{{MSC2020:}} 65N30, 53A70, 83C27.
\end{abstract}

\maketitle

\section{Introduction}
\label{sec:intro}

Regge calculus, introduced by Tullio Regge in the 1960s \cite{Regge61}, is now a firmly established technique in numerical relativity. It employs piecewise flat simplicial complexes to discretize the metric tensor of an $N$-dimensional manifold $(N\ge 2)$ via edge-length specifications and discretizes curvatures via angle defects (also called angle deficits). These angle defects, which are concentrated at $(N-2)$-subsimplices, were shown~\cite{Cheeger84} to converge to the scalar curvature in the sense of measures as the triangulation becomes finer. However, high-order approximations of the full Riemann curvature tensor in $N$ dimensions \change{have} remained elusive so far. This work provides such an extension, and a rigorous proof of convergence in a Sobolev norm (including rates of convergence), confirming its accuracy across all dimensions. The heart of the matter lies beyond utilitarian considerations in approximating curvature numerically. In fact, our central contribution is the identification and the analysis of a generalized notion of Riemannian curvature on triangulated manifolds with isometrically glued simplices, whose intrinsic metric is not necessarily flat.

Regge's edge-length prescription is equivalent to defining a constant (flat) metric tensor within each $N$-simplex (``element'') of the simplicial complex such that their tangential components are continuous across element interfaces~\cite[\S~II.A]{Sorkin75}. By using higher-order polynomial approximations of the metric tensor, rather than just constants, within each $N$-simplex, while maintaining the same tangential inter-element continuity, one can obtain better metric approximations on the simplicial complex. Such piecewise polynomial metrics, possessing inter-element tangential continuity, form our point of departure. \change{Extending the classical flat model introduced by Regge,} we refer to such metrics as ``Regge metrics''. Since these metrics are smooth within each element, one can view each element, by itself, as a Riemannian manifold with boundary. Imposing tangential continuity of the metric tensor across elements, precisely defined in \S\ref{ssec:tt-continuity} and referred to as ``$tt$-continuity'' throughout this paper, \change{is tantamount to isometrically gluing adjacent element manifolds.}

Two important questions arise. How can one craft a notion of Riemannian curvature from such a (possibly discontinuous) Regge metric? Will such a generalized notion of Riemannian curvature produce good approximations to classical Riemannian curvature if the Regge metric approximates a globally smooth metric? Let us start by outlining our approach to answering the first question. Obviously, in the interior of each $N$-simplex, classical formulas for the Riemann curvature tensor apply. These curvature contributions from the interior must be added to the angle defects, which provide curvature contributions concentrated at $(N-2)$-subsimplices (``bones''). We show that an additional type of curvature contributions is required, namely curvature contributions concentrated at $(N-1)$-subsimplices (``facets''). These contributions correspond to the jump of the second fundamental form across facets. We prove that this additional contribution suffices in order to obtain a generalization of the Riemann curvature tensor, for which high-order convergence rates can be established, when the Regge metric approximates a given smooth metric.

In two dimensions, the full Riemann curvature tensor, which encodes all intrinsic curvature information, reduces to the classical Gauss curvature. Numerous prior works have proposed and analyzed a generalization of the Gauss curvature. For example, in \cite{BKG21, Gaw20, GNSW2023, Str2020}, the authors considered three types of curvature contributions: An element contribution obtained by classical Gauss curvature formulas, a bone contribution at each vertex of the simplicial complex consisting of the angle deficit, and a facet contribution at each edge consisting of the jump of geodesic curvature. The latter can be transparently motivated by the Gauss--Bonnet theorem. Since the geodesic curvature ($\kappa$) of each element's edge is related to the edge's second fundamental form ($\sff$) by $\kappa = \sff(X, X)$ for any unit tangent vector $X$ along the edge, our generalized curvature in $N$ dimensions reduces to the known two-dimensional construction (see \S\ref{subsec:spec_gauss_curv}). Nevertheless, since the Gauss--Bonnet theorem does not apply in higher dimensions, we now provide more motivation and geometric insight for our proposed facet contribution of curvatures.

Consider a common facet $F$ of two adjacent elements $T_\pm$ in the simplicial complex with respective inward unit normal vectors $\gn_\pm$ at a point $p$ in $F$. Also consider two $2$-dimensional \change{embedded smooth} submanifolds $S_\pm$ of $T_\pm$ passing through $p$ and transversely intersecting $F$ along a smooth curve~$\gamma$ in $F$, such that one tangent direction at $p$ is parallel to the respective normal $\gn_{\pm}$, \change{as illustrated in Figure~\ref{fig:sectional}.}  Denote the unit tangent vector along $\gamma$ at $p$ by $X$. Let $\Riemann_\pm$ be the respective Riemann curvature tensors of $T_\pm$. The respective sectional curvatures at $p$ generated by the orthonormal pair $X, \gn_\pm$ equal $\Riemann_\pm(X, \gn_\pm, \gn_\pm, X)$. It is well-known that these sectional curvatures coincide with the Gauss curvatures of~$S_\pm$. Since we know, from the aforementioned two-dimensional result, that the generalized Gauss curvature of the composite $2$-manifold $S_+ \cup S_-$ must have \change{a Dirac delta contribution at the interface given by the difference in the} geodesic curvatures $\kappa_\pm \equiv \sff_\pm(X, X)$ from adjacent elements, we conclude that the difference between $\Riemann_\pm(X, \gn_\pm, \gn_\pm, X)$ at $p$ must equal the difference between $\sff_+(X, X)$ and $\sff_-(X, X)$. This result can be extended to any $X$ in the tangent space of $F$ at $p$ by varying the 2-manifold. Furthermore, by the symmetry of $\Riemann_\pm(X, \gn_\pm, \gn_\pm, Y)$ in $X$ and $Y$, the polarization identity implies that the jump of $\Riemann_\pm(X, \gn_\pm, \gn_\pm, Y)$ must equal the jump of $\sff_\pm(X, Y)$ for any $X, Y$ in the tangent space of~$F$. Thus, a generalization of the Riemann curvature tensor must include a facet \change{Dirac delta} contribution arising as a jump of the second fundamental form at facets. Further motivation will be provided in Remark~\ref{rem:jump-second-fund}. Of course, no amount of motivation will replace the rigorous proof that we provide in this article.

\begin{figure}
  \centering
  \tdplotsetmaincoords{60}{110}
  \begin{tikzpicture}
    [tdplot_main_coords, >=stealth,
    declare function=
    {
      pfft(\x)=pi+0.3*sin(deg(\x));
    },
    scale=1.6,
    ] 

    \coordinate (r0) at (0.3, 1.4,    {pfft(2*0.7)+0.6});
    \coordinate (r1) at (0.3, 0.67*pi, {pfft(2*0.67*pi)+0.8});
    \coordinate (r2) at (0.3, 0.5*pi, {pfft(2*0.5*pi)-1});
    \coordinate (r3) at (0.3, 0.7,    {pfft(2*0.7)-1});


    \fill[cyan!70!brown, opacity=0.5]
    plot[variable=\x,domain=0.7:0.5*pi,smooth] (0.3,\x,{pfft(2*\x)}) --
    (r2) -- (r3) -- cycle node [pos=0.0, text=cyan!50!brown, below, opacity=1]
    {$S_+$};

    \coordinate (a0) at (0, 0, {pfft(0)});
    \coordinate (a1) at (0, 1.1*pi, {pfft(2*1.1*pi)});
    \coordinate (a2) at (1.1*pi, 1.1*pi, {pfft(2*1.1*pi)});
    \coordinate (a3) at (0, 0.4, -0.2);
    \coordinate (b0) at (0, 2, 5);

    \draw[color=cyan!50!brown, line width=0.7mm] (a3) to[out=110, in=-120] (a0);
    \draw[color=cyan!50!brown, line width=0.7mm] (a3)--(a2);  
    \draw[opacity=0.3, color=cyan!50!brown, line width=0.7mm] (a3)--(a1);

    \path
    [
    opacity=0.4, draw, draw opacity=1,   
    color=blue,
    line width=0.7mm,
    left color=blue, right color=blue,
    middle color=blue!20, shading angle=72
    ]
    plot[variable=\x,domain=0:1.1*pi,smooth] (0,\x,{pfft(2*\x)}) --
    plot[variable=\x,domain=1.1*pi:0,smooth] (\x,\x,{pfft(2*\x)});

    \fill[red!50, opacity=0.8]
    plot[variable=\x,domain=0.7:0.5*pi,smooth] (0.3,\x,{pfft(2*\x)}) --
    (r1) -- (r0) -- cycle node [pos=0.02, text=red, opacity=1, above] {$S_-$};

    \draw[blue, very thick]
    plot[variable=\x,domain=0.7:0.85*pi,smooth] (0.3,\x,{pfft(2*\x)})
    node [right] {$\gamma$};

    \draw[->, line width=0.5mm, green!50!blue]
    (0.3, 0.3*pi, {pfft(2*0.3*pi)})--++(0, -0.06, -0.5)
    node [right] {$\nu_+$}; 

    \draw[->, line width=0.5mm, blue, opacity=1]
    (0.3, 0.3*pi, {pfft(2*0.3*pi)})--++(0, 0.5, -0.06)
    node[pos=1.2] {$X$};

    \draw[->, line width=0.5mm, red!50!blue]
    (0.3, 0.3*pi, {pfft(2*0.3*pi)})--++(0, 0.37, 0.27)
    node [right] {$\nu_-$};

    \draw[color=red!50, line width=0.7mm] (a0) to[out=60, in=-150] (b0);
    \draw[color=red!50, line width=0.7mm] (a2) to[out=60, in=-100] (b0);
    \draw[opacity=0.7, color=red!50, line width=0.7mm] (a1)--(b0);

    \node at ($(b0)!0.2!(a1)$) [below, red] {$T_-$};
    \node at ($(a3)!0.2!(a0)$) [above, cyan!50!brown] {$T_+$};
    \node at (a2) [above, blue] {$F\;$};
    
  \end{tikzpicture}
  \caption{\change{An illustration of two $N$-dimensional element manifolds
    $T_\pm$, with distinct metrics, isometrically glued at a shared
    facet $F$, and a piecewise smooth two-dimensional embedded manifold
    $S_+\cup S_-$ around a curve $\gamma$ on $F$. (The drawing is for $N=3$.)}}
  \label{fig:sectional}
\end{figure}

One central contribution of our paper is the convergence analysis of this generalized notion of Riemannian curvatures. Our analysis relies on the \emph{linearization} of Riemannian curvature and of the other terms in the generalized Riemann curvature we propose. In two dimensions, as shown in~\cite{GNSW2023}, linearization of Gauss curvature leads to the so-called incompatibility operator, a second-order linear differential operator involving two curls, which also arises in elasticity when quantifying violation of St.~Venant's compatibility conditions. We provide \change{(Theorem~\ref{thm:distr_Riem_evol})} a corresponding linearization of the Riemann curvature tensor in higher dimensions. Similar linearizations have also been considered in the literature of Ricci flows~\cite{Top2006}. Comparing the linearization to the known incompatibility operators in two and three dimensions, we are led to a generalization of the covariant incompatibility operator in arbitrary dimensions (named ``$\INC$'' and defined in~\eqref{eq:1}). \change{Theorem~\ref{thm:distr_inc_curved} shows how to recover this ``$\INC$'' as an integral part $b(g;\cdot,\cdot)$ of the linearization computed in Theorem~\ref{thm:distr_Riem_evol}. The adjoint of ``$\INC$'', generalized to piecewise smooth metrics (Theorem~\ref{thm:distr_covariant_adjoint_inc}),} turns out to be a crucial ingredient in our analysis. We often refer to our generalized Riemann curvature as a ``distributional Riemann curvature'' (although it is technically not a distribution) because its linearization around a flat metric gives terms that correspond exactly to those of a distributional version of the linear incompatibility operator. Another key idea in our analysis is a variant of the Uhlenbeck trick~\cite{Ham1986, Top2006}. Namely, when a Regge metric approximates a smooth metric, comparing geometric quantities that depend on the metric is made easier if their metric dependence can be transformed into a common metric-independent space. We identify such a space in \S\ref{ssec:metr-indep-test}.

\subsection*{Related prior results}

Comprehensive overviews of the development and impact of Regge calculus over the past fifty years, with broad applications in relativity and quantum mechanics, can be found in the works of \cite{williams92, RW00, BOW18}. As previously mentioned, the first rigorous proof demonstrating the convergence of Regge's angle deficit to the scalar curvature, within a sequence of appropriate triangulations in the sense of measures, was achieved in~\cite{Cheeger84} by Cheeger, M\"uller, and Schrader. 
For a fixed piecewise flat (lowest-order Regge finite element) metric, Christiansen subsequently showed that the curvature of mollified smooth metrics converges to the angle deficit in the sense of measures~\cite{Christiansen13, Chr2024}. 
In Discrete Differential Geometry (DDG), angle deficit methods for approximating the Gauss curvature on triangulated surfaces are classical. \change{Convergence in the $\Linf$-norm up to quadratic order has been proven \cite{BCM03, Xu06, XX09} on triangulations satisfying certain conditions. However, convergence is not guaranteed  \change{on} general irregular grids.} In~\cite{LX07}, Regge's concept of angle deficit was extended to quadrilateral meshes. Noteworthy among the results applicable to higher-dimensional manifolds is the proof of convergence for approximated Ricci curvatures of isometrically embedded hypersurfaces $\subset \R^{N+1}$, as presented in \cite{Fritz13} and later utilized for Ricci flows in \cite{Fritz15}. 

\change{Regge's piecewise flat metrics fit naturally into the intrinsic viewpoint with curvature as a \emph{measure} supported on the codimension 2 strata. In Alexandrov's framework, the curvature of a non-smooth surface is a Radon (Borel) measure determined by the metric. For polyhedral (piecewise flat) metrics, it is purely atomic with masses equal to the vertex angle defects \cite{alexandrovAlexandrovSelectedWorks2006}. Alexandrov and Zalgaller extended these models within the class of surfaces of bounded integral curvature and developed approximation by polyhedral metrics together with a Gauss--Bonnet theorem in the measure-theoretic setting \cite{aleksandrovIntrinsicGeometrySurfaces}. Cheeger studied extensions of classical results in geometric analysis, including pseudomanifolds, in which such curvature measures appear naturally \cite{cheegerHodgeTheoryRiemannian1980,cheegerSpectralGeometrySingular1983}. In \cite[p.139f]{cheegerHodgeTheoryRiemannian1980}, Cheeger points out a remark of Gromov that ``Although this condition (for a piecewise flat pseudomanifold to have positive defect angles) is sometimes taken to be the analogue of positive sectional curvature [\dots] Gromov has suggested that positive curvature operator might be a more appropriate interpretation''. Our results  confirm this observation: one does not require a notion of positive sectional curvature to define a positive curvature operator---see Remark~\ref{rem:pos_curv_op}.}

From the numerical analysis side, many of these constructions admit a clean expression in \emph{finite element exterior calculus} (FEEC) \cite{arnold06, arnold10}. Here, a \emph{Regge metric} can be viewed as a piecewise flat tensor field with continuous tangential--tangential components across faces. This recognition dates back to Sorkin \cite{Sorkin75}. Christiansen introduced subsequent developments in finite element structures for Regge calculus in \cite{Christiansen04}, and these elements gained popularity in FEEC under the designation ``Regge finite elements''~\cite{li18}. Regge elements approximating metric and strain tensors were further extended to arbitrary polynomial orders on triangles, tetrahedra, and higher-dimensional simplices in \cite{li18}, as well as quadrilaterals, hexahedra, and prisms in \cite{Neun21}. The effectiveness of Regge elements in discretizing portions of the Kr\"oner complex or the elasticity complex was explored in \cite{AH21, Christiansen11, Hauret13}. Notably, properties of Regge elements were leveraged to devise a method circumventing membrane locking for general triangular shell elements in~\cite{NS21}.

Finally, recent works successfully formulated distributional curvatures of Regge metrics and proved convergence rates. In~\cite{Gaw20}, the Regge finite elements on 2-manifolds were used to develop a high-order Gauss curvature approximation. The key ingredient was an integral representation of the angle deficit extended to high-order. This formulation, which can be seen as a covariant version of the Hellan--Herrmann--Johnson (HHJ) method \cite{Com89}, enabled rigorous proofs of convergence at specific rates. This approach was reformulated in~\cite{BKG21} in terms of a nonlinear distributional Gauss curvature, consisting of elementwise Gauss curvature, jumps of geodesic curvature at edges, and atomic contributions (angle deficit) at vertices as sources of curvature---see also~\cite{Str2020} for a derivation of a curvature notion on singular surfaces in the sense of measures. Under the assumption that the metric approximation is produced by the canonical Regge interpolation operator, an improved convergence for the distributional Gauss curvature was proven in~\cite{GNSW2023}. The first extension of distributional curvatures in dimension $N\geq 3$ has been proposed in \cite{GN2023}, where the distributional scalar curvature has been defined and analyzed in any dimension. Therein, the jump of the mean curvature at codimension 1 facets and the angle deficit at codimension 2 boundaries have been used as additional sources of curvature. Terms only up to codimension 2 boundaries are considered, which reflects the fact that second-order derivatives of the metric tensor are needed to compute curvatures. Further, in \cite{GN2023}, the $H^{-2}$-norm has been taken to measure the approximation error. The analysis there showed that, in $N=2$, convergence rates of $\mathcal{O}(h^{k+1})$ for Regge metrics of polynomial order $k\geq 0$ are obtained, whereas, for $N\geq 3$, approximation of order $k\geq 1$ is necessary. Here, $h$ denotes the meshsize of the triangulation. A significant difference between dimensions $N=2$ and $N\geq 3$ is the appearance of a ``second part'' of the integral representation besides the covariant HHJ bilinear form. This second part, responsible for the lack of convergence in the lowest-order case $k=0$ (piecewise constant metrics), has been identified as the distributional Einstein tensor \cite{GN2023b}, extending the classical one to non-smooth Regge metrics.

\subsection*{Outline of contributions}

In Section~\ref{sec:distr_Riemann}, we motivate and define a generalized Riemann curvature for any given $tt$-continuous Regge metrics on an $N$-manifold. 
We show that our definition reduces to the distributional Gauss curvature of  \cite{BKG21, GNSW2023} when $N=2$ in \S\ref{subsec:spec_gauss_curv}.
Prior notions of distributional scalar curvature~\cite{GN2023} and
Einstein tensor~\cite{GN2023b} are shown to be recoverable from our
generalized Riemann curvature in \S\ref{ssec:spec-scal-curv} and
\S\ref{ssec:spec-einst-tens}, respectively.
A generalized  Ricci tensor for Regge metrics in arbitrary dimension
is derived in \S\ref{ssec:spec-ricci-curv}, which to our knowledge
has not appeared in prior literature. 

The main convergence results for the generalized Riemann curvature are in Section~\ref{sec:num_ana}, 
specifically, Theorems~\ref{thm:conv_Riemann} and 
Corollary~\ref{cor:conv_Riemann_int}.
We show that if a sequence $\gappr$ of Regge metrics interpolating a smooth metric $g$ using  piecewise polynomials of order $k\geq 0$ in $N=2$ ($k\geq 1$ in $N\geq 3$) is given, then the generalized Riemann curvature produced by  $\gappr$ converges to the exact smooth Riemann curvature of $g$ at the
rate $\mathcal{O}(h^{k+1})$ in the $H^{-2}$-norm.
To prove this, we combine ideas from \cite{GNSW2023} and \cite{GN2023}, using the new ingredients introduced in the previous sections, described next.

A central new ingredient, found in Section~\ref{sec:lineariz}, is the
linearization of the distributional Riemann curvature tensor as the
underlying metric changes. The generalized curvature is given in
Section~\ref{sec:distr_Riemann} using geometrically natural test
functions having the same symmetries as the Riemann curvature
tensor. However, these test functions are metric-dependent, creating
difficulties in the analysis by generating opaque metric dependencies. To
circumvent these difficulties, we develop a version of the Uhlenbeck
trick \cite{Ham1986, Top2006}, mapping test functions to certain metric-independent ones, in \S\ref{ssec:metr-indep-test}.
This then allows us to compute the linearization of all curvature contributions, thus proving the main result of the section,  Theorem~\ref{thm:distr_Riem_evol}. \change{This technique, using a fixed triangulation $\T$, can directly be extended to a sequence of triangulations $\T_h$ with meshsize $h\to 0$ by employing uniform estimates.}

The result of Theorem~\ref{thm:distr_Riem_evol}  sets the stage for Section~\ref{sec:distr_inc}, where the
incompatibility operator is defined in $N$~dimensions. Specifically, 
Theorem~\ref{thm:distr_Riem_evol} shows that
the linearization consists of two parts. In Section~\ref{sec:distr_inc},
we show that one part of the
linearization can be interpreted as the distributional covariant
incompatibility operator, as in \cite{GNSW2023}, but now extended to
any dimension, as shown in Theorem~\ref{thm:distr_inc_curved}. The
remaining part consists of terms without covariant derivatives
of the metric perturbation. The latter is zero in dimension $N=2$, so
it never appeared in~\cite{GNSW2023}. The final important ingredient is
the adjoint of the distributional covariant incompatibility operator,
the subject of Theorem~\ref{thm:distr_covariant_adjoint_inc}. Each of
these ingredients, interesting by itself, is used in a subsequent
section (Section~\ref{sec:num_ana}) on numerical analysis, which
proceeds by estimating each of the above-mentioned two parts of the
linearization.

In Section~\ref{sec:spec}, we simplify and specialize the
distributional Riemann curvature tensor in $N=2, 3$ dimensions. When
$N=2$, we obtain the distributional Gauss curvature as expected, and
we highlight the peculiarity where one of the two parts in the
linearization vanishes. When $N=3$, we focus on a curvature operator
$\Curvature$ that encapsulates the (skew-) symmetry properties of the
Riemann curvature. All terms are then written out in a computable form. 

The computable form is leveraged in Section~\ref{sec:num_examples}
where a numerical example is displayed in the $N=3$ case. The results
provide a practical illustration of asymptotic theoretical convergence
rates. They also indicate that the rates proved are likely not improvable
in general. In the lowest-order case of a piecewise constant metric
tensor, we observe a large pre-asymptotic regime of linear
convergence, which then eventually appears to degenerate to no
convergence, as expected from the theory.

\section{Distributional densitized Riemann curvature tensor}
\label{sec:distr_Riemann}

Consider an open \change{and bounded} domain $\om \subset \R^N$, $N \ge 2$, 
\change{with a piecewise smooth boundary $\d\om$} on which a smooth metric $\gex$
provides a Riemannian manifold structure. We are not given $\gex$,
only an approximation of it, denoted by $g$. This approximation $g$
is a piecewise smooth metric with respect to a triangulation $\T$. We
assume that $\om$ can be subdivided into finitely many bounded
elements which are diffeomorphisms of $N$-simplices, and these
curved elements are collected into $\T$.
The goal of this section is to generalize the notion of Riemann
curvature for metrics $g$ that are only piecewise smooth, specifically for
$g$ in a Regge space defined in~\eqref{eq:ttspace} below.

A foreword on notation is in order. The symbols $\Xm T$, $\W^k (T)$, and
$\TT_l^k (T)$ denote, respectively, the spaces of smooth vector fields,
\change{alternating} $k$-form fields, and $(k, l)$-tensor fields on a submanifold $T$ of
$\om$.  Here, smoothness signifies not only infinite differentiability
at interior points, but also smoothness 
up to (including) the boundary of $T$.
In such symbols, replacement of the manifold $T$ by a collection of
subdomains, such as the triangulation $\T$, signifies the piecewise
smooth analogue with respect to the collection. Namely, $\TT_l^k(\T)$
is the Cartesian product of $ \TT_l^k(T) $ over some enumeration of
all $T \in \T$. Also, $\W^1 (\T) = \TT_0^1(\T)$ and
$\Xm \T = \TT_1^0(\T)$. Our metric $g$ is in $\TT_0^2(\T)$,
but may not be in  $\TT_0^2(\om)$ generally.
Let $\triangle_{-m}T$ denote the set of
$(N-m)$-dimensional subsimplices of an $N$-simplex $T$.  The
collection of all subsimplices in $\triangle_{-1}T$ for all $T \in \T$
is the set $\F$ of facets of $\T$.
For an element $T\in\T$ and any $F \in \triangle_{-1}T$,
we shall refer to a vector field $\gn$ satisfying
\begin{equation}
  \label{eq:g-normal-vec}
 g|_T(\gn, X) = 0 \text{ for all $X \in \Xm F$} \quad \text{and}\quad
 g|_T(\gn, \gn)=1,
\end{equation}
as a {\em $g$-normal vector of $F$ in $T$.}  There are two possible
orientations of $\gn$ (and we will select one when needed). For
vector fields $X$ on $T$ on $F$, let
\begin{equation*}
  \Proj X = X - g|_T(X, \gn) \gn
\end{equation*}
denote the pointwise \change{orthogonal} projection onto the tangent bundle 
of $F$. When \change{projecting} $k$-linear forms $B(X_1, \dots, X_k)$ to subsimplices,
we often need to consider cases where some arguments are fixed to some
specific vector \change{field}, while other arguments are \change{projected} to tangent
vector fields on the subsimplex. For example, on an
$F \in \triangle_{-1}T$, given smooth vector fields $X_i$ on $T$, the forms $B_{\gn \Fp \cdots \Fp}$ and $B_{\Fp \gn \Fp \cdots\Fp}$ are
defined by
  \begin{equation}
      \label{eq:nFnF-notation}
    \begin{aligned}  
 B_{\gn \Fp \cdots \Fp} (X_1, X_2,  \ldots, X_{k-1})
      &= B(\gn, \Proj X_1, \ldots,\Proj X_{k-1}),
      \\
 B_{\Fp \gn \Fp \cdots \Fp} (X_1, X_2, \ldots, X_{k-1})
      &= B(\Proj X_1, \gn, \Proj X_2, \ldots, \Proj X_{k-1}),  
    \end{aligned}
  \end{equation}
and this notation is extended to cases when more than one argument is
fixed with $\gn$ in the obvious fashion. \change{We note that using the interior product---see~\eqref{eq:contraction}---the first line in \eqref{eq:nFnF-notation} can be written as $B_{\gn \Fp \cdots \Fp}=(\gn\lrcorner B)_{\Fp\cdots\Fp}$.} When all arguments of $B$
are projected by $\Proj$, we abbreviate the resulting form  $B_{\Fp \dots \Fp}$ to
simply $B_{\Fp}$.  When an argument is left open,
and not projected to the $\Xm F$, then we indicate so by a subscript
``$\cdot$'' in place of $\Fp$, e.g., for a form $B(X, Y, Z, W)$ taking
four arguments, the form $B_{\gn \Fp\Fp \cdot}$ takes three arguments and
is defined by
\begin{equation*}
 B_{\gn \Fp\Fp \cdot}(X_1, X_2, X_3) = B(\gn, \Proj X_1, \Proj X_2, X_3).
\end{equation*}
Obvious extensions of this notation to vectors other than $\gn$ and to
lower dimensional subsimplices are used without much ado.

\change{Besides the projection $\Proj$, we also consider the restriction of $k$-linear forms $B$ to a subsimplex $D$ of $\T$. We define 
\begin{align}
  \label{eq:restriction}
 B_{\Dr}=B|_{\Dr},\qquad B_{\Dr}(X_1,\dots,X_k) = B(X_1,\dots,X_k) \text{ for } X_i\in\Xm D,
\end{align}
as the pullback of $B$ under the inclusion operator $D \hookrightarrow \Omega$. In contrast to the projection operator $\Proj$, the restriction is independent of the metric $g$. Note that on a facet $\Fr$, the restriction $B_{\Fr}$ does not coincide with the projection~$B_{\Fp}$ unless $g$ is continuous at $F$. However,  $B_{\Fp}|_{\Fr}=B_{\Fr}$. We adapt restrictions to individual arguments in the obvious way, e.g.,  $B_{\Fr\gn\Fr\Fr}(X_1, X_2, X_3) = B(X_1, \gn, X_2, X_3)$ for any $X_i \in \Xm F$.}

We also use
standard notions from smooth Riemannian geometry throughout, some of
which are collected in  Appendix~\ref{sec:geometrical-prelims} for ready reference.

\subsection{The $tt$-continuity}
\label{ssec:tt-continuity}

Let
$ \Sc(\T) = \{\sigma \in \TT_0^2(\T): \sigma(X, Y) = \sigma(Y, X)$ for
$X, Y \in \Xm \T \}$ be the space of symmetric covariant
2-tensors on $\om$ with no interelement continuity in general. With $\Sc^+(\T)=\{\sigma \in \Sc(\T): \sigma(X, X)>0 \text{ for all
} 0\neq X \in \Xm \T\}$ we denote the subspace of positive definite 2-tensors. 
Recalling that $\F$ denotes the set of all mesh facets (of codimension~1),
we divide $\F$ into $\Fbnd$ consisting of facets contained in
$\d\om$ and the remainder, the set of interior facets, $\Fint$.  Every
$F \in \Fint$ is of the form $F = \triangle_{-1} T_+ \cap \triangle_{-1} T_-$ for two elements
$T_\pm \in \T$.  We say that a $\sigma \in \Sc(\T)$ has
``tangential-tangential continuity'' or {\em ``$tt$-continuity''} if
$ \sigma|_{T_+}(X, Y) = \sigma|_{T_-}(X, Y)$ for all tangential vector fields $X, Y \in \Xm F$
for every $F$ in $\Fint$ (i.e., with \eqref{eq:restriction}, $\sigma_{\Fr}$ is single-valued on all
$F \in \Fint$). This brings us to the {\em Regge space}
\begin{align}  \label{eq:ttspace}
  \Regge(\T)
  & =
 \{
    \sigma \in \Sc(\T): \sigma \text{ is $tt$-continuous}
 \},
\end{align}
and its  subset
$ \RR^+(\T) = \Regge(\T)\cap \Sc^+(\T)$. The approximate metric $g$ is assumed to be in
$\RR^+(\T)$. Also define
\begin{equation*}
  \oRegge(\T) = \{ \sigma \in \Regge(\T): \; \sigma(X, Y)=0 \text{ for all }
 X, Y \in \Xm F, F \in \F_\d\}.
\end{equation*}
\change{Note that $\sigma_{\Fr}$ is single-valued for $\sigma\in\Regge[]$, however, $\sigma_{\Fp}$ is in general not.}

\subsection{Curvature within elements}

Since a $g$ in $\RR^+(\T)$ is smooth within each element $T \in \T$, it
generates a unique Levi-Civita connection in each $T$, denoted by~$\nabla$. We refer to Appendix~\ref{sec:geometrical-prelims} for a summary of the geometric notation used. Using $\nabla$,  the Riemann curvature tensor within $T$ can be
computed using standard formulas:  following the sign convention
of~\cite{Lee18}, define 
\begin{equation}
  \label{eq:RiemannCurvTensor}
  \Riemann(X, Y, Z, W) =
 g (R_{X, Y} Z, W), \qquad X, Y, Z, W \in \Xm \T,
\end{equation}
where \change{for given $X$, $Y$, the map $Z\mapsto$}
$R_{X, Y} Z = \nabla_X \nabla_Y Z - \nabla_Y \nabla_X Z -
\nabla_{[X,Y]}Z$ is the Riemann curvature endomorphism. This element-by-element 
curvature tensor  $\Riemann \in \TT_0^4(\T)$ is only one of the contributors to the total distributional curvature defined below.

\subsection{Jump of the second fundamental form}
\label{ssec:jump-II}

The jumps of $g$ create further sources of curvatures at lower
dimensional facets, which must be added to the curvature within elements
to get good curvature approximations. 
Recall the $g$-normal vector $\gn$ of \eqref{eq:g-normal-vec} \change{on $T\in\T$.}
The second fundamental form \cite{Lee18} of \change{$F\in\triangle_{-1}T$} considered as an
embedded submanifold depends on the orientation of $\gn$
and is defined by
\begin{equation}
  \label{eq:sff-def}
  \sff^{\gn}(X,Y)=
  -\change{g|_T}(\nabla_X\gn,Y) = \change{g|_T}(\gn, \nabla_X Y), \qquad X, Y \in \Xm F.
\end{equation}
The second equality follows from differentiating the identity
$\change{g|_T}(\gn,Z)=0$ for $Z\in\Xm{F}$. \change{If necessary, we extend $\sff^{\gn}\in \TT^2_0(\d T)$ to $\sff^{\gn}_{\Fp}\in \TT^2_0(T)$ with \eqref{eq:nFnF-notation} and then drop the subscript $\Fp$.} Let the unique $g$-normal vector on $F$ that
points {\em inward} into an element $T \in \T$ be denoted by $\gn_F^T$.
Now consider an $F=\triangle_{-1} T_+\cap \triangle_{-1} T_-$ for some $T_\pm \in \T$.  As
$g\in\RR^+(\T)$ is solely $tt$-continuous, $\gn_F^{T_+}\neq -\gn_F^{T_-}$
in general. The {\em jump of the second fundamental form} across $F$
is defined by
\begin{equation}
  \label{eq:jump-sff}
  \jmp{\sff}(X, Y) = \sff^{\gn_F^{T_+}}(X, Y)
  + \sff^{\gn_F^{T_-}}(X, Y),    
\end{equation}
for all $ X, Y \in \Xm F$. This jump 
$\jmp{\sff}$ \change{is a function defined on the facet $F\in$ }$\TT_0^2(\Fint)$ (which, per previous notation, is the Cartesian product of $\TT_0^2(F)$ over an enumeration of all $F$ in $\Fint$). It 
will act as a source of curvature on
facets.

The jump of a 
tensor $B^\gn \in \TT_0^k(\T)$ \change{with a linear (or odd) $\gn$-dependency} across $F$
is defined by
\begin{equation}
  \label{eq:jmp-tensor}
  \jump{B}(X_1, X_2, \ldots) =
 B^{\gn_F^{T_+}}(X_1, X_2, \ldots)
  + B^{\gn_F^{T_-}}(X_1, X_2, \ldots),       
  \qquad 
 X_i \in \Xm F,  
\end{equation}
in analogy with \eqref{eq:jump-sff}.

\begin{remark}
  \label{rem:jump-second-fund}
 The jump of the second fundamental form as a source of curvature along a hypersurface $F$ can be motivated by the \emph{Radial Curvature Equation}, \emph{Tangential Curvature Equation}, and \emph{Normal Curvature Equation} \cite[Theorem 3.2.2, Theorem 3.2.4, and Theorem 3.2.5]{Peter16}. (These equations are also called \emph{Ricci}, \emph{Gauss}, and \emph{Codazzi} equations~\cite{Lee12b}.) They imply (after setting $f$ in \cite[Theorem 3.2.2]{Peter16} as the signed distance function of the hypersurface $F$) that
  \begin{align*}
    & \Riemann(X,\gn,\gn,Y)=(\nabla_{\gn}\sff)(X,Y)-\mathrm{I\!I\!I}(X,Y),\\
    & \Riemann(X,Y,Z,W)=\Riemann_F(X,Y,Z,W)-\sff(X,W)\sff(Y,Z)+\sff(X,Z)\sff(Y,W),\\
    & \Riemann(X,Y,Z,\gn) = (\nabla_X\sff)(Y,Z)-(\nabla_Y\sff)(X,Z),
  \end{align*}
  where $X,Y,Z,W\in\Xm{F}$ are tangent vector fields, $\mathrm{I\!I\!I}(X,Y)=\langle\nabla_X\gn,\nabla_Y\gn\rangle$ denotes the third fundamental form, and $\Riemann_F$ the Riemann curvature tensor on $F$. All other components of $\Riemann$ can be traced back to the three above. \change{To sketch 
    a mollifier argument for  how the first identity yields the jump of the second fundamental form as a distribution when $g$ is a Regge metric, we ``blow up'' the facet $F$ by a factor $\epsilon>0$ and consider a smooth (mollified) metric $g_{\epsilon}$ on the blown-up domain. The normal derivative term $\nabla_{\gn}\sff$ translates to a finite difference quotient of order $\epsilon^{-1}$ which is balanced with the area factor $\epsilon$ in the limit, yielding a Dirac delta times the jump of the second fundamental form, $\jmp{\sff}\,\delta_F$. Due to the $tt$-continuity of $g$ and all other terms on the right-hand sides being of order $1$ in the blown-up domain, these terms vanish in the limit $\epsilon\to 0$, not inducing any distributional terms on the facet $F$. Another motivation for the jump of the second fundamental form as a source of curvature along $F$ was given in Section~\ref{sec:intro}.}
\end{remark}

\subsection{Angle deficit}
\label{ssec:angle-deficit}

There are also sources of curvature along subsimplices of
codimension~2.  Let $\E$ denote the collection of all simplices in
$\triangle_{-2}T$ for all $T \in \T$. Divide it into $\Ebnd$
consisting of simplices in $\E$ lying on the boundary $\d\om$ and the
remainder $\Eint = \E \setminus \Ebnd$.  Given any $E \in \Eint$ there
is an $F \in \Fint$ such that $E \in \triangle_{-1} F$.  We refer to
a vector field
$\gcn \in \Xm F$
satisfying 
\begin{equation}
  \label{eq:g-conormal-vec}  
 g|_T(\gcn, \gn_F^T)=0,\quad F\in\triangle_{-1}T,
  \qquad
 g(\gcn, X)=0 \text{ for all $X \in \Xm E$,}
  \qquad
 g(\gcn, \gcn)=1,
\end{equation}
as a {\em $g$-conormal vector of $E$ in $F$}. \change{Note that the latter two expressions in \eqref{eq:g-conormal-vec} are single-valued due to the $tt$-continuity of $g$.}
There are two possible orientations for such a $\gcn$. When both $E$
and $F$ lie on the boundary of an element $T \in \T$, we
select the unique $g$-conormal vector of $E$ that
points into $F$ from $E$ and denote it by $\gcn_E^F$ (see  Figure~\ref{fig:vectors}).  Using the two
facets $F_\pm$ in $\triangle_{-1}T$ such that
$E = \triangle_{-1} F_+ \cap \triangle_{-1} F_-$, we define the following angle function \change{acting on $T_pT$ for $p\in E$}
\begin{align}
  \label{eq:interior_angle}
  \sphericalangle_E^T 
 =  \arccos\big( g\big|_T(\gcn^{F_+}_E,\gcn^{F_-}_E) \big). 
\end{align}
Let $\T_E = \{ T \in \T: E \in \triangle_{-2}T\}$. 
The  {\em angle deficit} 
at $E\in\Eint$ is defined by
\begin{align}
  \label{eq:angle-deficit}
\Theta_E = 2\pi -\sum_{T \in \T_E}\sphericalangle_E^T. 
\end{align}
This function, $\Theta=\Pi_{E\in\Eint}\Theta_E \in \TT_0^0(\Eint)$, will  act as a source
of curvature on $\Eint$.

\begin{figure}[ht!]
	\centering
	\begin{tikzpicture}
		\draw[thin] (0,0) to (2,0);
		\draw[thin] (0,0) to (1.4,1.5);
		\draw[thin] (0,0) to (1.6,-1.3);
		\draw (0,0) circle (3pt);
		
		\draw[-latex,color=red,thick] (1.2,0) to (1.2,-0.8);
		\draw[-latex,color=red,thick] (1.7,0) to (1.7,0.8);
		\draw[-latex,color=blue,thick] (0,0) to (0.8,0);
		
		\node (A) at (-0.3,-0.1) {$E$};
		\node (B) at (2,1.1) {$T_+$};
		\node (C) at (1.95,-1.) {$T_-$};
		\node (D) at (2.3,-0.1) {$F$};
		\node (E) at (1.4,0.4) {{\color{red}$\gn^{T_+}_F$}};
		\node (F) at (1.65,-0.5) {{\color{red}$\gn^{T_-}_F$}};
		\node (G) at (0.7,0.3) {{\color{blue}$\gcn^F_E$}};
	\end{tikzpicture}\hspace*{3cm}
	\begin{tikzpicture}
		\draw[thin] (0,0) to (1.4,1.5);
		\draw[thin] (0,0) to (1.6,-1.3);
		\draw (0,0) circle (3pt);
		
		\draw[-latex,color=red,thick] (1.4/2,1.5/2) to (1.4/2+1.5/3,1.5/2-1.4/3);
		\draw[-latex,color=red,thick] (1.6/2,-1.3/2) to (1.6/2+1.3/3,-1.3/2+1.6/3);
		\draw[-latex,color=blue,thick] (0,0) to (1.4/3,1.5/3);
		\draw[-latex,color=blue,thick] (0,0) to (1.6/3,-1.3/3);
		
		\node (A) at (-0.5,-0.1) {$E$};
		\node (B) at (2,0) {$T$};
		\node (C) at (0.7,1.2) {$F_+$};
		\node (D) at (0.7,-1.2) {$F_-$};
		\node (E) at (1.3,0.7) {{\color{red}$\gn_{F_+}^{T}$}};
		\node (F) at (1.5,-0.6) {{\color{red}$\gn_{F_-}^{T}$}};
		\node (G) at (0.1,0.6) {{\color{blue}$\gcn^{F_+}_E$}};
		\node (G) at (0.,-0.6) {{\color{blue}$\gcn^{F_-}_E$}};
	\end{tikzpicture}
	\caption{Visualization of $g$-normal and $g$-conormal vectors on \change{a} facet between two elements (left) and a single element (right).}
	\label{fig:vectors}
\end{figure}

\subsection{Riemann curvature for Regge metrics}

\change{Suppose that} $A \in \TT_0^4(\T)$ \change{has} the (skew) symmetries of the Riemann
curvature tensor, i.e.,
\begin{equation}
  \label{eq:symm-Riem}
 A(X, Y, Z, W) = -A(Y, X, Z, W) = -A(X, Y, W, Z) = A(Z, W, X, Y)
\end{equation}
for all $X, Y, Z, W \in \Xm \T$. We will generalize the Riemann
curvature tensor as a linear functional acting on 
such~$A$. We shall also require $A$ to have certain interelement
continuity constraints described next. Recall that per~\eqref{eq:restriction}, 
${\Atnnt}$ is defined by
\begin{equation*}
 {\Atnnt}(X, Y) = A(X, \gn, \gn, Y), \qquad X, Y \in \Xm F,  
\end{equation*}
for any \change{$F\in\triangle_{-1}T$, $T\in\T$,} and a $g$-normal vector $\gn$ as in~\eqref{eq:g-normal-vec}. Note that ${\Atnnt}$ is
independent of the orientation of $\gn$. Further, due to \eqref{eq:symm-Riem}, \change{there holds} $A_{\Fp\gn\gn\Fp}=A_{\cdot\gn\gn\cdot}$. 

In general, since the limiting values of $A$ from adjacent elements are different,
${\Atnnt}$ is discontinuous
with multi-valued limits on element interfaces. We consider the
following continuity requirement:
\begin{equation}
  \label{eq:cont_A}
 {\Atnnt} \text{ is single-valued for all }F\in\Fint.
\end{equation}
Define the {\em test space} $\Ao$ by 
\begin{align}
  \label{eq:testspace_A}
  \begin{split}
    \Atest:=
    &\{ A\in\TT^4_0(\T)\,: \;
 A \text{ satisfies~\eqref{eq:symm-Riem} and \eqref{eq:cont_A}}\},
    \\
    \Atesto:=
    &\{ A\in \Atest\,: \, {\Atnnt} \text{ vanishes on all } F\in\Fbnd\}.
  \end{split}
\end{align}
The continuity condition \eqref{eq:cont_A} implies continuity at mesh interfaces of codimension two, as we now show.

\begin{lemma}
  \label{lem:cont_A_codim2}
 Let $A\in\Atest$ and $E \in \Eint$.  Let $F \in \Fint$ and $T
        \in \T$ be such that $E \in \triangle_{-1}F$ and
        $F \in \triangle_{-1} T$.  If $\gn$ is a $g$-normal vector of
        $F$ in $T$ (see \eqref{eq:g-normal-vec}) and $\gcn$ is a
        $g$-conormal vector of $E$ in $F$ (see
        \eqref{eq:g-conormal-vec}), then $A_{\gcn\gn\gn\gcn} = A(\gcn, \gn, \gn, \gcn)$ is
 single-valued for all $E\in\Eint$.
\end{lemma}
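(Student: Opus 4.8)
The plan is to reduce the claim to the pointwise statement that, at each $p\in E$, the value $A(\gcn,\gn,\gn,\gcn)$ is independent of the element $T\in\T_E$, of the facet $F\in\triangle_{-1}T$ with $E\in\triangle_{-1}F$, and of the particular choices of $g$-normal $\gn$ of $F$ in $T$ and $g$-conormal $\gcn$ of $E$ in $F$. The argument then splits into an algebraic invariance observation, a purely local (single-element) reduction, and a propagation step around $E$ that uses the continuity hypothesis~\eqref{eq:cont_A}.

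First I would record the standard algebraic fact that if a $4$-tensor $A$ on an inner-product space $(V,\langle\cdot,\cdot\rangle)$ satisfies the Riemann symmetries~\eqref{eq:symm-Riem}, then for any orthonormal pair $u,v$ the number $A(u,v,v,u)$ depends only on the plane $\mathrm{span}\{u,v\}$; this is precisely the well-definedness of the ``sectional curvature of $A$'', proved by noting that two orthonormal bases of a fixed $2$-plane differ by a rotation or a reflection and checking, via~\eqref{eq:symm-Riem}, that $A(u,v,v,u)$ is invariant under both. In particular it is unchanged when $u$ or $v$ is replaced by $-u$ or $-v$, so the orientations of $\gn$ and $\gcn$ are irrelevant from the outset.

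Second, I would fix $E\in\Eint$, a point $p\in E$, and an element $T\in\T_E$, and show that any admissible $\gn$ and $\gcn$, for any facet $F$ of $T$ containing $E$, form a $g|_T$-orthonormal basis of the two-dimensional $g|_T$-orthogonal complement $\Pi_T(p)$ of $T_pE$ in $T_pT$. Indeed $\gn=\gn_F^T$ is a unit vector $g|_T$-orthogonal to $\Xm F\supseteq\Xm E$, so $\gn\in\Pi_T(p)$, while the defining relations~\eqref{eq:g-conormal-vec} for $\gcn$ involve only vectors tangent to $F$ (the pairing $g|_T(\gcn,\gn_F^T)=0$ being automatic since $\gcn\in\Xm F$), hence are expressed entirely through the $tt$-continuous, and therefore single-valued, restriction of $g$ to $F$; they force $\gcn\in\Pi_T(p)$, $|\gcn|=1$, and $\gcn\perp\gn$. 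Combined with the algebraic fact, $A|_T(\gcn,\gn,\gn,\gcn)$ equals a number $\alpha_T(p)$ depending only on $T$ and $p$. This computation also yields the key point: for a shared facet $F\in\Fint$ the $g$-conormal $\gcn$ of $E$ in $F$ is literally the same vector, up to sign, regardless of which of the two adjacent elements $F$ is viewed from, which is what makes~\eqref{eq:cont_A} applicable below.

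Finally, since $E$ is an interior codimension-$2$ simplex, the elements of $\T_E$ can be cyclically enumerated $T_1,\dots,T_m$ so that consecutive ones $T_i,T_{i+1}$ share a facet $F_i\in\Fint$ with $E\in\triangle_{-1}F_i$. Fixing a $g$-conormal $\gcn$ of $E$ in $F_i$, the local reduction just established gives $\alpha_{T_i}(p)=A|_{T_i}(\gcn,\gn_{F_i}^{T_i},\gn_{F_i}^{T_i},\gcn)$ and, with the same $\gcn$, $\alpha_{T_{i+1}}(p)=A|_{T_{i+1}}(\gcn,\gn_{F_i}^{T_{i+1}},\gn_{F_i}^{T_{i+1}},\gcn)$; these are exactly the two one-sided values of $\Atnnt$ on $F_i$ evaluated at $(\gcn,\gcn)$, hence coincide by~\eqref{eq:cont_A}. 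Chaining $\alpha_{T_1}(p)=\cdots=\alpha_{T_m}(p)$ around the cycle establishes single-valuedness. The points I expect to need the most care are the two the whole argument hinges on: confirming that $\gcn$ is intrinsic to the facet $F$ itself (not just to the pair $(T,F)$), which is what allows~\eqref{eq:cont_A} to be invoked, and the careful bookkeeping of~\eqref{eq:symm-Riem} in the $O(2)$-invariance step; the ``link of an interior codimension-$2$ simplex is a circle'' structure underlying the cyclic enumeration of $\T_E$ is standard for a mesh triangulating a manifold and I would use it without elaboration.
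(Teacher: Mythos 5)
Your proof is correct and follows essentially the same route as the paper's: your within-element step is the paper's explicit rotation computation, repackaged as the invariance of $A(u,v,v,u)$ under orthonormal changes of basis of the $2$-plane $g$-orthogonal to the tangent space of $E$, and your across-facet step invokes \eqref{eq:cont_A} with the facet-intrinsic conormal exactly as the paper does (the paper additionally spells out that single-valuedness of $\Atnnt$ on $F$ extends to $E\subset\partial F$ by the standing smoothness-up-to-boundary convention, a point you use implicitly when evaluating at $p\in E$). The explicit cyclic chaining of the elements around $E$ is left implicit in the paper but is the obvious way to combine the two local moves, so nothing essential differs.
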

\begin{proof}
 Let $T_\pm \in \T$ share a facet $F = \triangle_{-1} T_+ \cap \triangle_{-1} T_-$ with
  $E \in \triangle_{-1} F$.  Let $\gt\in \Xm F$ denote some extension of
  $\gcn$ from $E$ to $F$, i.e., $\gt|_E = \gcn$. Because
 of~\eqref{eq:cont_A}, the values of $A(\gt,\gn,\gn,\gt)$ from $T_+$
 and $T_-$ are equal at any point on $F$. By the assumption that
 elements in $\TT_0^4(\T)$ are continuous up to the element
 boundaries, the same holds for points on $E$. Thus, the values of
  $A_{\gcn \gn\gn \gcn}$ from $T_+$ and $T_-$ coincide on $E$.

 Next, let $F_\pm \in \triangle_{-1} T$ be such that
  $\triangle_{-1} F_+ \cap \triangle_{-1} F_- = E$.  The proof will be finished
 if we show that at any point in $E$,
 the values of
  $A_{\gcn\gn\gn\gcn}$ from $F_+$ and $F_-$ coincide, i.e., if 
  $A(\gcn_E^{F_+},\gn_{F_+}^T,\gn_{F_+}^T,\gcn_E^{F_+})$
 and $A(\gcn_E^{F_-},\gn_{F_-}^T,\gn_{F_-}^T,\gcn_E^{F_-})$ coincide.
 (Figure~\ref{fig:vectors} illustrates the vectors involved.)
 Since $\{\gn_{F_+}^T,\gcn_E^{F_+}\}$ and
  $\{\gn_{F_-}^T,\gcn_E^{F_-}\}$ span the same 2-dimensional plane,
 there is an angle $\phi$ (measured in $g|_T$ \change{and thus, acting on $T_pT$ for $p\in E$}) by which
  $\gcn_{E}^{F_+}$ can be rotated in plane to $\gcn_{E}^{F_-}$. Then,
 noting that $\gn_{F_+}^T$ and $\gn_{F_-}^T$ point into $T$,
  \begin{flalign}
    \label{eq:angle-rotation}
    \gcn_{E}^{F_-} = \cos(\phi)\gcn_{E}^{F_+} + \sin(\phi)\gn_{F_+}^T,
    \qquad
    \gn_{F_-}^T = \sin(\phi)\gcn_{E}^{F_+} - \cos(\phi)\gn_{F_+}^T.
\end{flalign}
Using the skew symmetries \eqref{eq:symm-Riem} of $A$,
\begin{flalign*}
 A(\gcn_E^{F_-},\gn_{F_-}^T,\gn_{F_-}^T,\gcn_E^{F_-}) &= 
 (\cos^2(\phi)+\sin^2(\phi))A(\gcn_E^{F_+},\gn_{F_+}^T,\gn_{F_-}^T,\gcn_E^{F_-}) \\
  &=
 A(\gcn_E^{F_+},\gn_{F_+}^T,\gn_{F_+}^T,\gcn_E^{F_+}),
\end{flalign*}
i.e., at $E$, the value of $A_{\gcn\gn\gn\gcn}$ from $F_+$ and $F_-$ coincide. 
\end{proof}

\begin{definition}
  \label{def:gen-riemann-curv-regge}
 We define the {\em generalized densitized Riemann curvature} for the  non-smooth metric 
$g \in \RR^+(\T)$ to be the linear functional $\Rog : \Ao \to \R$
given by
\begin{equation}
  \label{eq:distr_Riemann}
  \begin{aligned}    
    \Rog(A)=\sum_{T\in\T}\int_T\langle \Riemann,A\rangle\,\vo{T}
    & +
    4\sum_{F\in\Fint}\int_F\langle\jmp{\sff},{\Atnnt}\rangle\,\vo{F}
    \\
    & +
    4\!\!\sum_{E\in\Eint}\int_E \Theta_E\, A_{\gcn\gn\gn\gcn}\,\vo{E}
  \end{aligned}
\end{equation}
for all $A\in \Atesto$, where $\vo T = \og|_T, \vo F = \og|_F$, and
$\vo E = \og|_E$ are the volume forms on $T$, $F$, and $E$,
respectively, given by the Riemannian volume form
$\og_g \equiv \og \in \change{\W^N(\T)}$ generated by $g$. (We drop the subscript in $\og_g$ when there can be no confusion on what metric is being used.)  Here and throughout,
$\ip{\cdot, \cdot}$ denotes the standard extension of the $g$-inner
product to tensors (see Appendix~\ref{sec:geometrical-prelims},
\eqref{eq:innerprod-extended}), $\gn$ is a $g$-normal vector of $F$, 
and $\gcn$ is a $g$-conormal vector of $E$. By Lemma~\ref{lem:cont_A_codim2}, the last term in~\eqref{eq:distr_Riemann} makes sense.
\end{definition}

\change{ Note that for tensors $A,B\in \TT^2_0(\F)$ the inner product $\ip{A,B}$ is understood with respect to $g_{\Fr}$ instead of $g$. The following relation holds between the restriction and the projection of covariant tensors.

\begin{lemma}
  \label{lem:restr_proj_relation}
 Let $A,B\in \TT^k_0(\T)$ be two $(k,0)$-tensors with their restrictions on facets $F\in\F$, $A_{\Fr}$ and $B_{\Fr}$, single-valued. There holds for $T\in\T$ and $F\in\triangle_{-1}T$
  \begin{align}
  \label{eq:restr_proj_relation}
  \ip{A_{\Fr},B_{\Fr}}=\ip{A_{\Fp},B_{\Fp}}.
\end{align}
\end{lemma}
\begin{proof}
 It suffices to prove the claim for $k=1$. Let
  $\{E_1,\dots, E_{N-1}, E_N\}$
 be any $g$-orthonormal frame
 with $E_1,\dots, E_{N-1}\in \Xm{F}$
 and $E_N=\gn_F^T$, abbreviated to $\gn$ here. Expanding $A_{\Fr}$ and $B_{\Fr}$ in this frame gives
  $$
  \ip{A_{\Fr},B_{\Fr}}=\sum_{i=1}^{N-1}A(E_i)B(E_i)
  $$
 and the right-hand side yields
  \begin{align*}
    \ip{A_{\Fp},B_{\Fp}} &= \ip{A-A(\gn)\gn^\flat, B-B(\gn)\gn^\flat}=\sum_{i=1}^{N-1}A(E_i)B(E_i).
  \end{align*} 
\end{proof}
Note that by Lemma~\ref{lem:restr_proj_relation}, $\ip{A_{\Fp},B_{\Fp}}$ is single-valued.

}
The test space $\Ao$ contains infinitely smooth compactly supported
tensor functions on $\om$. Hence, $ \Rog(A)$ can be regarded as an
extension of a Schwartz distribution or a measure on~$\om$. Due to the
presence of the volume form, it is a {\em distribution
density}~\cite{Christiansen13} as in the title of this
section. Equipping $\Ao$ with a topology in which the right-hand side
of \eqref{eq:distr_Riemann} is continuous is an interesting issue (see
e.g., a similar issue in~\cite[Appendix~A]{GNSW2023}).
But this is not discussed further in this paper because it is not central to our main effort of
proving the \change{convergence} of~\eqref{eq:distr_Riemann}
through numerical analysis, where we
will only  examine convergence of $\Rog$ in the \change{$H^{-2}(\om)$-norm defined in \eqref{eq:hmknorm}}.

Note that there are many $g$-dependent quantities
in~\eqref{eq:distr_Riemann}. When we need to explicitly show that
dependence, we will write $g$ as an argument, i.e., instead of
$\Rog(A), \vo T,$ $\vo F,$ $\vo E,$ $\gn$, and $\gcn$, we write
$\Rog (g) (A),$ $\vol T$, $\vol F$, $\vol E$, $\gn(g)$, and $\gcn(g)$,
respectively. Even the test space $\Ao$ is $g$-dependent, an issue we
discuss in more detail in \S\ref{sec:lineariz}.

\subsection{Specialization to two-dimensional Gauss curvature case}
\label{subsec:spec_gauss_curv}
In the case of 2D manifolds ($N=2$), elements of the test
space can be generated by scalar fields in
\begin{equation*}
  \begin{gathered}  
    \VV(\T) = \{ u \in \W^0 (\T) :
    \; \text{$u$ is continuous on } \om\},
    \\ \nonumber 
    \Vo(\T)
 = \{ u \in \VV(\T): u|_{\d\om} = 0\},
  \end{gathered}  
\end{equation*}
by combining them with the Riemannian volume form~$\og$. Let
$v \in \Vo(\T)$. The tensor field $A = -v\,\og \otimes \og$,
\begin{equation}
  \label{eq:Achoice2D}
 A(X, Y, Z, W) = -v\;\og(X, Y) \;\og(Z, W),   
\end{equation}
obviously satisfies the symmetries in~\eqref{eq:symm-Riem}.  Moreover,
since $\og$ applied to any $g$-orthonormal frame yields $\pm 1$,
we have $\og(\gcn,\gn)\og(\gn, \gcn) = -1,$ so the continuity of $v$ implies~\eqref{eq:cont_A}.
Hence, $A \in \Ao$. We use this choice of $A$
in~\eqref{eq:distr_Riemann}. Then, \change{in a given system of coordinates $x_1$, $x_2$,} a computation
using~\eqref{eq:innerprod-extended} and
$\og = \sqrt{\det g} \; dx^1 \wedge dx^2$ shows that the first term
on the right-hand side of~\eqref{eq:distr_Riemann} takes the form
\begin{align}
  \label{eq:K-1}
  \ip{\Riemann, A}
  & =  \frac{ 4 R_{1221}\, v }{\det g} = 4 K \, v,
\end{align}
where $K$ is the Gauss curvature.

We proceed to the next term on the right-hand side of~\eqref{eq:distr_Riemann}.
It is easy to see that for the $A$ in~\eqref{eq:Achoice2D}, 
\[
 {\Atnnt} = v\, \change{(\gt^\flat \otimes \gt^\flat)_{\Fr}},
\]
for any
$g$-normalized tangent $\gt$ to the one-dimensional element boundary $\d T$. Note that $\gt$ and $\gcn$ are 
collinear at each point. Therefore, by the second equality
of~\eqref{eq:sff-def}, the geodesic curvature
$\GeodCurv^{\gn} = g( \nabla_\gt \gt, \gn) $ of $\d T$ equals
$\sff^{\gn}(\gt, \gt)$. Thus, the jump
of the geodesic curvature across $F = \triangle_{-1} T_+ \cap \triangle_{-1} T_-$, namely 
$\jump{\GeodCurv} = \GeodCurv^{\gn_F^{T_+}} + \GeodCurv^{\gn_F^{T_-}},$
satisfies
\begin{align*}
  \ip{\jump{\sff}, {\Atnnt}} = \jump{\GeodCurv} v .
\end{align*}

Finally, to relate to the last term on the right-hand side
of~\eqref{eq:distr_Riemann} on the angle deficit, observe that
\begin{equation}
  \label{eq:K-3}
  \Theta_E A_{\gcn\gn\gn\gcn} = \Theta_E \, v
\end{equation}
since $A_{\gcn\gn\gn\gcn} = -\og(\gcn,\gn)\og(\gn,\gcn) v = v$. Combining~\eqref{eq:K-1}--\eqref{eq:K-3} we find that
\begin{equation}
    \label{eq:distr_Gauss}
  \frac 1 4 \Rog( -v \,\og \otimes \og) =
  \sum_{T\in\T}\int_T K v \,\vo{T}
  +
  \sum_{F\in\Fint}\int_F \jmp{\GeodCurv} v\,\vo{F}
  +
  \sum_{E\in\Eint} \Theta_E\, v(E).
\end{equation}
The right-hand side above is exactly the two-dimensional
\emph{distributional densitized Gauss curvature} $\widetilde{K \og}(v)$
previously analyzed in~\cite{BKG21,
  GNSW2023}.

\subsection{Specialization to scalar curvature in any dimension}
\label{ssec:spec-scal-curv}

The scalar curvature in the smooth case is given by $S = \Riemann_{ijkl}g^{il} g^{jk}$.
Its generalized densitized version $\widetilde{S \og}$ can be obtained from our generalized Riemann curvature, as shown next.
The Kulkarni--Nomizu product $\owedge:\change{\Sc(\Omega)\times \Sc(\Omega)}\to \TM{0}{4}$ is defined as
\begin{align*}
  \begin{split}
 (h\owedge k)(X,Y,Z,W) &:= h(X,W)k(Y,Z)+h(Y,Z)k(X,W)\\
    &\quad- h(X,Z)k(Y,W)-h(Y,W)k(X,Z)
  \end{split}
\end{align*}
taking two \change{symmetric} $(2,0)$-tensors resulting into a $(4,0)$-tensor with the algebraic properties \change{\eqref{eq:symm-Riem}, which are also shared by the Riemann curvature tensor,}
i.e., $(h\owedge k)(X,Y,Z,W)=-(h\owedge k)(Y,X,Z,W)=(h\owedge k)(Y,X,W,Z)$ and $(h\owedge k)(X,Y,Z,W)=(h\owedge k)(Z,W,X,Y)$. We can choose $\change{A\in\Atesto}$ to be of the form
\begin{align}
  \label{eq:testfunc_scalar_curv}
 A= v\,g\owedge g,\qquad v\in \Vo,
\end{align}  
fulfilling the continuity conditions \eqref{eq:cont_A}, as on all $F\in\Fint$ and $E\in\Eint$
\begin{align*}
  &(g\owedge g)(X,\gn,\gn,Y) = 2(g(X,Y)-g(\gn,X)g(\gn,Y))=2\,g(X,Y) \forall X,Y\in\Xm{F},\\
  & (g\owedge g)(\gcn,\gn,\gn,\gcn)=2.
\end{align*}
Inserting \eqref{eq:testfunc_scalar_curv} into \eqref{eq:distr_Riemann} yields due to the (skew-)symmetry properties of $\Riemann$
\begin{align*}
  \langle \Riemann,A\rangle = 4\Riemann_{ijkl}g^{il}g^{jk}v=4 Sv,\quad \langle\jmp{\sff},{\Atnnt}\rangle=2 \jmp{H}v,\quad \Theta_E\, A_{\gcn\gn\gn\gcn}= 2\Theta_Ev,
\end{align*}
where $S$ and $H^{\gn}=\tr{\sff^{\gn}}$ denote the scalar and mean curvature, respectively. Thus, we obtain, \change{for every} $v \in \Vo$,
\begin{align*}
  \frac 1 4 \Rog(v\,g\owedge g) = \sum_{T \in \T} \int_T \Scalar v\, \vo{T} + 2\sum_{F \in \Fint} \int_F \llbracket H \rrbracket\, v\, \vo{F} + 2\sum_{E \in \Eint} \int_E \Theta_E v\, \vo{E},
\end{align*}
which coincides with the definition of the \emph{distributional densitized scalar curvature}, $\widetilde{S \og}(v)$, in any dimension proposed in \cite{GN2023}. The factor $2$ in the codimension 1 and 2 boundary terms is consistent with \eqref{eq:distr_Gauss} as in two dimensions the scalar curvature is twice the Gauss curvature, $\Scalar=2\Gauss$.

\subsection{Specialization to Ricci curvature in any dimension}
\label{ssec:spec-ricci-curv}
For smooth metrics, the Ricci curvature is a contraction of the Riemann curvature tensor given by $\Ricci_{ij}=\Riemann_{kijl}g^{kl}$. \change{In two dimensions there holds $\Ricci=\Gauss\,g$, so we focus on the interesting case of dimension $N\ge 3$.} To develop its generalization to Regge metrics, 
consider test functions of the form
\begin{equation}
  \label{eq:A-Ricci}
 A=g\owedge \rho,
\end{equation}
where $\rho$ is any tensor in the \change{space $\mathcal{W}(\T)=\{ J\sigma\,:\,\sigma\in \oRegge(\T)\}$ and $J:\Sc(\T)\to \Sc(\T)$, $J\sigma=\sigma-1/2\tr[]{\sigma}g$, a bijective algebraic operator.} Hence, the $A$ in~\eqref{eq:A-Ricci} satisfies, for all $X,Y\in\Xm{F}$ and $F \in \Fint$,
\begin{align*}
 A(X,\gn,\gn,Y)
  &= \rho(X,Y)+g(Y,X)\rho(\gn,\gn)-g(\gn,X)\rho(\gn,Y)-g(\gn,Y)\rho(\gn,X) \\
  &= \change{(J\sigma)(X,Y) + g(X,Y)(J\sigma)(\gn,\gn)} \\
  &= \change{\sigma(X,Y) + g(X,Y)\sigma_{\gn\gn}-g(X,Y)\tr{\sigma} = \sigma(X,Y) -\mathrm{tr}(\sigma_{\Fr})g(X,Y),}
  \\
 A(\gcn,\gn,\gn,\gcn)
  &= \change{(J\sigma)(\gcn,\gcn)+(J\sigma)(\gn,\gn)=\sigma(\gcn,\gcn)+\sigma(\gn,\gn)-\tr{\sigma} = -\tr{\sigma_E}},
\end{align*}
where \change{$\sigma_E$ denotes the restriction to $E \in \Eint$ \eqref{eq:restriction}}. \change{Note that $\tr[]{\sigma}$, $\tr[]{\sigma_{\Fr}}$, and $\tr[]{\sigma_E}$ are computed with respect to $g|_T$, $g_{\Fr}$, and $g_E$, respectively, cf. \eqref{eq:trace-2-tensor}. Thus, there holds, e.g.,$\tr[]{\sigma}=\tr[]{\sigma_{\Fr}}+\sigma_{\gn\gn}$.} We conclude that the $A$ in~\eqref{eq:A-Ricci} is  in $\Atesto$.
Inserting such $A$ into \eqref{eq:distr_Riemann} leads to a novel definition of the \emph{generalized densitized Ricci curvature tensor}
\begin{align*}
  \begin{split}
    \frac 1 4 \Rog(g \owedge \rho)=
    &\sum_{T\in\T}\int_T\langle \Ricci,\rho\rangle\,\vo{T}
      +\sum_{F\in\Fint}\int_F\langle\jmp{\sff},\change{\rho_{\Fr}+\rho_{\gn\gn}g_{\Fr}}\rangle\,\vo{F}
    \\
    + &
        \sum_{E\in\Eint}\int_E \Theta_E\, (\rho_{\gn\gn}+\rho_{\gcn\gcn})\,\vo{E},
  \end{split}
\end{align*}
a linear functional acting on any $\rho$ in $\mathcal{W}(\T)$. \change{In \cite{lottRicciMeasureSingular2016}, the Ricci curvature for singular metrics was investigated in the context of measures. The codimension 1 boundary term \cite[Section 4.2]{lottRicciMeasureSingular2016} coincides with ours.}

\subsection{Specialization to the Einstein tensor}
\label{ssec:spec-einst-tens}

Let $N\geq 3$.
The Einstein tensor for smooth metrics is
given using the above-mentioned Ricci curvature, \change{scalar curvature, and algebraic operator $J$} by 
$\Einstein=\Ricci-\frac{1}{2}\Scalar\,g\change{=J\,\Ricci}$.
Its generalization to Regge metrics \change{therefore directly follows from \eqref{eq:A-Ricci} using test functions of the form}
\begin{align*}
 A=g\owedge J\rho,\qquad
  \rho\in \oRegge(\T). 
\end{align*}
\change{Analogously to \S\ref{ssec:spec-ricci-curv}, we have for all $F\in\Fint$, $X,Y\in\Xm{F}$, and $E\in\Eint$}
\begin{align*}
  \Atnnt(X,Y)
  & \change{= \rho(X,Y) -\mathrm{tr}(\rho_{\Fr})g(X,Y)=\mathbb{S}_{\Fr}(\rho)(X,Y),}\qquad \change{A_{\gcn\gn\gn\gcn} = -\tr{\rho_E},}
\end{align*}
where $\mathbb{S}_{\Fr}(\rho)=\rho_{\Fr}-\tr{\rho_{\Fr}}g_{\Fr}$ is the trace-reversed part of $\rho$ restricted to a facet $F\in\Fint$. Inserting into \eqref{eq:distr_Riemann} yields
\begin{align*}
  \begin{split}
    \frac 1 4 \Rog( g \owedge J \rho)=
    &\sum_{T\in\T}\int_T\langle \Einstein,\rho\rangle\,\vo{T}
      +\sum_{F\in\Fint}\int_F\langle\jmp{\sff},\mathbb{S}_{\Fr}(\rho)\rangle\,\vo{F}-
        \sum_{E\in\Eint}\int_E \Theta_E\, \change{\tr{\rho_{E}}}\,\vo{E}.
  \end{split}
\end{align*}
The right-hand side is exactly
the \emph{distributional densitized Einstein tensor} 
$\widetilde{\Einstein\og}(\rho)$ 
defined and analyzed in~\cite{GN2023,GN2023b}. Note that $\langle\jmp{\sff},\mathbb{S}_{\Fr}(\rho)\rangle=\langle\jmp{\mathbb{S}_{\Fr}\sff},\rho_{\Fr}\rangle=\langle\jmp{\overline{\sff}},\rho_{\Fr}\rangle$ with the trace-reversed second fundamental form $\overline{\sff}^{\gn}=\sff^{\gn}-H^{\gn}\,g_{\Fr}$. In general relativity, the jump of the trace-reversed second fundamental form features in the well-known Israel junction condition~\cite{Isr1966}.

\section{Linearization of curvature}
\label{sec:lineariz}
The test spaces $\Atest$ and $\Atesto$ defined
in~\eqref{eq:testspace_A}, which provide test functions in the
generalized curvature  formula, depend on the metric tensor $g$,
since the continuity properties are given in terms of the $g$-normal
vector $\gn$.  While analyzing the convergence of the
approximate Riemann curvature tensor as the metric $g$ approaches the exact
metric tensor $\gex$, it is useful to work with $g$-independent test
functions that remain unchanged as $g \to \gex$. We will accomplish
this by constructing a $g$-independent test space $\Utest$ in
bijection with $\Atest$ in \S\ref{ssec:metr-indep-test}. \change{Key ingredient is the Hodge star isomorphism, depending on the metric, between $\W^2(\om)$ and $\W^{N-2}(\om)$ to translate the metric-dependent interface conditions \eqref{eq:cont_A} to independent ones.}
This then allows us to quantify changes in quantities that depend on  
the metric. Specifically, introducing intermediate metrics
between the exact metric $\gex$ and its non-smooth discretization $\change{g}$
by $\gpar(t)=\gex+t(\change{g}-\gex)$, we study how elements of $\Atest$
evolve as $g(t)$ evolves with a fictitious ``time'' variable $t$
(in \S\ref{ssec:evol-test-funct}), followed by how each ingredient in
the generalized Riemann curvature tensor \eqref{eq:distr_Riemann}
evolves as $g$ evolves (in \S\ref{ssec:evol-riem-curv},
\S\ref{ssec:evol-second-fund}, and \S\ref{ssec:evol-angle-defic}). \change{In this section, we consider a fixed triangulation $\T$. However, by proving uniform estimates, the results can be extended to a sequence of triangulations $\T_h$ with meshsize $h\to 0$.}

\subsection{Metric-independent test space}
\label{ssec:metr-indep-test}
Let the symmetric dyadic product of two tensors $a$ and $b$ be denoted
by
$ a \odot b = \frac 1 2 \left( a \otimes b + b \otimes a \right)$.
Consider the vector bundle that associates to each point $p$ of $\om$
the vector space of linear combinations of symmetric dyadic products of
$(N-2)$-forms. We denote its smooth sections 
by
$\W^{N-2}(\T)^{\odot 2} \equiv \W^{N-2}(\T)\odot\W^{N-2}(\T)$.
In other
words, an element $U$ of $\W^{N-2}(\T)^{\odot 2}$ is a
linear combination of tensors of 
the form $u \odot v$ for some piecewise smooth forms $u$ and $v$ in
$\W^{N-2}(\T)$. 
The restriction of $U$ on an interior facet $F\in\Fint$, denoted by
$U_{\Fr}$ \change{\eqref{eq:restriction},} is a multi-valued function in general since its value depends on which of the elements sharing $F$ is used for the restriction. 
Requiring it to be single-valued, as done
next in~\eqref{eq:testspace_U}, places inter-element continuity
constraints. The following metric-independent test spaces with such continuity constraints on the
parameter domain $\om$, defined without using the Riemannian structure, are
useful when perturbing the metric:
\begin{align}
  \label{eq:testspace_U}
  \begin{split}
          \Utest&\! =\!\{
 U\in \W^{N-2}(\T)^{\odot 2}
          \,:\,U_{\Fr}(X_1,\dots, X_{N-2},Y_1,\dots,Y_{N-2}) \text{ is single-valued}\\
    &\qquad\qquad\qquad\qquad\qquad\qquad
                  \text{ for any } X_i, Y_i  \in\Xm{F}, F\in\Fint\},\\
    \Utesto&\! =\!\{ U\in \Utest: 
 U_{\Fr}(X_1,\dots, X_{N-2},Y_1,\dots,Y_{N-2})\! =\!0,
          \\
          & \qquad\qquad\qquad\qquad\qquad\qquad \text{ for any }
 X_i, Y_i \in \Xm{F},\,F\in\Fbnd\}.
  \end{split}
\end{align}
\begin{remark}
 One can easily verify that \change{the} continuity condition \eqref{eq:testspace_U} implies that for all $X_i, Y_i \in \Xm{E}$, $E\in\Eint$, $U_{\Er}(X_1,\dots, X_{N-2},Y_1,\dots,Y_{N-2})$ is single-valued \change{in a similar manner that we employed to establish Lemma~\ref{lem:cont_A_codim2}. In the two-dimensional case, $U\in\Utest$ is a scalar function, and in three dimensions it can be interpreted as a symmetric matrix, see \S\ref{sec:spec_2d} and \S\ref{sec:spec_3d}, respectively.}
\end{remark}
\change{We relate the spaces $\Utest$ and $\Atest$. To this end, let $\star$ be the Hodge dual operator \eqref{eq:Hodgestar}. We extend it to the symmetric product of $k$-forms, denote it by
  $\star^{\odot 2}:\W^k(\T)^{\odot 2}\to\W^{N-k}(\T)^{\odot 2}$, and define it by 
  \[
    \star^{\odot 2} (u \odot v) = (\star u)\odot (\star v),
    \qquad u, v \in \W^k(\T).
  \]
Then we define a linear mapping $\mapUA: \Utest \to \TT^4_0(\T)$ as
  \begin{equation}
    \label{eq:2}
    \mapUA = -\star^{\odot 2}.
  \end{equation}
 We also use the following equivalent definition of the mapping $\mapUA$
\begin{align}
  \label{eq:mapping_U_A}
 (\mapUA U)(X, Y, Z, W) =\langle U,\star(X^\flat\wedge Y^\flat)\odot\star( W^\flat\wedge Z^\flat)\rangle,\quad X, Y, Z, W \in \Xm \T,
\end{align}
where $\flat$ is the musical operator lowering indices, see in the appendix the lines above 
\eqref{eq:4}. Indeed, given $U\in\Utest$, \eqref{eq:2} can be rewritten as
\begin{align*}
  -(\star^{\odot2}U)(X,Y,Z,W)&=\langle \star^{\odot2}U, X^\flat\otimes Y^\flat\otimes W^\flat\otimes Z^\flat\rangle\\
  &=\frac{1}{4}\langle \star^{\odot2}U, (X^\flat\wedge Y^\flat)\odot (W^\flat\wedge Z^\flat)\rangle\\
  &=\langle U, \star(X^\flat\wedge Y^\flat)\odot \star(W^\flat\wedge Z^\flat)\rangle =(\mapUA U)(X,Y,Z,W),
\end{align*}
where we have used \eqref{eq:hodge_star_iso} and \eqref{eq:4}. }
Denote the range of $\mapUA$ by
\begin{align}
  \label{eq:testspace_A2}
  \begin{split}
          \tilde{\mathcal{A}}_g \equiv  \tilde{\mathcal{A}} :=&\{ \mapUA U\,:\, U\in\Utest \}.
  \end{split}
\end{align}
Although $\Utest$ is independent of $g$, note that 
$\Atest$, $\tilde \Atest$, and  $\mapUA$ all depend on $g$. When needed, we will emphasize this dependence by writing them
as
$\Atest_g$, $\tilde \Atest_g$, and  $\mapUA_g$, respectively.
The next two lemmas show that \eqref{eq:testspace_A} and \eqref{eq:testspace_A2} define the same spaces. 

\begin{lemma}
 Let $g\in\RR^+(\T)$. Then $\tilde{\mathcal{A}}_g \subseteq \Atest_g$.
\end{lemma}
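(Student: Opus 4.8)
The plan is to show that an arbitrary element $\mapUA U$ of $\tilde{\mathcal{A}}_g$ satisfies both the symmetry conditions~\eqref{eq:symm-Riem} and the $tt$-continuity condition~\eqref{eq:cont_A}, so that it lies in $\Atest_g$. For the symmetries, I would work directly from the defining formula~\eqref{eq:mapping_U_A}. The key observation is that $X^\flat \wedge Y^\flat = -Y^\flat \wedge X^\flat$, so $\star(X^\flat\wedge Y^\flat)$ is antisymmetric in $(X,Y)$; this immediately gives $(\mapUA U)(X,Y,Z,W) = -(\mapUA U)(Y,X,Z,W)$, and similarly the antisymmetry of $\star(W^\flat\wedge Z^\flat)$ in $(Z,W)$ gives the antisymmetry in the last pair. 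The pair-swap symmetry $A(X,Y,Z,W)=A(Z,W,X,Y)$ follows from the symmetry of the dyadic product $\odot$ (so that $\star(X^\flat\wedge Y^\flat)\odot\star(W^\flat\wedge Z^\flat)$ is unchanged under swapping the two factors) together with the symmetry of $\ip{\cdot,\cdot}$. So the algebraic symmetries are essentially immediate from the structure of the map.

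The substantive part is verifying~\eqref{eq:cont_A}, i.e., that $(\mapUA U)_{F\gn\gn F}(X,Y) = (\mapUA U)(X,\gn,\gn,Y)$ is single-valued on each interior facet $F\in\Fint$ for $X,Y\in\Xm F$. Here I would use the hypothesis that $U\in\Utest$ is single-valued when all its $2(N-2)$ arguments are tangential to $F$. The idea is to unwind what $\star(X^\flat\wedge\gn^\flat)$ looks like when $X$ is tangential to $F$ and $\gn$ is the $g$-normal vector of $F$. Working in a $g$-orthonormal frame adapted to $F$ (with $\gn$ as the last basis vector and the remaining ones spanning $T_pF$), one computes that $\star(X^\flat\wedge\gn^\flat)$ is, up to sign, an $(N-2)$-form built entirely from the covectors dual to vectors tangent to $F$ — concretely, it is $\pm$ the interior product $\iota_{\gn}\star(X^\flat)$ restricted appropriately, a form that "lives on $F$". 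Then $\ip{U, \star(X^\flat\wedge\gn^\flat)\odot\star(W^\flat\wedge Z^\flat)}$ with $W=\gn$, $Z=Y$ becomes an evaluation of $U$ on $2(N-2)$ tangential arguments, which by the defining property of $\Utest$ is single-valued. I would also need to note that the sign ambiguity in $\gn$ is irrelevant because $\gn$ enters quadratically (two factors), which the excerpt already remarks for $\Atnnt$; and that the volume form $\og$ entering the Hodge star and the inner product, while $g$-dependent, is still single-valued tangentially on $F$ because $g$ is $tt$-continuous, so no multi-valuedness sneaks in through the metric.

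I expect the main obstacle to be making the frame computation clean: one must carefully track how the Hodge star interacts with the splitting $T_pT = T_pF \oplus \mathbb{R}\gn$, and argue that $\star(X^\flat\wedge\gn^\flat)$, paired against another such form inside $\ip{\cdot,\cdot}$, only ever sees the tangential part of $U$. A slick way to phrase this is via the identity relating $\star$ on $T$ to $\star_F$ on $F$ together with the contraction $\iota_{\gn}$, so that the pairing collapses to $\pm\ip{U_F, (\text{tangential }(N-2)\text{-forms})}_{g_F}$ — at which point single-valuedness is exactly the $tt$-continuity of $g$ (ensuring $g_F$ and its volume form are single-valued) combined with the defining continuity of $\Utest$. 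The rest is bookkeeping of signs, which I would not grind through here.
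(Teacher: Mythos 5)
Your proposal is correct and follows essentially the same route as the paper: the algebraic symmetries are read off directly from the antisymmetry of the wedge and the symmetry of $\odot$, and the $tt$-continuity is verified by working in a $g$-orthonormal frame adapted to $F$ with $\gn$ as the last vector, observing via the Hodge star that $\star(X^\flat\wedge\gn^\flat)$ involves only covectors tangential to $F$, so the pairing sees only the single-valued tangential trace of $U$ (with the tangential frame itself single-valued by the $tt$-continuity of $g$). The paper additionally isolates the degenerate $N=2$ case, where $\star(E_1\wedge E_2)=1$ and the pairing reduces to $U$ itself, but your argument specializes correctly there.
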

\begin{proof}
 Let $U \in \Utest$ and  $A = \mapUA_g U \in\tilde{\mathcal{A}}_g $. By
 definition~\eqref{eq:mapping_U_A}, $A$ clearly fulfills the
 symmetries in~\eqref{eq:symm-Riem} of $\Atest_g$, so it
 suffices to prove that the continuity constraint~\eqref{eq:cont_A}
 also hold.

 Let $F\in\Fint$ and $T\in \T$ be an element containing $F$. Let
  $\{E_1,\dots, E_{N-1}, E_N\}$
 be any $g$-orthonormal frame
 with $E_1,\dots, E_{N-1}\in \Xm{F}$
 and $E_N=\gn_F^T$, abbreviated to $\gn$ here. 
 Note that
  $E_1,\dots, E_{N-1}$ depend only on the tangential-tangential
 components of $g$ restricted to $F$ (as can be seen e.g.\ from
 Gram-Schmidt orthogonalization).
 Also, in the \change{components} of the $E_i$~frame \change{$g$ satisfies $g(E_i,E_j)=\delta_{ij}$.}

 First, consider the case $N>2$. Then by~\eqref{eq:Hodge-on-frame},
 the  Hodge duals
  \begin{align*}
    &\star (E_1\wedge E_N)
 = (-1)^{N-2}
 E_2 \wedge\dots\wedge E_{N-1},\\
    &\star (E_2\wedge E_N)
 = (-1)^{N-1}E_1 \wedge E_3 \wedge\dots\wedge E_{N-1}, 
  \end{align*}
 are expressed only in terms of tangential vectors on $F$.
 Letting 
  $X=E_1$, $W=E_2$, we then see that  
  \begin{align*}
 A(X,\gn,\gn,W)
    &= \langle U, \star (E_1\wedge E_N)\odot\star( E_2\wedge E_N)\rangle
  \end{align*}
 depends only on $U_{\Fr}(Z_1, \ldots, Z_{N-2}, Y_1, \ldots, Y_{N-2})$
 where $Z_i, Y_i \in \{ E_1, \ldots, E_{N-1}\}$
 are tangential vectors in $\Xm F$.
 Hence, by \eqref{eq:testspace_U}, $A(X,\gn,\gn,W)$ is single-valued
 on $F$.
 The same reasoning applies for all other choices
 of $X,W \in \{ E_1, \ldots, E_{N-1}\}$ and hence for any $X, W \in\Xm{F}$. 
 Thus, we conclude that $\Atnnt$
 is single-valued on all interior facets $F$.
  
 In the  $N=2$ case, 
 any $X, W \in \Xm F$ must be a  scalar multiple of $E_1$ and
  $\star (E_1 \wedge E_2) =1$, so 
  \begin{align*}
 A(E_1,\gn,\gn,E_1) &= \langle U, \star (E_1\wedge \change{E_2})\odot\star( E_1\wedge \change{E_2})\rangle = U,
  \end{align*} 
 which is single-valued on interior facets $F$ by
  \eqref{eq:testspace_U}.
\end{proof}

Next, we improve the inclusion of the previous lemma to an equality.

\begin{lemma}
  \label{lem:testspaceA}
 Let $g\in\RR^+(\T)$. Then $\tilde{\mathcal{A}}_g =\Atest_g$ and the mapping $\mapUA_g:\Utest\to\Atest_g$ is a bijection.
\end{lemma}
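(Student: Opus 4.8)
The plan is to complement the preceding lemma, which established $\tilde{\mathcal{A}}_g \subseteq \Atest_g$, by proving the reverse inclusion $\Atest_g \subseteq \tilde{\mathcal{A}}_g$ together with injectivity of $\mapUA_g$; these two facts together give both claims, since surjectivity onto $\Atest_g$ plus injectivity yields the bijection. The cleanest route is a pointwise, purely algebraic one: fix a point $p \in \om$ and an element $T$ containing $p$, and work in a $g$-orthonormal basis $\{E_1,\dots,E_N\}$ of $T_p\om$. Since both sides of \eqref{eq:mapping_U_A} are tensorial, it suffices to show that the linear map sending a symmetric-dyadic-of-$(N-2)$-forms tensor $U_p$ to the $4$-tensor $(\mapUA_g U)_p$ is a bijection from the space of algebraic symmetric dyadic products of $(N-2)$-covectors onto the space of algebraic $4$-tensors with the Riemann symmetries \eqref{eq:symm-Riem}; the interelement continuity conditions then match up automatically by the argument already used in the previous lemma (running it in reverse, using that the Hodge star and its inverse are determined by the $tt$-components of $g$ on a facet).

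For the algebraic bijection, the key observation is that $\star$ gives an isomorphism $\W^2(T_p\om) \to \W^{N-2}(T_p\om)$, hence an induced isomorphism on symmetric dyadic products $\W^2 \odot \W^2 \to \W^{N-2}\odot\W^{N-2}$. Under this identification, $\mapUA_g$ becomes, up to the musical isomorphisms raising indices (which are bijections since $g$ is positive definite), the classical identification of the space of $4$-tensors with Riemann symmetries with $\mathrm{Sym}^2(\W^2)$: namely $A \mapsto A(X,Y,Z,W)$ viewed as a symmetric bilinear form on bivectors $X\wedge Y$. That this latter identification is a bijection is the standard fact that a $4$-tensor satisfying the pair symmetry $A(X,Y,Z,W)=A(Z,W,X,Y)$ and antisymmetry in each pair corresponds exactly to an element of $\mathrm{Sym}^2(\Lambda^2)$ — note we do \emph{not} need the first Bianchi identity here, and indeed \eqref{eq:symm-Riem} does not include it. A dimension count confirms consistency: $\dim \mathrm{Sym}^2(\Lambda^2 \R^N) = \binom{m+1}{2}$ with $m = \binom{N}{2}$, matching the dimension of the space of $4$-tensors with symmetries \eqref{eq:symm-Riem}, and equalling $\dim \W^{N-2}\odot\W^{N-2}$ since $\dim\W^{N-2} = \dim\W^2 = m$.

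Concretely, I would expand $U_p = \sum_{I,J} c_{IJ}\, \omega^I \odot \omega^J$ over an orthonormal basis of $(N-2)$-forms indexed by $(N-2)$-subsets $I,J$, apply $\star^{-1}$ to write each $\star^{-1}\omega^I = \pm E_{i}\wedge E_{j}$ for the complementary pair $\{i,j\} = I^c$, and read off from \eqref{eq:mapping_U_A} and \eqref{eq:Hodge-on-frame} that $(\mapUA_g U)(E_a,E_b,E_c,E_d)$ recovers precisely $\pm c_{IJ}$ when $\{a,b\}$ and $\{d,c\}$ are the complements of $I$ and $J$, and vanishes otherwise. This makes the map manifestly invertible, with inverse $A \mapsto \sum_{a<b,\;c<d} A(E_a,E_b,E_c,E_d)\, \star(E_a\wedge E_b)\odot\star(E_c\wedge E_d)$ (symmetrized), and one checks this inverse lands in $\Utest_g$ whenever $A\in\Atest_g$ by invoking condition \eqref{eq:cont_A} and Lemma~\ref{lem:cont_A_codim2} exactly as in the forward direction. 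The main obstacle, and the only place requiring genuine care rather than bookkeeping, is verifying that the continuity conditions transfer correctly in the reverse direction: one must check that single-valuedness of $A_{F\gn\gn F}$ on all interior facets (plus the codimension-$2$ consequence in Lemma~\ref{lem:cont_A_codim2}) really does force single-valuedness of $U|_F$ on \emph{all} arguments in $\Xm F$, not just the ones appearing as Hodge duals of $E_i\wedge\gn$ pairs — this is where one uses that, as the orthonormal frame $\{E_1,\dots,E_{N-1}\}$ tangent to $F$ ranges over all such frames, the corresponding wedge products $E_i\wedge E_j$ span all of $\Lambda^2(T_pF)$, so the $\star$-duals span the relevant $(N-2)$-forms and $A(X,\gn,\gn,Y)$ being single-valued for all tangential $X,Y$ pins down $U|_F$ completely.
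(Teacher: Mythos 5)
Your proposal is correct and follows essentially the same route as the paper's proof: injectivity of $\mapUA_g$ together with a pointwise dimension count (both spaces have fiber dimension $N(N-1)\big(N(N-1)+2\big)/8$, with $N(N-1)/2$ matching continuity constraints per facet), combined with the inclusion $\tilde{\mathcal{A}}_g\subseteq\Atest_g$ from the preceding lemma. The only differences are cosmetic refinements: you make the inverse explicit via the identification with $\mathrm{Sym}^2(\Lambda^2)$ and argue the reverse transfer of the continuity conditions directly (using that the duals $\star(E_i^\flat\wedge\gn^\flat)$ span the tangential $(N-2)$-forms on $F$), whereas the paper simply counts the number of linearly independent constraints on each side.
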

\begin{proof}
 First, we prove that $\mapUA_g$ is an injection. Let $U\in\Utest$ 
 be such that for $X, Y, Z, W \in \Xm \T$,
  \begin{align}
    \nonumber 
      0 = (\mapUA_g U)(X, Y, Z, W)
      & =
    \langle U,\star(X^\flat\wedge Y^\flat)
    \odot\star( W^\flat\wedge Z^\flat)\rangle
      \\     \label{eq:AgU=0}
      & =     \langle U,\star(X^\flat\wedge Y^\flat)
        \otimes \star( W^\flat\wedge Z^\flat)\rangle.
  \end{align}
 Within each mesh element, use a $g$-orthonormal basis
  $E^1, \ldots, E^N$ to expand \change{the symmetric tensor} $U$ in components $U_{\alpha\beta} E^{\alpha}\odot E^{\alpha}$ 
 for increasing
 multi-indices $\alpha=(\alpha_1, \ldots,\alpha_{N-2})$ and
  $\beta = (\beta_1, \ldots, \beta_{N-2})$, where $E^\alpha$
 abbreviates $E^{\alpha_1} \wedge \cdots \wedge E^{\alpha_{N-2}}$. 
  
 For each $\alpha$ and
  $\beta$, there are index pairs $i, j$ and $k, l$ and sign selections such that
  $ \pm E^i \wedge E^j \wedge E^\alpha
  $ and
  $ \pm E^k \wedge E^l \wedge E^\beta
  $ equal the volume form $\og$. Then, using
  $X = \pm E^i$ and $W = \pm E^k,$ with the selected signs,
 and $Y = E^j, Z = E^l$
 in~\eqref{eq:AgU=0},
  \begin{align*}
    0
     & \! =\!
 U_{\alpha\beta}
 ( \langle E^\alpha , \star(X^\flat\!\wedge\! Y^\flat) \rangle
      \langle E^\beta , \star( W^\flat\!\wedge\! Z^\flat)\rangle
      \!+\! 
      \langle E^\beta , \star(X^\flat\!\wedge\! Y^\flat) \rangle
      \langle E^\alpha , \star( W^\flat\!\wedge\! Z^\flat)\rangle).
  \end{align*}
 The term in parentheses equals $1$ for $\alpha \neq \beta$ and equals $2$ for $\alpha=\beta$, so  
  $U_{\alpha\beta}=0$.
 Repeating the argument for every coefficient of $U$, we
 conclude that $U=0$. 

 It now suffices to prove that the dimensions of $\Atest_g$ and $\Utest$ are equal. 
 The set of fourth-order tensors fulfilling the symmetry conditions \eqref{eq:symm-Riem} is $N(N-1)\big(N(N-1)+2\big)/8$ dimensional.
 The number of constraints in \eqref{eq:cont_A} at a facet point is $N(N-1)/2$ (and the linear independence of the constraints can be verified by inserting a basis of $\Xm{F}$). To see that these numbers match those for $\Utest$ in \eqref{eq:testspace_U},  first note that    $\dim\big(\W^{N-2}(\T)^{\odot 2}\big)=N(N-1)\big(N(N-1)+2\big)/8$. 
 Next, the number of linearly independent constraints in \eqref{eq:testspace_U} is also $N(N-1)/2$ on facets, \change{the dimension of symmetric $(N-2)\times (N-2)$ matrices.} Thus, the dimensions of $\Atest_g$ and $\Utest$ coincide, and $\mapUA_g$ is a bijection.
\end{proof}

Recall that \eqref{eq:distr_Riemann} generalized the densitized
Riemann curvature to a functional $\Rog(A)$ acting on $A$ in
$\Atest$. In view of the bijection $\mapUA$, we now consider the corresponding  functional on $U$, defined by 
\begin{align}
  \label{eq:def_curv_riemann}
  \RogU (U) :=\Rog( \mapUA U).
\end{align}

\begin{theorem}  \label{thm:distr_Riemann_U}
 The generalized curvature functional $\RogU (U)$
 can be calculated using 
 metric-independent test functions $U\in \Utesto$ by
\begin{align*}
  \begin{split}
    \RogU(U)
    &  =\sum_{T\in\T}
      \int_T\langle \Riemann,\mapUA U\rangle\,\vo{T}
      +
      4\sum_{F\in\Fint}\int_F\langle\jmp{\sff},
      \tnnt{\mapUA U}\rangle\,\vo{F}
      \\
      &\qquad+4
      \sum_{E\in\Eint}\int_E \Theta_E\, (\mapUA U)_{\gcn\gn\gn\gcn}\,\vo{E}.
  \end{split}
\end{align*}
\end{theorem}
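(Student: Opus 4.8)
The plan is to treat this as a direct consequence of Lemma~\ref{lem:testspaceA} together with the defining equation~\eqref{eq:def_curv_riemann}: the only substantive point to settle is that $\mapUA$ carries $\Utesto$ into $\Atesto$, so that the formula~\eqref{eq:distr_Riemann} for $\Rog$ is legitimately applicable to $A=\mapUA U$, after which the claimed identity is pure substitution.

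First I would check the inclusion $\mapUA_g(\Utesto)\subseteq \Atesto$. By Lemma~\ref{lem:testspaceA}, for any $U\in\Utesto\subseteq\Utest$ the tensor $A:=\mapUA_g U$ already lies in $\Atest_g$, hence satisfies the symmetries~\eqref{eq:symm-Riem} and has single-valued $\Atnnt$ on interior facets; what remains is that $\Atnnt$ vanishes on every boundary facet $F\in\Fbnd$. For this I would reuse verbatim the frame computation from the proof of the inclusion $\tilde{\mathcal{A}}_g\subseteq\Atest_g$ above: choosing a $g$-orthonormal frame $\{E_1,\dots,E_{N-1},\gn\}$ of the element containing $F$, adapted to $F$ so that $E_1,\dots,E_{N-1}\in\Xm{F}$, formula~\eqref{eq:Hodge-on-frame} expresses each $\star(E_i\wedge\gn)$ purely in terms of the tangential frame vectors, so that $A(E_i,\gn,\gn,E_j)=\langle U,\star(E_i\wedge\gn)\odot\star(E_j\wedge\gn)\rangle$ is a linear combination of values $U|_F(Z_1,\dots,Z_{N-2},Y_1,\dots,Y_{N-2})$ with all $Z_m,Y_m$ tangential to $F$. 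Since $U\in\Utesto$, all such values vanish for $F\in\Fbnd$, whence by bilinearity $\Atnnt(X,Y)=A(X,\gn,\gn,Y)=0$ for all $X,Y\in\Xm{F}$; the case $N=2$ is identical using $\star(E_1\wedge\gn)=1$. Thus $A\in\Atesto$.

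With this in hand the proof closes by substitution: by~\eqref{eq:def_curv_riemann} we have $\RogU(U)=\Rog(\mapUA U)$, and since $\mapUA U\in\Atesto$ we may expand the right-hand side using~\eqref{eq:distr_Riemann} with $A=\mapUA U$. The element term becomes $\int_T\langle\Riemann,\mapUA U\rangle\,\vo{T}$; the facet term becomes $4\int_F\langle\jmp{\sff},(\mapUA U)_{\cdot\gn\gn\cdot}\rangle\,\vo{F}$, using $\Atnnt=A_{\cdot\gn\gn\cdot}$ (valid by~\eqref{eq:symm-Riem}) and the fact that pairing against $\jmp{\sff}\in\TT^2_0(\Fint)$ only sees the tangential-tangential part; and the codimension-two term becomes $4\int_E\Theta_E\,(\mapUA U)_{\gcn\gn\gn\gcn}\,\vo{E}$, which is well defined by Lemma~\ref{lem:cont_A_codim2}. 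This is exactly the asserted formula.

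The only place where anything must actually be argued is the boundary-vanishing check in the second paragraph, and even there the needed computation is already contained in the proof of the preceding lemma; so I expect no genuine obstacle and would keep the write-up correspondingly short.
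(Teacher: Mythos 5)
Your proposal is correct and follows essentially the same route as the paper, whose entire proof is ``Apply Lemma~\ref{lem:testspaceA}'' followed by the substitution you describe. Your extra paragraph verifying that $\mapUA_g$ carries $\Utesto$ into $\Atesto$ (the boundary-vanishing of $\Atnnt$ on $\Fbnd$, via the adapted-frame computation already present in the proof of the inclusion $\tilde{\mathcal{A}}_g\subseteq\Atest_g$) makes explicit a point the paper leaves implicit, and is a worthwhile addition rather than a deviation.
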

\begin{proof}
 Apply Lemma~\ref{lem:testspaceA}.
\end{proof}

\begin{remark}[The Uhlenbeck trick]
 Our technique of putting the metric-dependent test function space $\Atest$
 in isomorphism with the fixed,
 metric-independent space $\Utest$
 may be viewed as  a variant of a well-known idea of
 Karen Uhlenbeck: cf.~\cite{Ham1986} or \cite[Section~9.4]{Top2006}.
\end{remark}
\begin{remark}[Finite elements for $\Utest$]
 In 2D,
  \eqref{eq:distr_Riemann} reduces to \eqref{eq:distr_Gauss},
 and the test space $\Utest$ can be discretized by  Lagrange finite elements~\eqref{eq:lag_fe}.
 In three spatial dimensions $\Utest$ consists of Regge functions, which are $tt$-continuous, and can be discretized using Regge finite elements~\eqref{eq:regge_fem_space}. The specialization of \eqref{eq:distr_Riemann} in 3D is
 presented and discussed in \S\ref{sec:spec_3d}. 
\end{remark}

\begin{remark}[Curvature operator]
  \label{rem:curv-oper}
 Often, the smooth Riemann curvature tensor $\Riemann$ is identified with an
 operator acting on bivectors (see e.g., \cite[Section
 3.1.2]{Peter16}) due to the symmetries of $\Riemann$. In our
 context, we can similarly define, within each element $T$, a {\em curvature
 operator} acting on 2-forms by setting its quadratic form,  
  $\CurvatureOther: \W^2(T) \times \W^2(T) \to \R$ by
  \begin{align}
    \label{eq:def_curvature_op}
    \CurvatureOther(X^\flat\wedge Y^\flat, W^\flat\wedge Z^\flat):= \Riemann(X,Y,Z,W),\quad \forall X,Y,Z,W\in\Xm{T}.
  \end{align}
 Clearly, $\CurvatureOther$ is a symmetric bilinear form in its two
 arguments, and  generates a selfadjoint curvature
 operator.
 Next, define
  \begin{equation}
    \label{eq:Q-defn}
    \Curvature = \mapUA^{-1} \Riemann.
  \end{equation}
 Then, due to \eqref{eq:mapping_U_A}, the equation
  $\mapUA \Curvature = \Riemann$ can be written as
  \[
    \ip{ \Curvature, \star(X^\flat\wedge Y^\flat)\odot\star( W^\flat\wedge Z^\flat) }
 = \Riemann(X, Y, Z, W)
  \]
 for all $X,Y,Z,W\in\Xm{T}$.
 Comparing with~\eqref{eq:def_curvature_op}, we see that $\Curvature$ and
  $\CurvatureOther$ are related by
  \[
    \CurvatureOther(X^\flat\wedge Y^\flat, W^\flat\wedge Z^\flat)
 =\ip{ \Curvature, \star(X^\flat\wedge Y^\flat)\odot\star(
 W^\flat\wedge Z^\flat) } = \Riemann(X, Y, Z, W).    
  \]
 Thus, $\Curvature, \CurvatureOther$, and $\Riemann$ all contain the same
 information.  A generalized densitized version of $\Curvature$ is
 the functional $\RogU$ of Theorem~\ref{thm:distr_Riemann_U}.  In
 §\ref{sec:spec_3d}, we will revisit the curvature operator in 3D.
\end{remark}

\change{\begin{remark}[Positive curvature operator]
  \label{rem:pos_curv_op}
  To identify when the curvature operator is ``positive,'' we take cue
  from the notion that a distributional operator is positive if its
  application to any positive test function yields a positive
  scalar. Generalizing to our context, we say that a function
  $U\in\Utest$ is positive if
  $U(X_1,\dots,X_{N-2},X_1,\dots,X_{N-2})>0$ for all nonzero
  $X_i\in\Xm{\T}$, and declare $\RogU$ to be a positive curvature
  operator if $\RogU(U)>0$ for all such positive $U$.  We claim that
  $\RogU$ is positive if the angle defect $\Theta_E$ is a positive
  scalar, the jump of the second fundamental form~$\jmp{\sff}$ is
  positive definite, and the element-wise smooth curvature operator
  from \eqref{eq:Q-defn}, namely $\Curvature=\mapUA^{-1}\Riemann$ in
  $\Utest$, is positive. Indeed, for any positive $U\in\Utesto$ there
  holds for the codimension 1 and 2 terms
  \begin{align*}
    (\mapUA U)(\gcn,\gn,\gn,\gcn)&=\langle U,\star(\gcn^\flat\wedge \gn^\flat)\odot \star(\gcn^\flat\wedge \gn^\flat)\rangle>0,\,&&\forall E\in\Eint,\\
    \tnnt{\mapUA U}(X,X)&=\langle U,\star(X^\flat\wedge \gn^\flat)\odot \star(X^\flat\wedge \gn^\flat)\rangle>0,\,&&\forall X\in\Xm{F},\,F\in\Fint.
  \end{align*} 
  Based on the fact that the inner product of two self-adjoint
  positive definite endomorphisms on a finite-dimensional space is
  positive, the two inner products $\langle \Curvature, U\rangle$ and
  $\langle\jmp{\sff}, \tnnt{\mapUA U}\rangle$, whose arguments are
  interpreted as symmetric positive quadratic forms on the vector
  spaces $\W^{N-2}(\T)$ and $\W^1(\T)$, respectively, are
  positive. This then yields the claim. It confirms Gromov's
  suggestion (quoted in Section~\ref{sec:intro}) that positive angle
  defects be interpreted as generating a positive curvature operator
  rather than a positive sectional curvature (and extends to beyond the piecewise flat case).
\end{remark}}

We conclude this subsection by giving coordinate formulas for the map
$\mapUA$ and its inverse,  anticipating its utility in computations.  
Let $\alpha = (\alpha_1, \ldots, \alpha_{N-2})$ and
$\beta= (\beta_1, \ldots, \beta_{N-2})$ denote multi-indices of
integers $\alpha_m, \beta_m \in \{1, \ldots, N\}$.  Let $g^{\alpha\beta} =
g^{\alpha_1 \beta_1} \dots g^{\alpha_{N-2} \beta_{N-2}}$. Mixing integer
indices $p, q$ and multiindex $\alpha$, \change{we write for the Levi-Civita symbol \eqref{eq:Levi-Civita-symbol}}
$\gveps^{pq\alpha}$ for
$\gveps^{p,q, \alpha_1 \dots \alpha_{N-2}}$ (and similarly
$\eveps^{pq\alpha}$).  Given $U \in \Utest,$ and a coordinate frame
$\{ \d_i \}_{i=1}^N$, we wish to relate the coefficients
\[U_{\alpha \beta} \equiv U_{\alpha_1, \ldots, \alpha_{N-2}, \beta_1,
  \ldots, \beta_{N-2}} := U(\d_{\alpha_1}, \ldots, \d_{\alpha_{N-2}},
\d_{\beta_1}, \ldots, \d_{\beta_{N-2}}), 
\]
with $A_{ijkl} = A(\d_i, \d_j, \d_k, \d_l)$ when $A = \mapUA U$.  We
extend the summation convention to multi-indices $\alpha$ and $\beta$
(so a sum over all components $\alpha_m$ and $\beta_m$ is implied in
the formula~\eqref{eq:mapping_U_A_coo} below).

\begin{proposition}
  \label{prop:mapA_coo}
 For any $U \in \Utest,$ using the above-mentioned coordinate notation,
  \begin{align}
    \label{eq:mapping_U_A_coo}
    \begin{split}
      &[\mapUA U]_{ijkl} 
 =\frac{-1}{[(N-2)!]^2}\gveps^{pq\alpha}\, \gveps^{rs\beta}\,U_{\alpha\beta}\,
 g_{pi} g_{qj}  g_{rk}g_{sl},\\
      & [\mapUA U]^{ijkl} 
 =\frac{-1}{[(N-2)!]^2}\gveps^{ij\alpha}\, \gveps^{kl\beta}\,U_{\alpha\beta}.
    \end{split}
  \end{align}        
\end{proposition}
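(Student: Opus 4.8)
The plan is to evaluate the defining formula~\eqref{eq:mapping_U_A} on coordinate frame fields and reduce everything to the coordinate form of the Hodge star. First I would set $X=\d_i$, $Y=\d_j$, $Z=\d_k$, $W=\d_l$ in~\eqref{eq:mapping_U_A}, which gives $[\mapUA U]_{ijkl}=\langle U,\,\star(\d_i^\flat\wedge\d_j^\flat)\odot\star(\d_l^\flat\wedge\d_k^\flat)\rangle$; note the reversed wedge order $W^\flat\wedge Z^\flat=\d_l^\flat\wedge\d_k^\flat$, which will be the source of the overall minus sign. The key preliminary computation is to identify $\star(\d_i^\flat\wedge\d_j^\flat)$ in coordinates: raising the two indices of the $2$-form $\d_i^\flat\wedge\d_j^\flat$ gives $\delta^p_i\delta^q_j-\delta^p_j\delta^q_i$, and contracting with the Levi--Civita tensor per the coordinate expression of $\star$ (see~\eqref{eq:Hodgestar}) shows that $\star(\d_i^\flat\wedge\d_j^\flat)$ is the $(N-2)$-form whose totally antisymmetric component array is exactly $\hat\veps_{ij\alpha}$, so that $\star(\d_l^\flat\wedge\d_k^\flat)$ has components $\hat\veps_{lk\alpha}=-\hat\veps_{kl\alpha}$. (As a consistency check, restricting to a $g$-orthonormal frame recovers~\eqref{eq:Hodge-on-frame}.)

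Next I would substitute these into the inner product. Writing $a\odot b=\tfrac12(a\otimes b+b\otimes a)$ and pairing with $U$ using~\eqref{eq:innerprod-extended}, the two symmetrized terms coincide because $U_{\alpha\beta}=U_{\beta\alpha}$, the block-exchange symmetry built into sections of $\W^{N-2}(\T)^{\odot 2}$, so the factor $\tfrac12$ is absorbed; the factorial normalization of the inner product on each of the two $(N-2)$-form slots contributes the prefactor $1/[(N-2)!]^2$, and the sign noted above yields the minus. Raising the $\alpha,\beta$ indices of the $\hat\veps$'s onto $U$, this gives $[\mapUA U]_{ijkl}=\tfrac{-1}{[(N-2)!]^2}\,\hat\veps_{ij}{}^{\alpha}\,\hat\veps_{kl}{}^{\beta}\,U_{\alpha\beta}$, which is the first displayed identity once we write $\hat\veps_{ij}{}^{\alpha}=\hat\veps^{pq\alpha}g_{pi}g_{qj}$ and $\hat\veps_{kl}{}^{\beta}=\hat\veps^{rs\beta}g_{rk}g_{sl}$. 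The second identity is then immediate: raise $i,j,k,l$ with $g^{-1}$ and use $g^{ip}g_{pm}=\delta^i_m$. An essentially equivalent route is to start from the identity $\mapUA=-\star^{\odot 2}$ of Remark~\ref{rem:alternative_repr}, apply the coordinate Hodge star to each factor of $U=\sum u\odot v$, re-express the components of $\star u\odot\star v$ in terms of those of $u\odot v=U$, and extend by linearity; the $1/[(N-2)!]^2$ then arises from two applications of the Hodge star.

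It remains to note that the right-hand sides of~\eqref{eq:mapping_U_A_coo} are well defined under the free multi-index summation convention, since $\hat\veps^{pq\alpha}$ and $U_{\alpha\beta}$ are each totally antisymmetric in the entries of $\alpha$ (and of $\beta$), so summing over all components of $\alpha,\beta$ merely overcounts increasing multi-indices by $(N-2)!$, which is harmless here. The step I expect to be the main obstacle is purely bookkeeping: pinning down where the $(N-2)!$ factors live (in the Hodge-star/form-inner-product normalizations, not in $U_{\alpha\beta}$) and tracking the two independent sources of signs, namely the reversed wedge $W^\flat\wedge Z^\flat$ versus the argument order $(Z,W)$, and the antisymmetry of $\hat\veps$ under index permutation; both become mechanical once~\eqref{eq:Hodgestar} is written out explicitly.
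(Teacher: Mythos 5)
Your proposal follows essentially the same route as the paper's proof: evaluate \eqref{eq:mapping_U_A} on the coordinate frame, compute $\star(\d_i^\flat\wedge \d_j^\flat)$ from \eqref{eq:Hodge-dxpdxq}, contract with $U$ (absorbing the $\odot$ via the block symmetry $U_{\alpha\beta}=U_{\beta\alpha}$, exactly as in \eqref{eq:AgU=0}), read off the sign from $\hat{\veps}_{lk\beta}=-\hat{\veps}_{kl\beta}$, and obtain the second identity by raising indices. The computation and the final formula are right.

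One bookkeeping caution on the point you yourself flag as delicate: the paper does \emph{not} place the $[(N-2)!]^{-2}$ in the inner product --- $\ip{\cdot,\cdot}$ as defined in \eqref{eq:innerprod-extended} is the plain tensor contraction with no factorial normalization (the $1/k!$ lives only in the separate form inner product $g^{-1}(\cdot,\cdot)$ of \eqref{eq:factor-ip}, which is not what \eqref{eq:mapping_U_A} uses). In the paper's proof the two factors of $1/(N-2)!$ enter through the wedge-expansion coefficients $\tfrac{\sqrt{\det g}}{(N-2)!}\,\veps_{ij\beta}$ of each Hodge dual in \eqref{eq:Hodge-dxpdxq}, which are then contracted against $U_{\alpha\beta}$ under the free multi-index summation. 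If you instead insist, as you do, that the totally antisymmetric \emph{tensor} components of $\star(\d_i^\flat\wedge\d_j^\flat)$ are $\hat{\veps}_{ij\alpha}$ (which already absorb an $(N-2)!$ from summing the wedge expansion), then you must not also charge a $1/(N-2)!$ per slot to the inner product, or you will double-count; the two accountings are alternatives, not cumulative. So either carry the coefficient arrays as the paper does, or work with tensor components and track where the compensating $(N-2)!$'s from the antisymmetric overcounting cancel --- but state explicitly which convention you are using for ``components,'' since the final prefactor in \eqref{eq:mapping_U_A_coo} is exactly the quantity at stake.
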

\begin{proof}
 By the definition of $\mapUA$ in \eqref{eq:mapping_U_A},
  $[\mapUA U]_{ijkl} = \ip{ U, \star(\d_i^\flat\wedge
    \d_j^\flat)\odot\star( \d_l^\flat\wedge \d_k^\flat)}$.  Noting
 that \eqref{eq:Hodge-dxpdxq} implies
  \[
    \star(\d_i^\flat\wedge \d_j^\flat) = \frac{\sqrt{\det g}}{(N-2)!}
    \,\eveps_{ij\,\beta} \;dx^{\beta_1}\wedge\dots\wedge dx^{\beta_{N-2}},
  \]
 we obtain, using \eqref{eq:innerprod-extended}, 
  \begin{align*}
 [\mapUA U]_{ijkl}
    & = \frac{\det g}{[(N-2)!]^2}
 U_{\alpha\beta} g^{\alpha\gamma} g^{\beta \change{\eta}} \eveps_{ij\,\change{\gamma}} \eveps_{lk\,\change{\eta}}.
  \end{align*}
 Now simplifying using $\eveps_{ij\,\change{\gamma}} g^{\alpha\gamma}  = 
 g_{pi} g_{qj} \eveps_{rs\,\change{\gamma}} g^{rp} g^{sq} g^{\alpha\gamma}  =
 g_{pi} g_{qj} \eveps^{pq\,\alpha} \det(g^{-1}),
  $
 we obtain the first expression in \eqref{eq:mapping_U_A_coo}. The second follows by the direct computation of $[\mapUA U]^{ijkl} = g^{ip} g^{jq} g^{kr} g^{ls} [\mapUA U]_{pqrs}$.
\end{proof}

\begin{proposition}
The inverse $\mapUA^{-1}:\Atest\to \Utest$ has the explicit form
\begin{align}
  \label{eq:mapping_U_A_inv}
  \begin{split}
 (\mapUA^{-1}A)&(X_1,\dots,X_{N-2},Y_1,\dots,Y_{N-2}) \\
    &= -\frac{1}{4}\langle A, \star(X_1^\flat\wedge\dots\wedge X_{N-2}^\flat)\odot \star(Y_1^\flat\wedge\dots\wedge Y_{N-2}^\flat)\rangle,
  \end{split}
\end{align} for any $A \in \Atest$, and has the representation $\mapUA^{-1} = -\star^{\odot 2}$.  In coordinates, for multi-indices
$\alpha = (\alpha_1, \ldots, \alpha_{N-2})$ and
$\beta= (\beta_1, \ldots, \beta_{N-2})$,
\begin{align}
  \label{eq:mapping_U_A_inv_coo}
 (\mapUA^{-1}A)_{\alpha\beta} = -\frac{1}{4}\gveps_{\alpha\,ij}\gveps_{\beta\,kl}A^{ijkl}.
\end{align}
\end{proposition}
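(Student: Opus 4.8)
The plan is to combine the bijectivity of $\mapUA_g$ from Lemma~\ref{lem:testspaceA} with the representation $\mapUA = -\star^{\odot 2}$ established in Remark~\ref{rem:alternative_repr}. Recall that Remark~\ref{rem:alternative_repr} implicitly identifies $\Atest$ with the subspace of $\W^2(\T)^{\odot 2}$ cut out by the single-valuedness constraint~\eqref{eq:cont_A}; this identification is legitimate because the algebraic symmetries~\eqref{eq:symm-Riem} are exactly those of the symmetric square of two-forms, as the dimension count in the proof of Lemma~\ref{lem:testspaceA} confirms. Under this identification $\mapUA = -\star^{\odot 2}$, where $\star^{\odot 2}$ carries $\W^{N-2}(\T)^{\odot 2}$ to $\W^2(\T)^{\odot 2}$. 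Since $\mapUA$ is already known to be a bijection, it suffices to exhibit one left inverse, which is then the two-sided inverse.

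First I would record the relevant involution property of the Hodge star. On an $N$-dimensional Riemannian manifold one has $\star\star = (-1)^{k(N-k)}\,\idop$ on $k$-forms; for both $k=N-2$ and $k=2$ the exponent equals $2(N-2)$, which is even, so $\star\star=\idop$ on $\W^{N-2}(\T)$ and on $\W^2(\T)$. Since $\star^{\odot 2}$ is defined by $\star^{\odot 2}(u\odot v) = (\star u)\odot(\star v)$, it follows that $\star^{\odot 2}\circ\star^{\odot 2} = (\star\circ\star)^{\odot 2} = \idop$ on $\W^{N-2}(\T)^{\odot 2}$. Hence $(-\star^{\odot 2})\circ\mapUA = \star^{\odot 2}\circ\star^{\odot 2} = \idop$ on $\Utest$, and therefore $\mapUA^{-1} = -\star^{\odot 2}$, now regarded as the operator $\W^2(\T)^{\odot 2}\to\W^{N-2}(\T)^{\odot 2}$ restricted to $\Atest$. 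In particular this map does take values in $\Utest$, which can alternatively be checked directly by the orthonormal-frame argument used in the lemma preceding Lemma~\ref{lem:testspaceA}.

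It remains to translate $\mapUA^{-1}A = -\star^{\odot 2}A$ into the two stated explicit formulas. For the coordinate-free formula I would run the chain of identities in Remark~\ref{rem:alternative_repr} in reverse: evaluate the $2(N-2)$-tensor $-\star^{\odot 2}A$ on arguments $X_1,\dots,X_{N-2},Y_1,\dots,Y_{N-2}\in\Xm{\T}$, rewrite via~\eqref{eq:4}, and then use the Hodge-star isometry~\eqref{eq:hodge_star_iso} componentwise across the symmetric product to move the two stars off $A$ and onto the test arguments. This produces the constant $-\tfrac14$, the reciprocal counterpart of the $\tfrac14$ appearing in~\eqref{eq:2}, giving
\[
  (\mapUA^{-1}A)(X_1,\dots,X_{N-2},Y_1,\dots,Y_{N-2})
  = -\tfrac14\,\langle A,\,\star(X_1^\flat\wedge\dots\wedge X_{N-2}^\flat)\odot\star(Y_1^\flat\wedge\dots\wedge Y_{N-2}^\flat)\rangle.
\]
The coordinate formula then follows by substituting the Hodge expansion of $\star(\d_{\alpha_1}^\flat\wedge\dots\wedge\d_{\alpha_{N-2}}^\flat)$ in terms of the permutation symbol (a companion of~\eqref{eq:Hodge-dxpdxq}), contracting against $A^{ijkl}$ through~\eqref{eq:innerprod-extended}, and absorbing the $\sqrt{\det g}$ factors into $\hat{\veps}_{\alpha\,ij}=\sqrt{\det g}\,\veps_{\alpha_1\dots\alpha_{N-2}ij}$, exactly mirroring the computation in the proof of Proposition~\ref{prop:mapA_coo}; this yields $(\mapUA^{-1}A)_{\alpha\beta} = -\tfrac14\,\hat{\veps}_{\alpha\,ij}\hat{\veps}_{\beta\,kl}A^{ijkl}$.

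The one delicate point I anticipate is the constant bookkeeping in this last step: the factor $-\tfrac14$ arises from the interplay between the $(N-2)!$ and $2!$ normalizations built into the Hodge star on decomposable forms, the tensor inner product~\eqref{eq:innerprod-extended} (which carries no $1/k!$), and the two symmetrizations implicit in the symmetric dyadic products. I would pin these constants down once in a $g$-orthonormal frame using~\eqref{eq:Hodge-on-frame}, where $\star$ merely permutes frame covectors up to a sign, and then transport the resulting identity to a general coordinate frame. Once the involution observation is in place, nothing conceptual remains; everything reduces to matching the conventions already fixed in~\eqref{eq:mapping_U_A}, \eqref{eq:2}, and the Appendix.
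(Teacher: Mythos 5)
Your proposal is correct and follows essentially the same route as the paper: the key identity $\mapUA^{-1}=-\star^{\odot 2}$ obtained from the Hodge-star isometry \eqref{eq:hodge_star_iso} together with the involution $\star\circ\star=\pm\idop$ (so that $\star^{\odot2}\circ\star^{\odot2}=\idop$), followed by a coordinate computation mirroring Proposition~\ref{prop:mapA_coo}. The only difference is ordering — the paper first shows the displayed formula equals $-\star^{\odot 2}A$ and then checks $\mapUA^{-1}\mapUA=\idop$, whereas you deduce $\mapUA^{-1}=-\star^{\odot 2}$ from bijectivity and the involution and then unwind to the explicit formula — which is an immaterial rearrangement.
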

\begin{proof}
 To show that \eqref{eq:mapping_U_A_inv} is indeed the inverse we prove that $\mapUA^{-1}\mapUA=\idop$. First, we prove the representation $\mapUA^{-1} = -\star^{\odot 2}$ of \eqref{eq:mapping_U_A_inv}.
 By~\eqref{eq:hodge_star_iso},
    \begin{align*}
 (\mapUA^{-1}A)&(X_1,\dots,X_{N-2},Y_1,\dots,Y_{N-2})\\
      &=-\frac{1}{4}\langle A, \star(X_1^\flat\wedge\dots\wedge X_{N-2}^\flat)\odot \star(Y_1^\flat\wedge\dots\wedge Y_{N-2}^\flat)\rangle \\
      &= -\frac{1}{((N-2)!)^2}\langle \star^{\odot 2}A, X_1^\flat\wedge\dots\wedge X_{N-2}^\flat\odot Y_1^\flat\wedge\dots\wedge Y_{N-2}^\flat\rangle\\
      &= -\langle \star^{\odot 2}A, X_1^\flat\otimes\dots\otimes X_{N-2}^\flat\otimes Y_1^\flat\otimes\dots\otimes Y_{N-2}^\flat\rangle \\
      &= -(\star^{\odot 2}A)(X_1,\dots,X_{N-2}, Y_1,\dots, Y_{N-2}).
    \end{align*}
 Then, we have for all $U\in \Utest$ with \eqref{eq:2}  (and that $\star\circ\star=\pm 1$ \eqref{eq:hodge_star_twice})
    \begin{align*}
      \mapUA^{-1}(\mapUA U) = \star^{\odot 2}(\star^{\odot 2} U) = U.
    \end{align*}
 Coordinate expression \eqref{eq:mapping_U_A_inv_coo} can be derived analogously to \eqref{eq:mapping_U_A_coo}.
\end{proof}

\subsection{Evolution of test functions.}
\label{ssec:evol-test-funct}

In the remainder of this section, we consider a family of 
metrics $\gpar(t)\in \Cone[\R, \Sc^+]$ parameterized by time $t$
and examine how relevant quantities change as
$\gpar(t)$ changes with time $t$.
\change{Write} $\gdot(t) = (d g_{ij} / dt) dx^i \otimes dx^j$ in a $t$-independent coordinate coframe $dx^i$. 
The specific task for this subsection is to investigate how the test function $A\in\Atest$ changes as the metric $g$ evolves.
Here, we rely on definition \eqref{eq:testspace_A2} and mapping \eqref{eq:mapping_U_A}. 
To this end, we need some preliminaries. Let $X_i \in \Xm\T$ be
time-independent vector fields and let $\sigma \in \TT^2_0(\T)$.  Let
$\Lsigma: \Xm \T \to \Xm \T$ be the endomorphism defined by
\begin{equation}
  \label{eq:Ldefn}
 g(\Lsigma X_1, X_2) = \sigma(X_1, X_2).
\end{equation}
By trace of $\sigma$ we mean the usual trace of the linear operator $\Lsigma$,
\begin{equation}
  \label{eq:trace-2-tensor}
  \tr \sigma := \tr {\Lsigma} \change{=[\Lsigma]_i^i} = g^{ij} \sigma_{ij}.
\end{equation}
It is easy to prove (e.g., using the
Jacobi formula for determinant derivative) that the derivative of the
time-dependent Riemannian volume form $\omega \equiv \og_{g(t)}$ is
given by
\begin{equation}
  \label{eq:derivative-vol}
  \frac{d}{dt} \og_{g(t)}(X_1, \ldots, X_N) = \frac 1 2 \tr\gdot \, \og_{g(t)}(X_1, \ldots, X_N),
\end{equation}
where the $\tr \gdot$ is calculated as in~\eqref{eq:trace-2-tensor}.
\change{For codimension 1 and 2 subsimplices $D$ we define $\LsigmaD$ as in \eqref{eq:Ldefn} with $g_D$ and restricting to vector field in $X_i\in\Xm{D}$.}

Given any  $A \in \TT^k_0(\T)$, define
$\Lsigma^{(\ell)} A \in \TT_0^k(\TT)$, for any
$\ell =1, \ldots, k$,  by
\begin{align}
  \label{eq:Lsig-defn}
 (\Lsigma^{(\ell)}A)(X_1, \ldots, X_k)
 := A(X_1, \ldots, X_{\ell-1}, \Lsigma X_\ell, X_{\ell+1}, \ldots, X_k).
\end{align}
When $\sigma$ is symmetric, it is easy to verify that
\begin{align}
  \label{eq:Lsig-symm}
  \ip{\Lsigma^{(\ell)} A, B} & = \ip{ A, \Lsigma^{(\ell)} B}, \qquad
 A, B \in \TT_0^k (\T), 
  \\
  \label{eq:Lsig-1}
  \ip{ \Lsigma^{(\ell)} A, B} & = \ip{ \Lsigma^{(1)}A,  B}, \qquad
 A, B \in \Atest, \;\ell=1, \dots, 4.                                 
\end{align}
\change{Relying on Leibniz' formula,} define $\dot{A} \in \TT^k_0(\T)$ by
\[
  \dot{A}(Y_1, \ldots, Y_k)
 = \frac{d}{dt} A(Y_1, \ldots, Y_k)
  - \sum_{i=1}^k A\left( Y_1, \ldots, \frac{d Y_i}{dt}, \ldots, Y_k\right)
\]
for possibly time-dependent vector fields $Y_i \in \Xm \T$. When $Y_i$ are time-independent, the last sum vanishes. 
In components of the time-independent coordinate frame $\{\d_1,\dots,\d_N\}$, this means that the components of $\dot{A}$ are obtained by differentiating the components of $A$, i.e.,  
$\dot A_{i_1, \ldots, i_k} =  d A_{i_1, \ldots, i_k} / dt$.
Using this notation, we have the following lemma.

\begin{lemma}
  \label{lem:evolution_scalar_product}
 For any  $A,B\in\TT^k_0(\T)$,  \change{and $C,D\in \TT^k_0(\F)$,}
  \begin{align*}
          \frac{d}{dt}\langle A,B\rangle
          &=\langle\dot A, B\rangle
            +
            \langle A, \dot B\rangle
            -
            \sum_{\ell=1}^k \langle \Ldotg^{(\ell)} A, B\rangle,\\
            \change{\frac{d}{dt}\langle C,D\rangle}
          &\change{=\langle\dot C, D\rangle
            +
            \langle C, \dot D\rangle
            -
            \sum_{\ell=1}^k \langle \LdotgF^{(\ell)} C, D\rangle.}
  \end{align*}
\end{lemma}
\begin{proof}
  \change{We only prove the first statement; the second follows analogously.} Since  $\dot{g}^{ij} =-g^{i\change{m}}\,\dot{g}_{\change{m}l}\,g^{lj}$,
  \change{by \eqref{eq:innerprod-extended},}
  \begin{align*}
    \frac{d}{dt}\langle A,B\rangle
    &
 =
      \frac{d}{dt}(
 A_{i_1\dots i_k}g^{i_1j_1}\ldots g^{i_kj_k}B_{j_1\dots j_k})      
    \\
    & =
      \langle{\dot A, B}\rangle
      +
      \langle{A, \dot B}\rangle
      -
      \sum_{\ell=1}^k
 A_{i_1\dots i_k}\; g^{i_1j_1}\cdots
 (g^{i_\ell \change{m}}\dot{g}_{\change{m}l}g^{l j_\ell} ) \cdots 
 g^{i_kj_k} \; B_{j_1\dots j_k}
  \end{align*}
 and the $\ell$th summand in the last sum is easily seen to equal 
  $\langle \Ldotg^{(\ell)} A, B\rangle$.
\end{proof}

\begin{lemma}
  \label{lem:identity_volume_form_sym_matrix}
 Let $\omega\in\change{\W^N}(\T)$ be the volume form. Then for symmetric
 2-tensors $\sigma\in \Sc(\T),$
  \begin{align}
    \label{eq:identity_volume_form_sym_matrix}
    \sum_{i=1}^N \Lsigma^{(i)}\omega
    &= \tr{\sigma}\,\omega.
  \end{align}
\end{lemma}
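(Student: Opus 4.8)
The plan is to prove \eqref{eq:identity_volume_form_sym_matrix} as a pointwise, purely linear-algebraic identity, valid within each element $T\in\T$, using the fact that an alternating top-degree form transforms under a linear map by the determinant. First I would fix a point and arbitrary vectors $X_1,\dots,X_N$ in the tangent space there, and put $\phi_t=\idop+tL_\sigma$. Since $\omega$ is $N$-linear in its arguments and $\frac{d}{dt}\big|_{t=0}\phi_t X_i=L_\sigma X_i$, the Leibniz rule gives
\[
  \frac{d}{dt}\Big|_{t=0}\omega(\phi_t X_1,\dots,\phi_t X_N)
  =\sum_{i=1}^N\omega(X_1,\dots,L_\sigma X_i,\dots,X_N)
  =\Big(\sum_{i=1}^N L_\sigma^{(i)}\omega\Big)(X_1,\dots,X_N),
\]
the last equality being the definition \eqref{eq:Lsig-defn} of $L_\sigma^{(i)}$.

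On the other hand, because $\omega$ (the Riemannian volume form) is a totally antisymmetric $N$-form on the $N$-dimensional tangent space, it obeys the transformation law $\omega(\phi X_1,\dots,\phi X_N)=(\det\phi)\,\omega(X_1,\dots,X_N)$ for every linear endomorphism $\phi$. Applying this with $\phi=\phi_t$, differentiating at $t=0$, and using Jacobi's formula $\frac{d}{dt}\big|_{t=0}\det(\idop+tL_\sigma)=\tr{L_\sigma}$, I would obtain $\tr{L_\sigma}\,\omega(X_1,\dots,X_N)$. Comparing with the previous display, and recalling $\tr{L_\sigma}=\tr{\sigma}$ from \eqref{eq:Ldefn} and \eqref{eq:trace-2-tensor}, this yields \eqref{eq:identity_volume_form_sym_matrix} at the chosen point; since the point and the vectors were arbitrary, the identity holds.

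I do not expect a genuine obstacle here: this is the ``derivative of the determinant is the trace'' phenomenon, already used in the excerpt to derive \eqref{eq:derivative-vol}. The only points worth a remark are that symmetry of $\sigma$ plays no role --- the identity holds for an arbitrary $(2,0)$-tensor, hence a fortiori for $\sigma\in\Sc(\T)$ --- and that it is the antisymmetry of $\omega$, not any special property of the metric, that licenses the determinant transformation law. If one prefers to avoid the determinant altogether, the same identity can be checked componentwise: contracting the totally antisymmetric components $\omega_{i_1\cdots i_N}$ with the matrix of $L_\sigma$ on a single slot and summing over slots reproduces $\tr{L_\sigma}$ times $\omega_{i_1\cdots i_N}$, which is a routine permutation-symbol computation.
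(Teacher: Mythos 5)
Your proof is correct and rests on the same linear-algebraic fact as the paper's: the polarization at the identity of the determinant is the trace. The paper phrases this as the matrix identity $\sum_{i=1}^N\det[b_1,\dots,L\,b_i,\dots,b_N]=\tr{L}\,\det[b_1,\dots,b_N]$, verified by noting both sides are alternating and multilinear and evaluating on the standard basis, while you obtain the same identity by writing $\phi_t=\idop+tL_\sigma$ and differentiating $\det(\idop+tL_\sigma)$ via Jacobi's formula --- an equivalent route with the same content.
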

\begin{proof}
 Let $X_i \in \Xm \T$, and $E_i \in \Xm \T$ be an oriented
  $g$-orthonormal frame. Further, let $b_i \in \R^N$ denote the column vector
 whose $j$th component equals \change{$g(X_i, E_j)$}, and $L$ be the
 matrix whose $(i, j)$th entry is $\sigma(E_i, E_j)$.  Then, by
  \eqref{eq:Ldefn}, the $j$th component of $L b_i$ equals
  \change{$g(\Lsigma X_i, E_j)$} and the trace of the matrix $L$ equals $\tr\sigma$.
 Combined with the fact that
  $\omega(X_1, \ldots, X_N) = \det [b_1, \ldots, b_N]$, we see that
  \eqref{eq:identity_volume_form_sym_matrix} is the same as
  \begin{equation}
    \label{eq:determinant-identity} 
    \sum_{i=1}^N\det[b_1,\dots, L\,b_i,\dots,b_N]= \tr{L}\,\det[b_1,\dots,b_N].
  \end{equation}
 The identity~\eqref{eq:determinant-identity} actually holds for
 arbitrary matrices $L \in \R^{N \times N}$ and  vectors $b_i \in \R^N$
 and can be readily verified as follows. Both sides of~\eqref{eq:determinant-identity} are alternating and linear in $b_i$, so they are $N$-forms, and can only differ by a scalar factor. That the scalar factor, in this occasion, is $\tr{L}$, can be seen by substituting the standard Euclidean unit basis vectors for $b_i$.
\end{proof}

\begin{lemma}
  \label{lem:evolution_test_function}
 For time-independent vector fields $X,Y,Z,W\in \Xm \T$ and $A\in\Atest,$
 the time derivative of $A(X, Y, Z, W)$ is given by 
  \begin{align*}
    \dot{A}(X,Y,Z,W)
    & = 
 A(\Ldotg X,Y,Z,W) +A(X,\Ldotg Y,Z,W)  \nonumber
    \\
    &
      + A(X,Y,\Ldotg Z,W)+A(X,Y,Z,\Ldotg W) -\tr{\gdot}A(X,Y,Z,W).
  \end{align*}
\end{lemma}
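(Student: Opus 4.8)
The plan is to invoke the Uhlenbeck-trick construction of \S\ref{ssec:metr-indep-test} and then differentiate a coordinate formula for $\mapUA$. First I would make precise what ``$A\in\Atest$ evolving with $t$'' means here: since the space $\Atest_{g(t)}$ itself moves with $t$, the relevant curve is $A(t) = \mapUA_{g(t)}U$ for a \emph{fixed}, $t$-independent $U\in\Utest$ (the preimage of $A$ under the bijection $\mapUA$), and Lemma~\ref{lem:testspaceA} guarantees this curve lies in $\Atest_{g(t)}$ for every $t$. Thus the entire $t$-dependence of $A$ sits in the metric-dependent map $\mapUA_{g(t)}$ applied to a frozen tensor, which I can differentiate explicitly using Proposition~\ref{prop:mapA_coo}.

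Next I would work in the $t$-independent coordinate frame $\{\d_i\}$, in which $\dot A_{i_1\dots i_4} = \tfrac{d}{dt}A_{i_1\dots i_4} = \dot A(\d_{i_1},\dots,\d_{i_4})$ and, crucially, the component array $U_{\alpha\beta}$ does not depend on $t$. The second identity of~\eqref{eq:mapping_U_A_coo} gives
\[
A^{ijkl} = \frac{-1}{[(N-2)!]^2}\,\hat{\veps}^{ij\alpha}\hat{\veps}^{kl\beta}\,U_{\alpha\beta}
= \frac{-1}{[(N-2)!]^2}\,\frac{\veps^{ij\alpha}\veps^{kl\beta}}{\det g}\,U_{\alpha\beta},
\]
so the only $t$-dependent factor on the right is $1/\det g$. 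Jacobi's formula yields $\tfrac{d}{dt}\det g = (\det g)\,\tr{\gdot}$, with $\tr{\gdot}$ as in~\eqref{eq:trace-2-tensor}, hence
\[
\frac{d}{dt}A^{ijkl} = -\tr{\gdot}\;A^{ijkl}.
\]

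Then I would convert this into a statement about the \emph{covariant} components — this is where the four $L_{\gdot}$ terms are born, since $\dot A$ is defined through covariant components. Differentiating $A_{ijkl} = g_{ip}g_{jq}g_{kr}g_{ls}A^{pqrs}$ by the product rule, the term from $\tfrac{d}{dt}A^{pqrs}$ returns $-\tr{\gdot}\,A_{ijkl}$, whereas the term from differentiating the factor $g_{ip}$ is $\gdot_{ip}g_{jq}g_{kr}g_{ls}A^{pqrs} = \gdot_{ip}g^{pm}A_{mjkl}$; since the definition~\eqref{eq:Ldefn} of $L_{\gdot}$ gives $(L_{\gdot}\d_i)^m = g^{mp}\gdot_{pi}$, this is precisely $A(L_{\gdot}\d_i,\d_j,\d_k,\d_l)$, and the other three metric factors produce the analogous terms with $L_{\gdot}$ in the remaining slots. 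Collecting the five contributions gives the asserted formula for $\dot A(\d_i,\d_j,\d_k,\d_l)$, and multilinearity of $A$ together with linearity of $L_{\gdot}$ upgrade it to arbitrary $t$-independent $X,Y,Z,W$.

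I expect no real obstacle — the argument is short once the viewpoint is fixed. The points demanding care are: making explicit that $A$ moves so as to keep $U$ frozen (this is what renders the differentiation transparent); not confusing $\tfrac{d}{dt}A^{ijkl}$, the derivative of the contravariant component functions, with the contravariant components of the tensor $\dot A$, which differ by exactly the four $L_{\gdot}$ terms; and reading $(L_{\gdot}\d_i)^m = g^{mp}\dot g_{pi}$ correctly off~\eqref{eq:Ldefn}. A coordinate-free alternative — differentiating $A(X,Y,Z,W) = \langle U,\star(X^\flat\wedge Y^\flat)\odot\star(W^\flat\wedge Z^\flat)\rangle$ directly while tracking the $g$-dependence of $\flat$, $\star$, and $\langle\cdot,\cdot\rangle$ — is also possible but messier, so I would not take it.
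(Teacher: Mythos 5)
Your proof is correct, but it follows a genuinely different route from the paper's. Both arguments begin identically, with the Uhlenbeck-trick observation that the curve is $A(t)=\mapUA_{g(t)}U$ for a frozen $U\in\Utest$ (via Lemma~\ref{lem:testspaceA}). From there the paper stays coordinate-free: it writes $A(X,Y,Z,W)=\langle U,V\rangle$ with $V=\star(X^\flat\wedge Y^\flat)\odot\star(W^\flat\wedge Z^\flat)$, applies Lemma~\ref{lem:evolution_scalar_product} to the inner product, computes $\dot V=\tfrac12\tr{\gdot}\,V$ from \eqref{eq:Hodgestar-vecs} and \eqref{eq:derivative-vol}, and then trades the residual $\sum_i L_{\gdot}^{(i)}V$ terms for the four $L_{\gdot}$-insertions using the determinant identity of Lemma~\ref{lem:identity_volume_form_sym_matrix} applied to the volume form. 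You instead pass to the coordinate formula of Proposition~\ref{prop:mapA_coo}, observe that the sole $t$-dependence of $A^{ijkl}$ sits in the factor $1/\det g$, apply Jacobi's formula to get $\tfrac{d}{dt}A^{ijkl}=-\tr{\gdot}\,A^{ijkl}$, and then lower indices, letting the product rule on the four metric factors generate the four $L_{\gdot}$ terms via $(L_{\gdot}\d_i)^m=g^{mp}\gdot_{pi}$. Your version is more elementary and self-contained (it needs only Proposition~\ref{prop:mapA_coo}, Jacobi's formula, and \eqref{eq:Ldefn}, bypassing Lemmas~\ref{lem:evolution_scalar_product} and~\ref{lem:identity_volume_form_sym_matrix} entirely); the paper's version buys a coordinate-free derivation that reuses machinery already established for the other evolution lemmas in \S\ref{sec:lineariz}. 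The ironic note is that the ``messier coordinate-free alternative'' you declined to pursue is essentially the proof the authors chose. Your points of care are well placed — in particular the distinction between $\tfrac{d}{dt}A^{ijkl}$ and the contravariant components of $\dot A$ is exactly where a careless version of this argument would go wrong — and the symmetry of $\gdot$, which you use implicitly when identifying $\gdot_{ip}g^{pm}A_{mjkl}$ with $A(L_{\gdot}\d_i,\d_j,\d_k,\d_l)$, is available since $\gdot\in\Sc(\T)$.
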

\begin{proof}
 By Lemma~\ref{lem:testspaceA}, there exists a time-independent
  $U \in \Utest$ such that $A = \mapUA_g U$, which by
  \eqref{eq:mapping_U_A} is the same as
  $ A(X, Y, Z, W) = \langle{U, V} \rangle $ \change{is true for any}
  $V = \star(X^\flat\wedge Y^\flat)\odot\star( W^\flat\wedge
 Z^\flat)$. Since $\dot{U} = 0$, by Lemma~\ref{lem:evolution_scalar_product},
  \begin{equation*}
    \dot{A}(X,Y,Z,W) = \langle{U, \dot{V}}\rangle
    -
    \sum_{i=1}^{2(N-2)}
    \langle \Ldotg^{(i)} U, V \rangle.    
  \end{equation*}  
 We compute $\dot{V}$ using 
  \begin{align*}
    \frac{d}{dt}\big(\star (X^\flat\wedge Y^\flat)\big)
    & =
      \frac{d}{dt} \og(X, Y, \dots)
      && \text{by \eqref{eq:Hodgestar-vecs}},
    \\
    & =  \frac{1}{2}\tr[]{\gdot}\; \star(X^\flat\wedge Y^\flat)
      && \text{by~\eqref{eq:derivative-vol} \change{and \eqref{eq:Hodgestar-vecs}}}.
  \end{align*}
 Then 
  \begin{align}
    \label{eq:Adot}
    \dot{A}(X,Y,Z,W)
    &=\tr[]{\gdot}\,\langle U, V\rangle
      -
    \sum_{i=1}^{2(N-2)}    \langle U, \Ldotg^{(i)}  V \rangle.          
  \end{align}
  \change{Applying once more \eqref{eq:Hodgestar-vecs} and Lemma~\ref{lem:identity_volume_form_sym_matrix} to $V=\star(X^\flat\wedge Y^\flat)\odot\star(Z^\flat\wedge W^\flat)$,}
  \change{\begin{align*}
    \sum_{i=1}^{2(N-2)}    \Ldotg^{(i)}  V &=2\tr \gdot V- \star((\Ldotg X)^\flat \wedge Y^\flat) \odot\star (W^\flat \wedge Z^\flat)
    \\
    &\quad
      - \star(X^\flat \wedge (\Ldotg Y)^\flat) \odot\star (W^\flat \wedge Z^\flat) - \star(X^\flat \wedge Y^\flat) \odot\star ((\Ldotg W)^\flat \wedge Z^\flat)
    \\
    &\quad
      - \star(X^\flat \wedge Y^\flat) \odot\star (W^\flat \wedge (\Ldotg Z)^\flat).
  \end{align*}}
 Taking the inner product with $U$
   \change{and substituting in} \eqref{eq:Adot}, we obtain the result.  
\end{proof}

\subsection{Evolution of Riemann curvature tensor.}
\label{ssec:evol-riem-curv}

We proceed to study how each term in our formula for the
generalization of Riemann curvature~\eqref{eq:distr_Riemann} evolves
in time with $g(t)$. This subsection focuses on the integrand over
$T$ in \eqref{eq:distr_Riemann} and gives its time derivative (in Lemma~\ref{lem:evolution_vol}).

Given a $k$-permutation $\pi \in \Perm_k$,  let
$(S_\pi A) (X_1, \ldots, X_k) = A (X_{\pi(1)}, \ldots, X_{\pi(k)})$
for all $X_i \in \Xm{\T}$ and $A \in \TT_0^k (\T)$. A transposition $(i, j)$
thus generates $S_{(i, j)}$ which swaps the $i$th and $j$th input
arguments of a tensor. 
The transposition  $S_{(2, 3)}$ on 4-tensors is of particular interest to us: we abbreviate it to  $S$, i.e., 
\begin{equation}
  \label{eq:defn-S}
 (S A) (X, Y, Z, W) = A(X, Z, Y, W),
\end{equation}
for any $X, Y, Z, W \in \Xm{\T}$ and $A \in \TT_0^4(\T)$,  or, in coordinates,
$S_{ijkl}^{pqrs} = \delta_i^p \delta_j^r \delta_k^q \delta_l^s$ so that 
$S_{ijkl}^{pqrs} A_{pqrs} = A_{ikj l}$. 
It is easy to see that $S_{(i,j)}$ is self-adjoint, 
\begin{equation}
  \label{eq:S-selfadj}
  \ip{ S_{(i,j)} A, B} = \ip{ A, S_{(i,j)} B}
\end{equation}
for all $A, B \in \TT_0^4 (\T)$. The operation of skew-symmetrizing a
tensor $A$ with respect to its $i$th and $j$th arguments is
$P_{ij} = \frac 1 2 (I - S_{(i, j)})$. Of particular interest to us is
$P = P_{13} \circ P_{24} : \TT_0^4 (\T) \to \TT_0^4 (\T)$, which
skew-symmetrizes with respect to arguments 2, 4, followed by 1,
3. Expanding $P$ in terms of $S_{(i, j)}$, 
\begin{equation}
  \label{eq:proj-using-S}
 P = \frac 1 4 \big(I - S_{(1, 3)} - S_{(2,4)} + S_{(1,3)} \circ S_{(2,4)}\Big).   
\end{equation}
Disjoint transpositions commute, so
$S_{(1,3)} \circ S_{(2,4)} = S_{(2,4)} \circ S_{(1,3)}$. This fact, together with~\eqref{eq:S-selfadj}, immediately implies that $P$ is a
self-adjoint projection,
\begin{equation}
  \label{eq:P_selfad-proj}
  \ip{ P A, B} = \ip{ A, P B}, \qquad
 P^2 = P.
\end{equation}
Note that for any covariant 4-tensor $A$, combining~\eqref{eq:defn-S}
and~\eqref{eq:proj-using-S},
\begin{equation}
  \label{eq:SPA}
  \begin{aligned}
 (SP A)& (X, Y, Z, W)
 = (PA) (X, Z, Y, W)\\
 = \frac{1}{4}\Big[ & A(X,Z, Y,W)-A(Y, Z,X,W)
                         + A(Y,W,X,Z) -A(X,W,Y, Z)  \Big].
  \end{aligned}
\end{equation}
Also note that by the symmetries of the test space $\Atest$, 
\begin{equation}
  \label{eq:PSA}
 (PSA) (X, Y, Z, W) = (SA)(X, Y, Z, W), \qquad \text{ for any } A \in \Atest.
\end{equation}
Using these operators, as well as the second-order covariant derivative (see
\eqref{eq:2ndDerivative}), we obtain
\change{a} characterization of the derivative of $\Riemann$. \change{The following result is a reformulation of the general theory of a one-parameter deformation of a metric $g(t)$ \cite[Proposition 2.3.5]{Top2006}. The involved connection $\nabla$ has to be understood with respect to the metric $g=g(t)$ for the value of the parameter $t$ in which the derivative $\dot{g}(t)$ is taken.} 

\begin{lemma}
  \label{lem:evolution_Riemann}
 The action of the time derivative of the Riemann curvature tensor
  \eqref{eq:RiemannCurvTensor} on time-independent vector fields
  $X,Y,Z,W\in\Xm{T}$, for any $T \in \T$,  is given by  
  \begin{align*}
    \dot \Riemann (X, Y, Z, W)
    &\! =\!  2(SP\nabla^2\gdot)(X,Y,Z,W) + \frac 1 2
 \Big[\Riemann(X,Y,Z,\Ldotg W)+\Riemann(X,Y,\Ldotg Z,W)\Big].
  \end{align*}
\end{lemma}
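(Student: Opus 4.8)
The plan is to differentiate the defining formula $\Riemann(X,Y,Z,W)=g(R_{X,Y}Z,W)$ from \eqref{eq:RiemannCurvTensor} in $t$. Since $X,Y,Z,W$ are time-independent, $\dot\Riemann(X,Y,Z,W)=\tfrac{d}{dt}g(R_{X,Y}Z,W)=\gdot(R_{X,Y}Z,W)+g(\dot R_{X,Y}Z,W)$, where $\dot R$ denotes the time derivative of the curvature endomorphism. I would first record two standard facts. The difference of two connections is a tensor, so $\dot\Gamma:=\tfrac{d}{dt}\nabla$ is a tensor field; differentiating Koszul's formula and using $\nabla g=0$ to lower the last index gives its fully lowered form $\dot\Gamma(A,B,C):=g(\dot\Gamma(A,B),C)=\tfrac12\big[(\nabla_A\gdot)(B,C)+(\nabla_B\gdot)(A,C)-(\nabla_C\gdot)(A,B)\big]$, which is symmetric in $A,B$. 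A short torsion-free computation then yields the Palatini-type identity $\dot R_{X,Y}Z=(\nabla_X\dot\Gamma)(Y,Z)-(\nabla_Y\dot\Gamma)(X,Z)$, and since $\nabla g=0$ this lowers to $g(\dot R_{X,Y}Z,W)=(\nabla_X\dot\Gamma)(Y,Z,W)-(\nabla_Y\dot\Gamma)(X,Z,W)$.

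Next I would substitute the Koszul expression for $\dot\Gamma(\cdot,\cdot,\cdot)$ and expand, using that $\nabla$ commutes with the algebraic symmetrization defining $\dot\Gamma$, to obtain $(\nabla_X\dot\Gamma)(Y,Z,W)=\tfrac12\big[(\nabla^2_{X,Y}\gdot)(Z,W)+(\nabla^2_{X,Z}\gdot)(Y,W)-(\nabla^2_{X,W}\gdot)(Y,Z)\big]$ (with $\nabla^2$ as in \eqref{eq:2ndDerivative}) and the analogous expression with $X,Y$ interchanged. Subtracting presents $g(\dot R_{X,Y}Z,W)$ as $\tfrac12$ times the sum of three $(X,Y)$-antisymmetrized pairs: first $(\nabla^2_{X,Y}\gdot)(Z,W)-(\nabla^2_{Y,X}\gdot)(Z,W)$, then $(\nabla^2_{X,Z}\gdot)(Y,W)-(\nabla^2_{Y,Z}\gdot)(X,W)$, and $-\big[(\nabla^2_{X,W}\gdot)(Y,Z)-(\nabla^2_{Y,W}\gdot)(X,Z)\big]$.

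For the first pair I would apply the Ricci identity for a $(0,2)$-tensor with the sign convention of \eqref{eq:RiemannCurvTensor}, namely $(\nabla^2_{X,Y}\gdot-\nabla^2_{Y,X}\gdot)(Z,W)=-\gdot(R_{X,Y}Z,W)-\gdot(Z,R_{X,Y}W)$. Combining the resulting $-\tfrac12\gdot(R_{X,Y}Z,W)$ with the term $\gdot(R_{X,Y}Z,W)$ coming from the outer metric variation, and rewriting via the $g$-self-adjointness of $L_{\gdot}$ (immediate from \eqref{eq:Ldefn} since $\gdot$ is symmetric) together with the antisymmetry $\Riemann(X,Y,Z,W)=-\Riemann(X,Y,W,Z)$, these two terms become $\tfrac12\big[\Riemann(X,Y,Z,L_{\gdot}W)+\Riemann(X,Y,L_{\gdot}Z,W)\big]$, which is the algebraic bracket claimed. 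For the remaining four terms, reading $\nabla^2\gdot$ as the $(0,4)$-tensor $(\nabla^2\gdot)(A,B,C,D)=(\nabla^2_{A,B}\gdot)(C,D)$, a direct match with the explicit expansion \eqref{eq:SPA} shows they equal $4\,(SP\nabla^2\gdot)(X,Y,Z,W)$, so with the overall factor $\tfrac12$ they contribute $2\,(SP\nabla^2\gdot)(X,Y,Z,W)$, completing the identity. The only delicate point is keeping sign conventions consistent: the Palatini and Ricci identities must be stated compatibly with \eqref{eq:RiemannCurvTensor}, and I would double-check them by a normal-coordinate computation at a point (or on coordinate vector fields), which suffices because both sides are tensorial in $X,Y,Z,W$.
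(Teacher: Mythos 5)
Your proposal is correct, and it reaches exactly the identity the paper uses, but by a genuinely more self-contained route. The paper's proof simply quotes Topping's Proposition~2.3.5 for the variation formula of $\Riemann$ (adjusting for the sign convention of \eqref{eq:RiemannCurvTensor}) and then identifies $\gdot(R_{X,Y}Z,W)=\Riemann(X,Y,Z,L_{\gdot}W)$ and matches the four second-derivative terms against \eqref{eq:SPA}. You instead derive that variation formula from scratch: differentiate $g(R_{X,Y}Z,W)$, invoke the Palatini identity $\dot R_{X,Y}Z=(\nabla_X\dot\Gamma)(Y,Z)-(\nabla_Y\dot\Gamma)(X,Z)$ together with the linearized Koszul formula (which is the paper's \eqref{eq:linearedKoszul}), and then use the Ricci commutation identity $(\nabla^2_{X,Y}\gdot-\nabla^2_{Y,X}\gdot)(Z,W)=-\gdot(R_{X,Y}Z,W)-\gdot(Z,R_{X,Y}W)$ to convert the antisymmetrized pair into curvature terms. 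I checked the bookkeeping: the outer variation contributes $\gdot(R_{X,Y}Z,W)$, the Ricci identity contributes $-\tfrac12\gdot(R_{X,Y}Z,W)-\tfrac12\gdot(Z,R_{X,Y}W)$, and via self-adjointness of $L_{\gdot}$ and the antisymmetry of $\Riemann$ in its last two slots these combine to $\tfrac12[\Riemann(X,Y,Z,L_{\gdot}W)+\Riemann(X,Y,L_{\gdot}Z,W)]$; the remaining four terms are precisely $4(SP\nabla^2\gdot)(X,Y,Z,W)$ by \eqref{eq:SPA}, giving $2(SP\nabla^2\gdot)$ after the overall factor $\tfrac12$. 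All signs are consistent with the convention $R_{X,Y}Z=\nabla_X\nabla_YZ-\nabla_Y\nabla_XZ-\nabla_{[X,Y]}Z$ and with the definition \eqref{eq:2ndDerivative} of $\nabla^2$. What your approach buys is independence from the external reference and an explicit audit trail for the sign conventions (the very thing the paper's proof has to wave at with ``accounting for the different sign convention''); what it costs is length, since you must establish the Palatini and Ricci identities in the stated conventions, which the paper avoids by citation.
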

\begin{proof}
 By \cite[Proposition 2.3.5]{Top2006} (accounting for
 the  different sign convention of the curvature endomorphism \change{chosen there and the permuted roles of $Z$ and $W$}),
  \begin{align*}
    \dot \Riemann (X, Y, Z, W)
    &= \frac 1 2
 \bigg[
 (\nabla_{X, Z}^2  \gdot) (Y, W)-(\nabla_{X, W}^2 \gdot) (Y,Z)
      + \gdot(R_{X, Y}Z, W)
    \\
    &\quad\; +(\nabla_{Y, W}^2 \gdot) (X, Z)-(\nabla_{Y, Z}^2 \gdot)  (X, W) 
      -\gdot(R_{X, Y} W, Z)
 \bigg].
  \end{align*}
  \eqref{eq:Ldefn} and \eqref{eq:RiemannCurvTensor} imply
  $\gdot(R_{X, Y}Z, W) = g(R_{X, Y}Z, \Ldotg W) = \Riemann(X, Y, Z,
  \Ldotg W)$. Hence, the result follows from~\eqref{eq:SPA}.
\end{proof}

\begin{lemma}
  \label{lem:evolution_vol}
 Let $A\in\Atesto$, $T\in\T$. Then 
  \begin{align*}
    \frac{d}{dt}\Big(\langle\Riemann,A\rangle\,\vo{T}\Big)
    &=\left(2\langle \nabla^2\gdot,SA\rangle +
      \langle \Ldotg^{(1)}\Riemann,A\rangle
      -\frac{\tr{\gdot}}{2}\langle\Riemann,A\rangle\right)\vo{T}.
  \end{align*}
\end{lemma}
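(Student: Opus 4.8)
The proof is essentially a bookkeeping exercise: differentiate the product $\langle\Riemann,A\rangle\,\vo T$ by the ordinary product rule and assemble Lemmas~\ref{lem:evolution_scalar_product}, \ref{lem:evolution_Riemann}, and~\ref{lem:evolution_test_function} together with the algebraic identities \eqref{eq:Lsig-symm}--\eqref{eq:PSA}. Concretely, the plan is to write $\tfrac{d}{dt}\big(\langle\Riemann,A\rangle\,\vo T\big)=\big(\tfrac{d}{dt}\langle\Riemann,A\rangle\big)\vo T+\langle\Riemann,A\rangle\,\tfrac{d}{dt}\vo T$, insert $\tfrac{d}{dt}\vo T=\tfrac12\tr{\gdot}\,\vo T$ from \eqref{eq:derivative-vol} (this supplies the eventual $-\tfrac{\tr{\gdot}}{2}$ after a cancellation), and expand the remaining term via Lemma~\ref{lem:evolution_scalar_product} with $k=4$:
\[
  \tfrac{d}{dt}\langle\Riemann,A\rangle
  =\langle\dot\Riemann,A\rangle+\langle\Riemann,\dot A\rangle-\sum_{\ell=1}^4\langle L_{\gdot}^{(\ell)}\Riemann,A\rangle.
\]
The key observation that makes everything collapse is that each $L_{\gdot}^{(\ell)}$-term arising below reduces to one model term $\langle L_{\gdot}^{(1)}\Riemann,A\rangle$ by the identity \eqref{eq:Lsig-1}, which depends only on the algebraic (skew-)symmetries \eqref{eq:symm-Riem} and so applies to $\Riemann$ as well, even though $\Riemann\notin\Atest$ (it lacks the interelement continuity \eqref{eq:cont_A}, but it does satisfy \eqref{eq:symm-Riem} pointwise).

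For $\langle\dot\Riemann,A\rangle$ I would substitute Lemma~\ref{lem:evolution_Riemann}, rewriting its curvature-endomorphism part as $\tfrac12\big(L_{\gdot}^{(3)}\Riemann+L_{\gdot}^{(4)}\Riemann\big)$ through the notation \eqref{eq:Lsig-defn}; by \eqref{eq:Lsig-1} this contributes exactly $\langle L_{\gdot}^{(1)}\Riemann,A\rangle$. For the leading term $2\langle SP\nabla^2\gdot,A\rangle$, move $S$ and $P$ across the inner product using their self-adjointness \eqref{eq:S-selfadj} and \eqref{eq:P_selfad-proj}, and then use \eqref{eq:PSA} (valid since $A$ has the symmetries \eqref{eq:symm-Riem}) to get $PSA=SA$; hence $2\langle SP\nabla^2\gdot,A\rangle=2\langle\nabla^2\gdot,SA\rangle$. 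Thus $\langle\dot\Riemann,A\rangle=2\langle\nabla^2\gdot,SA\rangle+\langle L_{\gdot}^{(1)}\Riemann,A\rangle$.

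For $\langle\Riemann,\dot A\rangle$, substitute $\dot A=\sum_{\ell=1}^4 L_{\gdot}^{(\ell)}A-\tr{\gdot}\,A$ from Lemma~\ref{lem:evolution_test_function}, move each $L_{\gdot}^{(\ell)}$ onto $\Riemann$ by \eqref{eq:Lsig-symm}, and reduce by \eqref{eq:Lsig-1}, obtaining $\langle\Riemann,\dot A\rangle=4\langle L_{\gdot}^{(1)}\Riemann,A\rangle-\tr{\gdot}\,\langle\Riemann,A\rangle$. The same reduction gives $\sum_{\ell=1}^4\langle L_{\gdot}^{(\ell)}\Riemann,A\rangle=4\langle L_{\gdot}^{(1)}\Riemann,A\rangle$. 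Adding the three pieces, the $\pm4\langle L_{\gdot}^{(1)}\Riemann,A\rangle$ cancel and leave $\tfrac{d}{dt}\langle\Riemann,A\rangle=2\langle\nabla^2\gdot,SA\rangle+\langle L_{\gdot}^{(1)}\Riemann,A\rangle-\tr{\gdot}\,\langle\Riemann,A\rangle$. Multiplying by $\vo T$ and adding the $+\tfrac12\tr{\gdot}\,\langle\Riemann,A\rangle\,\vo T$ from the volume-form derivative converts $-\tr{\gdot}$ into $-\tfrac{\tr{\gdot}}{2}$, which is the asserted identity.

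I expect the only delicate point to be the coefficient accounting: one must check that the factor $\tfrac12$ in Lemma~\ref{lem:evolution_Riemann}, applied to the two surviving slots $\ell=3,4$, yields precisely one copy of $\langle L_{\gdot}^{(1)}\Riemann,A\rangle$, and that the $4\langle L_{\gdot}^{(1)}\Riemann,A\rangle$ produced by $\dot A$ exactly cancels the $-4\langle L_{\gdot}^{(1)}\Riemann,A\rangle$ from the last sum of Lemma~\ref{lem:evolution_scalar_product}, so the net coefficient is $1$ rather than $0$ or $2$. Beyond this, and the (harmless) remark that \eqref{eq:Lsig-1} and \eqref{eq:PSA} need only the symmetries \eqref{eq:symm-Riem} so they may be used with $\Riemann$ in place of a test function, the argument is a routine application of the cited results.
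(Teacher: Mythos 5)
Your proposal is correct and follows essentially the same route as the paper's proof: product rule plus Lemma~\ref{lem:evolution_scalar_product} and \eqref{eq:derivative-vol}, then Lemmas~\ref{lem:evolution_Riemann} and \ref{lem:evolution_test_function} together with \eqref{eq:Lsig-symm}, \eqref{eq:Lsig-1}, \eqref{eq:S-selfadj}, \eqref{eq:P_selfad-proj}, and \eqref{eq:PSA}, with the same cancellations and coefficients. Your explicit remark that \eqref{eq:Lsig-1} and \eqref{eq:PSA} rely only on the pointwise symmetries \eqref{eq:symm-Riem} and hence apply with $\Riemann$ in place of a test function is a point the paper uses implicitly; making it explicit is a small but welcome clarification, not a deviation.
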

\begin{proof}
 By \change{Lemma~\ref{lem:evolution_scalar_product}} and \eqref{eq:derivative-vol}, 
  \[
    \frac{d}{dt}
 \Big(\langle\Riemann,A\rangle\,\vo{T}\Big) =
 \big(\langle\dot{\Riemann},A\rangle
    +\langle \Riemann,\dot{A}\rangle-\sum_{\ell=1}^4
    \langle \Ldotg^{(\ell)}\Riemann, A\rangle
    +\frac{1}{2}\langle\tr{\gdot}\Riemann,A\rangle\big)\vo{T}.
  \]
 The first two terms admit  the following identities:  
  \begin{align*}
    \ip{\Riemann, \dot{A}}
    & = \sum_{\ell=1}^4
      \ip{ \Ldotg^{(\ell)}  \Riemann, A }
      -\langle\tr{\gdot}\Riemann,A\rangle,
    &&\text{by Lemma~\ref{lem:evolution_test_function}
 and~\eqref{eq:Lsig-symm},}     
    \\
    \ip{\dot\Riemann, A}
    & = 2\ip{SP \nabla^2 \gdot, A}  +
      \frac 1 2 \sum_{\ell=3}^4 \ip{\Ldotg^{(\ell)} \Riemann, A},
    && \text{by Lemma~\ref{lem:evolution_Riemann},}
    \\
    & = 2\ip{ \nabla^2 \gdot, SA}  +
      \ip{\Ldotg^{(1)} \Riemann, A},
    &&\text{by~\eqref{eq:S-selfadj}, \eqref{eq:P_selfad-proj}, \eqref{eq:PSA}, and \eqref{eq:Lsig-1}.}
  \end{align*}
 Using them and simplifying, we obtain the result.
\end{proof}

\subsection{Evolution of second fundamental form}
\label{ssec:evol-second-fund}

Next, we consider the codimension 1 boundary term in
\eqref{eq:distr_Riemann} and compute its time derivative (in
Lemma~\ref{lem:evolution_bnd}).  Let us begin by recalling that the
time derivative of the Levi-Civita connection can be computed by
differentiating the terms in the Koszul formula: specifically,
by~\cite[Proposition~2.3.1]{Top2006},
\begin{equation}
  \label{eq:linearedKoszul}
 g\left(\frac{d}{d t} (\nabla_Y X), Z\right)
 =  \frac{1}{2} \big[
 (\nabla_X \gdot) (Y, Z)
  +
 (\nabla_Y \gdot) (X, Z)
  -
 (\nabla_Z \gdot)(X, Y) \big]  
\end{equation}
for any time-independent vector fields $X,Y, Z\in \Xm \T$. Since both
sides are linear in $Z$,
\eqref{eq:linearedKoszul} also holds for time-dependent
$Z \equiv Z(t) \in \Xm \T$ by expanding $Z(t)$ in a time-independent
frame such as $\d_i$. The next auxiliary result we need is as follows
(cf.~\cite[Lemma 2.5]{GN2023}).

\begin{lemma} \label{lem:covdot} Let $F \in \F$, $X(t) \in \Xm \T$ be
 a possibly time-dependent vector field, and let $\gn$ denote a
  $g$-normal vector on $F$. Then, 
 abbreviating $\gdot(\gn,\gn)$ to $\gdot_{\gn\gn}$, 
  \begin{align}
    \label{eq:dgn/dt}
    \frac{d \gn}{dt}
    &
 = \frac{1}{2}\gdot_{\gn\gn}\gn-\Ldotg\gn,
    \\     \label{eq:dt-g(n,X)}
    \frac{d}{d t} g(\gn,X)
    &= 
      \frac{1}{2} \gdot_{\gn\gn} g(\gn,X) + g(\gn,\dot{X}).
  \end{align}
\end{lemma}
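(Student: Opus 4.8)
The plan is to differentiate the two relations that characterize a $g(t)$-normal vector of $F$, namely $g(t)(\gn(t),Y)=0$ for all $Y\in\Xm{F}$ and $g(t)(\gn(t),\gn(t))=1$, pointwise along $F$. The key structural remark is that the facet $F$, and hence the tangent bundle $\Xm{F}$, do not move with $t$: only the metric $g(t)$ and the normal $\gn(t)$ it selects evolve. One first records that $\gn(t)$ may be taken to depend $C^1$ on $t$: after fixing an orientation, $\gn(t)$ is the last vector produced by $g(t)$-Gram--Schmidt orthonormalization of a fixed frame adapted to $F$ (as in the proof of Lemma~\ref{lem:testspaceA}), hence a smooth function of the entries of $g(t)$. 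Throughout, $\gdot\in\Sc(\T)$ is symmetric and $g(L_{\gdot}V,W)=\gdot(V,W)$ by \eqref{eq:Ldefn}.

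To obtain \eqref{eq:dgn/dt}, fix a \emph{time-independent} $Y\in\Xm{F}$ and differentiate $g(\gn,Y)=0$ in $t$. Since $Y$ is time-independent and $\gdot(\gn,Y)=g(L_{\gdot}\gn,Y)$, this yields $g\big(\dot\gn+L_{\gdot}\gn,\,Y\big)=0$ for every $Y\in\Xm{F}$. As $F$ has codimension one in $T$, the $g$-orthogonal complement of $\Xm{F}$ is spanned by $\gn$, so $\dot\gn+L_{\gdot}\gn=c\,\gn$ for some scalar function $c$ on $F$. Differentiating $g(\gn,\gn)=1$ gives $\gdot_{\gn\gn}+2\,g(\dot\gn,\gn)=0$, i.e.\ $g(\dot\gn,\gn)=-\tfrac12\gdot_{\gn\gn}$. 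Pairing $\dot\gn+L_{\gdot}\gn=c\,\gn$ with $\gn$ and using $g(L_{\gdot}\gn,\gn)=\gdot_{\gn\gn}$ then gives $c=\gdot_{\gn\gn}-\tfrac12\gdot_{\gn\gn}=\tfrac12\gdot_{\gn\gn}$, which is precisely \eqref{eq:dgn/dt}.

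For \eqref{eq:dt-g(n,X)}, apply the product rule to $g(\gn,X)$, now allowing $X=X(t)$ to be time-dependent, so that $\tfrac{d}{dt}g(\gn,X)=\gdot(\gn,X)+g(\dot\gn,X)+g(\gn,\dot X)$. Substituting \eqref{eq:dgn/dt} for $\dot\gn$ and using $g(L_{\gdot}\gn,X)=\gdot(\gn,X)$, the two terms $\gdot(\gn,X)$ and $g(\dot\gn,X)$ collapse to $\tfrac12\gdot_{\gn\gn}\,g(\gn,X)$, which leaves the asserted identity. I do not expect a genuine obstacle: the computation is short and essentially forced. The only points that demand a little care are the $C^1$-in-$t$ (and orientation-consistent) choice of $\gn(t)$ and the codimension-one fact that isolates $\gn$ in the $g$-orthogonal complement of $\Xm{F}$; this mirrors the argument of \cite[Lemma~2.5]{GN2023}.
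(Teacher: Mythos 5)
Your proof is correct and follows essentially the same route as the paper: differentiate the defining relations $g(\gn,Y)=0$ and $g(\gn,\gn)=1$, identify the tangential and normal parts of $\dot\gn$, and then obtain \eqref{eq:dt-g(n,X)} by the product rule. The only cosmetic difference is that the paper expands $\dot{\gn}$ in a $g(t)$-orthonormal frame $\{\gt_i,\gn\}$ (noting $\dot{\gt}_i\in\Xm F$), whereas you use the codimension-one orthogonal-complement argument to write $\dot\gn+L_{\gdot}\gn=c\,\gn$; these are equivalent.
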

\begin{proof}
 Let $\{\gt_i\}_{i=1}^{N-1}$ denote a $g(t)$-orthonormal basis of the
 tangent space at a point in $F$. Since the tangent space $\Xm{F}$ depends
 only on the triangulation $\T$ (and is independent of $t$), the
 difference quotients $(\gt_i(t_2) - \gt_i(t_1)) / (t_2 - t_1)$ are in
  $\Xm F$ at different times $t_1, t_2$, so
  $\dot{\gt}_i = d \gt_i / dt \in \Xm F$ and
  $g( \gn, \dot{\gt}_i) =0$. Hence, by the product rule,
  \begin{align*}
    &0=\frac{d}{dt}g(\gn,\gt_i)
 =\gdot(\gn,\gt_i) + g(\dot{\gn},\gt_i). 
    \intertext{Since $g(\gn,\gn)=1$ for all $t$, we also have}
    &0=\frac{d}{dt}g(\gn,\gn) = \gdot(\gn,\gn) + 2g(\dot{\gn},\gn).
  \end{align*}
 After differentiating the expansion of
  $\dot{\gn}$ in the basis $\{\gt_1,\dots,\gt_{N-1},\gn\}$ these identities imply, 
  \begin{align*}  
    \frac{d\gn}{dt}
    &= g(\dot{\gn},\gn)\gn + \sum_{i=1}^{N-1}g(\dot{\gn},\gt_i)\gt_i =-\frac{1}{2}\gdot(\gn,\gn)\gn -\sum_{i=1}^{N-1}\gdot(\gn,\gt_i)\gt_i
 = \frac{1}{2}\gdot(\gn,\gn)\gn - \Ldotg\gn,
  \end{align*}
 where we have used~\eqref{eq:Ldefn} in the last step. This proves~\eqref{eq:dgn/dt}. Using it,
        \eqref{eq:dt-g(n,X)} readily follows by applying the
 product rule and simplifying \change{in view of \eqref{eq:Ldefn}.}
\end{proof}

Let $T\in\T$ and $F\in\triangle_{-1}T$ be a facet of $T$.  Recall that
per the notation in~\change{\eqref{eq:restriction}},  for  facet tangent vectors $X, Y, Z \in \Xm F$, 
\begin{subequations}
  \label{eq:nabla-sigmaF-def}
  \begin{gather}
    \nablans{\sigma}{\Fr\gn \Fr}(X, Y)
 := (\nabla \sigma)( X, \gn, Y),
    \qquad 
    \nablans{\sigma}{\gn \Fr\Fr} (X, Y) := (\nabla \sigma)(\gn, X, Y),
    \\
 (\nabla{\sigma})_{\Fr}(X, Y, Z) :=
 (\nabla \sigma)(X, Y, Z), \qquad \sigma_{\Fr}(X, Y)
 := \sigma(X, Y).
  \end{gather}
\end{subequations}
We also use
the interior product $\lrcorner$ (see~\eqref{eq:contraction}) and a triple product notation \change{taking three $(2,0)$-tensors and returning a scalar}
\begin{align}
  \label{eq:triple-prod}
  \begin{split}
    \rho : \sigma : \eta = \rho_{ij} \change{\sigma_{ab}g^{ak}g^{bj} \eta_{kc}g^{ci}} \text{ for } \rho, \sigma, \eta \in \TT_0^2 (\T),\\
    \change{\rho : \sigma : \eta = \rho_{ij} \sigma_{ab}g_{\Fr}^{ak}g_{\Fr}^{bj} \eta_{kc}g_{\Fr}^{ci} \text{ for } \rho, \sigma, \eta \in \TT_0^2 (\F),}
  \end{split}
\end{align}
with \change{appropriate metric-based index raising}. \change{Like in Lemma~\ref{lem:restr_proj_relation} there holds the relation $\rho_{\Fr} : \sigma_{\Fr} : \eta_{\Fr}=\rho_{\Fp} : \sigma_{\Fp} : \eta_{\Fp}$ for $\rho, \sigma, \eta \in \TT_0^2 (\T)$ with single-valued tangential components.} The following lemma is needed for the main result (Theorem~\ref{thm:distr_Riem_evol}) of this section.

\begin{lemma}
  \label{lem:triple-prod}
 Let  $T\in\T$,  $F\in\triangle_{-1}T$ be a facet of $T$, $A \in \Atest$,
 and $\sigma \in \TT_0^2(T)$.
 Then
  \[
    \ip{ (\nabla (\gn \lrcorner \sigma))_{\Fr}, \Atnnt}
 = \ip{\nablans\sigma{\Fr\gn \Fr}, \Atnnt} - \sff^\gn : \sigma_{\Fr} : \Atnnt.
  \]
\end{lemma}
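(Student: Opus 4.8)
The plan is to expand $\nabla(\gn\lrcorner\sigma)$ using the Leibniz rule for the ambient Levi--Civita connection, compare the result with $\nabla\sigma$, and recognize the leftover term as a shape-operator contribution that assembles into the triple product. Since the left-hand side is paired against $\Atnnt\in\TT_0^2(F)$, only the action of $\nabla(\gn\lrcorner\sigma)$ on tangential arguments (including a tangential derivative direction) enters, so $\nabla(\gn\lrcorner\sigma)$ is well-defined along $F$ in exactly the same sense that $\nabla\gn$ is in~\eqref{eq:sff-def}, despite $\gn\lrcorner\sigma$ being only a section along $F$. I would fix a point of $F$ together with a $g$-orthonormal frame $\{E_a\}_{a=1}^{N-1}$ of $\Xm F$ there and work in that frame, so that the $g|_F$-inner product of $2$-tensors on $F$ and the triple product~\eqref{eq:triple-prod} both become plain sums over components with no index-raising factors.

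The first step is the pointwise identity, valid for $X,Y\in\Xm F$,
\[
  (\nabla_X(\gn\lrcorner\sigma))(Y) = (\nabla_X\sigma)(\gn,Y) + \sigma(\nabla_X\gn,Y),
\]
which follows by writing out $(\nabla_X(\gn\lrcorner\sigma))(Y)=X(\sigma(\gn,Y))-\sigma(\gn,\nabla_X Y)$ and $(\nabla_X\sigma)(\gn,Y)=X(\sigma(\gn,Y))-\sigma(\nabla_X\gn,Y)-\sigma(\gn,\nabla_X Y)$ and subtracting. The second step identifies $\nabla_X\gn$: differentiating $g(\gn,\gn)=1$ gives $g(\nabla_X\gn,\gn)=0$, so $\nabla_X\gn\in\Xm F$, and the first equality in~\eqref{eq:sff-def} gives $g(\nabla_X\gn,Z)=-\sff^\gn(X,Z)$ for $Z\in\Xm F$; hence $\nabla_{E_a}\gn=-\sum_c\sff^\gn(E_a,E_c)E_c$ and so $\sigma(\nabla_{E_a}\gn,E_b)=-\sum_c\sff^\gn(E_a,E_c)\,\sigma_F(E_c,E_b)$.

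Summing the displayed identity against $\Atnnt(E_a,E_b)$ over $a,b$ then writes $\ip{\nabla(\gn\lrcorner\sigma),\Atnnt}$ as the sum of $\sum_{a,b}(\nabla_{E_a}\sigma)(\gn,E_b)\Atnnt(E_a,E_b)$ and $-\sum_{a,b,c}\sff^\gn(E_a,E_c)\sigma_F(E_c,E_b)\Atnnt(E_a,E_b)$. By the definition~\eqref{eq:nabla-sigmaF-def}, $(\nabla_{E_a}\sigma)(\gn,E_b)=\nablans\sigma{F\gn F}(E_a,E_b)$, so the first term is $\ip{\nablans\sigma{F\gn F},\Atnnt}$; and since $\Atnnt$ is symmetric for any $A\in\Atest$ (by~\eqref{eq:symm-Riem}), the second term equals $-\,\sff^\gn:\sigma_F:\Atnnt$ once $\Atnnt(E_a,E_b)$ is rewritten as $\Atnnt(E_b,E_a)$ to match the cyclic index pattern of~\eqref{eq:triple-prod}. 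Adding the two pieces yields the assertion. I expect no genuine obstacle here: the only care needed is the well-definedness remark above and the index bookkeeping, where it is the symmetry of $\Atnnt$ (not of $\sigma$) that is needed to land precisely on $\sff^\gn:\sigma_F:\Atnnt$.
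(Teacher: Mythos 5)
Your proposal is correct and follows essentially the same route as the paper's proof: the Leibniz-rule expansion relating $\nabla(\gn\lrcorner\sigma)$ to $(\nabla\sigma)_{F\gn F}$, the tangential expansion of $\nabla_X\gn$ via the first equality in~\eqref{eq:sff-def}, and the use of the symmetry $\Atnnt(X,Y)=\Atnnt(Y,X)$ (from~\eqref{eq:symm-Riem}) to identify the leftover sum with the triple product. The only difference is cosmetic — you solve for $\nabla(\gn\lrcorner\sigma)$ where the paper solves for $\nablans\sigma{F\gn F}$ — so nothing further is needed.
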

\begin{proof}
 Let $X, Y \in \Xm F$. It is easy to see that
  $g( \nabla_X\gn, \gn) = 0$. Hence, we may expand $\nabla_X\gn$ in a
 tangential $g$-orthonormal frame $\{\gt_i\}_{i=1}^{N-1}$ of $\Xm{F}$
  \change{writing} $\nabla_X\gn = \sum_{i=1}^{N-1}g(\nabla_X\gn, \gt_i) \gt_i$. Using it, 
  \begin{align*}
    \nablans\sigma{\Fr \gn \Fr} (X, Y)
    & = (\nabla_X \sigma) (\gn, Y)
 = \nabla_X( \sigma(\gn, Y)) 
      - \sigma( \gn, \nabla_XY) - \sigma(\nabla_X \gn, Y)
    \\
    & = \nabla_X( \sigma(\gn, Y)) 
      - \sigma( \gn, \nabla_XY)
      - \sum_{i=1}^{N-1}g( \nabla_X \gn, \gt_i)\sigma(\gt_i,Y)
    \\
    & =
 (\nabla_X (\gn \lrcorner \sigma))(Y)
      + \sum_{i=1}^{N-1}\sff^\gn(X,\gt_i)\sigma(\gt_i,Y).
  \end{align*}
 Hence,
  \[
    \ip{\nablans\sigma{\Fr \gn \Fr} , \Atnnt}
 =
    \ip{ (\nabla (\gn \lrcorner \sigma))_{\Fr}, \Atnnt}
    + \sum_{i,j,k=1}^{N-1}
    \sff^\gn(\gt_i,\gt_k)\sigma(\gt_k,\gt_j)A(\gt_i,\gn,\gn,\gt_j)
  \]
 and the lemma follows since
  \begin{equation}
    \label{eq:17Ainnj}
 A(\gt_i,\gn,\gn,\gt_j) = A(\gt_j,\gn,\gn,\gt_i)
  \end{equation}
 by the symmetries of $A$ in \eqref{eq:symm-Riem}.
\end{proof}

\begin{lemma}
  \label{lem:evolution_bnd}
 Let $T\in\T$ and $F\in\triangle_{-1}T$ be a facet of $T$. Then, for any 
  $A\in\Atesto$
  \begin{align*}
      \frac{d}{dt}
      &\Big( \langle\sff^{\gn},{\Atnnt}\rangle\,\vo{F} \Big)
 = \frac{1}{2}\big\langle (\gdot_{\gn\gn}-\tro{\gdot_{\Fr}})
        \sff^{\gn}
        +
        2\nablans\gdot{\Fr \gn \Fr}
        -
        \nablans\gdot{\gn \Fr\Fr}, \; 
 {\Atnnt}\big\rangle\,\vo{F}.
  \end{align*}
\end{lemma}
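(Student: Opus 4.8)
The plan is to differentiate the product $\langle\sff^{\gn},\Atnnt\rangle\,\vo{F}$ by the product rule. For $\frac{d}{dt}\langle\sff^{\gn},\Atnnt\rangle$ I would invoke Lemma~\ref{lem:evolution_scalar_product} on the submanifold $F$ (with $k=2$ and the induced metric $g|_F$, whose time derivative is $\gdot|_F=\gdot_F$), which produces the correction $-\sum_{\ell=1}^2\langle L_{\gdot_F}^{(\ell)}\sff^{\gn},\Atnnt\rangle$; for $\frac{d}{dt}\vo{F}$ I would apply \eqref{eq:derivative-vol} on $F$, giving $\tfrac12\tro{\gdot_F}\,\vo{F}$. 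This reduces the statement to computing the two tensor time-derivatives $\dot{\sff^{\gn}}$ and $\dot{\Atnnt}$ and then checking that the assorted trace-type terms collapse to the single term $-\tfrac12\tro{\gdot_F}\langle\sff^{\gn},\Atnnt\rangle$. Throughout I would use that $\sff^{\gn}$ and $\Atnnt$ are symmetric $2$-tensors on $F$ (symmetry of $\Atnnt$ being~\eqref{eq:17Ainnj}), which, via \eqref{eq:Lsig-symm}, identifies the $\ell=1$ and $\ell=2$ contractions of $L_{\gdot_F}^{(\ell)}$ against $\Atnnt$.

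First I would compute $\dot{\sff^{\gn}}$. Taking time-independent $X,Y\in\Xm{F}$ and starting from $\sff^{\gn}(X,Y)=g(\gn,\nabla_X Y)$ (the second identity in~\eqref{eq:sff-def}), differentiation gives $\dot{\sff^{\gn}}(X,Y)=\gdot(\gn,\nabla_X Y)+g(\dot\gn,\nabla_X Y)+g(\gn,\tfrac{d}{dt}(\nabla_X Y))$. The last term is evaluated by the linearized Koszul formula~\eqref{eq:linearedKoszul} with $Z=\gn$, which (using symmetry of $\gdot$) yields $\tfrac12\big[\nablans{\gdot}{F\gn F}(X,Y)+\nablans{\gdot}{F\gn F}(Y,X)-\nablans{\gdot}{\gn FF}(X,Y)\big]$. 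For the middle term I would insert $\dot\gn=\tfrac12\gdot_{\gn\gn}\gn-L_{\gdot}\gn$ from Lemma~\ref{lem:covdot}, \eqref{eq:dgn/dt}, and use~\eqref{eq:Ldefn} to get $g(\dot\gn,\nabla_X Y)=\tfrac12\gdot_{\gn\gn}\sff^{\gn}(X,Y)-\gdot(\gn,\nabla_X Y)$; the two copies of $\gdot(\gn,\nabla_X Y)$ then \emph{cancel}. Contracting the surviving symmetrized first-derivative term against the symmetric tensor $\Atnnt$ removes the symmetrization, so $\langle\dot{\sff^{\gn}},\Atnnt\rangle=\tfrac12\big\langle\gdot_{\gn\gn}\sff^{\gn}+2\,\nablans{\gdot}{F\gn F}-\nablans{\gdot}{\gn FF},\,\Atnnt\big\rangle$, which already supplies all the covariant-derivative terms in the claim.

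Next I would compute $\dot{\Atnnt}$. Writing $\dot{\Atnnt}(X,Y)=\tfrac{d}{dt}[A(X,\gn,\gn,Y)]=\dot A(X,\gn,\gn,Y)+A(X,\dot\gn,\gn,Y)+A(X,\gn,\dot\gn,Y)$ for time-independent $X,Y\in\Xm{F}$, with $\dot A$ supplied by Lemma~\ref{lem:evolution_test_function}, and substituting $\dot\gn$ from~\eqref{eq:dgn/dt}: the $L_{\gdot}\gn$ terms from $\dot A$ cancel those from $A(X,\dot\gn,\gn,Y)+A(X,\gn,\dot\gn,Y)$, the two $\tfrac12\gdot_{\gn\gn}\Atnnt$ pieces add to $\gdot_{\gn\gn}\Atnnt$, and — splitting the ambient trace in a $g$-orthonormal frame adapted to $F$ as $\tr{\gdot}=\tro{\gdot_F}+\gdot_{\gn\gn}$ — the $\gdot_{\gn\gn}$ contributions cancel as well, leaving $\dot{\Atnnt}(X,Y)=A(L_{\gdot}X,\gn,\gn,Y)+A(X,\gn,\gn,L_{\gdot}Y)-\tro{\gdot_F}\,\Atnnt(X,Y)$. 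Since $A(\gn,\gn,\cdot,\cdot)=0$ by~\eqref{eq:symm-Riem}, only the tangential part $\Proj(L_{\gdot}X)$ contributes, and a one-line check with~\eqref{eq:Ldefn} shows $\Proj(L_{\gdot}X)=L_{\gdot_F}X$ for $X\in\Xm{F}$; hence $\dot{\Atnnt}=L_{\gdot_F}^{(1)}\Atnnt+L_{\gdot_F}^{(2)}\Atnnt-\tro{\gdot_F}\Atnnt$. By~\eqref{eq:Lsig-symm} and symmetry of $\sff^{\gn},\Atnnt$, $\langle\sff^{\gn},\dot{\Atnnt}\rangle=2\langle L_{\gdot_F}^{(1)}\sff^{\gn},\Atnnt\rangle-\tro{\gdot_F}\langle\sff^{\gn},\Atnnt\rangle$, which exactly cancels the correction $-\sum_{\ell=1}^2\langle L_{\gdot_F}^{(\ell)}\sff^{\gn},\Atnnt\rangle$ from Lemma~\ref{lem:evolution_scalar_product}, leaving $-\tro{\gdot_F}\langle\sff^{\gn},\Atnnt\rangle$; adding the $+\tfrac12\tro{\gdot_F}\langle\sff^{\gn},\Atnnt\rangle$ coming from $\tfrac{d}{dt}\vo{F}$ yields precisely $-\tfrac12\tro{\gdot_F}\langle\sff^{\gn},\Atnnt\rangle$. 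Collecting the two steps gives the stated identity.

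The computations are elementary, so the main obstacle is bookkeeping: one must consistently split $L_{\gdot}$ into its normal and $\Proj$-tangential parts, keep the ambient trace $\tr{\gdot}$ and the facet trace $\tro{\gdot_F}$ distinct until the very end, and establish the bridge identity $\Proj(L_{\gdot}X)=L_{\gdot_F}X$ linking the ambient operator to the induced one. The heart of the argument is the pair of cancellations — of $\gdot(\gn,\nabla_X Y)$ in the $\dot{\sff^{\gn}}$ step and of the $L_{\gdot}\gn$ and $\gdot_{\gn\gn}$ terms in the $\dot{\Atnnt}$ step — which is what makes the trace corrections telescope down to the single $-\tfrac12\tro{\gdot_F}$ factor.
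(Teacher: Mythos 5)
Your proposal is correct and follows essentially the same route as the paper's proof: product rule plus the scalar-product evolution lemma, the linearized Koszul formula together with Lemma~\ref{lem:covdot} for $\dot{\sff}^{\gn}$, and Lemma~\ref{lem:evolution_test_function} with \eqref{eq:dgn/dt} for $\frac{d}{dt}\Atnnt$, ending with the same cancellations and the identity $\gdot_{\gn\gn}-\tro{\gdot}=-\tr{\gdot_F}$. The only (harmless) cosmetic differences are that you phrase the correction terms via the induced operator $L_{\gdot_F}$ — justified by your bridge identity $\Proj(L_{\gdot}X)=L_{\gdot_F}X$, which agrees with the paper's ambient $L_{\gdot}$ on tensors annihilating $\gn$ — and that you unpack \eqref{eq:dt-g(n,X)} into its underlying cancellation instead of citing it directly.
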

\begin{proof}
 Since  Lemma~\ref{lem:evolution_scalar_product} gives
  \begin{align}
    \label{eq:dt-ip-II-A}
    \frac{d}{dt}\langle\sff^{\gn},{\Atnnt}\rangle
    &= \ip{\dot{\sff}^{\gn},{\Atnnt}}
      +
      \ip{\sff^{\gn},\frac{d}{dt}{\Atnnt}}
      -\sum_{\ell=1}^2\langle \LdotgF^{(\ell)}\sff^{\gn},{\Atnnt}\rangle,
  \end{align}
 we proceed to compute and simplify the first two terms on the
 right. Let $X, Y \in \Xm F $ be time-independent vector
 fields. To compute the time derivative of the second fundamental
 form, we start by differentiating \eqref{eq:sff-def}
  \begin{align*}
    \frac{d}{dt}\sff^{\gn}(X,Y)
    &= \frac{d}{dt}g(\gn,\nabla_XY) = \frac 1 2
      \gdot_{\gn \gn} g(\gn , \nabla_XY) + g\left( \gn, \frac{d}{dt} \nabla_XY\right) \\
    &= \frac{1}{2}\gdot_{\gn \gn}\sff^{\gn}(X,Y) + \frac{1}{2}\Big[(\nabla_X\gdot)(\gn,Y)+(\nabla_Y\gdot)(\gn,X)
      - (\nabla_{\gn}\gdot)(X,Y)\Big],
  \end{align*}
 where we \change{first} used~\eqref{eq:dt-g(n,X)} of Lemma~\ref{lem:covdot}\change{, and then}
   \eqref{eq:linearedKoszul}.  Noting that
  $\Atnnt(X, Y)$ and
  $(\nabla_X\gdot)(\gn,Y)+(\nabla_Y\gdot)(\gn,X)= \nablans\gdot{\Fr\gn
    \Fr}(X,Y)+\nablans\gdot{\Fr\gn \Fr}(Y,X)$ are both symmetric in $X,Y$, we obtain
  \begin{align}
    \label{eq:evolution_sff}
 \left\langle\dot{\sff}^{\gn},{\Atnnt}\right\rangle
    &= \frac{1}{2}\gdot_{\gn \gn}\langle\sff^{\gn},{\Atnnt}\rangle
      +
      \ip{\nablans\gdot{\Fr \gn \Fr}
      -\frac{1}{2}\nablans\gdot{\gn \Fr\Fr},{\Atnnt}}.
  \end{align}
 Next, note that  for any time-independent $X, Y \in \Xm F$, 
  \begin{align*}
    \frac{d}{dt}
    & {\Atnnt}(X, Y)
 = \dotAtnnt(X, Y) +
 A(X,\dot\gn,\gn,Y)+ A(X,\gn,\dot\gn,Y)
    \\
    &=
      \dotAtnnt(X, Y) 
      +\gdot_{\gn \gn}{\Atnnt}(X, Y)
      -A(X,\Ldotg\gn,\gn,Y)-A(X,\gn,\Ldotg\gn,Y)\\
    &=(\gdot_{\gn \gn}-\tro\gdot)\,{\Atnnt}(X, Y)
      +A(\Ldotg X,\gn,\gn,Y)+A(X,\gn,\gn,\Ldotg Y),
  \end{align*}
 where the second and third equalities followed from
 Lemmas~\ref{lem:covdot} and \ref{lem:evolution_test_function}, respectively.
 Hence, 
  \begin{align*}
 \Big\langle\sff^{\gn}
 ,\frac{d}{dt}{\Atnnt}\Big\rangle
      &=\langle\sff^{\gn},(\gdot_{\gn \gn}-\tro\gdot)\,{\Atnnt}
    + 2\Ldotg^{(1)}\Atnnt \rangle\\
    &\change{=\langle\sff^{\gn},(\gdot_{\gn \gn}-\tro\gdot)\,{\Atnnt}
    + 2\LdotgF^{(1)}\Atnnt \rangle,} 
  \end{align*}
 using the symmetry of $\sff^{\gn}$, \eqref{eq:Lsig-1}, \change{and that by \eqref{eq:Ldefn} $\Ldotg^{(1)}=\LdotgF^{(1)}$ when restricted to the facet $F$.}
  
 Finally, we \change{collect the above identity with \eqref{eq:evolution_sff} to express \eqref{eq:dt-ip-II-A}, using again \eqref{eq:Lsig-symm}}
 to obtain  
  \begin{align*}
    \frac{d}{dt}\langle\sff^{\gn},{\Atnnt}\rangle
    &= \frac{1}{2}\gdot_{\gn \gn}\langle\sff^{\gn},{\Atnnt}\rangle
      +
      \ip{\nablans\gdot{\Fr \gn \Fr}
      -\frac{1}{2}\nablans\gdot{\gn \Fr\Fr},{\Atnnt}}
     \\
     &\quad+
      \ip{\sff^{\gn},(\gdot_{\gn \gn}-\tro{\gdot})\,{\Atnnt}}.
  \end{align*}
 Since $\gdot_{\gn \gn}-\tro{\gdot} = -\tr{\gdot_{\Fr}}$, the lemma now
 follows from the facet version of \eqref{eq:derivative-vol}, namely
  $\dot{\volform}_{F}=\frac{1}{2}\tr{\gdot_{\Fr}}\,\vo{F}$.
\end{proof}

\subsection{Evolution of angle deficit}
\label{ssec:evol-angle-defic}

Finally, we consider the time evolution of the last term (of
codimension~2) of \eqref{eq:distr_Riemann}. Given $E \in \Eint$, let
$\F_E = \{ F \in \F: E \in \triangle_{-1}F\}$ denote the collection of
facets sharing $E$. Each $F \in \F_E$ is shared by two mesh elements
$T_\pm \in \T$.  Recall that the previously defined $g$-conormal
$\gcn_E^{F}$ and $g$-normals $\gn_F^{T_\pm}$ (see
Subsections~\ref{ssec:jump-II}--\ref{ssec:angle-deficit} and
Figure~\ref{fig:vectors}), are such that $\gn^{T_{\pm}}_F$ points
inward from $F$ with respect to $T_{\pm}$ and $\gcn^{F}_E$ points
outward from $E$ into $F$.  Using them, and letting
$\sigma_\pm = \sigma|_{T_\pm}$, we define, for any
$\sigma \in \TT_0^2(\T)$, a jump-like function on $E$ \change{in $F$} by
\begin{equation}
  \label{eq:def_n_cn_jump}  
 {\jump{ \sigma_{\gcn\gn}}}_F^E =
  \sigma_+(\gn_F^{T_+},\gcn^{F}_E)+\sigma_-(\gn_F^{T_-},\gcn^{F}_E).
\end{equation}
\change{Recall that $\sigma_E$ denotes the restriction on tangential vector fields of $E$, i.e.,  for all $X, Y \in \Xm E$, $\sigma_E(X, Y) = \sigma(X, Y)$.}
\begin{lemma}
  \label{lem:evolution_bbnd}
 There holds for each $E\in\Eint$ and $A\in\Atesto$
  \begin{align*}
          \frac{d}{dt}\big(\Theta_EA_{\gcn\gn\gn\gcn}
          &\,\vo{E}\big) = -\frac{1}{2}\left(\sum_{F \in \F_E}
            \jmp{\gdot_{\gn\gcn}}^E_F+\tr{\gdot_E}\Theta_E\right)A_{\gcn\gn\gn\gcn}\,\vo{E}.
  \end{align*}
\end{lemma}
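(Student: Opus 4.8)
The plan is to differentiate the product $\Theta_E\, A_{\gcn\gn\gn\gcn}\,\vo{E}$ by the product rule and to treat its three factors in turn: the angle deficit $\Theta_E$, the scalar $A_{\gcn\gn\gn\gcn}=A(\gcn,\gn,\gn,\gcn)$ (which moves in time because the $g$-normal $\gn$ and $g$-conormal $\gcn$ depend on $g(t)$, even though $A\in\Atesto$ is carried by the fixed $U\in\Utest$ behind it as in Lemma~\ref{lem:evolution_test_function}), and the codimension-$2$ volume form $\vo{E}$, for which the analogue of~\eqref{eq:derivative-vol} on $E$ gives $\dot{\vo{E}}=\frac{1}{2}\tr{\gdot_E}\,\vo{E}$.

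First I would record an evolution formula for the conormal analogous to~\eqref{eq:dgn/dt}. Since $\gcn$ is, up to sign, the unique $g|_F$-unit vector in $\Xm{F}$ that is $g|_F$-orthogonal to $\Xm{E}$, applying Lemma~\ref{lem:covdot} with $F$ in place of $T$ and $E$ in place of $F$, and then writing the resulting endomorphism on $\Xm{F}$ as the restriction of $L_{\gdot}$ to $\Xm{F}$, yields $\frac{d\gcn}{dt}=\frac{1}{2}\gdot(\gcn,\gcn)\gcn-L_{\gdot}\gcn+\gdot(\gcn,\gn)\gn$. Differentiating $A(\gcn(t),\gn(t),\gn(t),\gcn(t))$ by the product rule, inserting this identity together with~\eqref{eq:dgn/dt}, and using Lemma~\ref{lem:evolution_test_function} for the intrinsic derivative $\dot A$, all terms containing $L_{\gdot}$ cancel (the $\gdot(\gcn,\gn)\gn$ contributions drop out by the skew symmetries~\eqref{eq:symm-Riem}, since $A(\gn,\gn,\cdot,\cdot)=A(\cdot,\cdot,\gn,\gn)=0$), which leaves $\frac{d}{dt}A_{\gcn\gn\gn\gcn}=\bigl(\gdot(\gn,\gn)+\gdot(\gcn,\gcn)-\tr{\gdot}\bigr)A_{\gcn\gn\gn\gcn}$. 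Because~\eqref{eq:g-conormal-vec} makes $\{\gcn,\gn\}$ together with any $g$-orthonormal frame of $\Xm{E}$ a $g$-orthonormal frame of $\Xm{T}$, one has $\tr{\gdot}=\tr{\gdot_E}+\gdot(\gcn,\gcn)+\gdot(\gn,\gn)$, hence $\frac{d}{dt}A_{\gcn\gn\gn\gcn}=-\tr{\gdot_E}\,A_{\gcn\gn\gn\gcn}$. Multiplying by $\vo{E}$ and combining with $\dot{\vo{E}}=\frac{1}{2}\tr{\gdot_E}\vo{E}$ gives $\frac{d}{dt}\bigl(A_{\gcn\gn\gn\gcn}\vo{E}\bigr)=-\frac{1}{2}\tr{\gdot_E}\,A_{\gcn\gn\gn\gcn}\vo{E}$, which produces the $\tr{\gdot_E}\Theta_E$ term once multiplied by $\Theta_E$.

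Second, I would compute $\dot\Theta_E=-\sum_{T\in\T_E}\dot{\sphericalangle}^T_E$. Fixing $T\in\T_E$ with its two facets $F_\pm\in\triangle_{-1}T$ meeting at $E$, and writing $\gcn_\pm=\gcn^{F_\pm}_E$, $\gn_\pm=\gn^T_{F_\pm}$, $u=g|_T(\gcn_+,\gcn_-)=\cos\sphericalangle^T_E$, differentiation of $\sphericalangle^T_E=\arccos u$ gives $\dot{\sphericalangle}^T_E=-\dot u/\sin\sphericalangle^T_E$. Differentiating the defining relations of $\gcn_+$ shows $g(\dot\gcn_+,\gcn_+)=-\frac{1}{2}\gdot(\gcn_+,\gcn_+)$ and $g(\dot\gcn_+,X)=-\gdot(\gcn_+,X)$ for $X\in\Xm{E}$, and since $\gcn_-\perp\Xm{E}$ this forces $g(\dot\gcn_+,\gcn_-)=-\frac{1}{2}\gdot(\gcn_+,\gcn_+)u$ (and symmetrically for $g(\gcn_+,\dot\gcn_-)$); hence $\dot u=\gdot(\gcn_+,\gcn_-)-\frac{1}{2}u\bigl(\gdot(\gcn_+,\gcn_+)+\gdot(\gcn_-,\gcn_-)\bigr)$. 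Inserting the in-plane rotation identity from the proof of Lemma~\ref{lem:cont_A_codim2}, which writes $\gcn_-$ in terms of $\gcn_+,\gn_+$ (and $\gcn_+$ in terms of $\gcn_-,\gn_-$), this collapses to $\dot{\sphericalangle}^T_E=-\frac{1}{2}\bigl(\gdot(\gcn_+,\gn_+)+\gdot(\gcn_-,\gn_-)\bigr)$. Summing over $T\in\T_E$ and collecting, for each facet $F\in\F_E$, the two contributions from the elements sharing $F$, then expresses $\dot\Theta_E$ through the codimension-$2$ jumps $\jmp{\gdot_{\gn\gcn}}^E_F$ of~\eqref{eq:def_n_cn_jump}; assembling this with the previous paragraph via $\frac{d}{dt}\bigl(\Theta_E A_{\gcn\gn\gn\gcn}\vo{E}\bigr)=\dot\Theta_E A_{\gcn\gn\gn\gcn}\vo{E}+\Theta_E\frac{d}{dt}\bigl(A_{\gcn\gn\gn\gcn}\vo{E}\bigr)$ yields the stated identity. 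I expect the genuinely delicate part to be precisely this last bookkeeping: keeping the inward $g$-normals $\gn^{T}_F$, the outward $g$-conormals $\gcn^F_E$, the sign coming from $\frac{d}{dt}\arccos$, and the jump convention of~\eqref{eq:def_n_cn_jump} mutually consistent. The remaining differential-geometric content (the conormal evolution, the rotation identity, and the orthonormal splitting of $\Xm{T}$) is routine.
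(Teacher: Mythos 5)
Your overall strategy is sound and, for two of the three factors, it coincides with the paper's own proof: the derivative of $\vo{E}$ via the codimension-2 analogue of \eqref{eq:derivative-vol}, and the computation $\frac{d}{dt}A_{\gcn\gn\gn\gcn}=-\tr{\gdot_E}A_{\gcn\gn\gn\gcn}$ from the normal/conormal evolution together with Lemma~\ref{lem:evolution_test_function} and the skew symmetries \eqref{eq:symm-Riem}, are exactly the paper's steps. (Your conormal formula carries the extra term $\gdot(\gcn,\gn)\gn$ relative to the paper's; yours is the version consistent with $\dot\gcn\in\Xm{F}$, and, as you observe, the difference is annihilated by the symmetries of $A$, so this is immaterial.) Where you genuinely depart from the paper is the angle-deficit factor: the paper simply cites \cite[Lemma~3.2]{GN2023} for $\dot\Theta_E$, whereas you derive $\dot{\sphericalangle}^T_E$ from scratch by differentiating $\arccos$. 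Your formula $\dot{\sphericalangle}^T_E=-\tfrac{1}{2}\bigl(\gdot(\gcn_+,\gn_+)+\gdot(\gcn_-,\gn_-)\bigr)$, with the inward normals $\gn_\pm=\gn^T_{F_\pm}$, checks out both algebraically (using the rotation identities from the proof of Lemma~\ref{lem:cont_A_codim2}) and on model configurations; this self-contained derivation is a nice addition.

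The gap is in the final bookkeeping that you yourself flagged as the delicate part, and it is not cosmetic. With $\Theta_E=2\pi-\sum_{T\in\T_E}\sphericalangle^T_E$, your formula gives $\dot\Theta_E=-\sum_{T}\dot{\sphericalangle}^T_E=+\tfrac12\sum_{T\in\T_E}\bigl(\gdot(\gcn_+,\gn_+)+\gdot(\gcn_-,\gn_-)\bigr)$, and regrouping the element-by-element contributions facet-by-facet turns this, via \eqref{eq:def_n_cn_jump}, into $\dot\Theta_E=+\tfrac12\sum_{F\in\F_E}\jmp{\gdot_{\gn\gcn}}^E_F$ --- the \emph{opposite} sign from the $-\tfrac12$ that the stated identity requires. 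So the assembly does not ``yield the stated identity'' as you claim. You can test the sign concretely: take four quadrants around a vertex $E$ in the plane with $\gdot=s(e_1\otimes e_2+e_2\otimes e_1)$ on the first quadrant only and zero elsewhere (this is $tt$-continuous); then $\Theta_E(t)=2\pi-\bigl(\tfrac{3\pi}{2}+\arccos(ts)\bigr)$, so $\dot\Theta_E=s$, while $\sum_{F\in\F_E}\jmp{\gdot_{\gn\gcn}}^E_F=2s$, which is consistent with $+\tfrac12$ and not with $-\tfrac12$. You must therefore either locate a compensating sign error in your derivation of $\dot{\sphericalangle}^T_E$ (I could not find one) or accept that your argument establishes the identity with $+\sum_{F\in\F_E}\jmp{\gdot_{\gn\gcn}}^E_F$ in place of $-\sum_{F\in\F_E}\jmp{\gdot_{\gn\gcn}}^E_F$; in the latter case you should state the discrepancy explicitly and reconcile it against the conventions of \eqref{eq:def_n_cn_jump} and the outward-versus-inward normal conversion invoked in the citation of \cite{GN2023}, rather than assert agreement with the statement as written.
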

\begin{proof}
 Denote by $\{\gt_j\}_{j=1}^{N-2}$ the $g$-orthonormal frame of $\Xm{E}$.
 As in the proof of Lemma~\ref{lem:covdot}, the time derivatives of $\gt_j$ and $\gcn$ lie
 in their respective tangent spaces, i.e., $\dot{\gt}_i \in \Xm{E},$
  $\dot{\gcn}\in\Xm{F},$ and the identities
  \begin{align*}
    &0=\frac{d}{dt}g(\gcn,\gt_i) =\gdot(\gcn,\gt_i) + g(\dot{\gcn},\gt_i),\\
    &0=\frac{d}{dt}g(\gcn,\gcn) = \gdot(\gcn,\gcn) + 2g(\dot{\gcn},\gcn),\\
    &0=\frac{d}{dt}g(\gcn,\gn) =\gdot(\gcn,\gn) + g(\gcn,\dot{\gn}),
  \end{align*}
 imply that
  \begin{align}
    \label{eq:dot-gcn}
    \dot{\gcn}
 =\frac{1}{2} \gdot(\gcn,\gcn)
    \gcn-\Ldotg\gcn.
  \end{align}
 Hence, \change{in view of the above expression,} by Lemma~\ref{lem:evolution_test_function}, Lemma~\ref{lem:covdot}, and the symmetries \eqref{eq:symm-Riem} of $A$, 
\begin{align*}
  \frac{d}{dt}A_{\gcn\gn\gn\gcn}
  &= \dot{A}_{\gcn\gn\gn\gcn}+2A(\dot{\gcn},\gn,\gn,\gcn)+2A(\gcn,\dot{\gn},\gn,\gcn)\\
  &=-\tr{\gdot}A_{\gcn\gn\gn\gcn}+2A(\Ldotg\gcn,\gn,\gn,\gcn)+2A(\gcn,\Ldotg\gn,\gn,\gcn) \\
  &\quad +A(\gdot(\gcn,\gcn)\gcn-2\Ldotg\gcn,\gn,\gn,\gcn)
    + A(\gcn,\gdot(\gn,\gn)\gn-2\Ldotg\gn,\gn,\gcn)\\
  &=\big(\gdot(\gn,\gn)+\gdot(\gcn,\gcn)-\tr{\gdot}\big)A_{\gcn\gn\gn\gcn} = -\tr{\gdot_E}A_{\gcn\gn\gn\gcn}.
\end{align*}
\change{For the angle deficit \eqref{eq:angle-deficit}, we follow the idea of \cite[Lemma 3.2]{GN2023} by first computing the variation of the angle \eqref{eq:interior_angle}
\begin{align*}
  \cos\phi:=\cos\sphericalangle_E^T=g|_T(\gcn^{F_+}_E,\gcn^{F_-}_E).
\end{align*} 
By \eqref{eq:dot-gcn}, abbreviating $g=g|_T$, $\gn_{\pm}=\gn^T_{F_\pm}$, $\gcn_{\pm}=\gcn^{F_+}_E$, 
\begin{align*}
  -\dot{\phi}\sin\phi \!&= \gdot(\gcn_+,\gcn_-) \!+\! g(\frac{1}{2} \gdot(\gcn_+,\gcn_+)
    \gcn_+\!-\!\Ldotg\gcn_+,\gcn_-)\!+\!g(\gcn_+,\frac{1}{2} \gdot(\gcn_-,\gcn_-)
    \gcn_-\!-\!\Ldotg\gcn_-)\\
    &=  \frac{1}{2} \gdot(\gcn_+,\gcn_+)g(\gcn_+,\gcn_-)+\frac{1}{2} \gdot(\gcn_-,\gcn_-)g(\gcn_+,
    \gcn_-) -\gdot(\gcn_+,\gcn_-).
\end{align*}
With $g(\gcn_+,\gcn_-) = \cos\phi$, splitting the last term into two, and using \eqref{eq:angle-rotation}, we obtain
\begin{align*}
  -\dot{\phi}\sin\phi &=\frac{1}{2} \gdot(\gcn_+,\gcn_+\cos\phi-\gcn_-)+\frac{1}{2} \gdot(\gcn_-\cos\phi-\gcn_+,\gcn_-)\\
  &=\frac{1}{2} \gdot(\gcn_+,-\sin\phi\gn_+)+\frac{1}{2} \gdot(\sin\phi\gn_-,\gcn_-).
\end{align*}
Thus, $\dot{\phi} = \frac{1}{2} \left(\gdot(\gn_+,\gcn_+)-\gdot(\gn_-,\gcn_-)\right)$ and rearranging the sum from \eqref{eq:angle-deficit} yields
\begin{align*}
\frac{d}{dt}\Theta_E = -\frac{1}{2}\sum_{F \in \F_E}\jmp{\gdot_{\gn\gcn}}^E_F.
\end{align*}}
We also have the analogue of~\eqref{eq:derivative-vol} for $E$, namely
$\dot{\volform}_{E}=\frac{1}{2}\tr{\gdot_E}\,\vo{E}$. Using each of
these identities, the lemma follows after applying the Leibniz rule to
differentiate $\Theta_EA_{\gcn\gn\gn\gcn} \vo{E}$.
\end{proof}

Putting the lemmas  together, we obtain the main result of this
section, Theorem~\ref{thm:distr_Riem_evol}, below. There, the time derivative
of the previously defined generalized densitized Riemann curvature is
expressed  in terms of two forms $a(g; \sigma, U)$ and
$b (g; \sigma, U)$, both linear in $\sigma = \gdot$ and the
metric-independent test function $U$, but nonlinear in the metric
$g$.  Note that $a(g; \sigma, U)$ does not contain any spatial
derivatives nor jumps of $\sigma$, but $b(g; \sigma, U)$ \change{collects them all}.

\begin{theorem}
  \label{thm:distr_Riem_evol}
 Let $\sigma=\dot{g}(t),$ $U \in \Utesto$ and $A = \mapUA U \in\Atesto$. Then
  \begin{align}
          \frac{d}{dt}
          \RogU(U) = 
 a(g;\sigma,U)+b(g;\sigma,U),\label{eq:evolution_distr_Riemann}
  \end{align}
where with $\mathbb{S}_{\Fr}(\rho)=\rho_{\Fr}-\tr{\rho_{\Fr}}g_{\Fr}$ for all $\rho\in\Sc(\T)$\change{, the space of symmetric $\rho\in\TT_0^2(\T)$}, 
\begin{align}
 a(g;\sigma,U)
  &:=\sum_{T\in\T}\int_T\Big(
    \langle \Lsigma^{(1)}\Riemann,\, A \rangle
    -\frac{1}{2}\tr{\sigma}\,\langle\Riemann,\, A
    \rangle\Big)\vo{T}\nonumber\\
  &+2\sum_{F \in \Fint}\int_F
    \jmp{\sff}:\mathbb{S}_{\Fr}(\sigma): \Atnnt\, \vo{F}\nonumber
  \\
  &- 2\sum_{E \in \Eint}\int_E\tr{\sigma_E}\,\Theta_E
 A_{\gcn\gn\gn\gcn}\,\vo{E},\label{eq:def_ah}
  \\
 b(g;\sigma,U)
  &:= 2\sum_{T\in\T}\int_T\langle \nabla^2\sigma,\;
 SA\rangle\;\vo{T}\nonumber \\
  &+2\sum_{F \in \Fint}\int_F
    \langle\jmp{
    \sigma_{\gn\gn}\sff +
    \nablans\sigma{\Fr\gn \Fr}+
 (\nabla (\gn \lrcorner \sigma))_{\Fr}
    -\nablans\sigma{\gn \Fr\Fr}},
    \; \Atnnt\rangle\,\vo{F}\nonumber\\
  &- 2\sum_{E \in \Eint}
    \int_E\sum_{F \in \F_E}
    \jmp{\sigma_{\gcn\gn}}^E_F\, A_{\gcn\gn\gn\gcn}\,\vo{E}.  \label{eq:def_bh}
\end{align}
\end{theorem}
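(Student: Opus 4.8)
The plan is to differentiate the three-term expression for $\RogU(U)$ of Theorem~\ref{thm:distr_Riemann_U}, namely the sum of $\sum_{T}\int_T\langle\Riemann,A\rangle\,\vo{T}$, $4\sum_{F\in\Fint}\int_F\langle\jmp{\sff},\Atnnt\rangle\,\vo{F}$, and $4\sum_{E\in\Eint}\int_E\Theta_E A_{\gcn\gn\gn\gcn}\,\vo{E}$ with $A=\mapUA U$, term by term; the $t$-derivatives of these three kinds of integrand are supplied by Lemma~\ref{lem:evolution_vol}, by Lemma~\ref{lem:evolution_bnd} (applied to the two summands of $\jmp{\sff}$), and by Lemma~\ref{lem:evolution_bbnd}, respectively. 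These lemmas already absorb the $t$-dependence of $A=\mapUA_{g(t)}U$, which is present even though $U$ is fixed. Write $\sigma=\gdot$ throughout. For the element integrals, Lemma~\ref{lem:evolution_vol} outputs a second-covariant-derivative piece $2\langle\nabla^2\sigma,SA\rangle$, which is assigned to $b$, plus an algebraic remainder $\langle L_\sigma^{(1)}\Riemann,A\rangle-\tfrac12\tr\sigma\,\langle\Riemann,A\rangle$, assigned to $a$. For the codimension-two integrals, Lemma~\ref{lem:evolution_bbnd} (whose hypotheses hold, and whose output makes sense since $A_{\gcn\gn\gn\gcn}$ is single-valued by Lemma~\ref{lem:cont_A_codim2}) gives, after multiplication by the factor $4$, the jump piece $-2\sum_{F\in\F_E}\jmp{\sigma_{\gcn\gn}}^E_F A_{\gcn\gn\gn\gcn}$ for $b$ and the trace piece $-2\tr{\sigma_E}\Theta_E A_{\gcn\gn\gn\gcn}$ for $a$.

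The facet integrals carry the only real work. Writing $\jmp{\sff}=\sff^{\gn_F^{T_+}}+\sff^{\gn_F^{T_-}}$ and invoking Lemma~\ref{lem:evolution_bnd} on each term — using that $\Atnnt$ is orientation-independent and single-valued by \eqref{eq:cont_A}, and that the purely tangential quantities $\tr{\sigma_F}$ and $\sigma_F$ itself are single-valued by the $tt$-continuity of $g(t)$ — the facet contribution to $\tfrac{d}{dt}\RogU(U)$ equals $4\int_F$ of $\tfrac12\langle\jmp{\sigma_{\gn\gn}\sff}-\tr{\sigma_F}\jmp{\sff}+2\jmp{\nablans\sigma{F\gn F}}-\jmp{\nablans\sigma{\gn F F}},\,\Atnnt\rangle\,\vo{F}$, summed over $\Fint$. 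The key step is to rewrite exactly one of the two copies of $\nablans\sigma{F\gn F}$ via Lemma~\ref{lem:triple-prod}, trading $\langle\nablans\sigma{F\gn F},\Atnnt\rangle$ for $\langle\nabla(\gn\lrcorner\sigma),\Atnnt\rangle+\sff^{\gn}:\sigma_F:\Atnnt$; since $\sigma_F$ and $\Atnnt$ are single-valued, the jump of the triple product collapses to $\jmp{\sff}:\sigma_F:\Atnnt$. Splitting $\sigma_F=\mathbb{S}_F(\sigma)+\tr{\sigma_F}\,g_F$ and using the one-line identity $\jmp{\sff}:g_F:\Atnnt=\langle\jmp{\sff},\Atnnt\rangle$, the two $\pm\tfrac12\tr{\sigma_F}\langle\jmp{\sff},\Atnnt\rangle$ contributions cancel, leaving $\tfrac12\langle\jmp{\sigma_{\gn\gn}\sff+\nablans\sigma{F\gn F}+\nabla(\gn\lrcorner\sigma)-\nablans\sigma{\gn F F}},\Atnnt\rangle+\tfrac12\jmp{\sff}:\mathbb{S}_F(\sigma):\Atnnt$. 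Multiplying by $4$ and summing over $\Fint$ reproduces exactly the facet terms of $b$ and of $a$.

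Assembling the three computations and sorting each piece by whether it carries covariant derivatives or jumps of $\sigma$ (into $b$) or is algebraic in $\sigma$ (into $a$) yields \eqref{eq:evolution_distr_Riemann} with $a,b$ exactly as in \eqref{eq:def_ah}--\eqref{eq:def_bh}. The one genuine obstacle is the facet bookkeeping just sketched: one must be disciplined about which facet-restricted quantities ($\Atnnt$, $\tr{\sigma_F}$, $\sigma_F$, $g_F$) are single-valued so that jumps distribute correctly over products, and one must split only one — not both — of the two $\nablans\sigma{F\gn F}$ factors, so that $b$ inherits the stated combination involving both $\nablans\sigma{F\gn F}$ and $\nabla(\gn\lrcorner\sigma)$ while $a$ inherits the trace-reversed triple product $\jmp{\sff}:\mathbb{S}_F(\sigma):\Atnnt$; the element and codimension-two contributions are immediate.
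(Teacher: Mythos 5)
Your proposal is correct and takes essentially the same route as the paper's proof: differentiate the three terms of Theorem~\ref{thm:distr_Riemann_U} using Lemmas~\ref{lem:evolution_vol}, \ref{lem:evolution_bnd}, and \ref{lem:evolution_bbnd}, rewrite the facet terms with Lemma~\ref{lem:triple-prod}, and sort the derivative/jump terms into $b$ and the algebraic remainder into $a$. The facet bookkeeping you spell out — applying Lemma~\ref{lem:triple-prod} to only one of the two copies of $\nablans\sigma{F\gn F}$, using single-valuedness of $\sigma_F$ and $\Atnnt$ to collapse the jump of the triple product, and cancelling the $\pm\frac12\tr{\sigma_F}\langle\jmp{\sff},\Atnnt\rangle$ contributions via $\sigma_F=\mathbb{S}_F(\sigma)+\tr{\sigma_F}\,g_F$ — is exactly the detail the paper's terse proof leaves implicit, and it checks out.
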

\begin{proof}
 We differentiate each term of the identity of
 Theorem~\ref{thm:distr_Riemann_U} and apply
 Lemmas~\ref{lem:evolution_vol}, \ref{lem:evolution_bnd}, and
  \ref{lem:evolution_bbnd} for the terms on the right of
 codimension 0, 1, and 2, respectively. Then we apply
 Lemma~\ref{lem:triple-prod}\change{, grouping the simplices $T_+$ and $T_-$ having common facet $F\in\triangle_{-1}(T_+)\cap \triangle_{-1}(T_-)$, and rewriting} the codimension 1 terms.
 Next, we set $\sigma = \gdot(t)$. Collecting all terms with spatial
 derivatives of $\sigma$ and jumps of $\sigma$, we see that their sum
 equals $b$, while the remainder equals $a$.
\end{proof}

At this point, the grouping of terms into $a(\dots)$ and $b(\dots)$ may appear ad hoc. However,  as we will show in the next section, the form $b(\dots)$ in \eqref{eq:def_bh}  contains (up to a factor $-2$)  a distributional version of a generalized covariant incompatibility operator of $\sigma$ (see Theorem~\ref{thm:distr_inc_curved}).  In our numerical analysis of \S\ref{sec:num_ana} we will estimate $a(g;\cdot,\cdot)$ and $b(g;\cdot,\cdot)$ independently. In two dimensions, $a(\dots)$ vanishes, as is shown in \S\ref{sec:spec_2d}.

\section{Distributional incompatibility operator and its adjoint}
\label{sec:distr_inc}
The incompatibility operator acting on a 2-tensor $\sigma$
is well studied in two and three-dimensional Euclidean
domains. Motivated by the fact that in two dimensions it arises
from the linearization of the Gauss curvature, we
propose a generalization of the incompatibility operator (in \eqref{eq:1} below)
to higher dimensions by examining the 
linearization of the Riemann curvature tensor. Taking this further
(in Theorem~\ref{thm:distr_inc_curved} and Definition~\ref{def:distr-incomp-oper}) we define 
a distributional generalization of incompatibility when $\sigma$ is not
smooth and belongs only to $\RR(\T)$.
Finally, defining an adjoint of this generalized incompatibility
operator (in Definition~\ref{def:div-div-dist}), 
the key result  (Theorem~\ref{thm:distr_covariant_adjoint_inc})
of this section is presented.

To define the generalized incompatibility of a smooth $\sigma$, first 
recall that the result of applying the projector $P$ in
\eqref{eq:proj-using-S} on any covariant \change{$A\in \TT_0^4(\T)$} is
\begin{align}
  \begin{split}
    \label{eq:projector}
    (PA)(X,Y,Z,W)=\frac{1}{4}\Big[&A(X,Y,Z,W)-A(Z,Y,X,W)
    \\
    + &  A(Z,W,X,Y) -A(X,W,Z,Y)  \Big].
  \end{split}
\end{align}
Since $P = P_{13} \circ P_{24} = P_{24} \circ P_{13}$, the result $B = PA$ after an
application of $P$ 
is skew symmetric in its first and third arguments as well
as its second and fourth arguments. It is also immediate
from~\eqref{eq:projector} that $B(X,Y,Z,W)=B(Y,X,W,Z),$ i.e., $B = PA$ satisfies
\begin{align}
  \label{eq:sym_B}
 B(X,Y,Z,W)=-B(Z,Y,X,W)=-B(X,W,Z,Y)=B(Y,X,W,Z).
\end{align}
The same symmetries can also be obtained from $\Atest$ using the
operator $S$ in \eqref{eq:defn-S}. 
\change{For a more compact notation, we 
define the $B$ test function space as follows. Letting} 
\begin{align*}
  \Btest=\{ SA\,:\, A\in\Atest\}, \qquad\Btesto=\{ SA\,:\, A\in\Atesto\},
\end{align*}
we see  that any $B\in\Btest$ satisfies~\eqref{eq:sym_B}.  Note that tensors
in $\Btest$ also have the additional continuity condition
\eqref{eq:cont_A}, i.e., 
\begin{align}
  \label{eq:B_nnFF}
  \Btnnt
\end{align} is single-valued on interior facets
$F\in\Fint$. \change{Further, by the skew symmetry \eqref{eq:sym_B} of $B$ there holds $B_{\Fp\gn\gn\Fp}=B_{\cdot\gn\gn\cdot}$.}  Let us denote by $\D \Atest$ and $\D\Btest$ the
subspaces of $\Atest \cap \TT_0^4(\om)$ and $\Btest \cap \TT_0^4(\om)$
consisting of (globally smooth) compactly supported tensor fields.

From Lemma~\ref{lem:evolution_Riemann}, we know that the linearization
of the Riemann curvature tensor \change{comprises in its expression} a scalar multiple of
$S P \nabla^2 \sigma$ with $\sigma =\gdot$ when $g$ is smooth.
We relate incompatibility to this linearization, motivated
by the known relationship between incompatibility and curvature linearization in two dimensions~\cite{GNSW2023}. Namely, \change{given a fixed globally smooth Riemannian 
metric over $\om$,} we define the covariant incompatibility \change{tensor of any} smooth symmetric
$\sigma\in \Sc(\Omega)$ by
\begin{equation}
  \label{eq:1}
  \INC \sigma := -S P \nabla^2 \sigma,
\end{equation}
\change{where $\nabla = \nabla^g$ and later $\volform=\volform_g$ and the divergence operator is meant in the $g$-sense.} Then, for any $A \in \Atest$,
letting $B = SA$, the $4$-tensor $\INC\sigma$ satisfies
\begin{equation}
  \label{eq:6}
  \ip{ \INC\sigma, A} = -\ip{SP \nabla^2 \sigma, A}
 = -\ip{ \nabla^2 \sigma, SA}= -\ip{ \nabla^2 \sigma, B}  
\end{equation}
since, at each point on the manifold, $B = SA $ is in the range of
$P$, and $S$ is selfadjoint.  Integrating \eqref{eq:6} using the
volume form $\og$ of the smooth metric, we find that 
\begin{equation}
  \label{eq:7}
  \int_\om
  \ip{ \INC\sigma, \Phi} \,\og 
 = -\int_\om \ip{ \sigma, \div \div S \Phi} \,\og,
  \qquad \Phi \in \D \Atest.
\end{equation}
Here we have used integration by parts (see \eqref{eq:ibp_volume}) twice, recalling that $\Phi$ has compact support.

Next, suppose $\sigma \in \Regge(\T)$. Then $\nabla^2 \sigma$ is, in general,  not
definable as a classical derivative, and definition~\eqref{eq:1} needs extension. For this, we first define a linear functional
$\widetilde{\nabla^2\sigma}$ on $\D \Btest$ by
\begin{equation}
  \label{eq:8}
  \widetilde{\nabla^2\sigma} (\Psi)
 :=
  \int_\om \ip{ \sigma, \div \div \Psi} \,\og,
  \qquad \Psi \in \D \Btest, \; \sigma \in \Regge(\T).
\end{equation}
This generalizes the second covariant derivative's action on smooth
tensors possessing the symmetries of $\Btest$.  Using it, we extend
$\INC \sigma$, taking motivation from \eqref{eq:6}, as a linear
functional $\widetilde{\INC \sigma}$ acting on $\D \Atest$, as follows:
\begin{equation}
  \label{eq:9}
  \widetilde{\INC \sigma}(\Phi) : = -\widetilde{\nabla^2\sigma} (S\Phi)
 = \change{-}\int_\om \ip{ \sigma, \div \div S \Phi}\,\og, 
  \qquad \Phi \in \D  \Atest, \; \sigma \in \Regge(\T).
\end{equation}
By~\eqref{eq:7}, we see that this is indeed an extension of the smooth case~\eqref{eq:6}. The next result
characterizes this generalized incompatibility
in terms of  integrals over mesh
components, including the smooth incompatibility computed element-by-element. 

\change{We now show that the just introduced tensor $\INC$ is directly related to the term $b(g;\sigma,U)$ computed in Theorem~\ref{thm:distr_Riem_evol}, indeed one has:}
  
\begin{theorem}
  \label{thm:distr_inc_curved}
  \change{Let $g\in\Sc^+(\Omega)$ be a given smooth Riemannian metric on the domain $\om$. For any $\sigma\in\Regge(\T)$ and $\Phi \in \D \Atest$ one has}
  \begin{align}
    \widetilde{\INC \sigma}(\Phi)
    &=\sum_{T\in\T}
 \bigg(\int_T\langle\INC \sigma,\Phi\rangle\,\vo{T}
      \nonumber\\
      &\qquad\qquad-
      \int_{\d T}
      \langle
      \sigma_{\gn\gn}\sff^{\gn} + 
      \nablans\sigma{\Fr \gn \Fr} + (\nabla (\gn \lrcorner \sigma))|_{\Fr} -
      \nablans\sigma{\gn \Fr\Fr}
 ,
      \Phi_{\Fr\gn\gn \Fr}\rangle\,\vo{\d T}\bigg)\nonumber
    \\
    &\quad-\sum_{E\in\Eint}
      \sum_{F \in \F_E}
      \int_E
      \jmp{\sigma_{\gcn\gn}}^E_F\Phi_{\gcn\gn\gn\gcn}\,\vo{E},
      \label{eq:distr_cov_inc}
  \end{align}
  \change{where $\nabla = \nabla^g$, $\volform=\volform_g$, and later the divergence operator is meant in the $g$-sense. Further,} in the integrals over $\d T$, the $g$-normal $\gn$ points
 into the element $T$, and we have used the notation in
  \eqref{eq:nabla-sigmaF-def} and \eqref{eq:def_n_cn_jump}.
 Hence,
 for any smooth compactly supported 
  $U$ in $\Utesto$, letting $\Phi = \mapUA U \in \D \Atesto$, the
 form $b(\dots)$ in \eqref{eq:def_bh} satisfies
  \begin{equation}
    \label{eq:rel_b_inc_smooth}
 b(g;\sigma,U)=-2\,\widetilde{\INC \sigma}( \Phi).
  \end{equation}
\end{theorem}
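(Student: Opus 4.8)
The plan is to prove the integral representation \eqref{eq:distr_cov_inc} first, and then deduce \eqref{eq:rel_b_inc_smooth} as an essentially immediate consequence. For the main identity, I would start from the defining formula \eqref{eq:9}, namely $\widetilde{\INC\sigma}(\Phi) = \int_\om \ip{\sigma, \div\div S\Phi}\,\og$, which is a genuine (not distributional) integral since $\Phi$ is globally smooth and compactly supported. The point is that $\sigma$ is only piecewise smooth, so I cannot move the two divergences back onto $\sigma$ globally; instead I integrate by parts element by element. Writing $B = S\Phi \in \Btest$, the first integration by parts over each $T$ (using \eqref{eq:ibp_volume}) produces $-\int_T \ip{\div B, \nabla\sigma\text{-type term}}$ plus a boundary term on $\d T$ pairing $\div B$ with $\gn\lrcorner\sigma$; a second integration by parts over $T$ moves the remaining derivative off $\div B$ onto $B$, giving the element integrand $\ip{\nabla^2\sigma, B}\,\vo T = -\ip{\INC\sigma,\Phi}\,\vo T$ (by \eqref{eq:6}) together with further boundary contributions on $\d T$.

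The bulk of the work is then bookkeeping on the facet integrals. On each interior facet $F = \triangle_{-1}T_+\cap\triangle_{-1}T_-$, the two element contributions must be combined; since $\Phi_{F\gn\gn F}$ is single-valued on $F$ by \eqref{eq:cont_A} (the continuity built into $\Atest$), the $\Phi$-factor is common, and the $\sigma$-dependent factors add up to exactly the jump $\jmp{\sigma_{\gn\gn}\sff^\gn + \nablans\sigma{F\gn F} + \nabla(\gn\lrcorner\sigma) - \nablans\sigma{\gn FF}}$ appearing in \eqref{eq:distr_cov_inc} — here I would use Lemma~\ref{lem:triple-prod} to rewrite the terms so that the combination matches, and I would decompose $\div B$ restricted to $F$ into its normal and tangential parts, the tangential part producing (after a surface integration by parts along $\d T$, i.e.\ a tangential divergence theorem on the closed manifold $\d T$) the codimension-2 terms. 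On boundary facets $F\in\Fbnd$, the corresponding term vanishes because $\Phi\in\D\Atesto$ has $\Phi_{F\gn\gn F} = 0$ there. The codimension-2 terms then assemble into $-\sum_{E\in\Eint}\sum_{F\in\F_E}\int_E \jmp{\sigma_{\gcn\gn}}^E_F\,\Phi_{\gcn\gn\gn\gcn}\,\vo E$, using Lemma~\ref{lem:cont_A_codim2} to ensure $\Phi_{\gcn\gn\gn\gcn}$ is single-valued on $E$. This is the step I expect to be the main obstacle: correctly accounting for the surface integration by parts on each $\d T$, keeping track of the conormal orientations $\gcn^F_E$ across the facets meeting at $E$, and verifying that the sign conventions make the contributions from $\Fbnd$-facets and the interior facets sharing $E$ collapse precisely to the stated formula. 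It closely parallels the two-dimensional computation in \cite{GNSW2023} and the codimension-2 analysis in \cite{GN2023}, so I would follow those templates, adapting for the $N$-dimensional test tensors.

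Finally, for \eqref{eq:rel_b_inc_smooth}: when $g$ is smooth and $U\in\Utesto$ is smooth and compactly supported, $\Phi = \mapUA U$ lies in $\D\Atesto$, and the element Riemann curvature $\Riemann$ is globally smooth, so the codimension-0 integrals in \eqref{eq:distr_cov_inc} reduce to $\sum_T\int_T\ip{\INC\sigma,\Phi}\vo T = -\int_\om\ip{\nabla^2\sigma, S\Phi}\og$. Comparing \eqref{eq:distr_cov_inc} term by term with the definition \eqref{eq:def_bh} of $b(g;\sigma,U)$ — the element term $2\int_T\ip{\nabla^2\sigma, SA}$ matches $-2\int_T\ip{\INC\sigma,\Phi}$ via \eqref{eq:6} (recall $A = \Phi$ here); the facet term $2\int_F\ip{\jmp{\dots},\Atnnt}$ matches $-2$ times the $\d T$-boundary sum once adjacent elements are combined (note $\Phi_{F\gn\gn F} = \Atnnt$ since $A = \Phi$); and the codimension-2 terms match with the same factor $-2$ — we read off $b(g;\sigma,U) = -2\,\widetilde{\INC\sigma}(\Phi)$. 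Since $g$ being globally smooth kills no facet terms (the jumps of the smooth quantities $\sff$, $\nabla\sigma$, etc.\ are nonzero only because $\sigma$ is non-smooth, but $\sigma\in\Regge(\T)$ is allowed to be non-smooth), the correspondence is exact, completing the proof.
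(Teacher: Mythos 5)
Your overall route is the same as the paper's: start from $\widetilde{\INC\sigma}(\Phi)=\int_\om\ip{\sigma,\div\div S\Phi}\,\og$, integrate by parts twice element by element, do a surface integration by parts on each facet of $\d T$ to generate the $\sigma_{\gn\gn}\sff^\gn+\nabla^F(\gn\lrcorner\sigma)$ terms, and collect the resulting $\d F$ contributions into codimension-2 integrals. The element and facet bookkeeping you describe (skew symmetries of $B=S\Phi$ killing the normal-normal components of $\div B$, $tt$-continuity of $\sigma$ killing the purely tangential facet terms, cancellation of the $\sff^\gn\otimes\gn^\flat\otimes\sigma_{\gn F}$ contributions between the two integrations by parts) is exactly what the paper carries out in Lemma~\ref{lem:distr_inc_curved_part1} and the final assembly.

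There is, however, one genuine gap, and it sits precisely at the step you flag as ``the main obstacle'' without resolving it. The surface integration by parts on each facet produces the edge term $\int_{\d F}\langle\gcn^\flat\otimes\gn^\flat\otimes\gn^\flat\otimes(\gn\lrcorner\sigma),\Psi\rangle\,\vo{\d F}$, which after discarding the piece killed by skew symmetry still equals $\int_{\d F}\langle\jmp{\sigma_{\gn F}},\Psi_{\gcn\gn\gn F}\rangle\,\vo{\d F}$; this contains not only the conormal component $\jmp{\sigma_{\gn\gcn}}\Psi_{\gcn\gn\gn\gcn}$ appearing in \eqref{eq:distr_cov_inc} but also the components $\jmp{\sigma(\gn,\gt_i)}\,\Psi_{\gcn\gn\gn\gt_i}$ along the tangent directions $\gt_i$ of $E$. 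Showing that these extra components vanish when summed over all facets meeting at $E$ is \emph{not} a matter of orientation bookkeeping or of the single-valuedness supplied by Lemma~\ref{lem:cont_A_codim2}; it is the content of the paper's Lemma~\ref{lem:distr_inc_curved_part2}, which requires a separate symmetry argument (the jump tensor $\sum_F P(\gcn^\flat\otimes\gn^\flat\otimes\jmp{\sigma})$ is shown, via an integral over a rotation connecting the two normal--conormal frames of each element at $E$, to be symmetric in its first two slots, from which the unwanted terms cancel — an extension of an idea of Christiansen from 3D to arbitrary $N$). Without this lemma your codimension-2 sum does not reduce to the stated form. The remaining part of your argument, the term-by-term comparison yielding $b(g;\sigma,U)=-2\,\widetilde{\INC\sigma}(\Phi)$, is correct and matches the paper.
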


We proceed to prove \eqref{eq:distr_cov_inc} of
Theorem~\ref{thm:distr_inc_curved} using the next two
lemmas. Identity~\eqref{eq:rel_b_inc_smooth} immediately follows
from~\eqref{eq:distr_cov_inc}, \change{by grouping the codimension 1 terms along facets $F$ common to simplices $T_+$ and $T_-$ in $\T$,} but is stated to connect to the
developments in the previous section by showing the relationship
between the $b(\dots)$ in Theorem~\ref{thm:distr_Riem_evol} and the
just-defined distributional incompatibility operator of \eqref{eq:9}.

\begin{lemma}
  \label{lem:distr_inc_curved_part1}
 In the setting of Theorem~\ref{thm:distr_inc_curved}, letting $\Psi = S\Phi\in \D \Btest$, 
  \begin{align}
    \nonumber
    \widetilde{\nabla^2\sigma}(\Psi)
    & = \sum_{T\in\T}\bigg(
      -\int_T \langle\nabla\sigma,\div\Psi\rangle\,\vo{T}
    \\  \nonumber
    & \hspace{1.cm} + \int_{\d T}
 \Big(\langle\sigma_{\gn\gn}\sff^{\gn}
      +
 (\nabla(\gn \lrcorner  \sigma))_{\Fr},
      \;
      \Psitnnt\rangle
      -\langle\sff^{\gn} \otimes\sigma_{\gn \Fr},
      \Psi_{\Fr\Fr\gn\Fr}\rangle\Big)\,
      \vo{\d T}\bigg)
    \\
    & \quad+\sum_{F\in\Fint}\int_{\d F}
      \langle\jmp{ \sigma_{\gn \Fr}},\,
      \Psi_{\gcn\gn\gn \Fr}\rangle\,\vo{\d F},
      \label{eq:distr_inc_curved_part1}    
  \end{align}
 where, in the last term, the $g$-conormal vector $\gcn$ on $\d F$
 points into the facet $F$.
\end{lemma}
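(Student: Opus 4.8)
The plan is to obtain \eqref{eq:distr_inc_curved_part1} by one integration by parts over each element followed by one more over each facet. Since $\sigma$ is smooth on every $T\in\T$ while $\Psi=S\Phi$, and hence $\div\div\Psi$, is globally smooth, I would first split $\widetilde{\nabla^2\sigma}(\Psi)=\int_\om\langle\sigma,\div\div\Psi\rangle\,\og=\sum_{T\in\T}\int_T\langle\sigma,\div\div\Psi\rangle\,\vo{T}$ and apply the integration-by-parts identity \eqref{eq:ibp_volume} on each $T$, moving the outer divergence off $\Psi$ and onto $\sigma$. This already produces the volume term $-\int_T\langle\nabla\sigma,\div\Psi\rangle\,\vo{T}$, together with a facet contribution $\int_{\partial T}\langle\sigma,\,\gn\lrcorner\div\Psi\rangle\,\vo{\partial T}$, where $\gn=\gn_F^T$ denotes the inward $g$-normal on each $F\in\triangle_{-1}T$.

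The heart of the matter is a pointwise rewriting of the integrand $\langle\sigma,\gn\lrcorner\div\Psi\rangle$ on each facet $F$. I would choose a local $g$-orthonormal frame $\{\gt_1,\dots,\gt_{N-1},\gn\}$ adapted to $F$, split the contracted covariant derivative appearing in $\div\Psi$ into its tangential directions $\gt_a$ and the transverse direction $\gn$, and expand by the product rule; the relation $\nabla_{\gt_a}\gn=-L_{\sff^{\gn}}\gt_a$ (as in the proofs of Lemmas~\ref{lem:covdot} and~\ref{lem:triple-prod}) produces the second-fundamental-form contributions, while the symmetries \eqref{eq:sym_B} of $\Psi=S\Phi$ are used repeatedly to shuffle the fixed vectors $\gn$ between the slots of $\Psi$ — in particular $\Psi(\gn,\gn,\cdot,\cdot)=-\Psi_{\cdot\gn\gn\cdot}$ and $\Psi_{\cdot\gn\gn\cdot}(\gn,\cdot)=0$, so that $\Psi_{\cdot\gn\gn\cdot}=\Psi_{F\gn\gn F}$ is effectively a symmetric tensor on $F$. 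The expected outcome is the pointwise identity on $F$
\[
  \langle\sigma,\gn\lrcorner\div\Psi\rangle
  =\langle\sigma_{\gn\gn}\sff^{\gn}+\nabla(\gn\lrcorner\sigma),\,\Psi_{F\gn\gn F}\rangle
  -\langle\sff^{\gn}\otimes\gn^\flat\otimes\sigma_{\gn F},\,\Psi\rangle+\divF\xi,
\]
with $\divF$ the intrinsic divergence on $F$ and $\xi=\xi[\sigma,\Psi]$ an explicit tangential $1$-form on $F$ satisfying $\int_F\divF\xi\,\vo{F}=\int_{\partial F}\langle\sigma_{\gn F},\Psi_{\gcn\gn\gn F}\rangle\,\vo{\partial F}$; the term $\nabla(\gn\lrcorner\sigma)$ appears precisely when the one surviving tangential derivative of $\Psi$ is transferred onto $\sigma$. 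Establishing this identity — showing that every contribution carrying a derivative of $\Psi$, including the genuinely transverse piece $\nabla_\gn\Psi$, collapses into the three displayed terms — is the step I expect to be the main obstacle, requiring careful slot bookkeeping and sustained use of the $\Btest$-symmetries.

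Finally, I would integrate this identity over $F$, invoking the divergence theorem on the closed $(N-1)$-manifold $F$ (i.e.\ \eqref{eq:ibp_volume} with $F$ in place of $T$) to turn $\int_F\divF\xi\,\vo{F}$ into the codimension-$2$ integral $\int_{\partial F}\langle\sigma_{\gn F},\Psi_{\gcn\gn\gn F}\rangle\,\vo{\partial F}$, with $\gcn$ the $g$-conormal on $\partial F$ pointing into $F$. Summing over all $T\in\T$ and $F\in\triangle_{-1}T$ then collects the $\partial T$-integrals verbatim inside $\sum_{T\in\T}$; each interior facet $F\in\Fint$ receives its $\partial F$-contribution from both adjacent elements $T_\pm$, and since $g$ is smooth the two sides use the same $\gn$ and $\gcn$ up to orientation, so by the sign conventions in \eqref{eq:jmp-tensor} and \eqref{eq:def_n_cn_jump} they add up to $\langle\jmp{\sigma_{\gn F}},\Psi_{\gcn\gn\gn F}\rangle$; and all terms supported on $\partial\om$ vanish because $\Phi$, hence $\Psi$, is compactly supported in $\om$. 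This yields exactly \eqref{eq:distr_inc_curved_part1}.
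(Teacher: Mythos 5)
Your overall route is the paper's: one elementwise integration by parts producing $-\int_T\langle\nabla\sigma,\div\Psi\rangle\,\vo{T}$ plus facet traces, followed by a surface integration by parts on each facet to generate the $\d F$ term; your ``pointwise identity plus exact tangential divergence $\divF\xi$'' is just the surface integration by parts written differentially. The genuine gap is the pointwise identity itself. Decomposing the facet integrand as in \eqref{eq:10}, the skew symmetries \eqref{eq:sym_B} kill $(\div\Psi)_{\gn\gn\gn}$ and $(\div\Psi)_{\gn F\gn}$ and give $(\div\Psi)_{\gn\gn F}=(\divF\Psi)_{\gn\gn F}$ (no transverse derivative there), but the remaining term $\langle\sigma_F,(\div\Psi)_{\gn FF}\rangle$ contains the summand $-\langle\sigma_F,(\nabla_\gn\Psi)_{F\gn\gn F}\rangle$, a genuine transverse derivative of $\Psi$ paired with the tangential part of $\sigma$. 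This does \emph{not} collapse pointwise into your three displayed terms plus $\divF\xi$ on a single side of the facet, since none of those terms can produce $\nabla_\gn\Psi$. It disappears only after summing the traces from the two elements $T_\pm$ sharing each $F\in\Fint$: $\sigma_F$ is single-valued by $tt$-continuity, $\div\Psi$ is single-valued because $g$ is smooth and $\Psi\in\D\Btest$, and $\gn_F^{T_+}=-\gn_F^{T_-}$, so the two contributions cancel. This is exactly the reduction the paper performs between \eqref{eq:10} and \eqref{eq:intermediate1.5}; without it your claimed identity is false for generic $\sigma$ and $\Psi$, and your plan as written gives no mechanism for the transverse piece to vanish.

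Two smaller bookkeeping points. With the inward-normal convention of \eqref{eq:ibp_volume}, the facet contribution of the first integration by parts is $-\int_{\d T}\langle\gn^\flat\otimes\sigma,\div\Psi\rangle\,\vo{\d T}$, not $+$, so your pointwise identity would have to be stated for the negative of $\langle\sigma,\gn\lrcorner\div\Psi\rangle$ (or the signs tracked accordingly). And the surface divergence theorem on $F$ is \eqref{eq:ibp_surface}/\eqref{eq:surface_stokes}, which carries an extra mean-curvature term in $H^{\gn}$; it does vanish here because $\Psi(\gn,\gn,\gn,\cdot)=0$ by skew symmetry, but that step needs to be stated.
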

\begin{proof}
 Starting from \eqref{eq:8} and integrating by parts
 elementwise using \eqref{eq:ibp_volume},
  \begin{align}
    \widetilde{\nabla^2\sigma}(\Psi)
    &= \sum_{T\in\T}\int_T \langle\sigma, \div\div \Psi\rangle\,\vo{T}\nonumber
    \\
    &=\sum_{T\in\T}
 \left( -\int_T \langle\nabla\sigma,\div\Psi\rangle\,\vo{T}      
      - \int_{\d T}
      \ip{ \gn^\flat \otimes \sigma, \div\Psi}\, \vo{\d T} \right).
      \label{eq:intermediate1'}
  \end{align}
 Then using the notation in \eqref{eq:nFnF-notation} to
 split the last integrand using normal  and tangential components \change{to facets $F\subset \d T$},
  \begin{equation}
    \label{eq:10}
  \begin{aligned}
    \ip{\gn^\flat \otimes \sigma, \,\div\Psi}
    & = \ip{\sigma_{\gn\gn}, (\div\Psi)_{\gn \gn \gn}}
      +
      \langle{\sigma_{\Fp\gn }},(\div\Psi)_{\gn   \Fp \gn }\rangle
    \\
    &
      +\langle{\sigma_{\gn\Fp }},(\div\Psi)_{\gn \gn   \Fp }\rangle
      +\langle{\sigma_{ \Fp }},(\div\Psi)_{\gn  \Fp\Fp }\rangle.
  \end{aligned}    
  \end{equation}
 Since $\Psi$ is skew symmetric in its second and fourth
 arguments---see \eqref{eq:sym_B}---we know that $\div\Psi$ is skew
 symmetric in its first and third arguments (as can be seen
 from~\eqref{eq:div-def}), so $(\div\Psi)_{\gn \gn \gn}$ and
  $(\div\Psi)_{\gn \Fp \gn }$ vanish. The $tt$-continuity
 of $\sigma$ \change{together with \eqref{eq:restr_proj_relation} and the smoothness of $g$} implies that the $\sigma_{\Fp}$ term in~\eqref{eq:10} \change{coming from $T_+$ and $T_-$ adjacent to $F$ add to zero: $\langle\sigma_{\Fp},(\div\Psi)_{\gn^+\Fp\Fp}\rangle+\langle\sigma_{\Fp},(\div\Psi)_{\gn^-\Fp\Fp}\rangle=0$.}
 Using all these observations (and the jump notation in \eqref{eq:jmp-tensor}), 
 we find that~\eqref{eq:10} implies that at any point of an interior facet, 
  \begin{align*}
    \langle \jmp{\gn^\flat \otimes \sigma}, \div\Psi \rangle
    & = \langle \jmp{\sigma_{\gn \Fp }},(\div\Psi)_{\gn \gn \Fp }\rangle
 = \langle \jmp{\sigma_{\gn \Fp }},(\divFR\Psi)_{\gn \gn \Fp }\rangle.
  \end{align*}
 Here we have used that, by \eqref{eq:divF-def} and \eqref{eq:divF-div_relation}, the surface
 divergence satisfies
  $ (\divFR \Psi )_{\gn \gn \Fp} = (\div \Psi )_{\gn \gn \Fp} - (\nabla
  \Psi)_{\gn \gn \gn \gn \Fp}$ where the last term vanishes due to skew
 symmetries of $\Psi$.  Moreover, since $(\divFR\Psi)_{\gn \gn \gn }$
 vanishes and since 
  \begin{equation}
    \label{eq:141}
    \gn\lrcorner\sigma = \sigma_{{\gn\gn}}{\gn}^\flat + \sigma_{{\gn}  \Fp}, 
  \end{equation}
 we may rewrite
  $
    \langle \jmp{\sigma_{\gn \Fp }},(\divFR\Psi)_{\gn \gn \Fp }\rangle
 = \ip{ \gn^\flat \otimes \gn^\flat \otimes \jmp{ \gn \lrcorner \sigma },
      \divFR \Psi }.
  $
 Accounting for all the above-mentioned cancellations,
  \eqref{eq:intermediate1'} becomes
  \begin{align}
    \widetilde{\nabla^2\sigma}(\Psi)
    &=\sum_{T\in\T}
      \!\left( \!-\int_T \langle\nabla\sigma,\div\Psi\rangle\,\vo{T}      
      - \int_{\d T}
      \ip{ \gn^\flat \otimes \gn^\flat \otimes (\gn \lrcorner \sigma),
      \, \divFR \Psi}\,\vo{\d T}\! \right).
      \label{eq:intermediate1.5}
  \end{align}

 The last term is now in a form suitable for 
 integration by parts 
 on each facet $F$ of $\d T$
 using \eqref{eq:ibp_surface}. Doing so, we get 
  \begin{equation}
    \label{eq:13}
  \begin{aligned}
    -\int_F 
    \langle \gn^\flat &\otimes \gn^\flat
      \otimes (\gn \lrcorner \sigma ),
     \; \divFR\Psi \rangle
    \,\vo{F}
 = \int_F H^{\gn}\,\langle \gn^\flat \otimes \gn^\flat \otimes \gn^\flat \otimes
 (\gn \lrcorner \sigma),\,
      \Psi\rangle\,\vo{F}
    \\
    & +\int_F \langle\nablaF({\gn}^\flat\otimes{\gn}^\flat\otimes
 ({\gn \lrcorner \sigma})),\Psi\rangle\,\vo{F}
      +
      \int_{\d F}\langle\gcn^\flat \otimes \gn^\flat \otimes\gn^\flat\otimes
 ({\gn \lrcorner \sigma}),\,\Psi\rangle\,\vo{\d F},
  \end{aligned}    
  \end{equation}
 where $\gcn$ denotes the conormal vector on $\d F$ pointing into
  $F$, $H^{\gn}$ denotes the mean curvature (see~\eqref{eq:H}), and
  $\nablaF$ is as in \eqref{eq:11}.  By the skew symmetry of $\Psi$ in
 its first and third arguments, $\Psi(\gn, \gn, \gn, X)=0 \forall X\in\Xm{\T}$,
 so the integral with $H^{\gn}$ vanishes.

 To simplify the \change{integral showing} $\nablaF$ in~\eqref{eq:13}, we begin by noting
 that \eqref{eq:11} implies $\nablaF A( \gn, \dots) = 0$ for any
 tensor~$A$. Hence, the inner product in the integrand can be
 evaluated using a $g$-orthonormal basis $\{\gt_i\}_{i=1}^{N-1}$ of
 the tangent space $\Xm{F}$, i.e.,
  \begin{align}
    \label{eq:14} 
    &\langle
    \nablaF
 ({\gn}^\flat\otimes{\gn}^\flat\otimes({\gn\lrcorner\sigma })),
      \Psi\rangle =
      \sum_{i=1}^{N-1}
      \ip{  \nablaF_{\gt_i} ({\gn}^\flat\otimes{\gn}^\flat
      \otimes ({\gn\lrcorner\sigma })),
      \gt_i \lrcorner \Psi} 
    \\     \nonumber
 = \sum_{i=1}^{N-1}\langle
 (&\nablaF_{\gt_i}{\gn}^\flat)\otimes{\gn}^\flat
      \otimes ({\gn\lrcorner\sigma })
      +
 {\gn}^\flat\otimes(\nablaF_{\gt_i}{\gn}^\flat)
      \otimes
 ({\gn\lrcorner\sigma })
      +
 {\gn}^\flat\otimes{\gn}^\flat\otimes \nablaF_{\gt_i}
 ({\gn\lrcorner\sigma }),
      \gt_i \lrcorner \Psi\rangle,
  \end{align}
 where we have also used the Leibniz rule. Note that
  $\nablaF\gn^\flat = -\sff^{\gn}$ can be expressed in terms of
  $\gt_i^\flat \otimes \gt_j^\flat$ for all $i,j$ (without $\gn$).
 Also using \eqref{eq:141}, the first term in~\eqref{eq:14} becomes
  \begin{align*}
    \sum_{i=1}^{N-1}&
    \langle
 (\nablaF_{\gt_i}{\gn}^\flat)\otimes{\gn}^\flat\otimes
 ({\gn\lrcorner\sigma}), 
      \gt_i \lrcorner \Psi
      \rangle 
      \\
      &=
      \ip{\nablaF \gn^\flat \otimes \gn^\flat \otimes \sigma_{\gn \Fp}, \, \Psi}
      +\sum_{i=1}^{N-1}
      \sigma_{\gn\gn} \ip{
 (\nablaF_{\gt_i}{\gn}^\flat)
      \otimes
 {\gn}^\flat
      \otimes
       \gn^\flat, 
      \,       \gt_i \lrcorner \Psi
 }
    \\
    & =
      -
      \langle\sff^\gn\otimes{\change{\gn^\flat}}\otimes {\sigma_{{\gn} \Fp }},\Psi\rangle
      -
      \sum_{j=1}^{N-1}\sum_{i=1}^{N-1}
      \, {\sigma_{{\gn}{\gn}}}\; \sff^\gn(\gt_i, \gt_j)\,
      \Psi(\gt_i,\gt_j,{\gn},{\gn})
    \\
    & =
      -
      \langle\sff^\gn\otimes{\change{\gn^\flat}}
      \otimes {\sigma_{{\gn} \Fp }},\Psi\rangle
      +
      \,
      \ip{
 {\sigma_{{\gn}{\gn}}}\,\sff^\gn,
      \Psi_{\Fp\gn\gn\Fp}},
  \end{align*}
 since
  $\Psi(\gt_i,\gt_j,{\gn},{\gn}) = -\Psi(\gt_i,{\gn},{\gn}, \gt_j)$.
 Using  the
 skew symmetry of $\psi_{ij}(X) = \Psi(\gt_i, \gn, \gt_j, X)$ for all $X\in\Xm{\T}$ with
 respect to $i, j$ and the symmetry of $\sff^\gn(\gt_i, \gt_j)$ with
 respect to $i, j$, we find that the second term in~\eqref{eq:14} must vanish
  \begin{align*}
    -\sum_{i=1}^{N-1}
    \langle{\gn}^\flat
    \otimes
 (\nablaF_{\gt_i} \gn^\flat)
    \otimes
 ({\gn\lrcorner\sigma }),\gt_i \lrcorner \Psi
    \rangle
    &=
      \sum_{j=1}^{N-1}\sum_{i=1}^{N-1}      
      \langle
      \,\sff^{{\gn}}({\gt_i, \gt_j})\;
 ({\gn\lrcorner\sigma }), \, \psi_{ij}
      \rangle=0.
  \end{align*}
 Thus, \eqref{eq:14} becomes
  \begin{equation}
    \label{eq:15}
  \langle\nablaF
 ({\gn}^\flat\otimes{\gn}^\flat\otimes ({\gn\lrcorner\sigma })),\Psi\rangle
 =\langle {\sigma_{{\gn}{\gn}}\sff^{{\gn}}}
      +
      \nablaF(\gn \lrcorner  \sigma),
      \Psi_{\Fp\gn\gn\Fp}
      \rangle
      -
      \langle\sff^{{\gn}}\otimes{\gn}^\flat\otimes
 {\sigma_{{\gn} \Fp}},\Psi\rangle.    
  \end{equation}

 Finally, using~\eqref{eq:15} in~\eqref{eq:13} and substituting the result
 into~\eqref{eq:intermediate1.5},
  \begin{align*}
    \widetilde{\nabla^2\sigma}&(\Psi)
 =\sum_{T\in\T}
 \bigg[
     -\int_T \langle\nabla\sigma,\div\Psi\rangle\,\vo{T}      
      + \int_{\d T} \Big(\langle {\sigma_{{\gn}{\gn}}\sff^{{\gn}}}
      +
      \nablaF(\gn \lrcorner  \sigma),
      \Psi_{\Fp\gn\gn\Fp}
      \rangle
      \\
    & -
      \langle\sff^{{\gn}}\otimes{\gn}^\flat\otimes
 {\sigma_{{\gn} \Fp}},\Psi\rangle\Big)\,\vo{\d T}
    + \sum_{\{F \in \F: F \subset \d T \}}
      \int_{\d F}\langle\gcn^\flat \otimes \gn^\flat \otimes\gn^\flat\otimes
 ({\gn \lrcorner \sigma}),\,\Psi\rangle\,\vo{\d F} \bigg].
  \end{align*}
  \change{Since $\Psi=0$ on $\d\om$, the} last integrand, upon use of the decomposition~\eqref{eq:141},
 becomes a sum of two terms,
  $    \langle\gcn^\flat \otimes \gn^\flat \otimes\gn^\flat\otimes
 ({\gn \lrcorner \sigma}),\,\Psi\rangle
 =
  \langle \sigma_{\gn \Fp},\,\Psi_{\gcn \gn \gn \Fp} \rangle
  +       \langle \sigma_{\gn\gn},\,\Psi_{\gcn \gn \gn \gn} \rangle,     
  $
 of which the latter vanishes due to the skew symmetric properties of
  $\Psi$.
  \change{\eqref{eq:restr_proj_relation} yields the stated result.}
\end{proof}

\begin{lemma}
  \label{lem:distr_inc_curved_part2}
 In the setting of Theorem~\ref{thm:distr_inc_curved},
 letting $\Psi = S\Phi$, 
  \begin{align*}
          \sum_{F\in\Fint}\int_{\d F}
          \langle\jmp{\sigma_{\gn \Fr}},\Psi_{\gcn\gn\gn \Fr}\rangle\,\vo{\d F} = \sum_{E\in\Eint}\sum_{F \in \F_E}\int_E\jmp{\sigma_{\gn\gcn}}^E_F \Psi_{\gcn\gn\gn\gcn}\,\vo{E}.
  \end{align*}
\end{lemma}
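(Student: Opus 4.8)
The plan is to regroup the facet-boundary integrals on the left into integrals over the codimension-two simplices, split each integrand at a point of such a simplex $E$ into a ``conormal--conormal'' part and an ``$E$-tangential'' part, observe that the first part reproduces the right-hand side of the lemma, and then show that the second part cancels when summed over the facets meeting $E$.

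First I would rewrite the left-hand side: since $\d F$ is covered by the simplices in $\triangle_{-1}F$ and $\vo{\d F}$ is the induced $(N-2)$-volume form, $\int_{\d F}(\cdots)\,\vo{\d F}=\sum_{E\in\triangle_{-1}F}\int_E(\cdots)\,\vo E$; interchanging the sums, discarding the terms with $E\in\Ebnd$ (there $\Phi$, hence $\Psi=S\Phi$, vanishes by compact support), and noting that a facet containing an interior $E$ is itself interior, this becomes $\sum_{E\in\Eint}\sum_{F\in\F_E}\int_E\langle\jmp{\sigma_{\gn F}},\Psi_{\gcn\gn\gn F}\rangle\,\vo E$, where $\gcn=\gcn^F_E$ and $\gn$ is a $g$-normal of $F$ (unambiguous up to sign since $g$ is smooth). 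Then, fixing $E\in\Eint$, a point $p\in E$, and a $g$-orthonormal frame $\{t_a\}_{a=1}^{N-2}$ of $\Xm E$, and using the orthogonal splitting $T_pF=T_pE\oplus\R\gcn$, I would expand the $g|_F$-inner product of the $1$-forms $\jmp{\sigma_{\gn F}}$ and $\Psi_{\gcn\gn\gn F}$ in the frame $\{t_1,\dots,t_{N-2},\gcn\}$; together with \eqref{eq:def_n_cn_jump} this yields
\begin{equation*}
  \langle\jmp{\sigma_{\gn F}},\Psi_{\gcn\gn\gn F}\rangle
  = \jmp{\sigma_{\gn\gcn}}^E_F\,\Psi_{\gcn\gn\gn\gcn}
  + \sum_{a=1}^{N-2}\jmp{\sigma(\gn,t_a)}\,\Psi(\gcn,\gn,\gn,t_a),
\end{equation*}
in which $\jmp{\sigma(\gn,t_a)}$ denotes the jump of $\sigma(\gn,t_a)$ across $F$. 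Summed over $F$ and $E$, the first term on the right is exactly the right-hand side of the lemma, so everything reduces to the claim that for each $E\in\Eint$ and $p\in E$ the ``$E$-tangential'' sum $\sum_{F\in\F_E}\sum_a\jmp{\sigma(\gn,t_a)}\,\Psi(\gcn^F_E,\gn_F,\gn_F,t_a)$ vanishes.

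To prove this I would work in the $2$-plane $N_pE$ that is $g$-orthogonal to $T_pE$ inside $T_p\om$. Since $E$ is interior, the facets and elements of $\T$ containing $E$ form a cyclic chain $F_1,T_1,\dots,F_m,T_m$ with $T_i$ lying between $F_i$ and $F_{i+1}$; put $e_i=\gcn^{F_i}_E$ and let $n_i=\gn^{T_i}_{F_i}$ point into $T_i$, so $\{e_i,n_i\}$ is a $g$-orthonormal basis of $N_pE$ and (smoothness of $g$) $\gn^{T_{i-1}}_{F_i}=-n_i$. The $F_i$-contribution to the $E$-tangential sum is then $\sum_a c_i^a\,\Psi(e_i,n_i,n_i,t_a)$ with $c_i^a=\sigma|_{T_i}(n_i,t_a)-\sigma|_{T_{i-1}}(n_i,t_a)$. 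Two facts finish the argument. The first is that $\Psi(e_i,n_i,n_i,t_a)$ is \emph{linear} in $n_i$: writing $e_i=\cos\phi_i\,f_1+\sin\phi_i\,f_2$ and $n_i=-\sin\phi_i\,f_1+\cos\phi_i\,f_2$ in a fixed $g$-orthonormal basis $\{f_1,f_2\}$ of $N_pE$ and expanding the first and third slots, the skew symmetry \eqref{eq:sym_B} of $\Psi=S\Phi$ in those two slots kills the ``diagonal'' terms and collapses the rest to $(\cos^2\phi_i+\sin^2\phi_i)\,\Psi(f_1,n_i,f_2,t_a)=\Psi(f_1,n_i,f_2,t_a)$; hence $\Psi(e_i,n_i,n_i,t_a)=g(n_i,\eta_a)$ for some $\eta_a\in N_pE$ that is independent of $i$. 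The second is that $\sum_i c_i^a n_i=0$: by the $tt$-continuity of $\sigma$ on $F_i$ applied to $e_i,t_a\in\Xm{F_i}$, the linear functional $X\mapsto(\sigma|_{T_i}-\sigma|_{T_{i-1}})(X,t_a)$ on $N_pE$ annihilates $e_i$ and so equals $c_i^a\,g(n_i,\cdot)$, and summing these around the cycle telescopes to zero. Combining, $\sum_{i}\sum_a c_i^a\,\Psi(e_i,n_i,n_i,t_a)=\sum_a g\big(\textstyle\sum_i c_i^a n_i,\,\eta_a\big)=0$, which is what was wanted.

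The step I expect to be the crux is the first of the two facts above: one must recognize that $\Psi(e_i,n_i,n_i,t_a)$, though at first glance a cubic expression in the rotating unit vector $n_i\in N_pE$, is actually linear in $n_i$, and this is precisely where the curvature-type algebraic symmetries \eqref{eq:sym_B} of $\Psi$ enter. The rest --- the reshuffling of boundary integrals and the cyclic telescoping of the $tt$-jumps of $\sigma$ --- is routine bookkeeping.
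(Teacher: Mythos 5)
Your proposal is correct, and it reaches the conclusion by a noticeably different route than the paper. The paper first reduces the facet-boundary integrand to $\langle P(\gcn^\flat\otimes\gn^\flat\otimes\jmp{\sigma}),\Psi\rangle$, then reorders the sum over elements around each $E$ and uses an integral representation along a rotation $R(\rho)$ taking $(\gn_-,\gcn_-)$ to $(\gn_+,\gcn_+)$ to establish the auxiliary symmetry \eqref{eq:codim2_sym}; only then does it expand $\jmp{\sigma}$ into components and deduce, somewhat indirectly, that the tangential jump terms $\sum_F\sum_i\jmp{\sigma(\gn,\gt_i)}P(\gcn^\flat\otimes\gn^\flat\otimes\gn^\flat\otimes\gt_i^\flat)$ sum to zero. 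You instead split the integrand directly into the $\gcn$-component (which is the right-hand side) and the $E$-tangential remainder, and kill the remainder by two observations: the skew symmetry \eqref{eq:sym_B} in the first and third slots makes $\Psi(\gcn,\gn,\gn,t_a)$ linear in $\gn$ on the transversal plane $N_pE$, and the $tt$-continuity of $\sigma$ forces $\sum_i c_i^a n_i=0$ by telescoping around the cyclic chain of elements at $E$. Your ``linear in $n$'' computation plays exactly the role of the paper's rotation-integral device, and your telescoping plays the role of the symmetry \eqref{eq:codim2_sym}, but your version avoids both the projector $P$ and the parameter integral and is the more elementary and transparent of the two; the paper's choice has the modest advantage that the same rotation-integral trick is recycled in Lemma~\ref{lem:auxiliary_codim2}. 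Do make sure to state explicitly (as you implicitly do) that the pairs $(\gcn^{F_i}_E,\gn^{T_i}_{F_i})$ are consistently oriented around the cycle, since the sign in $\Psi(e_i,n_i,n_i,t_a)=\Psi(f_1,n_i,f_2,t_a)$ depends on that orientation; with your convention that $T_i$ follows $F_i$ in the cyclic order this is automatic.
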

\begin{proof}  
 This proof extends an idea of \cite{Christiansen11} from 3D to
 arbitrary dimension.
 Due to the $tt$-continuity of $\sigma$ and the skew-symmetry of $\Psi$ there holds with the projection $P$ \eqref{eq:proj-using-S}\change{, in view of \eqref{eq:PSA} together with \eqref{eq:restr_proj_relation},}
  \begin{align*}
    \langle\jmp{\gn \lrcorner  \sigma},\change{\Psi_{\gcn\gn\gn\cdot}}\rangle &=\langle\jmp{\gn \lrcorner  \sigma},\change{\Psi_{\gcn\gn\gn\cdot}}\rangle + \langle\jmp{\sigma_{\Fr}},\Psi_{\gcn\gn FF}\rangle + \langle\jmp{\sigma_{\Fr\gn}},\Psi_{\gcn\gn \Fr \gn}\rangle\\
    &=\langle\jmp{\sigma
 },\change{\Psi_{\gcn\gn\cdot\cdot}}\rangle =\langle P(\gcn^\flat\otimes\gn^\flat\otimes\jmp{\sigma}),\Psi\rangle
  \end{align*}
 and thus,
  \begin{align}
    \sum_{F\in\Fint}\int_{\d F}\langle\jmp{\gn \lrcorner  \sigma},\change{\Psi_{\gcn\gn\gn\cdot}}\rangle\,\vo{\d F} = \sum_{F\in\Fint}\int_{\d F}\langle P(\gcn^\flat\otimes\gn^\flat\otimes\jmp{\sigma}),\Psi\rangle\,\vo{\d F}.\label{eq:intermediate3}
  \end{align}

	\begin{figure}[ht!]
		\centering
		\begin{tikzpicture}
			\draw[thin] (0,0) to (1.4,1.5);
			\draw[thin] (0,0) to (1.6,-1.3);
			\draw (0,0) circle (3pt);
			
			\draw[-latex,color=red,thick] (1.4/2,1.5/2) to (1.4/2+1.5/3,1.5/2-1.4/3);
			\draw[-latex,color=red,thick] (1.6/2,-1.3/2) to (1.6/2-1.3/3,-1.3/2-1.6/3);
			
			\draw[-latex,color=black,thick] (1.6*0.8,-1.3*0.8) to (1.6*0.8+1.3/3,-1.3*0.8+1.6/3);
			
			\draw[-latex,color=blue,thick] (0,0) to (1.4/3,1.5/3);
			\draw[-latex,color=blue,thick] (0,0) to (1.6/3,-1.3/3);
			
			\node (A) at (-0.5,-0.1) {$E$};
			\node (B) at (2,0) {$T$};
			\node (C) at (0.5,1.) {$F_+$};
			\node (D) at (1.2,-0.6) {$F_-$};
			\node (E) at (1.8,0.7) {${\color{red}\gn_+}=\gn^T_{F_+}$};
			\node (F) at (0.9,-1.2) {{\color{red}$\gn_-$}};
			\node (G) at (0.1,0.6) {{\color{blue}$\gcn_+$}};
			\node (G) at (0.1,-0.6) {{\color{blue}$\gcn_-$}};
			\node (E) at (2.1,-0.8) {$\gn^T_{F_-}$};

			\centerarc[-latex,color=teal, thick](0,0)(-40:50:0.4)
			\node (E) at (0.65,0.) {{\color{teal}$\theta$}};
		\end{tikzpicture}
		\caption{Illustration of $g$-normal and $g$-conormal vectors in the proof of Lemma~\ref{lem:distr_inc_curved_part2}.}
		\label{fig:bbnd_for_proof}
	\end{figure}

 First, we focus on $\gcn^\flat\otimes\gn^\flat\otimes\jmp{\sigma}$ by reordering the sum over codimension 2 boundaries. Let $\{\gt_i\}_{i=1}^{N-2}$ be a $g$-orthonormal basis of $\Xm{E}$. For each element $T$ containing $E$ as the intersection of the two facets $F_+$, $F_- \in\triangle_{-1}T$ we choose the orientation $(\gn_+,\gcn_+)$ and $(\gn_-,\gcn_-)$ such that both build a right-handed orthonormal basis. Without loss of generality we assume that $\gn_+=\gn^T_{F_+}$ and $\gn_-=-\gn^T_{F_-}$, cf. Figure~\ref{fig:bbnd_for_proof} and \cite{Christiansen11}. Then,
  \begin{align*}
    \sum_{F\in\Fint}\gcn^\flat\otimes\gn^\flat\otimes\jmp{\sigma} = \sum_{E\in\Eint}\sum_{T\supset E}\gcn_+^\flat\otimes\gn_+^\flat\otimes\sigma|_T- \gcn_-^\flat\otimes\gn_-^\flat\otimes\sigma|_T.
  \end{align*}
 Let $\theta$ denote the angle to transform $(\gn_-,\gcn_-)$ into $(\gn_+,\gcn_+)$ and $R(\rho)$ the rotation matrix such that $R(0)\gn_-=\gn_-$, $R(0)\gcn_-=\gcn_-$ and $R(\theta)\gn_-=\gn_+$, $R(\theta)\gcn_-=\gcn_+$. Then
  \begin{align*}
    \gcn_+^\flat\otimes\gn_+^\flat\otimes\sigma|_T- \gcn_-^\flat\otimes\gn_-^\flat\otimes\sigma|_T=\int_0^{\theta}\frac{d}{d\rho}\big(R(\rho)\gcn_-^\flat\otimes R(\rho)\gn_-^\flat\otimes\sigma|_T\big)\,d\rho.
  \end{align*}
 Computing \change{(since $\frac{d}{d\rho}(R(\rho))\gcn^\flat_-=-R(\rho)\gn^\flat_-$ and $\frac{d}{d\rho}(R(\rho))\gn^\flat_-=R(\rho)\gcn^\flat_-$)}
   \begin{align*}
    \frac{d}{d\rho}\!\big(R(\rho)\gcn_-^\flat\otimes R(\rho)\gn_-^\flat\otimes\sigma|_T\!\big) \! =\! -\!R(\rho)\gn_-^\flat\otimes R(\rho)\gn_-^\flat\otimes\sigma|_T\!+\!R(\rho)\gcn_-^\flat\otimes R(\rho)\gcn_-^\flat\otimes\sigma|_T
   \end{align*}
 reveals that the integrand, and thus also the integral, is symmetric in its first two components. Therefore, we obtain the symmetry
   \begin{align}
    \label{eq:codim2_sym}
    \sum_{F\in\Fint}P(\gcn^\flat\otimes\gn^\flat\otimes\jmp{\sigma}) = \sum_{F\in\Fint}P(\gn^\flat\otimes\gcn^\flat\otimes\jmp{\sigma}).
   \end{align}
 Next, we expand $\jmp{\sigma}$ into (co)normal and tangential components and use the skew-symmetry of \change{elements in $P(\TT_0^4(\T))$ (see \eqref{eq:projector} and below)} and $tt$-continuity of $\sigma$ (only here, we implicitly sum over $i,j$ if applicable)
\begin{align*}
 P(\gcn^\flat\otimes\gn^\flat\otimes\jmp{\sigma})&=\jmp{\sigma(\gt_i,\gt_j)}P(\gcn^\flat\otimes\gn^\flat\otimes\gt^\flat_i\otimes \gt^\flat_j)+\jmp{\sigma(\gn,\gn)}P(\gcn^\flat\otimes\gn^\flat\otimes\gn^\flat\otimes \gn^\flat)\\&+\jmp{\sigma(\gcn,\gcn)}P(\gcn^\flat\otimes\gn^\flat\otimes\gcn^\flat\otimes \gcn^\flat)+\jmp{\sigma(\gcn,\gn)}P(\gcn^\flat\otimes\gn^\flat\otimes\gcn^\flat\otimes \gn^\flat)\\&+\jmp{\sigma(\gn,\gcn)}P(\gcn^\flat\otimes\gn^\flat\otimes\gn^\flat\otimes \gcn^\flat)+\jmp{\sigma(\gcn,\gt_i)}P(\gcn^\flat\otimes\gn^\flat\otimes\gcn^\flat\otimes \gt^\flat_i)\\&+\jmp{\sigma(\gt_i,\gcn)}P(\gcn^\flat\otimes\gn^\flat\otimes\gt^\flat_i\otimes \gcn^\flat)+\jmp{\sigma(\gn,\gt_i)}P(\gcn^\flat\otimes\gn^\flat\otimes\gn^\flat\otimes \gt^\flat_i)\\&+\jmp{\sigma(\gt_i,\gn)}P(\gcn^\flat\otimes\gn^\flat\otimes\gt^\flat_i\otimes \gn^\flat)\\
  &=\jmp{\sigma(\gn,\gcn)}P(\gcn^\flat\otimes\gn^\flat\otimes\gn^\flat\otimes \gcn^\flat)+\jmp{\sigma(\gn,\gt_i)}P(\gcn^\flat\otimes\gn^\flat\otimes\gn^\flat\otimes \gt^\flat_i),
\end{align*}
and analogously
\begin{align*}
 P(\gn^\flat\otimes\gcn^\flat\otimes\jmp{\sigma})&=\jmp{\sigma(\gcn,\gn)}P(\gn^\flat\otimes\gcn^\flat\otimes\gcn^\flat\otimes \gn^\flat)+\jmp{\sigma(\gt_i,\gn)}P(\gn^\flat\otimes\gcn^\flat\otimes\gt^\flat_i\otimes \gn^\flat).
\end{align*}
Due to the proven symmetry \eqref{eq:codim2_sym} we have with the symmetries of $\sigma$ and $P$
\begin{align*}
  0&=\sum_{F\in\Fint}\Big(P(\gcn^\flat\otimes\gn^\flat\otimes\jmp{\sigma})-P(\gn^\flat\otimes\gcn^\flat\otimes\jmp{\sigma})\Big)\\
  &=\sum_{F\in\Fint}\sum_{i=1}^{N-2}\jmp{\sigma(\gn,\gt_i)}\Big(P(\gcn^\flat\otimes\gn^\flat\otimes\gn^\flat\otimes \gt^\flat_i)-P\change{(\gt^\flat_i\otimes\gn^\flat\otimes\gn^\flat\otimes \gcn^\flat)}\Big)\\
  &=\sum_{F\in\Fint}\sum_{i=1}^{N-2}\jmp{\sigma(\gn,\gt_i)}P\Big(\gcn^\flat\otimes\gn^\flat\otimes\gn^\flat\otimes \gt^\flat_i-\gt^\flat_i\otimes\gn^\flat\otimes\gn^\flat\otimes \gcn^\flat\Big),
\end{align*}
and thus there must \change{hold}
\begin{align*}
  \sum_{F\in\Fint}\sum_{i=1}^{N-2}\jmp{\sigma(\gn,\gt_i)}P(\gcn^\flat\otimes\gn^\flat\otimes\gn^\flat\otimes \gt^\flat_i)=0.
\end{align*}
Using this identity in \eqref{eq:intermediate3} gives the desired result \change{(see \eqref{eq:def_n_cn_jump} for the definition of ${\jump{ \sigma_{\gcn\gn}}}_F^E$)}

\begin{align*}
  \sum_{F\in\Fint}\int_{\d F}\langle\jmp{\gn \lrcorner  \sigma},\change{\Psi_{\gcn\gn\gn\cdot}}\rangle\,\vo{\d F} &= \sum_{F\in\Fint}\int_{\d F}\Big(\langle\jmp{\sigma(\gn,\gcn)}\gcn^\flat\otimes\gn^\flat\otimes\gn^\flat\otimes \gcn^\flat\\
  &\qquad+\sum_{i=1}^{N-2}\jmp{\sigma(\gn,\gt_i)}\gcn^\flat\otimes\gn^\flat\otimes\gn^\flat\otimes \gt^\flat_i,\Psi\rangle\Big)\,\vo{\d F}\\
  &= \sum_{E\in\Eint}\sum_{F\supset E}\int_E\jmp{\sigma(\gn,\gcn)}^E_F \Psi_{\gcn\gn\gn\gcn}\,\vo{E},
\end{align*}
finishing the proof.
\end{proof}

\bigskip

\begin{proof}[{\bf{Proof of Theorem~\ref{thm:distr_inc_curved}}}]
 Let $\Psi = S\Phi\in \D \Btest$.
 We start by integrating by parts the first term of
  \eqref{eq:distr_inc_curved_part1} (in
 Lemma~\ref{lem:distr_inc_curved_part1}) using \eqref{eq:ibp_volume},
  \begin{align}
    \label{eq:161}
    \int_T -\langle\nabla \sigma, & \div\Psi\rangle\,\vo{T} = \int_T\langle\nabla^2\sigma,\Psi\rangle\,\vo{T}
                                    +
                                    \int_{\d T}\langle\gn^\flat\otimes\nabla\sigma,\Psi\rangle\,\vo{\d T}.
  \end{align}
 Splitting the boundary integrand into normal and tangential
 components \change{to facets $F\subset \d T$} and omitting terms that vanish by the skew symmetries \eqref{eq:sym_B} of
  $\Psi$,
  \begin{align}
    \label{eq:14-int-parts}
    \ip{\gn^\flat\otimes\nabla\sigma,\Psi}
    & = \ip{ \nablans\sigma{\gn \Fp\Fp}, \Psi_{\gn\gn \Fp\Fp} }
      + \ip{ \nablans\sigma{\Fp\Fp\gn}, \Psi_{\gn \Fp\Fp \gn} }
      + \ip{ (\nabla\sigma)_{\Fp\Fp\Fp}, \Psi_{\gn \Fp\Fp\Fp} }.
  \end{align}
 Since the first two terms on the right are present in the identity
 of the theorem, we focus on the last term. It can be understood by
 splitting $\sigma$ into normal and tangential components \change{to facets $F\subset \d T$}, 
  \begin{equation}
    \label{eq:sigma-split}
    \sigma = \gn^\flat\otimes \sigma_{\gn \Fp} +
    \sigma_{\Fp\gn} \otimes\gn^\flat+\sigma_{\gn\gn}\gn^\flat\otimes\gn^\flat
    + \sigma_{\Fp}.    
  \end{equation}
 To compute $(\nablaF \sigma)_{\Fp} =  (\nabla \sigma)_{\Fp\Fp\Fp}$
 using this decomposition, we start by applying  
 the Leibniz rule to the first term on the right-hand side. Then, using 
  $\sff^\gn = -\nablaF \gn^\flat$ and~\eqref{eq:141},
  $ \big(\nablaF (\gn^\flat\otimes \sigma_{\gn \Fp})\big)_{\Fp} 
 = \big( \gn^\flat \otimes \nablaF \sigma_{\gn \Fp}\big)_{\Fp}
  -\sff^\gn \otimes  \sigma_{\gn \Fp}
 = -\sff^\gn \otimes \sigma_{\gn \Fp}, 
    $
 so
  \begin{equation*}
    \ip{  \big( \nablaF (\gn^\flat\otimes\sigma_{\gn \Fp}) \big)_{\Fp},
      \Psi_{\gn \Fp\Fp\Fp}}
 =
    \ip{  -\sff^\gn \otimes \sigma_{\gn \Fp}, 
      -\Psi_{\Fp\Fp\gn \Fp}}.
  \end{equation*}
 The derivative of the second term on the right-hand side of
  \eqref{eq:sigma-split} is computed  similarly, but carefully accounting
 for the order of arguments, namely for $X_i \in \Xm F$,
  \begin{align*}
 \big(\nablaF (\sigma_{\Fp\gn} \otimes \gn^\flat)\big)_{\Fp}
 (X_1, X_2, X_3) 
    & =
 \big(\nablaF_{X_1} \big(\sigma_{\Fp\gn} \otimes \gn^\flat)\big)
 (X_2, X_3) 
    \\
    & =
 \big( \nablaF_{X_1} \sigma_{\Fp\gn} \otimes \gn^\flat
      +
      \sigma_{\Fp\gn} \otimes \nablaF_{X_1} \gn^\flat
 \big)
 (X_2, X_3)
    \\
    & = -\sigma_{\gn \Fp} (X_2) \; \sff^\gn (X_1, X_3),
  \end{align*}
 since $\gn^\flat(X_3)=0$.
 As a result, by the symmetry of $\sff^\gn(X_1, X_3)$ and the skew
 symmetry of $\Psi(\gn, X_1, X_2, X_3)$ in $X_1$ and $X_3$,
  \[    \ip{ \nablaF \big(\sigma_{\Fp\gn} \otimes \gn^\flat\big)_{\Fp},
      \Psi_{\gn \Fp\Fp\Fp} } = 0.
  \]
 The third term in~\eqref{eq:sigma-split} does not contribute since
  $(\nablaF (\sigma_{\gn\gn}\gn^\flat\otimes\gn^\flat))_{\Fp}
 =0$.

 Using these observations to
 simplify the last term of \eqref{eq:14-int-parts}, we obtain
  \[
    \begin{aligned}
      \sum_{T \in \T} \int_{\d T}
      \langle\gn^\flat\otimes\nabla\sigma,\Psi\rangle&\,\vo{\d T}
 =
      \sum_{T \in \T}\int_{\d T}
 \bigg(
        \ip{ \nablans\sigma{\Fp\gn \Fp}, \Psi_{\gn \Fp\Fp \gn} } 
        -
        \ip{ \nablans\sigma{\gn \Fp\Fp}, \Psi_{\gn \Fp\Fp \gn} }
      \\
      &+ 
        \ip{ { \sff^\gn \otimes  \sigma_{\gn \Fp}},
        \Psi_{\Fp\Fp\gn\Fp} } 
 \bigg)\,\og_{\d T} + \sum_{F \in \Fint} \int_F \ip{\jump{\sigma_{\Fp}}, \Psi_{\gn \Fp\Fp\Fp}} \,\og_F.
    \end{aligned}
  \]
 By the $tt$-continuity of $\sigma$, the last term vanishes.
 The penultimate  term appears with the opposite sign in the
 expression for $\widetilde{\nabla^2 \sigma} (\Psi)$ in
 Lemma~\ref{lem:distr_inc_curved_part1}. That expression, put
 together with~\eqref{eq:161} and the above, gives
  \begin{align*}
    \widetilde{\nabla^2\sigma}(\Psi)
 =\sum_{T\in\T}&\bigg[  \int_T\langle\nabla^2\sigma,\Psi\rangle\,\vo{T}
    \\
    & +\int_{\d T}
 \Big(
      \ip{ \nablans\sigma{\Fp\gn \Fp\Fp}-\nablans\sigma{\gn \Fp\Fp}, \Psi_{\Fp \gn\gn \Fp} }
      +
      \ip{ { \sff^\gn \otimes  \sigma_{\gn \Fp}},
        \Psi_{\Fp\Fp\gn\Fp} } \Big)\,\og_{\d T}
    \\
    & + \int_{\d T}
 \Big(\langle\sigma_{\gn\gn}\sff^{\gn}
      +
      \nabla(\gn \lrcorner  \sigma),
      \;
      \Psi_{\Fp\gn\gn \Fp}\rangle
      -\ip{ { \sff^\gn \otimes  \sigma_{\gn \Fp}},
        \Psi_{\Fp\Fp\gn\Fp} }\Big)\,
      \vo{\d T} \bigg]
    \\
    & +\sum_{F\in\Fint}\int_{\d F}
      \langle\jmp{ \sigma_{\gn \Fp}},\,
      \Psi_{\gcn\gn\gn \Fp}\rangle\,\vo{\d F}.
  \end{align*}
 Using Lemma~\ref{lem:distr_inc_curved_part2} for the integrands over
 codimension 2 boundary terms, recalling that
  $\widetilde{\INC \sigma}(\Phi) = -\widetilde{\nabla^2\sigma}
 (\Psi)$ by~\eqref{eq:9},   
 and \change{since by definition $S=S_{(2,3)}$, $\Psi=S\Phi$, one has}
  $\Psi_{F\gn\gn F}=\Phi_{F \gn\gn F}$ and
  $\Psi_{\gcn\gn\gn\gcn}=\Phi_{\gcn\gn\gn\gcn}$,  
 we prove \eqref{eq:distr_cov_inc} \change{with \eqref{eq:restr_proj_relation}}.
 The proof of \eqref{eq:rel_b_inc_smooth} follows by comparing the
 terms in \eqref{eq:distr_cov_inc} and \eqref{eq:def_bh} and
 applying~\eqref{eq:6}.
\end{proof}

\begin{remark}
 In Theorem~\ref{thm:distr_inc_curved}, assuming that the metric $g$
 is smooth, we obtained \eqref{eq:rel_b_inc_smooth}, namely
  $b(g;\sigma,U)=-2\,\widetilde{\INC \sigma}( \Phi)$, for smooth $U$
 and $\Phi=\mapUA U$. However, all terms in \eqref{eq:def_bh}
 defining $b(g;\sigma,U)$ make sense even when $\Phi$ is less
 smooth, as long as $\Phi$ has the continuity properties of
  \eqref{eq:cont_A}. This motivates us to conjecture that
 Theorem~\ref{thm:distr_inc_curved} also holds for $B\in\Btesto$.  To
 prove this rigorously, it suffices to show that smooth functions are
 dense in $\Btest$, which is an interesting question on its own, but
 not needed for the current analysis of the correctness of our
 generalized Riemann curvature expression. Instead, all we need, for
 now, is the following definition, which just amounts to
 a renaming of $b(\dots)$.
\end{remark}

\begin{definition}
  \label{def:distr-incomp-oper}
 For any $\sigma \in \Regge(\T)$, define $\widetilde{\INC \sigma}$ as
 a linear functional on $\Atesto$ by
  \begin{align*}
    \widetilde{\INC \sigma} (A)
 = &\sum_{T\in\T}
 \bigg(\int_T\langle \INC \sigma, A\rangle\,\vo{T}
      \\
      &\qquad-
      \int_{\d T}
      \ip
 {
      \sigma_{\gn\gn}\sff^{\gn} + 
      \nablans\sigma{\Fr \gn \Fr} + (\nabla (\gn \lrcorner \sigma))_{\Fr} -
      \nablans\sigma{\gn \Fr\Fr}
 ,
        \Atnnt}\,\vo{\d T}\bigg)\nonumber
    \\
    &-\sum_{E\in\Eint}
      \sum_{F \in \F_E}
      \int_E
      \jmp{\sigma_{\gcn\gn}}^E_FA_{\gcn\gn\gn\gcn}\,\vo{E},
  \end{align*}
 where the jump in the last term is as defined in
  \eqref{eq:def_n_cn_jump}, and in the integrals over $\d T$, the
  $g$-normal $\gn$ points into the element $T$.  With this definition,
 in view of Lemma~\ref{lem:testspaceA}, we have
  \begin{equation}
    \label{eq:rel_b_inc}
 b(g;\sigma,U)=-2\,\widetilde{\INC \sigma}( \mapUA U ),
    \qquad U \in \Utesto.
  \end{equation}
 Moreover, if \change{$A$ is a smooth test function, $A\in\D\Atest$,
 then by Theorem~\ref{thm:distr_inc_curved},
 this extended definition coincides with
  $\widetilde{\INC \sigma}(A)$ as defined in~\eqref{eq:9}.}  
\end{definition}

The remainder of this section focuses on an adjoint-like operator of
the above-defined generalized $\INC$. This is needed later for the
numerical analysis. \change{More precisely,
  in Theorem~\ref{thm:conv_Riemann},
  to gain the convergence in $\Hmtwo$ of the generalized curvature towards the Riemann curvature for a sequence of Regge metrics $g_h$ approaching  a Riemannian metric $g$, we will need to show that  $b(g;\sigma, U)$
  appearing in \eqref{eq:def_bh}
  with $\sigma = g - g_h$ goes to  $0$ in an appropriate sense. For this,  we will integrate $\INC$ twice by parts and work with its adjoint.}

We have already seen in \eqref{eq:9} that
$\widetilde{\INC \sigma}(\Phi) = \int_\om \ip{ \sigma, \div \div S
  \Phi}\, \og$ for smooth $\Phi \in \D \Atest$. This motivates the next
definition.

\begin{definition}
  \label{def:div-div-dist}
 For any $B \in \Btesto$, we define
  $\widetilde{\div\div B}$ as a linear functional on $\Regge(\T)$ by 
  \begin{equation*}
 (\widetilde{\div\div B})(\sigma) :=
    -\widetilde{\INC\sigma} (SB),
    \qquad B \in \Btesto, \; \sigma \in \Regge(\T),
  \end{equation*}
 where the right-hand side is as in Definition~\ref{def:distr-incomp-oper}.
\end{definition}

Note that neither $\sigma$ nor $g$ are assumed to be globally smooth
in the next result. Given $T\in \T$, $F \in \triangle_{-1} T$ and
$E \in \triangle_{-1}F$, in analogy with \eqref{eq:def_n_cn_jump}, and
using the oriented $g$-normals and $g$-conormals there (see
Figure~\ref{fig:vectors}), define
\begin{equation}
  \label{eq:jmp-B-conormal-normal-EE}
  \jmp{B_{\gn\gcn \Er\Er}}^E_F=B^+_{\gn_F^{T_+}\gcn_E^F \Er\Er}+B^-_{\gn_F^{T_-}\gcn_E^F \Er\Er}.   
\end{equation}
Here, per the notation in \change{\eqref{eq:restriction}}, the subscript $\Er$ indicates arguments that are \change{restricted} onto the tangent space of $E$.
The proof of the following theorem is given after a few necessary lemmas.

\begin{theorem}
  \label{thm:distr_covariant_adjoint_inc}
  \change{Given a Regge metric $g \in \Regge^+(\T)$, one has for any $\sigma\in\Regge(\T)$ and $B\in\Btesto$}
  \begin{align}
    \nonumber 
 (\widetilde{\div\div B})(\sigma)
    &=\sum_{T\in\T}\bigg[\int_T\langle\sigma,\div\div B\rangle\,\vo{T} +
    \int_{\d T}\Big(\langle\sigma_{\Fr},
 (\div B+\divFR B)_{\gn \cdot\cdot}\rangle
    \\ \nonumber
    & \hspace*{2cm}
      + \sigma_{\Fr}:\overline{\sff}^\gn:B_{\Fr\gn\gn \Fr}- \langle\sff^\gn\otimes\sigma_{\Fr},
 B_{\Fr}\rangle\Big)\,\vo{\d T}\bigg]
    \\
    & +\sum_{E\in\Eint}\sum_{F \in \F_E}\int_E\langle\sigma_{\Er},
      \jmp{B_{\gn\gcn \Er\Er}}^E_F\rangle\,\vo{E},\label{eq:distr_adjoint_inc}
  \end{align}
  \change{where $\volform=\volform_g$ and the divergence operator is meant in the $g$-sense. Further,} the triple product
  $\sigma_{\Fr}:\overline{\sff}^\gn:\change{\Btnnt}$ \change{is defined by}
  \eqref{eq:triple-prod}, $\overline{\sff}^\gn=\mathbb{S}_{\Fr}(\sff^\gn)=\sff^\gn-H^{\gn}g_{\Fr}$ the trace-reversed second fundamental form (with mean curvature $H^{\gn}$), the jump $\jmp{B_{\gn\gcn \Er\Er}}^E_F$ is as
 defined in~\eqref{eq:jmp-B-conormal-normal-EE}, and in the integrals
 over $\d T$, the $g$-normal $\gn$ points into the element $T$.
\end{theorem}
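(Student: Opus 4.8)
The plan is to peel off the two layers of definition and then integrate by parts twice \emph{inside each element}. By Definition~\ref{def:div-div-dist}, $(\widetilde{\div\div B})(\sigma)=-\widetilde{\INC\sigma}(SB)$ with $SB\in\Atesto$, and Definition~\ref{def:distr-incomp-oper} supplies an explicit formula for $\widetilde{\INC\sigma}(SB)$. Two elementary simplifications come first. Since $S$ and $P$ are self-adjoint (\eqref{eq:S-selfadj}, \eqref{eq:P_selfad-proj}) with $S^2=I$, and $PB=B$ for every $B\in\Btesto$ (by \eqref{eq:PSA}), the element integrand becomes $\langle\INC\sigma,SB\rangle=-\langle SP\nabla^2\sigma,SB\rangle=-\langle\nabla^2\sigma,B\rangle$, using \eqref{eq:1}. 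Because $S$ merely swaps the two middle slots, $(SB)_{F\gn\gn F}=B_{F\gn\gn F}$ and $(SB)_{\gcn\gn\gn\gcn}=B_{\gcn\gn\gn\gcn}$, so the facet and codimension-2 terms of the formula in Definition~\ref{def:distr-incomp-oper} pass over verbatim with $B$ replacing $SB$. No density of smooth tensors in $\Btesto$ is needed here: the formula in Definition~\ref{def:distr-incomp-oper} is used as is, and all integration by parts below is strictly elementwise or facetwise, which is legitimate because $g$, $\sigma$ and $B$ are smooth up to element and facet boundaries even though $g$ is only piecewise smooth.

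Next I would integrate by parts twice on the element term $-\int_T\langle\INC\sigma,SB\rangle\,\vo{T}=\int_T\langle\nabla^2\sigma,B\rangle\,\vo{T}$, using the volume integration-by-parts formula \eqref{eq:ibp_volume} with the inward $g$-normal $\gn$ on $\d T$. Reading \eqref{eq:161} backwards gives $\int_T\langle\nabla^2\sigma,B\rangle\,\vo{T}=-\int_T\langle\nabla\sigma,\div B\rangle\,\vo{T}-\int_{\d T}\langle\gn^\flat\otimes\nabla\sigma,B\rangle\,\vo{\d T}$, and a second application turns $-\int_T\langle\nabla\sigma,\div B\rangle\,\vo{T}$ into $\int_T\langle\sigma,\div\div B\rangle\,\vo{T}+\int_{\d T}\langle\gn^\flat\otimes\sigma,\div B\rangle\,\vo{\d T}$. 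Thus the element term of $(\widetilde{\div\div B})(\sigma)$ is exactly $\sum_{T\in\T}\int_T\langle\sigma,\div\div B\rangle\,\vo{T}$, and all remaining contributions live on the $\d T$'s and on codimension-2 subsimplices.

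The substance of the proof is then the reduction of the combined $\d T$ integrand
\[
  \langle\gn^\flat\otimes\sigma,\div B\rangle-\langle\gn^\flat\otimes\nabla\sigma,B\rangle+\big\langle\sigma_{\gn\gn}\sff^{\gn}+\nablans\sigma{F \gn F}+\nabla(\gn\lrcorner\sigma)-\nablans\sigma{\gn FF},\;B_{F\gn\gn F}\big\rangle
\]
(the first two terms from the double integration by parts, the last from Definition~\ref{def:distr-incomp-oper}) to $\langle\sigma_F,(\div B+\divF B)_{\gn\cdot\cdot}\rangle+\sigma_F:\overline{\sff}^{\gn}:B_{F\gn\gn F}-\langle\sff^{\gn}\otimes\sigma_F,B\rangle$, up to codimension-2 leftovers. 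Here I would follow the pattern of Lemma~\ref{lem:distr_inc_curved_part1}: decompose $\sigma$ and $\nabla\sigma$ restricted to $\d T$ into normal and tangential components via \eqref{eq:sigma-split} and \eqref{eq:141}; discard every contraction that vanishes by the skew symmetries \eqref{eq:sym_B} of $B$; and invoke the $tt$-continuity of $\sigma$. The terms containing $\sigma_{\gn\gn}$, $\sigma_{\gn F}$, $\nablans\sigma{\gn FF}$, or $\nablans\sigma{F \gn F}$ (i.e., all normal components and all normal derivatives of $\sigma$) must telescope away, since none of them appears in the target expression; only the genuinely tangential derivative $\nablaF\sigma_F$ survives. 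On each facet $F$ this tangential derivative is removed by a \emph{surface} integration by parts \eqref{eq:ibp_surface}: this is precisely where the surface divergence $(\divF B)_{\gn\cdot\cdot}$ appears, where the mean curvature $H^{\gn}$ enters and combines with the bare $\sff^{\gn}$ contractions produced by the Gauss-formula difference $\nabla-\nablaF$ to build the trace-reversed second fundamental form $\overline{\sff}^{\gn}=\sff^{\gn}-H^{\gn}g_F$, and where codimension-2 boundary terms on $\d F$ are generated.

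Finally, the codimension-2 accounting: the $\d F$-terms just produced, together with the $\jmp{\sigma_{\gcn\gn}}^E_F\,B_{\gcn\gn\gn\gcn}$ terms inherited from Definition~\ref{def:distr-incomp-oper}, are regrouped by summing over each $(N-2)$-subsimplex $E$ and the facets $F\in\F_E$ through it. Using the rotation/reordering argument of Lemma~\ref{lem:distr_inc_curved_part2}, now with the roles of $\sigma$ and the test tensor interchanged, together once more with the $tt$-continuity of $\sigma$ and the skew symmetries of $B$, these collapse to $\sum_{E\in\Eint}\sum_{F\in\F_E}\int_E\langle\sigma_E,\jmp{B_{\gn\gcn EE}}^E_F\rangle\,\vo{E}$ with the jump as in \eqref{eq:jmp-B-conormal-normal-EE}, completing \eqref{eq:distr_adjoint_inc}. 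The main obstacle is exactly this boundary algebra: correctly tracking argument orders and signs in the normal/tangential splittings of the four-tensor $B$, verifying the full cancellation of the normal-component and normal-derivative terms of $\sigma$ on $\d T$, and extracting the trace reversal $\overline{\sff}^{\gn}$ from the surface integration by parts. Conceptually, everything rests on the already-established Lemmas~\ref{lem:distr_inc_curved_part1} and \ref{lem:distr_inc_curved_part2}; the new theorem is essentially their transpose.
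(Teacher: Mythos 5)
Your proposal is correct and follows essentially the same route as the paper's proof: unfold Definitions~\ref{def:div-div-dist} and \ref{def:distr-incomp-oper}, use $S^2=I$ together with $PB=B$ and the invariance of $B_{F\gn\gn F}$ and $B_{\gcn\gn\gn\gcn}$ under $S$, integrate by parts twice inside each element, split the resulting $\d T$ integrand into normal/tangential parts and kill the normal pieces via the skew symmetries \eqref{eq:sym_B} and a facetwise integration by parts \eqref{eq:ibp_surface} (which is exactly where $(\divF B)_{\gn\cdot\cdot}$ and the $H^{\gn}$ term producing $\overline{\sff}^{\gn}$ arise), and finally regroup the $\d F$ contributions with the $\jmp{\sigma_{\gcn\gn}}^E_F$ terms by the rotation argument of Lemma~\ref{lem:distr_inc_curved_part2}. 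The only cosmetic difference is that the paper isolates the facet-level Weingarten manipulations and the codimension-2 regrouping into dedicated auxiliary Lemmas~\ref{lem:auxiliary_codim1} and \ref{lem:auxiliary_codim2}, whose content you have correctly anticipated.
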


\begin{remark}
 Note that in \eqref{eq:distr_adjoint_inc} only the $tt$-components of $\sigma$ are involved in the codimension 1  and 2 terms, which are single-valued for $\sigma\in\Regge[\T]$. \change{This is crucial for distributional differential operators. In Definition~\ref{def:distr-incomp-oper} of the distributional incompatibility operator $\Atnnt$ and $A_{\gcn\gn\gn\gcn}$ are single-valued and the terms involving $\sigma$ jump, whereas in the adjoint operator $\sigma_{\Fr}$ is single-valued and the terms involving $B=SA$ jump.}
\end{remark}

\begin{lemma}
  \label{lem:auxiliary_codim2}
 Under the assumptions of Theorem~\ref{thm:distr_covariant_adjoint_inc} there holds
  \begin{align*}
    \sum_{E\in\Eint}\sum_{F\supset E}\int_E&\jmp{\langle\sigma,\change{B_{\gn\gcn\cdot\cdot}}+2\change{B_{\gn\gn\gcn\cdot}}\otimes\gn\rangle}_F\,\vo{E}\\
    &=\sum_{E\in\Eint}\sum_{F\supset E}\int_E(\langle \sigma_{\Er},\jmp{B_{\gn\gcn \Er\Er}}_F^E\rangle- \jmp{\sigma_{\gcn\gn}}_F^EB_{\gcn\gn\gn\gcn})\,\vo{E}.
  \end{align*}
\end{lemma}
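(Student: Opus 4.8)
The plan is to reduce this codimension-2 identity to a purely pointwise, linear-algebraic statement at each $E\in\Eint$, in the spirit of the manipulations already carried out in the proof of Lemma~\ref{lem:distr_inc_curved_part2}. Fix $E\in\Eint$ and a facet $F\supset E$, shared by two elements $T_\pm$. On $F$ we have the $g$-normals $\gn_F^{T_\pm}$ and on $E$ the $g$-conormals $\gcn_E^{F}$; together with a $g$-orthonormal frame $\{\gt_j\}_{j=1}^{N-2}$ of $\Xm{E}$ these give, at a point of $E$, two $g$-orthonormal frames $\{\gn_F^{T_\pm},\gcn_E^{F},\gt_1,\dots,\gt_{N-2}\}$ of the ambient tangent space. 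First I would expand $\sigma$ in each such frame as in \eqref{eq:sigma-split}, writing $\sigma = \sigma_{\gcn\gn}$-, $\sigma_{\gn E}$-, $\sigma_{\gcn E}$-, $\sigma_{\gn\gn}$-, $\sigma_{\gcn\gcn}$- and $\sigma_E$-type pieces. Because $B\in\Btesto$ has the skew symmetries \eqref{eq:sym_B} (so $B(\gn,\cdot,\gn,\cdot)$-type contractions with a repeated first/third or second/fourth slot vanish) and $B_{\gn\gn}=B_{\cdot\gn\gn\cdot}$ is single-valued on $F$ by \eqref{eq:B_nnFF}, most pairings $\langle\sigma,B_{\gn\gcn}\rangle$ and $\langle\sigma,B_{\gn\gn\gcn}\otimes\gn\rangle$ collapse: on the left, $\langle\sigma,B_{\gn\gcn}\rangle$ reduces to $\sigma_{\gcn\gn}B_{\gcn\gn\gn\gcn}$ plus $\sum_i\sigma(\gn,\gt_i)B(\gt_i,\gn,\gcn,\gn)$-type and $\sigma_E$-contracted terms, and similarly $\langle\sigma, B_{\gn\gn\gcn}\otimes\gn\rangle = \sigma_{\gn\gcn}B_{\gn\gn\gn\gcn}+\dots$, where $B_{\gn\gn\gn\cdot}=0$ by skew symmetry, so this second pairing contributes only $\sigma_E$-type and $\sigma(\gn,\gt_i)$-type pieces.

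The next step is the jump over the pair of facets meeting at $E$. As in Lemma~\ref{lem:distr_inc_curved_part2}, I would reorganize $\sum_{F\in\Fint}$ into $\sum_{E\in\Eint}\sum_{T\supset E}$ over the two facets $F_\pm\in\triangle_{-1}T$ with $F_+\cap F_-=E$, choosing right-handed orientations $(\gn_\pm,\gcn_\pm)$ and using the rotation-by-$\theta$ argument to transport $(\gn_-,\gcn_-)$ to $(\gn_+,\gcn_+)$. Exactly the symmetrization identity \eqref{eq:codim2_sym}, together with the $tt$-continuity of $\sigma$ (which makes $\jmp{\sigma_E}$ well-defined and makes the $\sigma(\gn,\gt_i)$-type contributions cancel between the two facets, just as in the cited proof), should eliminate all terms except the two that appear on the right-hand side of the claimed identity: the $\langle\sigma_E,\jmp{B_{\gn\gcn EE}}_F^E\rangle$ term, coming from the $\sigma_E$-contracted pieces of $B_{\gn\gcn}$ after using the definition \eqref{eq:jmp-B-conormal-normal-EE}, and the $-\jmp{\sigma_{\gcn\gn}}_F^E B_{\gcn\gn\gn\gcn}$ term, coming from the $\sigma_{\gcn\gn}$-type piece together with the single-valuedness of $B_{\gcn\gn\gn\gcn}$ on $E$ (Lemma~\ref{lem:cont_A_codim2}, via $S$) and the definition \eqref{eq:def_n_cn_jump}. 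The sign and the factor $2$ on $B_{\gn\gn\gcn}\otimes\gn$ are precisely what is needed to combine $\sigma_{\gcn\gn}$ and $\sigma_{\gn\gcn}$ contributions into the single $\jmp{\sigma_{\gcn\gn}}_F^E$ term with a net coefficient $-1$; I would verify this sign bookkeeping carefully, orienting $\gn$, $\gcn$ as in Figure~\ref{fig:vectors}.

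The main obstacle I expect is exactly this sign- and orientation-tracking across the two facets $F_\pm$: one must be consistent about which $\gn_F^{T}$ points inward, how $\gcn_E^F$ is oriented (into $F$ from $E$), and how the handedness choice interacts with the skew symmetries of $B$ and the $P$-symmetrization \eqref{eq:codim2_sym}. A secondary subtlety is ensuring that the ``mixed'' terms involving $\sigma(\gn,\gt_i)$ and $\sigma(\gcn,\gt_i)$ genuinely cancel and do not leave a residual codimension-2 contribution; this is where the symmetry $\sff^\gn(\gt_i,\gt_j)=\sff^\gn(\gt_j,\gt_i)$ versus the skew symmetry of $B$ in the relevant slots must be invoked, mirroring the vanishing arguments in Lemma~\ref{lem:distr_inc_curved_part2}. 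Once those cancellations are pinned down, the rest is a direct, if tedious, pointwise computation, and the result follows by summing over $E\in\Eint$ and $F\supset E$.
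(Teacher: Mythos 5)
Your plan follows essentially the same route as the paper's proof: a pointwise expansion of $\langle\sigma, B_{\gn\gcn}+2B_{\gn\gn\gcn}\otimes\gn^\flat\rangle$ in the frame $\{\gn,\gcn,\gt_1,\dots,\gt_{N-2}\}$ using the skew symmetries \eqref{eq:sym_B} of $B$, which leaves exactly the two target terms plus the mixed terms $\langle\sigma_{\gn E},B_{\gn\gcn E\gn}\rangle+\langle\sigma_{\gcn E},B_{\gcn\gcn E\gn}\rangle$, followed by the rotation-integral argument (as in Lemma~\ref{lem:distr_inc_curved_part2}) to show those mixed terms sum to zero over the two facets of each element meeting at $E$. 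One small correction: that cancellation uses only the rotation trick and the skew symmetries of $B$ --- the second fundamental form plays no role in the codimension-2 terms, so the part of your obstacle discussion invoking $\sff^\gn(\gt_i,\gt_j)=\sff^\gn(\gt_j,\gt_i)$ is spurious here.
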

\begin{proof}
 We split the terms \change{on the left-hand side below on each $T\supset F$} into normal \change{(to facets $F$)}, conormal \change{(normal to $E$ in $F$),} and codimension 2 components \change{(tangent to $E$),} only write the \change{non-zero} ones and simplify
\begin{align*}
  \langle\sigma, \change{B_{\gn\gcn\cdot\cdot}}+ 2\change{B_{\gn\gn\gcn\cdot}}\otimes \gn^\flat&\rangle = \langle \sigma_{\Er},B_{\gn\gcn \Er\Er}\rangle + \langle \sigma_{\Er\gn},B_{\gn\gcn \Er\gn}+2B_{\gn\gn\gcn \Er}\rangle +\langle \sigma_{\gcn \Er},B_{\gn\gcn\gcn \Er}\rangle \\
  &\quad+ \sigma_{\gcn\gn}(B_{\gn\gcn\gcn\gn}+ 2 B_{\gn\gn\gcn\gcn})\\
  &=\langle \sigma_{\Er},B_{\gn\gcn \Er\Er}\rangle - \langle \sigma_{\Er\gn},B_{\gn\gcn \Er\gn}\rangle +\langle \sigma_{\gcn \Er},B_{\gn\gcn\gcn \Er}\rangle- \sigma_{\gcn\gn}B_{\gcn\gn\gn\gcn}\\
  &=\langle \sigma_{\Er},B_{\gn\gcn \Er\Er}\rangle- \langle \sigma_{\gn \Er},B_{\gn\gcn \Er\gn}\rangle -\langle \sigma_{\gcn \Er},B_{\gcn\gcn \Er\gn}\rangle- \sigma_{\gcn\gn}B_{\gcn\gn\gn\gcn}.
\end{align*}
For the second identity, we used the (skew-)symmetry properties \eqref{eq:sym_B} of $B$, $B_{\gn\gn\gcn \Er}=- B_{\gn\gcn \Er\gn}$ and $B_{\gn\gn\gcn\gcn}=-B_{\gn\gcn\gcn\gn}$. For the last equality we used the symmetry of $\sigma$ and that again by \eqref{eq:sym_B} $B_{\gn\gcn\gcn \Er} = -B_{\gcn\gcn \Er\gn}$.

The first and last terms together read due to the continuity conditions on $\sigma$ and $B$ with the jumps \eqref{eq:def_n_cn_jump}, cf. Figure~\ref{fig:vectors},
\begin{align*}
  \sum_{E\in\Eint}\sum_{F\supset E}\int_E&\jmp{\langle \sigma_{\Er},B_{\gn\gcn \Er\Er}\rangle- \sigma_{\gcn\gn}B_{\gcn\gn\gn\gcn}}_F\,\vo{E} \\
  &=\sum_{E\in\Eint}\sum_{F\supset E}\int_E\big(\langle \sigma_{\Er},\jmp{B_{\gn\gcn \Er\Er}}_F^E\rangle- \jmp{\sigma_{\gcn\gn}}_F^EB_{\gcn\gn\gn\gcn}\big)\,\vo{E}.
\end{align*}
Next, we show with the same notation as in the proof of Lemma~\ref{lem:distr_inc_curved_part2} that the sum over the remaining middle two terms is zero. Therefore, we reorder the sum, consider an integral representation of the difference \change{involving} the rotation $R$ \change{(see the proof of Lemma~\ref{lem:distr_inc_curved_part2})} such that $R(0)\gn_- = \gn_-$ and $R(\theta)\gn_- = \gn_+$, and prove that the integrand is zero. To this end, we define $F(X,Y,Z,W):= R(t)X^\flat\otimes R(t)Y^\flat\otimes \sigma_{(R(t)Z)\Er}\otimes R(t)W^\flat$ and compute, \change{applying below the Leibniz rule first to $\frac{d}{dt}(F(\gn_-,\gcn_-,\gn_-,\gn_-))$, then to $\frac{d}{dt}(F(\gcn_-,\gcn_-,\gcn_-,\gn_-))$,}
\begin{align*}
  \sum_{E\in\Eint}\sum_{F\supset E}&\jmp{\langle \sigma_{\gn \Er},B_{\gn\gcn \Er\gn}\rangle +\langle \sigma_{\gcn \Er},B_{\gcn\gcn \Er\gn}\rangle}_F\\
 =\sum_{E \in \Eint}\sum_{T \supset E}&\langle \gn_+^\flat\otimes \gcn_+^\flat\otimes \sigma_{\gn_+\Er}\otimes\gn_+^\flat -\gn_-^\flat\otimes \gcn_-^\flat\otimes \sigma_{\gn_-\Er}\otimes\gn_-^\flat  \\
  &\quad+ \gcn_+^\flat\otimes \gcn_+^\flat\otimes \sigma_{\gcn_+\Er}\otimes\gn_+^\flat- \gcn_-^\flat\otimes \gcn_-^\flat\otimes \sigma_{\gcn_-\Er}\otimes\gn_-^\flat,B\rangle\\
 =\sum_{E \in \Eint}\sum_{T \supset E}&\int_0^{\theta}\frac{d}{dt}\langle F(\gn_-,\gcn_-,\gn_-,\gn_-) +F(\gcn_-,\gcn_-,\gcn_-,\gn_-),B\rangle\,dt\\
 =\sum_{E \in \Eint}\sum_{T \supset E}&\int_0^{\theta}\langle F(\gcn_-,\gcn_-,\gn_-,\gn_-) -F(\gn_-,\gn_-,\gn_-,\gn_-) + F(\gn_-,\gcn_-,\gcn_-,\gn_-)\\
  &\quad+F(\gn_-,\gcn_-,\gn_-,\gcn_-)-F(\gn_-,\gcn_-,\gcn_-,\gn_-)-F(\gcn_-,\gn_-,\gcn_-,\gn_-)\\
  &\quad-F(\gcn_-,\gcn_-,\gn_-,\gn_-)+F(\gcn_-,\gcn_-,\gcn_-,\gcn_-),B\rangle\,dt\\
 =\sum_{E \in \Eint}\sum_{T \supset E}&\int_0^{\theta}\langle F(\gcn_-,\gcn_-,\gn_-,\gn_-) +F(\gn_-,\gcn_-,\gcn_-,\gn_-)-F(\gn_-,\gcn_-,\gcn_-,\gn_-)\\
  &\quad-F(\gcn_-,\gcn_-,\gn_-,\gn_-),B\rangle\,dt=0.
\end{align*}
In the penultimate equation, we used the skew symmetries \eqref{eq:sym_B} of $B$ and recognized in the final step that the remaining terms cancel. This concludes the proof.
\end{proof}

\begin{lemma}
  \label{lem:auxiliary_codim1}
 Under the assumptions of Theorem~\ref{thm:distr_covariant_adjoint_inc} \change{one has}
  \begin{align*}
    \ip{\sigma, \divFR B_{\gn \cdot\cdot\cdot}}
    & = \ip{\sigma, (\divFR B)_{\gn\cdot\cdot } }
      - \ip{ \sff^\gn \otimes \sigma, B_{\Fp\Fp\cdot\cdot}}\,,
    \\
    \ip{\sigma, \divFR(B_{\cdot\gn\gn \cdot} \otimes \gn^\flat) }
    & = \ip{ \sigma_{\gn\cdot}, (\divFR B)_{\gn\gn \cdot} }
      + \ip{ \sff^\gn \otimes \sigma_{\gn\cdot}, B_{\gn \Fp\Fp\cdot }}
      - \sigma_{\Fp}: \sff^\gn : B_{\Fp \gn\gn \Fp}\,.
  \end{align*}
\end{lemma}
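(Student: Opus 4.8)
The plan is to expand both surface divergences with the Leibniz rule for the surface connection $\nablaF$ and to reduce the resulting terms using only two facts: that $\nablaF$ differentiates in tangential directions only (so all contractions run over a local $g$-orthonormal tangential frame $\{\gt_i\}_{i=1}^{N-1}$ of $\Xm{F}$), and the Weingarten identity $\nablaF\gn^\flat=-\sff^\gn$, equivalently $\nablaF_{\gt_i}\gn=-\sum_j\sff^\gn(\gt_i,\gt_j)\gt_j$, exactly as used in the proof of Lemma~\ref{lem:distr_inc_curved_part1}. Throughout, the intermediate tensors produced are not symmetric in their open slots, but since $\sigma$ is symmetric only their symmetric parts contribute to $\ip{\sigma,\cdot}$, so open arguments may be transposed freely. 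Fix $T\in\T$, a facet $F\in\triangle_{-1}T$ with inward $g$-normal $\gn$, and $\{\gt_i\}$ as above.

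For the first identity, note that $B_{\gn\cdot\cdot\cdot}=\gn\lrcorner B$, so Leibniz gives $\nablaF_{\gt_i}(\gn\lrcorner B)=\gn\lrcorner\nablaF_{\gt_i}B+(\nablaF_{\gt_i}\gn)\lrcorner B$. Contracting the differentiation slot against the leading free slot and summing over $i$ splits $\divF B_{\gn\cdot\cdot\cdot}$ into two pieces. In the first piece I would use the pair-swap symmetry $B(X,Y,Z,W)=B(Y,X,W,Z)$ of \eqref{eq:sym_B} to move $\gn$ out of the contracted slot, identifying the piece with $(\divF B)_{\gn\cdot\cdot}$ up to a transposition of the two remaining arguments; pairing with the symmetric $\sigma$ then gives $\ip{\sigma,(\divF B)_{\gn\cdot\cdot}}$. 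In the second piece, substituting $\nablaF_{\gt_i}\gn=-\sum_j\sff^\gn(\gt_i,\gt_j)\gt_j$ and using the symmetry of $\sff^\gn$ in $i,j$ collapses the double sum to the full contraction $-(\sff^\gn)^{ij}B(\gt_i,\gt_j,\cdot,\cdot)$, whose pairing with $\sigma$ is exactly $-\ip{\sff^\gn\otimes\sigma,B}$. This proves the first identity.

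For the second identity I would apply Leibniz to $\nablaF\big(B_{\cdot\gn\gn\cdot}\otimes\gn^\flat\big)$, which produces four families of terms: $\nablaF$ falling on $B$; $\nablaF$ falling on the first interior $\gn$-insertion (slot $2$ of $B$); $\nablaF$ falling on the second interior $\gn$-insertion (slot $3$ of $B$); and $\nablaF$ falling on the $\gn^\flat$ factor. After taking $\divF$ and pairing with $\sigma$, the $\gn^\flat$ factor contracts $\sigma$ in that slot down to $\sigma_{\gn\cdot}$. The first family then contributes $\ip{\sigma_{\gn\cdot},(\divF B)_{\gn\gn\cdot}}$ directly. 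The second family, after replacing $\nablaF_{\gt_i}\gn$ by $-\sum_j\sff^\gn(\gt_i,\gt_j)\gt_j$ and relocating the resulting $\gn$-argument into slot $1$ of $B$ using the skew symmetry in slots $1,3$ of \eqref{eq:sym_B} together with the symmetry of $\sff^\gn$, assembles into $+\ip{\sff^\gn\otimes\sigma_{\gn\cdot},B_{\gn\cdot\cdot\cdot}}$. The third family vanishes, because it pairs the symmetric $\sff^\gn$ against the slice $(i,j)\mapsto B(\gt_i,\gn,\gt_j,\cdot)$ of $B$, which \eqref{eq:sym_B} forces to be antisymmetric in $i,j$. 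The fourth family, using $\nablaF\gn^\flat=-\sff^\gn$, becomes after pairing with $\sigma$ precisely the triple product $-\sigma:\sff^\gn:B_{\cdot\gn\gn\cdot}$ in the sense of \eqref{eq:triple-prod}. Summing the three surviving contributions gives the second identity.

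The Leibniz expansions and frame computations are routine; the genuinely delicate step, which I expect to be the main obstacle, is the disciplined use of the three symmetry relations in \eqref{eq:sym_B}: first to push each $\gn$-argument into the exact slot position matching $(\divF B)_{\gn\cdot\cdot}$, $(\divF B)_{\gn\gn\cdot}$, and $B_{\gn\cdot\cdot\cdot}$, and second to recognize which Weingarten-correction terms vanish (a tensor antisymmetric in a pair of indices contracted against the symmetric $\sff^\gn$) and which survive to form the triple product. Everything else is bookkeeping.
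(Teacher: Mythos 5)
Your proposal is correct and follows essentially the same route as the paper's proof: expand the surface divergence via the Leibniz rule in a tangential $g$-orthonormal frame, substitute the Weingarten identity $\nablaF\gn^\flat=-\sff^\gn$, and use the symmetries \eqref{eq:sym_B} of $B$ together with the symmetry of $\sigma$ and $\sff^\gn$ to identify the surviving terms (including the vanishing of the slot-$3$ correction by the antisymmetric-against-symmetric contraction). No gaps; nothing further is needed.
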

\begin{proof}
 We take the surface divergence \eqref{eq:divF-def} of the above expression using a $g$-orthonormal basis $\{\gt_i\}_{i=1}^{N-1}$ of the tangent space. Then \change{use the Leibniz rule and express the second fundamental form} as $\nabla_{\gt_i}\gn^\flat = -\sum_{j=1}^{N-1}\sff^\gn(\gt_i,\gt_j)\gt_j^\flat$
  \begin{align*}
    \ip{\sigma,\divFR B_{\gn \cdot\cdot\cdot}} &= \sum_{i=1}^{N-1}\ip{\sigma, (\nabla_{\gt_i}B)_{\gn\gt_i\cdot\cdot} + B_{(\nabla_{\gt_i}\gn)\gt_i\cdot\cdot}}\\
    &=\sum_{i=1}^{N-1}\left(\ip{\sigma,(\nabla_{\gt_i}B)_{\gt_i\gn\cdot\cdot}} - \sum_{j=1}^{N-1}\ip{\sigma, \sff^\gn(\gt_i,\gt_j)B_{\gt_j\gt_i\cdot\cdot}}\right)\\
    &=\ip{\sigma,(\divFR B)_{\gn\cdot\cdot}}-\ip{ \sff^\gn \otimes \sigma, B_{\Fp\Fp\cdot\cdot}}.
  \end{align*}
 For the second line, we used the symmetries \eqref{eq:sym_B} of $B$ and $\sigma$ for the first term, and for the last line, the symmetry of the second fundamental form. 
  
 We proceed analogously for the second identity
\begin{align*}
  &\langle\sigma,\divFR (B_{\cdot\gn\gn\cdot}\otimes\gn^\flat)\rangle \\
  &=\!\ip{\sigma, \sum_{i=1}^{N-1}(\nabla_{\gt_i}B)_{\gt_i\gn\gn\cdot}\otimes\gn^\flat + B_{\gt_i(\nabla_{\gt_i}\gn)\gn\cdot}\otimes\gn^\flat+B_{\gt_i\gn(\nabla_{\gt_i}\gn)\cdot}\otimes\gn^\flat+B_{\gt_i\gn\gn\cdot}\otimes\nabla_{\gt_i}\gn^\flat}\\
  &=\!\ip{\sigma_{\gn\cdot},(\divFR B)_{\gn\gn\cdot}} \\
  &\quad-\! \sum_{i,j=1}^{N-1}\ip{\sigma, \sff^\gn(\gt_i,\gt_j) B_{\gt_i\gt_j\gn\cdot}\otimes\gn^\flat\!+\!\sff^\gn(\gt_i,\gt_j)B_{\gt_i\gn\gt_j\cdot}\otimes\gn^\flat \!+\! \sff^\gn(\gt_i,\gt_j)B_{\gt_i\gn\gn\cdot}\otimes\gt_j}.
\end{align*}
By the (skew) symmetry of $B$ and the second fundamental form the second term $\sum_{i,j=1}^{N-1}\sff^\gn(\gt_i,\gt_j)B_{\gt_i\gn\gt_j\cdot}$ vanishes. \change{The last line of the above formula can be rewritten, using also the triple product \eqref{eq:triple-prod},} as
\begin{align*}
  &-\sum_{i,j=1}^{N-1}\ip{\sigma, \sff^\gn(\gt_i,\gt_j) B_{\gt_i\gt_j\gn\cdot}\otimes\gn^\flat + \sff^\gn(\gt_i,\gt_j)B_{\gt_i\gn\gn\cdot}\otimes\gt_j}\\
  &\hspace*{2cm}=\ip{\sff^\gn\otimes \sigma_{\gn\cdot},B_{\gn\Fp\Fp\cdot}}-\sigma_{\Fp}:\sff^\gn:B_{\Fp\gn\gn\Fp},
\end{align*}
which concludes the proof.
\end{proof}

\begin{proof}[{\bf{Proof of Theorem~\ref{thm:distr_covariant_adjoint_inc}}}]

  \change{Letting $s_{\Eint} = \sum_{E\in\Eint} \sum_{F \in \F_E} \int_E
  \jmp{\sigma_{\gcn\gn}}^E_FB_{\gcn\gn\gn\gcn}\,\vo{E}$ and examining the expression given in Definition~\ref{def:div-div-dist}, we can write \change{with \eqref{eq:restr_proj_relation}}
  \begin{align}
    &(\widetilde{\div\div B}
 )
 (\sigma)-s_{\Eint}
 =
      -\widetilde{\INC\sigma} (SB) -s_{\Eint} \label{eq:div-div-thm-1}
    \\
 =
     \!\sum_{T\in\T}&
      \int_T\langle\nabla^2\sigma,B\rangle\,\vo{T}
      \!+\!
      \int_{\d T}
      \ip
 {
      \sigma_{\gn\gn}\sff^{\gn} \!+\! 
      \nablans\sigma{\Fp \gn \Fp} \!+\! (\nabla (\gn \lrcorner \sigma))_{\Fp} \!-\!
      \nablans\sigma{\gn \Fp\Fp}
 ,
 B_{\Fp\gn\gn \Fp}}\,\vo{\d T}.\nonumber
  \end{align} 
 Looking at the term involving the second covariant derivative, we use \eqref{eq:ibp_volume} to integrate it by parts}

  \begin{equation}
    \label{eq:div-div-thm-5}
    \begin{aligned}
      \int_T\langle\nabla^2\sigma,B\rangle\,\vo{T}
      &= \int_T-\langle\nabla\sigma,\div B\rangle\,\vo{T} -\int_{\d T} \langle \nabla \sigma,B_{\gn \cdot\cdot\cdot}\rangle\,\vo{\d T}\\
      &=\int_T\langle\sigma,\div\div B\rangle\,\vo{T}
        +\int_{\d T}\langle \sigma, (\div B)_{\gn \cdot\cdot}\rangle\,\vo{\d T}-\int_{\d T}
        \langle \nabla \sigma,B_{\gn\cdot\cdot\cdot}\rangle\,\vo{\d T}.
    \end{aligned}    
  \end{equation}
  \change{We split the last integrand into its normal and tangential components.} 
 From the resulting eight terms, eliminating those
 that are zero by the skew
 symmetries \eqref{eq:sym_B} of $B$, we obtain, \change{recalling the notation $(\nabla\sigma)_{\Fp}=(\nabla\sigma)_{\Fp\Fp\Fp}$,}
  \begin{align}
    \label{eq:181}
    \langle \nabla \sigma,B_{\gn\cdot\cdot\cdot}\rangle
    & =
    \langle (\nabla \sigma)_{\Fp\gn}, B_{\gn \Fp\Fp \gn}\rangle + 
    \langle (\nabla \sigma)_{\gn \Fp\Fp}, B_{\gn \gn \Fp\Fp}\rangle +
      \langle (\nabla \sigma)_{\Fp}, B_{\gn \Fp\Fp\Fp}\rangle,
  \end{align}
 which we then substitute into~\eqref{eq:div-div-thm-5}.
  
  \change{We rewrite \eqref{eq:div-div-thm-1} using \eqref{eq:div-div-thm-5}  and \eqref{eq:181}, observing a cancellation of the term $\ip{(\nabla\sigma)_{\Fp\Fp\gn},B_{\gn \Fp\Fp\gn}}$ coming from \eqref{eq:181} with $-\ip{(\nabla\sigma)_{\Fp\gn \Fp},B_{\Fp\gn\gn \Fp}}$ already appearing in \eqref{eq:div-div-thm-1} (use \eqref{eq:sym_B} and the symmetries of $\sigma$)}
  \begin{align*}
    &(\widetilde{\div\div B}
 )
 (\sigma)-s_{\Eint}
 =  \sum_{T\in\T}\Big(
      \int_T\ip{ \sigma, \div\div B}\,\vo T\\
     &\quad+ 
      \int_{\d T}
      \ip
 {
      \sigma_{\gn\gn}\sff^{\gn} + 
      \nablans\sigma{\Fp \gn \Fp} + (\nabla (\gn \lrcorner \sigma))_{\Fp} -
      \nablans\sigma{\gn \Fp\Fp}
 ,
 B_{\Fp \gn\gn \Fp}}\,\vo{\d T}
    \\
    &\quad+
      \int_{\d T}
 \Big[ \langle \sigma, (\div B)_{\gn \cdot\cdot}\rangle
      -\langle (\nabla \sigma)_{\Fp}, B_{\gn \Fp\Fp\Fp}\rangle
      +\langle (\nabla \sigma)_{\gn \Fp\Fp} - (\nabla \sigma)_{\Fp\gn \Fp},
 B_{\Fp\gn \gn \Fp}\rangle       
 \Big]\,\vo{\d T}\Big).      
  \end{align*}    
  \change{The surviving term $\ip{(\nabla\sigma)_{\Fp\gn \Fp},B_{\Fp\gn\gn \Fp}}$} can be rewritten using Lemma~\ref{lem:triple-prod}.  Although
 Lemma~\ref{lem:triple-prod} was proved only for $A \in \Atest$,
 using the symmetries of $B$ in \eqref{eq:sym_B} in place of those of
  $A$, we find that \eqref{eq:17Ainnj} holds also for $B$, so the same
 proof there shows that the identity of the lemma also holds for
  $B \in \Btest$. Therefore,
  $\ip{ (\nabla(\gn \lrcorner \sigma))_{\Fp}, B_{\Fp\gn\gn\Fp}}=\ip{
 (\nabla\sigma)_{\Fp\gn \Fp},B_{\Fp\gn\gn \Fp}}-\sff:\sigma_{\Fp}:B_{\Fp\gn\gn\Fp}$.
 These observations lead to
  \begin{align}
    \label{eq:div-div-thm-intermediate1}
    \begin{split}
 (\widetilde{\div\div B})(\sigma)
 =&\, s_{\Eint}
         + \sum_{T \in \T}  \int_T \ip{ \sigma, \div\div B}\, \vo T
        + \int_{\d T} \ip{ \sigma, (\div B)_{\gn \cdot\cdot} }\, \vo{\d T}
      \\
      & + \sum_{T \in \T} \int_{\d T}
 \Big[
        \ip{ \sigma_{\gn\gn} \sff^\gn, B_{\Fp \gn\gn \Fp} }
        - \sff^\gn : \sigma_{\Fp} : B_{\Fp\gn\gn\Fp}
        + s_1
 \Big]\, \vo{\d T},
    \end{split}
  \end{align}
 where 
  $s_1 = \ip{ (\nabla \sigma)_{\Fp \gn \Fp}, B_{\Fp \gn \gn \Fp} } - \ip{
 (\nabla \sigma)_{\Fp}, B_{\gn \Fp\Fp\Fp}}$.

 Next, let us fix a facet $F \in \triangle_{-1}T$ of an element $T$
 and examine the two terms whose difference is $s_1$.  
 The first term equals
  \[
  \ip{ (\nabla \sigma)_{\Fp \gn \Fp}, B_{\Fp \gn \gn \Fp} } = \ip{ (\nablaF
    \sigma)_{\Fp\Fp \gn}, B_{\Fp \gn \gn \Fp} } =\ip{ \nablaF \sigma, B_{\cdot \gn
      \gn \cdot} \otimes \gn^\flat}.
      \] 
 Its second term can be rewritten, 
 noting that arguments indicated by subscript $\Fp$ are  tangentially projected using  $\Proj=\idop-\gn\otimes\gn^\flat$, and noting that $(\nablaF\sigma)_{\gn\cdot\cdot}=0$, as follows:
      \begin{align*}
        -\langle (\nabla \sigma)_{\Fp}, B_{\gn \Fp\Fp\Fp}\rangle
        &= -\langle \nablaF\sigma, B_{\gn\cdot \Fp\Fp}-\gn^\flat\otimes B_{\gn\gn \Fp\Fp}\rangle
 = -\langle \nablaF\sigma, B_{\gn\cdot \Fp\Fp}\rangle
        \\
        &= -\langle \nablaF \sigma,B_{\gn\cdot \cdot \Fp} \rangle
 = 
          -\langle \nablaF \sigma,\change{B_{\gn\cdot\cdot\cdot}}
          -B_{\gn\cdot\cdot\gn}\otimes \gn^\flat\rangle
        \\
        &=\langle \nablaF\sigma, B_{\cdot\gn\gn\cdot}\otimes \gn^\flat-B_{\gn\cdot\cdot\cdot}\rangle, 
      \end{align*}
 where we have used \eqref{eq:sym_B} twice, once
 in the second line for 
      $B_{\gn \cdot \Fp\Fp} =B_{\gn \cdot \cdot \Fp}$, and again in the last line for   $B_{\gn\cdot\cdot\gn}=B_{\cdot\gn\gn\cdot}$.
 Therefore, $s_1$ in \eqref{eq:div-div-thm-intermediate1} can be expressed using
  $\nablaF \sigma$, which then permits the use of  the surface integration
 by parts formula~\eqref{eq:ibp_surface} on the facet $F$. Namely, 
  \begin{align*}
    &\int_F s_1
      \, \vo{F}
 = \int_F \ip{ \nablaF \sigma,
      2B_{\cdot \gn \gn \cdot} \otimes \gn^\flat - B_{\gn \cdot\cdot\cdot}
 }\, \vo{F}
    \\
    & =
      \int_F
      \ip{ \sigma, -\divFR (2B_{\cdot \gn \gn \cdot} \otimes \gn^\flat - B_{\gn \cdot\cdot\cdot})}\,\vo{F}
      -\int_F H^{\gn} \ip{ \gn^\flat \otimes \sigma ,
      2B_{\cdot \gn \gn \cdot} \otimes \gn^\flat - B_{\gn \cdot\cdot\cdot}} \,\vo{F}
    \\
    & \quad-\int_{\d F}
      \ip{  \sigma,      2B_{\gcn \gn \gn \cdot} \otimes \gn^\flat - B_{\gn\gcn \cdot\cdot}}
      \,\vo{\d F},
  \end{align*}
 where $\gcn$ is the inward pointing $g$-conormal on $\d F$.
 We use Lemma~\ref{lem:auxiliary_codim1} to rewrite the first term on the right-hand side above to get 
  \begin{align*}
    &\ip{ \sigma, \divFR (B_{\gn \cdot\cdot\cdot}-2B_{\cdot \gn \gn \cdot} \otimes \gn^\flat )}\,\vo{F} = \ip{\sigma, (\divFR B)_{\gn\cdot\cdot } }- \ip{ \sff^\gn \otimes \sigma, B_{\Fp\Fp\cdot\cdot}}\\
    &\qquad
    -2\ip{ \sigma_{\gn\cdot}, (\divFR B)_{\gn\gn \cdot} }
    -2\ip{ \sff^\gn \otimes \sigma_{\gn\cdot}, B_{\gn\Fp\Fp \cdot }}
    + 2\sigma_{\Fp}: \sff^\gn : B_{\Fp \gn\gn \Fp}\,.
  \end{align*}
 The term multiplying $H^{\gn}$ in the integral of $s_1$ can be simplified to
  \[
    \ip{ \gn^\flat \otimes \sigma,
      2B_{\cdot \gn \gn \cdot} \otimes \gn^\flat - B_{\gn \cdot\cdot\cdot}}
 = \ip{ \sigma_{\gn\cdot}, 2B_{\gn \gn \gn \cdot}}
    - \ip{ \sigma,   B_{\gn \gn \cdot\cdot}} = -\ip{\sigma_{\Fp}, B_{\gn\gn\cdot\cdot}}.
  \]
 Hence, \change{inserting all the work just done on the terms involved in $s_1$, we get from \eqref{eq:div-div-thm-intermediate1}}
  \begin{align}
 (\widetilde{\div\div B})&(\sigma)
 = \tilde{s}_{\Eint}
       + \sum_{T \in \T}  \int_T \ip{ \sigma, \div\div B}\, \vo T
        + \int_{\d T} \ip{ \sigma, (\div B)_{\gn \cdot\cdot} }\, \vo{\d T}
      \nonumber\\
      & + \sum_{T \in \T} \int_{\d T}
 \Big[
        \ip{ \sigma_{\gn\gn} \sff^\gn, B_{\Fp \gn\gn \Fp} }
        - \sff^\gn : \sigma_{\Fr} : B_{\Fp\gn\gn \Fp}
 \Big]\, \vo{\d T}
        \nonumber\\
      & + \sum_{T \in \T} 
        \int_{\d T} \Big[
        \ip{ \sigma, (\divFR B)_{\gn\cdot\cdot} - 2(\divFR B)_{\gn\gn\cdot } \otimes \gn^\flat}
        + H^{\gn}  \ip{ \sigma_{\Fp}, B_{\gn\gn \Fp\Fp}}
 \Big]\, \vo{\d T}
        \nonumber\\
      \label{eq:div-div-thm-intermediate2}& +
        \sum_{T \in \T} 
        \int_{\d T} \Big[2\sigma_{\Fp} : \sff^\gn : B_{\Fp\gn\gn \Fp}
        -\ip{ \sff^\gn \otimes \sigma, 2B_{\gn\Fp\Fp\cdot} \otimes \gn^\flat + B_{\Fp\Fp\cdot\cdot}}
 \Big]\, \vo{\d T},
  \end{align}
 where
  $\tilde s_{\Eint} = s_{\Eint} + \sum_{T \in \T} \sum_{F \subset \d
 T} \int_{\d F} \ip{ \sigma, B_{\gn\gcn \cdot\cdot} - 2B_{\gcn \gn
      \gn \cdot } \otimes \gn^\flat} \, \vo{\d F} $ collects all
 integrals over codimension~2 mesh entities.

 Let us collect all integrands on the right-hand side
 of~\eqref{eq:div-div-thm-intermediate2} involving the second
 fundamental form
 into $s_2$, and all terms involving \change{only one} divergence operator into
  $s_3$, i.e.,
  \begin{align*}
 s_2
    & =
      \ip{ \sigma_{\gn\gn} \sff^\gn, B_{\Fp \gn\gn \Fp} }
      -\ip{ \sff^\gn \otimes \sigma, 2B_{\gn\Fp\Fp\cdot} \otimes \gn^\flat + B_{\Fp\Fp\cdot\cdot}}
      - \sff^\gn : \sigma_{\Fp} : B_{\Fp\gn\gn \Fp}\\
      &\quad+ 2\sigma_{\Fp} : \sff^\gn :
 B_{\Fp\gn\gn \Fp},\\
 s_3
    & =  \ip{ \sigma, (\div B)_{\gn \cdot\cdot}+(\divFR
 B)_{\gn\cdot\cdot}} \!-\!\ip{ \sigma_{\gn\cdot}, 2(\divFR B)_{\gn\gn\cdot }}.
  \end{align*}
 To simplify $s_2$, first note that using \eqref{eq:B_nnFF} \change{and the symmetry of $\sigma$, $\sff^{\gn}$, and $B_{\cdot\gn\gn \cdot}$,} the last
 two terms simplify to $\sigma_{\Fp} : \sff^\gn : B_{\Fp\gn\gn \Fp}$. Hence,
  \begin{align}
    \label{eq:0500}
 s_2
    & =  \sigma_{\Fp} : \sff^\gn : B_{\Fp \gn \gn \Fp}
      + \ip{ \sigma_{\gn\gn} \sff^\gn, B_{\Fp \gn\gn \Fp} }
      -\ip{ \sff^\gn \otimes \sigma,
      2B_{\gn\Fp\Fp\cdot} \otimes \gn^\flat + B_{\Fp\Fp\cdot\cdot}}.      
  \end{align}
 In the last term, splitting $\sigma$ by \eqref{eq:sigma-split},
  \change{
  \begin{align*}
    &\ip{ \sff^\gn \otimes \sigma, \,
    2B_{\gn\Fp\Fp\cdot} \otimes \gn^\flat + B_{\Fp\Fp\cdot\cdot}}
 =
      \ip{ \sff^\gn \otimes \sigma_{\Fp}, \,
    2B_{\gn\Fp\Fp\Fp} \otimes \gn^\flat_{\Fp} + B_{\Fp\Fp\Fp\Fp}}\\
      &+\ip{ \sff^\gn \otimes \gn^\flat \otimes \sigma_{\gn \Fp}+\sff^\gn \otimes \sigma_{\Fp\gn}\otimes \gn^\flat,\,
      2B_{\gn\Fp\Fp\cdot} \otimes \gn^\flat + B_{\Fp\Fp\cdot\cdot}}+ \ip{ \sigma_{\gn\gn}\sff^\gn, 2B_{\gn\Fp\Fp\gn} + B_{\Fp\Fp\gn\gn}},
  \end{align*}
 we observe that in the first term on the right-hand side $\ip{ \sff^\gn \otimes \sigma_{\Fp}, \,
    2B_{\gn\Fp\Fp\Fp} \otimes \gn^\flat_{\Fp}}$ cancels, the second term cancels by the skew symmetries \eqref{eq:sym_B} of $B$ and by the symmetry of $\sigma$ and $\sff^\gn$, while the last simplifies, giving
  \begin{align*}
    \ip{ \sff^\gn \otimes \sigma, \,
    2B_{\gn\Fp\Fp\cdot} \otimes \gn^\flat + B_{\Fp\Fp\cdot\cdot}} = \ip{ \sff^\gn \otimes \sigma_{\Fp}, B_{\Fp}} +
      \ip{ \sigma_{\gn\gn}\sff^\gn, B_{\gn\Fp\Fp\gn}}.
  \end{align*}}
 Hence,~\eqref{eq:0500} simplifies, after using \eqref{eq:B_nnFF},  to
  \begin{equation}
    \label{eq:s2}
 s_2 = \sigma_{\Fp} : \sff^\gn : B_{\Fp \gn \gn \Fp}
    -\ip{ \sff^\gn \otimes \sigma_{\Fp}, B_{\Fp}}.
  \end{equation}

 To simplify $s_3$, splitting $\sigma$ by \eqref{eq:sigma-split} again, 
 and using the corresponding splits for the divergence terms, the
 first inner product in $s_3$ splits into four terms, and the second
 into two terms.  Terms with $(\divFR B)_{\gn\cdot\gn}$ and
  $(\div B)_{\gn\cdot \gn}$ vanish due to the skew symmetries
  \eqref{eq:sym_B} of $B$. What remains gives
  \begin{align*}
 s_3
    & \! =\! \ip{ \sigma_{\Fp}, (\div B + \divFR B)_{\gn \Fp\Fp}} 
      \!+\! \ip{\sigma_{\gn \Fp}, (\div B  + \divFR B)_{\gn\gn \Fp} }
      \!-\! 2 \ip{\sigma_{\gn \Fp},  (\divFR B)_{\gn\gn \Fp} }.     
  \end{align*}
 Since $(\nabla_X B)_{\gn\gn\gn\cdot}$ and
  $(\nabla_X B)_{\cdot\gn\gn\gn}$ vanish, it is easy to see
 from~\eqref{eq:divF-def} that there holds
  $(\divFR B)_{\gn\gn \Fp} = (\div B)_{\gn\gn\cdot}$. Thus, $s_3$ further
 simplifies to
  \begin{equation}
    \label{eq:s3}
 s_3 =\ip{ \sigma_{\Fp}, (\div B)_{\gn \Fp\Fp}+(\divFR B)_{\gn \Fp\Fp}}. 
  \end{equation}
 Finally, we simplify $\tilde{s}_{\Eint}$ by 
 Lemma~\ref{lem:auxiliary_codim2},
  \begin{align}
    \nonumber 
    \tilde{s}_{\Eint}
    & = s_{\Eint}
      +  \sum_{T \in \T} \sum_{F \subset \d T}
      \int_{\d F}  \ip{ \sigma,
 B_{\gn\gcn \cdot\cdot} - 2B_{\gcn \gn \gn \cdot } \otimes \gn^\flat}
      \, \vo{\d F}
    \\     \label{eq:sEint}
    & = 
      \sum_{E \in \Eint} \sum_{F \in \F_E} \int_E \ip{\sigma_E, \jmp{\change{B_{\gn\gcn EE}}}_F^E}\,
      \vo{E}.
  \end{align}
  \change{Inserting}~\eqref{eq:sEint}, \eqref{eq:s3}, and~\eqref{eq:s2} in
  \eqref{eq:div-div-thm-intermediate2} \change{proves the theorem with \eqref{eq:restr_proj_relation}}.
\end{proof}

\section{Numerical analysis}
\label{sec:num_ana}

In this section, we prove \emph{a priori} convergence estimates for the densitized distributional Riemann curvature tensor in the following setting. Let $\Omega\subset\R^N$, $N\geq 2$, be a domain equipped with a smooth ``exact'' metric tensor $\gex$. We assume that a family of shape-regular triangulations $\{\T_h\}_{h>0}$ consisting of possibly polynomially curved elements of $\Omega$ with meshsize  $h:=\max_{T\in\T_h} h_T$, $h_T:=\mathrm{diam}(T)$, are given together with a family of Regge metrics $\{\gappr\}_{h>0}$. They approximate $\gex$ in a sense made precise shortly.
Here, shape-regularity means that there exists a constant $C_0>0$ independent of $h$ such that for all $h>0$
\begin{align}
  \label{eq:shape-regularity}
\sup\limits_{T\in\T_h}\frac{h_T}{\rho_T}\leq C_0,
\end{align}
where $\rho_T$ is the inradius of $T$. Our convergence estimates will have constants that may depend on $C_0$. \change{We use standard notation and results from numerical analysis throughout this section, cf. Appendix~\ref{sec:numerical-analysis-prelims} for an overview.}

In finite element computations, we use a reference element $\hat T$,
the unit \change{Euclidean} $N$-simplex, and the space $\Pol^k(\hat T)$ of polynomials of
degree at most~$k$ on $\hat T$. 
\change{Let $T\in\T_h$ be an $N$-simplex of the triangulation with possibly polynomially curved facets that is diffeomorphic to the reference element $\hat T$ via a map $\Phi_T: \hat{T}\to T$, $\Phi_T\in \Pol^k(\hat T,\R^N)$.} 
Define the {\em Regge finite element space} of degree $k$ by
\begin{equation}
  \label{eq:regge_fem_space}
  \begin{aligned}
    \Regge_h^k\! =\! \{ \sigma \in \Regge(\T_h):
    &
    \text{ for all } T \in \T_h,\;
    \sigma|_T \! =\! \sigma_{ij} dx^i \otimes dx^j ,\,\, \sigma_{ij}\circ\Phi_T \in \Pol^k(\hat T ) \},
  \end{aligned}
\end{equation}
where $\Regge(\T_h)$ is as in \eqref{eq:ttspace}.

\begin{remark}
For finite element computations on general manifolds $M$, we would need charts so that \change{each element $T\subset M$ in $\T_h(M)$} is covered by a single chart giving the coordinates $x^i$ on $T$. The chart identifies the parameter domain of $T$ as the (possibly curved) Euclidean $N$-simplex \change{$\Teuc\subset\Omega\subset \R^N$} diffeomorphic to  $T$. Let $\change{\tilde{\Phi}}: T \to \Teuc $ denote this diffeomorphism. \change{Then $\Phi_T = \tilde{\Phi}^{-1} \circ \Phi : \hat T \to T$ maps the reference element, where $\Pol^k(\hat T)$ is defined, diffeomorphically to $T$.} 
In the setting we have been using, $M=\Omega$ is an open subset of $\R^N$ that is coverable by a single chart, with \change{$\tilde{\Phi}$} set globally to the identity.
\end{remark}

Throughout, we use standard Sobolev\change{--Slobodeckij} spaces $\Wsp[\Omega]$ and their norms and seminorms for any $s\geq 0$ and $p\in [1,\infty]$. When the domain is $\Omega$, we omit it from the
norm notation if there is no chance of confusion.  We also use the
elementwise norms $\|u\|_{\Wsph}^p =\sum_{T\in \T_h}\|u\|^p_{\Wsp[T]},$
with the usual adaptation for $p=\infty$. When $p=2$, we put
$\|\cdot\|_{\Hsh}=\|\cdot\|_{W^{s,2}_h}$.  Furthermore, \change{define for any elementwise $H^1$ function $\sigma$ and elementwise $H^2$ function $\rho$ the norms}
\[
\nrm{\sigma}_2^2 = \| \sigma \|_{L^2}^2 + h^2 \| \sigma \|^2_{H_h^1}, \qquad
\nrmt{\rho}_2^2 = \| \rho \|_{L^2}^2 + h^2 \| \rho \|^2_{H_h^1} + h^4 \| \rho \|^2_{H_h^2}.
\]
\change{Note that both norms scale like the $\Ltwo$-norm due to the added powers of the meshsize, cf. \eqref{eq:scaling}.}

\subsection{Statements of the convergence results.}

In~\eqref{eq:def_curv_riemann}, the generalized densitized curvature
operator $\RogA$ was defined as a linear functional on the
metric-independent mesh-dependent test space $\Utest$ of
\eqref{eq:testspace_U}.  To highlight the dependence on the mesh
$\T_h$ and the metric $g_h$ on it, we now refer to $\Utest$ as
$\Utesth$ and to $\RogA$ as $\RogA(g_h)$.  Obviously, for the smooth
metric $\gex$, we have $\RogA(\gex) = (\Curvature\,\volform) (\gex)$
with the exact smooth curvature $\Curvature$ defined in
\eqref{eq:Q-defn}.  We prove convergence of $\RogA(g_h)$ to the exact
densitized curvature $(\Curvature\,\volform) (\gex)$ in the
$H^{-2}$-norm. For this, we only need \change{to consider} the action of $\RogA(g_h)$ on a
smoother $H^2$-subspace of the metric-independent test space
$\Utesth$. After identifying $\W^{N-2}(\om)^{\odot2}$ with symmetric
$\tilde{N} \times \tilde{N}$ matrix fields, where
\[
  \tilde N = \binom{N}{N-2} =\frac{N(N-1)}{2}, 
\]
\change{define and denote this subspace} by 
$H^{2}_0(\Omega,\R^{\tilde{N}\times \tilde{N}}_{\mathrm{sym}}) = \{ u: \om \to
  \R^{\tilde{N}\times \tilde{N}} \,:\, u_{ij} = u_{ji} \in
 H_0^2(\om) \}$ \change{with its dual space $H^{-2}(\om,\R^{\tilde{N}\times \tilde{N}}_{\mathrm{sym}})$ and dual norm  $\|\cdot\|_{H^{-2}}$ \eqref{eq:hmknorm}}.

\begin{theorem}
  \label{thm:conv_Riemann}
 Let $\Omega\subset\R^N$, $N\ge 2$, be a domain equipped with a
 smooth Riemannian metric $\gex$. Assume that $\{\gappr\}_{h>0}$ is a
 family of Regge metrics on a shape-regular family of triangulations
  $\{\T_h\}_{h>0}$ of $\Omega$ with
  $\lim_{h \to 0}\|\gappr-\gex\|_{\Linf}=0$ and
  $C_1:=\sup_{h>0}\max_{T \in
    \T_h}\|\gappr\|_{W^{2,\infty}(T)}<\infty$. Let
  \[
 C_g \!=\!
    \begin{cases}
      1+\max_{T \in \T_h}(h_T^{-1}\|\gex-\gappr\|_{\Linf[T]})+\|\gex-\gappr\|_{\Winfh},
      & N=2,
      \\
      1+\max_{T \in \T_h}(h_T^{-2}\|\gex-\gappr\|_{\Linf[T]})+\max_{T \in \T_h}(h_T^{-1}\|\gex-\gappr\|_{\Winf[T]}),
      & N\ge 3.
    \end{cases}
  \]
 Then there exists $h_0>0$ and a $C>0$ depending on $N$, $C_0$,
  $C_1$, $\|\gex\|_{W^{2,\infty}(\Omega)}$, and
  $\|\gex^{-1}\|_{\Linf[\Omega]}$ such that for all $h\leq h_0$,
  \[
    \|\RoggA{\gappr}-(\Curvature\,\volform)(\gex)\|_{H^{-2}}
    \leq C C_g \nrmt{\gappr-\gex}_2.
  \]
\end{theorem}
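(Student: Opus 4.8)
The plan is to estimate the error functional $f_h := \RoggA{\gappr}-(\Curvature\,\volform)(\gex)$ acting on a test field $U\in H^2_0(\Omega,\R^{\tilde N\times\tilde N}_{\mathrm{sym}})$ by exploiting the linearization machinery of Sections~\ref{sec:lineariz} and~\ref{sec:distr_inc}. First I would introduce the affine homotopy of metrics $g(t)=\gex + t(\gappr-\gex)$, $t\in[0,1]$, connecting the exact metric to the approximate one. A subtlety is that $U$ must be a legitimate test function for $\RogU(g(t))$ for every $t$: the metric-independent space $\Utesth$ does not depend on $g$, but the boundary-vanishing subspace $\Utesto$ and the map $\mapUA_{g(t)}$ do depend on $g(t)$. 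For $U\in H^2_0$ the facet constraints in~\eqref{eq:testspace_U} are automatically satisfied since all relevant traces vanish, so $U\in\Utesto(g(t))$ for all $t$, and the fundamental theorem of calculus gives
\[
  f_h(U) = \RoggA{\gappr}(U) - \RoggA{\gex}(U)
  = \int_0^1 \frac{d}{dt}\RogU(g(t))(U)\,dt
  = \int_0^1 \big(a(g(t);\sigma,U)+b(g(t);\sigma,U)\big)\,dt,
\]
where $\sigma = \dot g(t)=\gappr-\gex$, using Theorem~\ref{thm:distr_Riem_evol}. This reduces the whole problem to bounding $|a(g(t);\sigma,U)|$ and $|b(g(t);\sigma,U)|$ uniformly in $t\in[0,1]$ by $C_g\,\nrmt{\sigma}_2\,\|U\|_{H^2}$.

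Next I would estimate the two pieces separately, as the theorem's sketch after Theorem~\ref{thm:distr_Riem_evol} indicates. For $b(g;\sigma,U)$, the key is Definition~\ref{def:distr-incomp-oper} and Definition~\ref{def:div-div-dist}: we have $b(g;\sigma,U)=-2\,\widetilde{\INC\sigma}(\mapUA_g U) = 2\,(\widetilde{\div\div (S\mapUA_g U)})(\sigma)$, and Theorem~\ref{thm:distr_covariant_adjoint_inc} furnishes an explicit representation of $(\widetilde{\div\div B})(\sigma)$ as a sum of element-interior integrals of $\div\div B$ against $\sigma$, facet integrals involving only $tt$-components of $\sigma$ (which are single-valued, being traces of the continuous $\gex$ minus a $tt$-continuous Regge metric), and codimension-2 integrals. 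Here $B = S\mapUA_g U$ is built from $U$ by the bounded algebraic map $\mapUA_g$ whose coefficients depend smoothly on $g$ and $g^{-1}$ (Proposition~\ref{prop:mapA_coo}); since $\|g(t)\|_{W^{2,\infty}_h}$ and $\|g(t)^{-1}\|_{L^\infty}$ stay uniformly bounded for small $h$ (this is where $\|\gappr-\gex\|_{L^\infty}\to 0$ and $C_1<\infty$ enter, to keep $g(t)$ uniformly positive-definite and control two covariant derivatives of $B$), we get $\|\div\div B\|_{L^2_h}\lesssim \|U\|_{H^2_h}$ and analogous bounds for the trace terms. The facet and edge terms are then handled by scaled trace inequalities: on a facet $F\subset\partial T$, $\|v\|_{L^2(F)}\lesssim h_T^{-1/2}\|v\|_{L^2(T)}+h_T^{1/2}\|v\|_{H^1(T)}$, and on an edge $E$ one loses another half power of $h$. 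Tracking the powers of $h$, the facet terms of $b$ contribute $\lesssim (\,h^{-1}\|\sigma\|_{L^2}+ \|\sigma\|_{H^1_h}\,)\|U\|_{H^2}$ up to further trace factors on $U$ that cost no net power because $U\in H^2_0(\Omega)$ globally; after collecting everything one obtains $|b(g(t);\sigma,U)|\lesssim C_g\,\nrmt{\sigma}_2\,\|U\|_{H^2}$, where the $h^{-2}$ (resp.\ $h^{-1}$) weighting in $C_g$ in dimension $N\ge 3$ (resp.\ $N=2$) matches exactly the worst power arising from the double trace reduction through facets down to codimension-2 bones.

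For $a(g;\sigma,U)$, I would use~\eqref{eq:def_ah}: it has no spatial derivatives of $\sigma$, only algebraic contractions of $\sigma$ with $\Riemann$, with $\jmp{\sff}$, and with $\Theta_E$. The element term is $\lesssim \|\Riemann(g)\|_{L^\infty_h}\|\sigma\|_{L^2}\|A\|_{L^2}$, and since $\|\Riemann(g(t))\|_{L^\infty_h}\lesssim C_1$ (two derivatives of $g(t)$, bounded) this is $\lesssim \|\sigma\|_{L^2}\|U\|_{L^2}$. The facet term involves $\jmp{\sff}(g(t))$, whose size on each $F$ is controlled by the jump of one derivative of $g(t)$ across $F$; writing $g(t)=\gex + t\sigma$ and using that $\jmp{\sff}(\gex)=0$ (the exact metric is globally smooth), we get $\|\jmp{\sff}(g(t))\|_{L^2(F)}\lesssim \|\sigma\|_{W^{1,\infty}_h}\cdot h_F^{N-1}$-type bounds combined with trace inequalities, producing a contribution absorbed into $C_g\,\nrmt{\sigma}_2\,\|U\|_{H^2}$. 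The edge (angle-deficit) term is handled the same way using that $\Theta_E(\gex)=0$ and that $\Theta_E(g(t))$ depends on first-order jumps of $g(t)$ across facets meeting at $E$; combined with the edge trace inequality this again fits under $C_g\,\nrmt{\sigma}_2\,\|U\|_{H^2}$. In dimension $N=2$ one knows $a(\dots)\equiv 0$, which is why the $C_g$ there is milder.

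I expect the main obstacle to be the careful bookkeeping of mesh-size powers through the repeated trace inequalities — particularly in the facet and bone terms of $b(\dots)$, where one must pass from a volume $L^2$-norm of $U$ (or its two covariant derivatives) to a trace on a codimension-2 simplex and simultaneously from $\sigma$'s volume norm to its trace, while keeping the constant independent of $h$ and dependent on $\gex$ only through $\|\gex\|_{W^{2,\infty}}$ and $\|\gex^{-1}\|_{L^\infty}$. A second delicate point is verifying uniform positive-definiteness and uniform $W^{2,\infty}_h$-bounds on the homotopy metrics $g(t)$ — this needs $h_0$ small enough that $\|\gappr-\gex\|_{L^\infty}$ is below a threshold fixed by $\|\gex^{-1}\|_{L^\infty}$, which is exactly the role of the hypotheses $\lim_{h\to0}\|\gappr-\gex\|_{L^\infty}=0$ and $C_1<\infty$. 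The rest — expanding $\mapUA_{g(t)}$ and its derivatives in coordinates, bounding covariant derivatives of $B$ by Euclidean derivatives of $U$ times powers of $\|g(t)\|_{W^{2,\infty}_h}$ and $\|g(t)^{-1}\|_{L^\infty}$, and assembling the final estimate — is routine once these two points are settled.
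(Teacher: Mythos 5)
Your proposal follows essentially the same route as the paper: the affine homotopy $g(t)=\gex+t(\gappr-\gex)$ combined with the fundamental theorem of calculus and Theorem~\ref{thm:distr_Riem_evol} to reduce the error to $\int_0^1(a+b)\,dt$, a direct estimate of $a$ using the vanishing of $\jmp{\sff}$ and $\Theta_E$ for the smooth metric together with facet and codimension-2 trace inequalities, and an estimate of $b$ through the adjoint representation of Theorem~\ref{thm:distr_covariant_adjoint_inc}, with the uniform bounds on $g(t)$ and $g(t)^{-1}$ secured by the hypotheses exactly as you describe. This matches the paper's roadmap and the content of Propositions~\ref{prop:conv_a_term} and~\ref{prop:conv_b_term}, so the plan is sound.
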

When the metric approximation is sufficiently good to have $C_g$
bounded independently of meshsize, then rates of convergence can be
quantified from the above result.
As an example, let $\OptInt[k]:\Cinf[\Omega,\Sc]\to\Regge_h^k$ be an interpolation operator into the Regge finite element space satisfying the following \change{best-approximation} conditions: there  exists a $p\in [2,\infty]$ such that the interpolant can be continuously extended to symmetric tensor fields in $W^{k+1,p}(\Omega)$ and
\begin{subequations}
  \label{eq:interp_cond}
  \begin{align}
 |\OptInt[k] g - g|_{W^{t,p}(T)} \le C_2 h_T^{k+1-t} |g|_{W^{k+1,p}(T)},\qquad\forall t\in [0,k+1],
  \end{align}
 where $C_2=C_2(N,k,h_T/\rho_T,t)$. \change{Further,} suppose  that for all $
  \change{0 < s \le \min\{k+1,2\}},$
  \begin{align}
 |\OptInt[k] g - g|_{W^{t,\infty}(T)} \le C_2 h_T^{s-t} |g|_{W^{s,\infty}(T)},\qquad \forall t\in [0,s]. 
  \end{align}
\end{subequations}
The canonical Regge interpolant\change{, cf. \eqref{eq:RegInt},} \cite{li18} is an example of an interpolant fulfilling \eqref{eq:interp_cond}. It is well-defined for $g$ in $W^{s,p}(\Omega)$ with $s>(N-1)/p$, a sufficient condition for defining traces on boundaries of codimension up to $N-1$. Then, setting, e.g., $p=N$ meets the requirements.
An Oswald-type interpolant \change{\cite{oswaldBPXpreconditionerP1Elements1993}, \cite[Section 22.2]{ernFiniteElementsApproximation2021}, Appendix~\ref{sec:numerical-analysis-prelims},} performing local elementwise $L^2$-projections and averaging the degrees of freedom shared by different elements, also leads to a valid choice if the requirements are weakened to hold on element patches, see e.g. \cite[Appendix A]{GN2023} for an example.

\begin{corollary}
  \label{cor:conv_Riemann_int}
 Suppose the assumptions of Theorem~\ref{thm:conv_Riemann} hold.
 Assume further that $\gappr=\OptInt[k] \change{\gex}\in\Regge_h^k$,
 for some integer $k\geq 0$ for $N=2$ and $k\geq 1$ for $N\geq 3$, 
 and $\OptInt[k]$ is an interpolation operator fulfilling~\eqref{eq:interp_cond}
 for some $p\in [2,\infty]$. Then there exists $h_0>0$ and $C>0$
 depending on $N$, $\Omega$, $k$, $C_0$, $C_1$, $C_2$, $\|\gex\|_{W^{2,\infty}(\Omega)}$, and $\|\gex^{-1}\|_{\Linf[\Omega]}$ 
 such that for all
$h\leq h_0$
\begin{align}
  \label{eq:cgce-rates} &\|\RoggA{\gappr}-(\Curvature\,\volform)(\gex)\|_{H^{-2}}\leq C\Big(\sum_{T\in\T_h}h_T^{p(k+1)}|\gex|^{\change{p}}_{W^{k+1,p}(T)}\Big)^{1/p}.
\end{align}
For $p=\infty$ the right-hand side is $C\max_{T \in \T_h}h_T^{k+1}|\change{\gex}|_{W^{k+1,\infty}(T)}$.
\end{corollary}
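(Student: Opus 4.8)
The plan is to specialize Theorem~\ref{thm:conv_Riemann} to the case $\gappr = \OptInt[k] g$ by estimating the three ingredients appearing on the right-hand side of the bound in that theorem: the quantity $C_g$, the mesh-dependent norm $\nrmt{\gappr - \gex}_2$, and (implicitly) the hypothesis $\lim_{h\to 0}\|\gappr-\gex\|_{\Linf}=0$ together with the uniform bound $C_1$. First I would verify that the interpolation hypotheses~\eqref{eq:interp_cond} guarantee the qualitative assumptions of Theorem~\ref{thm:conv_Riemann}: the bound~\eqref{eq:interp_cond} with $t=0$ gives $\|\OptInt[k]g - g\|_{\Linf[T]} \lesssim h_T^{\min\{k+1,2\}}|g|_{W^{\min\{k+1,2\},\infty}(T)}$ (using the second part of~\eqref{eq:interp_cond} when $k=0$, and the first with $p=\infty$ or a Sobolev embedding when $p<\infty$), so $\|\gappr-\gex\|_{\Linf}\to 0$ as $h\to 0$ since $\gex$ is smooth; and the inverse-estimate-plus-interpolation argument bounds $\|\gappr\|_{W^{2,\infty}(T)}$ by $\|\gex\|_{W^{2,\infty}(T)}$ plus a vanishing term, giving $C_1<\infty$.

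Next I would bound $C_g$. For $N=2$, the three contributions are $1$, $\max_T h_T^{-1}\|\gex-\gappr\|_{\Linf[T]}$, and $\|\gex-\gappr\|_{\Winfh}$. The first is harmless; for the second, apply~\eqref{eq:interp_cond} with $t=0$ and $s=\min\{k+1,2\}\ge 1$ (for $k\ge 0$) to get $\|\gex-\gappr\|_{\Linf[T]}\lesssim h_T^{s}|\gex|_{W^{s,\infty}(T)}\lesssim h_T \cdot(\text{bounded})$, so $h_T^{-1}\|\gex-\gappr\|_{\Linf[T]}$ stays bounded; for the third, apply~\eqref{eq:interp_cond} with $t=1$, $s\ge 1$, giving $|\gex-\gappr|_{W^{1,\infty}(T)}\lesssim h_T^{s-1}|\gex|_{W^{s,\infty}(T)}$, which is bounded. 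For $N\ge 3$, where $k\ge 1$ so $s=\min\{k+1,2\}=2$, the same reasoning with $t=0$ yields $h_T^{-2}\|\gex-\gappr\|_{\Linf[T]}\lesssim |\gex|_{W^{2,\infty}(T)}$ and with $t=1$ yields $h_T^{-1}|\gex-\gappr|_{W^{1,\infty}(T)}\lesssim |\gex|_{W^{2,\infty}(T)}$. In both cases $C_g \lesssim 1$ with the implicit constant depending only on $C_2$, $N$, $k$, $C_0$, and $\|\gex\|_{W^{2,\infty}(\Omega)}$.

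Finally I would bound $\nrmt{\gappr-\gex}_2^2 = \|\gappr-\gex\|_{L^2}^2 + h^2\|\gappr-\gex\|_{H_h^1}^2 + h^4\|\gappr-\gex\|_{H_h^2}^2$. Using~\eqref{eq:interp_cond} with $p$, $t=0,1,2$ elementwise: $\|\gappr-\gex\|_{L^2(T)}\lesssim h_T^{k+1}|\gex|_{W^{k+1,p}(T)}$ (converting the $W^{0,p}$ seminorm to the $L^2$ norm via Hölder on the bounded element, absorbing the volume factor into the constant), $h\|\gappr-\gex\|_{H^1(T)}\lesssim h_T h_T^{k}|\gex|_{W^{k+1,p}(T)} = h_T^{k+1}|\gex|_{W^{k+1,p}(T)}$, and similarly $h^2\|\gappr-\gex\|_{H^2(T)}\lesssim h_T^{k+1}|\gex|_{W^{k+1,p}(T)}$ (when $k\ge 1$; for $N=2$, $k=0$ the $H^2$ term needs the second estimate in~\eqref{eq:interp_cond} with $s=1$, $t=2$, but then one reconciles exponents—here I would note that $\gappr$ piecewise polynomial of degree $0$ has vanishing second derivatives on each element, so $\|\gappr-\gex\|_{H^2_h} = \|\gex\|_{H^2_h}\lesssim 1$ and $h^4\|\gappr-\gex\|^2_{H^2_h}\lesssim h^4 \le h^2_T|\gex|^2_{W^{1,\infty}}$, consistent with rate $h^{k+1}=h$). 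Summing over $T$ and taking the $1/p$ power gives $\nrmt{\gappr-\gex}_2 \lesssim \big(\sum_T h_T^{p(k+1)}|\gex|_{W^{k+1,p}(T)}^p\big)^{1/p}$, with the $p=\infty$ case read as the max. Combining $C_g\lesssim 1$ with this bound in Theorem~\ref{thm:conv_Riemann} yields~\eqref{eq:cgce-rates}.

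The main obstacle I anticipate is the bookkeeping at the low-order endpoint: reconciling the $h^4\|\cdot\|^2_{H^2_h}$ term with the target rate $h^{k+1}$ when $k=0$ (only possible for $N=2$), where the interpolation bounds~\eqref{eq:interp_cond} do not directly give $h^2$-order control of second derivatives; the fix is to exploit that piecewise-constant Regge fields annihilate second derivatives, so the $H^2_h$-seminorm of the error reduces to that of $\gex$ alone, which is $O(1)$, and the prefactor $h^4$ renders this term of order $h^2 = h^{k+1}$ as required. Care must also be taken that all generic constants only pick up dependence on the quantities listed in the statement ($N$, $\Omega$, $k$, $C_0$, $C_1$, $C_2$, $\|\gex\|_{W^{2,\infty}(\Omega)}$, $\|\gex^{-1}\|_{\Linf[\Omega]}$), in particular that the $W^{k+1,p}$- and $W^{s,\infty}$-seminorms of the smooth $\gex$ that appear transiently in bounding $C_g$ are absorbed appropriately.
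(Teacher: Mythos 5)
Your overall strategy is sound for $k\ge 1$ and is essentially the paper's argument in a different packaging: the paper deduces the corollary directly from Propositions~\ref{prop:conv_a_term} and~\ref{prop:conv_b_term} (whose second halves already state the interpolation rates), while you plug interpolation estimates into Theorem~\ref{thm:conv_Riemann}. For $k\ge 1$ both routes work: \eqref{eq:interp_cond} with $t=0,1,2$ gives $C_g\lesssim 1$ and $\nrmt{\gappr-\gex}_2\lesssim\big(\sum_{T\in\T_h} h_T^{p(k+1)}|\gex|^p_{W^{k+1,p}(T)}\big)^{1/p}$ after the $\ell^2$-to-$\ell^p$ conversion of \eqref{eq:trafo_2_p}, and your bookkeeping for $C_g$ in both dimension regimes is correct.

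The genuine gap is at the endpoint $N=2$, $k=0$, which the corollary explicitly covers. Theorem~\ref{thm:conv_Riemann} is too lossy to yield \eqref{eq:cgce-rates} there: its right-hand side contains $h^2\|\gappr-\gex\|_{H_h^2}$, and since \eqref{eq:interp_cond} only controls derivatives of order $t\le k+1=1$, you are left with $h^2|\gex|_{H_h^2}$ (the interpolant being piecewise constant). Your patch, ``$h^4\le h_T^2|\gex|^2_{W^{1,\infty}}$'', is not a valid inequality --- it implicitly requires a \emph{lower} bound on $|\gex|_{W^{1,\infty}}$, which is not among the permitted constant dependencies --- and more fundamentally $h^2|\gex|_{H_h^2}$ cannot be dominated by the target quantity $\big(\sum_{T\in\T_h} h_T^{p}|\gex|^p_{W^{1,p}(T)}\big)^{1/p}$, which sees only first derivatives of $\gex$ (consider $\gex$ with small gradient but $O(1)$ Hessian). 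The correct repair, and the reason the paper proves the corollary from the propositions rather than from Theorem~\ref{thm:conv_Riemann}, is that for $N=2$ the $H_h^2$-seminorm of $\gappr-\gex$ never actually enters: $a(\cdot;\cdot,\cdot)\equiv 0$ by Proposition~\ref{prop:spec_2d}, and the only source of the $H_h^2$ term in the bound for $b$ is the codimension-2 estimate of Lemma~\ref{lem:est_b_bbnd}, whose terms vanish when $N=2$. Hence in two dimensions the error is controlled by $C_g\,\nrm{\gappr-\gex}_2$ (no second-derivative contribution), and \eqref{eq:interp_cond} with $t=0,1$ suffices. You need to descend to Lemmas~\ref{lem:est_b_vol}--\ref{lem:est_b_bbnd}, or equivalently quote Propositions~\ref{prop:conv_a_term} and~\ref{prop:conv_b_term} directly, for this case rather than using Theorem~\ref{thm:conv_Riemann} as a black box.
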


\begin{remark}[Lowest order cases \change{of piecewise constant Regge metrics}]
 Note that in \change{two dimensions,} $N=2$, Corollary~\ref{cor:conv_Riemann_int} gives convergence already for the lowest-order case of piecewise constant Regge metrics, \change{$\gappr\in\Regge_h^0$.} In \change{higher dimensions, $N\geq 3$,} however, at least \change{a linear approximation of the metric is} needed, \change{$\gappr\in\Regge_h^k$ with $k\ge 1$,} to obtain norm convergence from \eqref{eq:cgce-rates}\change{, because the constant $C_g$ is of order $\mathcal{O}(h^{-1})$ for $k=0$.} In \S\ref{sec:num_examples} we will confirm this requirement through \change{numerical examples} in the $N=3$ case. It is consistent with the results in \cite[Theorem 4.1]{GN2023} and \cite[Corollary 4.3]{GN2023b} where, for dimensions greater than two, at least linear Regge elements were needed to obtain convergence for the densitized distributional scalar curvature and Einstein tensor, respectively. \change{In other words, the original Regge elements do not generally converge in the $H^{-2}$-norm for dimensions $N\ge 3$, but in the sense of measures \cite{Cheeger84}.}
\end{remark}

\begin{remark}[Applicability to embedded hypersurfaces]
 Let $M_h$ be a piecewise smooth $N$-dimensional hypersurface
 embedded in $\R^{N+1}$ approximating a smooth hypersurface. \change{Denote with}
  $\VV_h^k$ the degree~$k$ Lagrange finite element space \change{\eqref{eq:lag_fe}} on a
 mesh $\T_h$ of $\om$ \change{for $k\ge 1$}. Letting $\om\subset\R^{N}$ take the role
 of a parameter domain for the hypersurface, assume that
  $M_h = \Phi_h(\om)$ for some embedding
  $\Phi_h\in [\VV_h^k]^{N+1}$, \change{i.e, $N+1$ copies of $\VV_h^k$.  
 Then, the induced metric tensor by the embedding reads $\gappr= (\grad \Phi_h)^T\grad \Phi_h$ with $\grad$ denoting the Euclidean Jacobi matrix. Lagrange finite elements are elementwise smooth and continuous over interfaces. Thus, their tangential derivatives are also continuous across interfaces. 
 Hence, $\gappr$} is $tt$-continuous, and thus defines a Regge metric in
  $\Regge_h^{2(k-1)}$. Therefore, Theorem~\ref{thm:conv_Riemann} and
 Corollary~\ref{cor:conv_Riemann_int} can be applied.
\end{remark}

\subsection{Roadmap of the proof}

To prove Theorem~\ref{thm:conv_Riemann} and Corollary~\ref{cor:conv_Riemann_int}, we use \eqref{eq:evolution_distr_Riemann} to rewrite the error as an integral representation. Namely,  letting $\gpar(t):= \gex+(\gappr-\gex)t$ and $\sigma:=\gpar^\prime(t)=\gappr-\gex$, by the fundamental theorem of calculus and Theorem~\ref{thm:distr_Riem_evol}, we obtain the following integral representation of the error
\begin{align*}
 \Big( \RoggA{\gappr}-(\Curvature\,\volform)(\gex)\Big)(U)&= \int_{0}^1\frac{d}{dt}\RoggA{\gpar(t)}(U)\,dt\\
  &= \int_0^1\left(a(\gpar(t);\sigma,U) + b(\gpar(t);\sigma, U)\right)\,dt,
\end{align*}
where $a(\cdot;\cdot,\cdot)$ and $b(\cdot;\cdot, \cdot)$ are defined as in \eqref{eq:def_ah} and \eqref{eq:def_bh}, respectively.

We estimate the bilinear form $a(\cdot;\cdot,\cdot)$ directly in Proposition~\ref{prop:conv_a_term} below, whereas for the bilinear form $b(\cdot;\cdot, \cdot)$, we use the relation of the distributional covariant incompatibility operator, \eqref{eq:rel_b_inc}, and estimate it via its adjoint, \eqref{eq:distr_adjoint_inc} of Theorem~\ref{thm:distr_covariant_adjoint_inc},
\begin{align*}
 b(\gpar(t);\sigma, U)=-2\,\widetilde{\INC\sigma}(B) &= 2\,(\widetilde{\div\div B})(\sigma),
\end{align*} 
with $B = S\mapUA_{\gpar(t)}(U)$, as done in Proposition~\ref{prop:conv_b_term} below. 
\change{The adjoint \eqref{eq:distr_adjoint_inc} puts all derivatives from $\sigma$ to $B$ (or equivalently $U$, which is in $H^2$). As $\sigma=\gdot(t)=\gappr-\gex$, we can then extract the best possible convergence rates from the estimates.}

\subsection{Basic estimates}
\change{We use $a \lesssim b$ to indicate that there is
an $h$-independent generic constant $C>0$, depending on $\om$ and the
shape-regularity \change{\eqref{eq:shape-regularity}} of the mesh $\T_h$, such that $a \le C b$.
The $C$ may additionally depend on
$\{\|\gex\|_{\Wtinf},\|\gex^{-1}\|_{\Linf}, N\}$.} We assume that the  approximation property $\lim_{h \to 0} \|\gappr-\gex\|_{L^\infty(\Omega)} = 0$ and stability estimate $\sup_{h>0} \max_{T \in \T_h} \|\gappr\|_{W^{2,\infty}(T)} < \infty$, both assumptions of Theorem~\ref{thm:conv_Riemann}, tacitly hold in the remainder of this section.  These assumptions have some elementary consequences \change{for $\gpar(t)=\gex+t(\gappr-\gex)$ that we quickly state.} 

 For every $h$ sufficiently small, every $t \in [0,1]$, and every vector $w$ with unit Euclidean length there \change{directly} holds
\begin{subequations}
\begin{align}
  \|\gpar(t)\|_{L^\infty} + \|\gpar(t)^{-1}\|_{L^\infty} &\lesssim 1,\qquad\quad \max_{T\in\T_h} |\gpar(t)|_{W^{2,\infty}(T)} \lesssim 1, \label{eq:gtbound} \\
  1 \lesssim \inf_{\Omega} (w^T \gpar(t) w) \le \sup_{\Omega} (w^T \gpar(t) w) &\lesssim 1, \label{eq:eigbound}
\end{align}
where we interpret $\gpar(t)$ as matrix and $w$ as  column vector in \eqref{eq:eigbound}.  Note that \eqref{eq:eigbound} implies the existence of positive lower and upper bounds on the inverse
\begin{equation*}
  1 \lesssim \inf_{\Omega} (w^T \gpar(t)^{-1} w) \le \sup_{\Omega} (w^T \gpar^{-1}(t) w) \lesssim 1.
\end{equation*}
In addition, the inequalities $\|\gpar(t)\|_{L^\infty} \lesssim 1$ and $\|\gpar^{-1}(t)\|_{L^\infty} \lesssim 1$ imply that
\begin{align} \label{eq:Lpequiv}
  \begin{split}
  & \|\rho\|_{L^p(D,\gpar(t_2))} \lesssim \|\rho\|_{L^p(D,\gpar(t_1))} \lesssim \|\rho\|_{L^p(D,\gpar(t_2))},\\
  & \|\rho\|_{L^p(D)} \lesssim \|\rho\|_{L^p(D,\gpar(t_1))} \lesssim  \|\rho\|_{L^p(D)}
  \end{split}
\end{align}
\end{subequations}
for every $t_1,t_2 \in [0,1]$, every submanifold $D$ \change{on which the induced metric $g|_D$ is well-defined,} every $p \in [1,\infty]$, every tensor field $\rho$ having finite $L^p(D)$-norm \change{\eqref{eq:lp-norm},} and every $h$ sufficiently small.  We select $h_0>0$ so that~(\eqref{eq:gtbound}--\eqref{eq:Lpequiv}) hold for all $h \le h_0$, and we tacitly use these inequalities throughout our analysis.

\change{The following lemma shows how the difference of covariant derivatives is related to the difference of the underlying metric tensor. As the covariant derivative depends on the first derivative of the metric, it is not surprising that the $k$-th covariant derivative can be estimated by the $k$-th derivative of the metric, which can be proved via Christoffel symbols or the Koszul formulas. For completeness, we present a rigorous proof. We remind that all fields are defined on $\Omega\subset\R^N$.
\begin{lemma}
  \label{lem:est_T}
 Let $D\in \{T,F,E\}$ and $S_g\in\TT^r_s(D)$ depend on a Regge metric $g$. Assume that $S_g=\mathbb{G}_gV$, where $V\in\TT^{i}_j(D)$ is independent of $g$ and sufficiently smooth, and $\mathbb{G}_g:=\mathbb{G}(g)$ with $\mathbb{G}:\Sc^+(\T)\to \mathrm{Hom}(\TT^i_{j}(D),\TT^r_s(D))$ is a piecewise smooth function depending on $g$ but not on its derivatives. 
 Let $\gpar(t)=\gex+(\gappr-\gex)t$. Then for $t\in [0,1]$ there holds for $p\in [1,\infty]$
  \begin{subequations}
    \begin{align}
      \|\nabla^{\ell}_{\gpar(t)} S_{\gpar(t)}\|_{L^p(D)} &\lesssim \|V\|_{W^{\ell,p}(D)},\quad &&\ell\in \{0,1,2\}, \label{eq:est_grad_S}\\
      \|\nabla^{\ell}_{\gpar(t)} S_{\gpar(t)}-\nabla^{\ell}_{\gex} S_{\gex}\|_{L^p(D)} &\lesssim \|\gappr-\gex\|_{W^{\ell,\infty}(D)}\|V\|_{W^{\ell,p}(D)}, \quad &&\ell\in \{0,1,2\}. \label{eq:est_grad_S_diff}
      \end{align} 
 If $S_{g}$ allows for a well-defined jump across a facet $F\in\Fint$, cf. \eqref{eq:jmp-tensor},
      \begin{align}
      \|\jmp{\nabla^{\ell}_{\gpar(t)} S_{\gpar(t)}}\|_{L^p(F)} &\lesssim \|\jmp{\gappr-\gex}\|_{W^{\ell,\infty}(F)}\|V\|_{W^{\ell,p}(F)}, \quad &&\ell\in \{0,1\}. \label{eq:est_grad_S_diff_F}
    \end{align}  
  \end{subequations}
 When $\gappr\in \Regge_h^0$, i.e. $\gappr$ is piecewise constant, then $\|\jmp{\gappr-\gex}\|_{W^{\ell,\infty}(F)}$ can be replaced by $\|\jmp{\gappr-\gex}\|_{\Linf[F]}$ in the estimate \eqref{eq:est_grad_S_diff_F}.
\end{lemma}
\begin{proof}
  \eqref{eq:est_grad_S} directly follows from \eqref{eq:est_grad_S_diff} as the generic constant $C$ absorbs $\|\gex\|_{W^{2,\infty}(\Omega)}$ and $\|\gex^{-1}\|_{\Linf[\Omega]}$. For \eqref{eq:est_grad_S_diff}, we start with $\ell=0$. By the multiplicative decomposition of $S_g$, we have
  \begin{align*}
    \|S_{\gpar(t)}-S_{\gex}\|_{L^p(D)} \lesssim \|\mathbb{G}(\gpar(t))-\mathbb{G}(\gex)\|_{\Linf[D]}\|V\|_{L^p(D)}.
  \end{align*} 
 By Taylor and the definition of $\gpar(t)$, there exists an $s\in [0,t]$ such that
  \begin{align*}
    \mathbb{G}(\gpar(t))-\mathbb{G}(\gex) = D\mathbb{G}(\gpar(s))\cdot(\gappr-\gex),
  \end{align*} 
 where $D$ denotes the Fr\'echet derivative. By our assumptions on $\gappr$ and $\gex$ the Fr\'echet derivative is uniformly bounded, i.e. $\|D\mathbb{G}(\gpar(s))\|_{W^{2,\infty}(D)} \lesssim 1$. Thus,
  \begin{align*}
    \|S_{\gpar(t)}-S_{\gex}\|_{L^p(D)} \lesssim \|\gappr-\gex\|_{\Linf[D]}\|V\|_{L^p(D)}.
  \end{align*} 
 Next, we consider $\ell=1$. Assume for the moment that $S_{\gpar(t)}\in \W^1(D)$. Then
  \begin{align}
    \label{eq:est_grad_S_diff_1}
 (\nabla_{\gpar(t)} S_{\gpar(t)}-\nabla_{\gex} S_{\gex})_{ij} &= \d_i(S_{\gpar(t)}-S_{\gex})_j +\Gamma_{ij}^k(\gpar(t)) S_{\gpar(t),k}-\Gamma_{ij}^k(\gex)S_{\gex,k}.
  \end{align}
 By the splitting of $S_g$, the product rule, and the boundedness of the Fr\'echet derivative, we can bound the first term in \eqref{eq:est_grad_S_diff_1} as follows
  \begin{align*}
    \|\d_i(S_{\gpar(t)}-S_{\gex})_j\|_{L^p(D)} &\lesssim \|D\mathbb{G}(\gpar(s))\|_{W^{1,\infty}(D)}\|\gappr-\gex\|_{\Winf[D]}\|V\|_{W^{1,p}(D)}.
  \end{align*}
 Using the definition of $\gpar(t)$, we regroup the second term in \eqref{eq:est_grad_S_diff_1}
  \begin{align*}
    \Gamma_{ij}^k(\gpar(t)) S_{\gpar(t),k}-\Gamma_{ij}^k(\gex)S_{\gex,k} &= \left(\gpar(t)-\gex\right)^{k\ell}\Gamma_{ij\ell}(\gpar(t)) S_{\gpar(t),k} \\
    &+\gex^{k\ell}\Gamma_{ij\ell}(\gex)(S_{\gpar(t)}-S_{\gex})_k +\gex^{k\ell}\Gamma_{ij\ell}(t(\gappr-\gex))S_{\gpar(t),k}. 
  \end{align*}
 The first two terms on the right-hand side can be estimated directly. As the Christoffel symbols of the first kind depend linearly on the derivative of the metric, we have
  \begin{align*}
    \|\gex^{k\ell}\Gamma_{ij\ell}(t(\gappr-\gex))S_{\gpar(t),k}\|_{L^p(D)} 
    &\lesssim \|\gappr-\gex\|_{\Winf[D]}\|V\|_{L^p(D)}. 
  \end{align*}
 Combining all the estimates above, we obtain
  \begin{align*}
    \|\nabla_{\gpar(t)} S_{\gpar(t)}-\nabla_{\gex} S_{\gex}\|_{L^p(D)} &\lesssim \|\gappr-\gex\|_{\Winf[D]}\|V\|_{W^{1,p}(D)}.
  \end{align*}
 The calculations are analogously for general tensors $S_g\in \TT^r_s(D)$, only more Christoffel symbols appear. The case $\ell=2$ is similar, and therefore omitted.
  
 For \eqref{eq:est_grad_S_diff_F}, we can add the tensor depending on the smooth metric into the jump, i.e., we can write
  \begin{align*}
    \|\jmp{\nabla^{\ell}_{\gpar(t)} S_{\gpar(t)}}\|_{L^p(F)} = \|\jmp{\nabla^{\ell}_{\gpar(t)} S_{\gpar(t)}-\nabla^{\ell}_{\gex}S_{\gex}}\|_{L^p(F)}
  \end{align*}
 Then using (iteratively) that $\jmp{ab}=\jmp{a}\{\!\{b\}\!\}+\jmp{b}\{\!\{a\}\!\}$ we can follow the same steps as above.
\end{proof}
}

\begin{lemma} \label{lemma:ndiff}
 Let $g_1$ and $g_2$ be two symmetric positive definite matrices, and let $\nv$ be a Euclidean unit vector.  \change{Writing $g^{\nv\nv}_i=g_i^{ab}\nv_a\nv_b$, set}
  \[
  \gn_{g_{i}} = \frac{1}{\sqrt{g_{i}^{\nv\nv}}} g_{i}^{-1} \nv, \quad i=1,2.
  \]
 Then there exists a constant $c>0$ depending on the Euclidean norms $|g_1|$, $|g_2|$, $|g_1^{-1}|$, $|g_2^{-1}|$ such that
  \[
 |\gn_{g_1} - \gn_{g_2}| \le c|g_1-g_2|.
  \]
\end{lemma}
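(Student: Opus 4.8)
The statement is a Lipschitz-continuity result for the map $g \mapsto \gn_g$ sending a symmetric positive definite matrix to the associated $g$-unit normal of the fixed hyperplane $\nv^\perp$. Since everything is finite-dimensional and the vector $\nv$ is fixed, the plan is to write $\gn_g$ as an explicit algebraic expression in $g$ and apply the standard product/quotient Lipschitz estimates, exploiting the uniform bounds on $|g_i|$ and $|g_i^{-1}|$ (which control all intermediate quantities, in particular bounding $g_i^{\nv\nv} = \nv^T g_i^{-1}\nv$ from above and below away from zero).

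\textbf{Step 1: reduce to two factors.} Write $\gn_{g_i} = (g_i^{\nv\nv})^{-1/2}\, g_i^{-1}\nv$. Split the difference via the telescoping identity
\begin{align*}
  \gn_{g_1} - \gn_{g_2}
  &= \big((g_1^{\nv\nv})^{-1/2} - (g_2^{\nv\nv})^{-1/2}\big)\, g_1^{-1}\nv
     + (g_2^{\nv\nv})^{-1/2}\,\big(g_1^{-1} - g_2^{-1}\big)\nv.
\end{align*}
It then suffices to estimate each of the two scalar/matrix factors separately.

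\textbf{Step 2: Lipschitz bounds on the factors.} For the matrix-inverse factor, use the classical identity $g_1^{-1} - g_2^{-1} = -g_1^{-1}(g_1 - g_2) g_2^{-1}$, hence $|g_1^{-1}-g_2^{-1}| \le |g_1^{-1}|\,|g_2^{-1}|\,|g_1-g_2|$; multiplying by the unit vector $\nv$ and by the bounded scalar $(g_2^{\nv\nv})^{-1/2}$ (bounded because $g_2^{\nv\nv} = \nv^T g_2^{-1}\nv \ge 1/|g_2| > 0$ and $\le |g_2^{-1}|$) controls the second term by $c\,|g_1-g_2|$. For the scalar factor, first note $|g_1^{\nv\nv} - g_2^{\nv\nv}| = |\nv^T(g_1^{-1}-g_2^{-1})\nv| \le |g_1^{-1}-g_2^{-1}| \le |g_1^{-1}|\,|g_2^{-1}|\,|g_1-g_2|$; then apply the mean value estimate $|a^{-1/2} - b^{-1/2}| \le \tfrac12 (\min\{a,b\})^{-3/2}|a-b|$ with $a = g_1^{\nv\nv}, b = g_2^{\nv\nv}$, both bounded below by $1/\max\{|g_1|,|g_2|\}$. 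Finally $|g_1^{-1}\nv| \le |g_1^{-1}|$ is bounded. Combining, the first term is also bounded by $c\,|g_1-g_2|$ with $c$ depending only on $|g_1|,|g_2|,|g_1^{-1}|,|g_2^{-1}|$.

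\textbf{Main obstacle.} There is no serious obstacle here; the only point requiring a little care is verifying that $g_i^{\nv\nv}$ is bounded away from zero \emph{uniformly in terms of the allowed data}, so that the $(\cdot)^{-1/2}$ and its difference are Lipschitz on the relevant range — this follows from $g_i^{\nv\nv} = \nv^T g_i^{-1}\nv \ge |g_i|^{-1}$ (equivalently, the smallest eigenvalue of $g_i^{-1}$ is at least $|g_i|^{-1}$ since $|\nv| = 1$). Tracking the dependence of the final constant $c$ on the four Euclidean norms is then just bookkeeping through Steps 1--2.
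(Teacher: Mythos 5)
Your proof is correct. The paper does not give its own argument for this lemma — it simply cites \cite[Lemma 4.6]{GN2023} — so there is nothing to compare approaches against; your telescoping decomposition, the identity $g_1^{-1}-g_2^{-1}=-g_1^{-1}(g_1-g_2)g_2^{-1}$, the lower bound $g_i^{\nv\nv}=\nv^T g_i^{-1}\nv\ge |g_i|^{-1}$, and the mean value estimate for $t\mapsto t^{-1/2}$ together constitute a complete, self-contained proof of the standard kind one would expect behind that citation.
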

\begin{proof}
\change{Follows from \eqref{eq:est_grad_S_diff} with $\mathbb{G} = \frac{1}{\sqrt{g^{\nv\nv}}} g^{-1}$ and $V=\nv$ and $l=0$.
 }
\end{proof}

As a preparation, we estimate some geometric quantities that frequently arise.
\begin{lemma}
  \label{lem:est_curv_quant}
 Let $D\in \{T,F,E\}$ be a volume, codimension 1, or codimension 2 \change{subsimplex} of $\T_h$, $\mapUA_{\gpar(t)}$ the mapping defined in \eqref{eq:mapping_U_A}, and $\gpar(t)= \gex+(\gappr-\gex)t$. There holds for all $t\in [0,1]$ and $p\in [1,\infty]$
  \begin{gather*}
    \|\mapUA_{\gpar(t)}(U)\|_{L^p(D)}\lesssim \|U\|_{L^p(D)},\\
    \|\mapUA_{\gpar(t)}(U)-\mapUA_{\gex}(U)\|_{L^p(D)}\lesssim \|\gex-\gappr\|_{\Linf[D]}\|U\|_{L^p(D)},\\
    \change{\|\vol[\gpar(t)]{D}\|_{\Linf[D]}\lesssim 1,\qquad\|\Riemann_{\gpar(t)}\|_{\Linf[T]}\lesssim 1,\qquad\|\gn_{\gpar(t)}\|_{\Linf[F]}\lesssim 1.}
  \end{gather*}
\end{lemma}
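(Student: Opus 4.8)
The plan is to prove the three estimates of Lemma~\ref{lem:est_curv_quant} in order, relying throughout on the uniform bounds (\ref{eq:gtbound}--\ref{eq:Lpequiv}) that hold for $h\le h_0$. Since $\gpar(t)=\gex+t(\gappr-\gex)$ is an affine combination and, by assumption, $\|\gappr-\gex\|_{\Linf}\to 0$ and $\sup_h\max_T\|\gappr\|_{W^{2,\infty}(T)}<\infty$, we know the eigenvalues of $\gpar(t)$ (and of $\gpar(t)^{-1}$) are bounded above and below uniformly in $t\in[0,1]$, in $h\le h_0$, and pointwise on $\om$; likewise $\max_T|\gpar(t)|_{W^{2,\infty}(T)}\lesssim 1$. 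All constants hidden in $\lesssim$ are therefore allowed to depend only on $N$, $C_0$, $C_1$, $\|\gex\|_{W^{2,\infty}}$, and $\|\gex^{-1}\|_{\Linf}$, as in the convention fixed just before the lemma.

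For the first estimate, I would use the coordinate formula for $\mapUA$ in Proposition~\ref{prop:mapA_coo}, namely $[\mapUA_{\gpar(t)}U]_{ijkl}=\frac{-1}{[(N-2)!]^2}\hat\veps^{pq\alpha}\hat\veps^{rs\beta}U_{\alpha\beta}\,g_{pi}g_{qj}g_{rk}g_{sl}$ where the hatted permutation symbols carry factors $(\det\gpar(t))^{\pm 1/2}$. Each entry of $\mapUA_{\gpar(t)}U$ is thus a finite sum of products of entries of $\gpar(t)$, of $(\det\gpar(t))^{-1}$, and of a component of $U$; by the uniform two-sided eigenvalue bounds, $\|\gpar(t)\|_{\Linf}$ and $\|(\det\gpar(t))^{-1}\|_{\Linf}$ are $\lesssim 1$, so pointwise $|\mapUA_{\gpar(t)}U|\lesssim |U|$ (with the Euclidean tensor norm), and then $\|\mapUA_{\gpar(t)}U\|_{L^p(D)}\lesssim\|U\|_{L^p(D)}$ for any $p$ and any of the three types of domain $D$. (One should note $\mapUA$ is metric-dependent also through the inner product $\langle\cdot,\cdot\rangle$ in its abstract definition, but the coordinate formula already absorbs that, so working in coordinates is the cleanest route; alternatively one may invoke the representation $\mapUA=-\star^{\odot 2}$ of Remark~\ref{rem:alternative_repr} and use that the Hodge star in $\gpar(t)$-orthonormal frames is an isometry, combined with equivalence of $\gpar(t)$-norms and Euclidean norms from \eqref{eq:Lpequiv}.)

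For the second estimate, I would write $\mapUA_{\gpar(t)}U-\mapUA_{\gex}U$ using the same coordinate formula and expand the difference of the two multilinear expressions in $\gpar(t)$ versus $\gex$ as a telescoping sum: each term has exactly one factor of the form $(\gpar(t))_{\bullet\bullet}-(\gex)_{\bullet\bullet}$ or $(\det\gpar(t))^{-1/2}-(\det\gex)^{-1/2}$ (which is itself controlled by $\|\gpar(t)-\gex\|_{\Linf}$ via the mean value theorem and the uniform bounds on the determinant), with all remaining factors bounded by $\lesssim 1$. Since $\gpar(t)-\gex=t(\gappr-\gex)$ and $t\le 1$, every term is bounded pointwise by $\lesssim\|\gappr-\gex\|_{\Linf(D)}|U|$, giving the claimed $L^p(D)$ bound. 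The third estimate is then a collection of routine bounds: $\vol[\gpar(t)]{D}=\sqrt{\det(\gpar(t)|_D)}$ in local coordinates is bounded by the uniform determinant bounds; $\|\Riemann_{\gpar(t)}\|_{\Linf(T)}\lesssim 1$ because the curvature components are algebraic expressions in $\gpar(t)^{-1}$ and up to two derivatives of $\gpar(t)$, all of which are bounded elementwise by $\|\gpar(t)^{-1}\|_{\Linf(T)}\lesssim 1$ and $|\gpar(t)|_{W^{2,\infty}(T)}\lesssim 1$; and $\|\gn_{\gpar(t)}\|_{\Linf(F)}=1$ exactly since $\gn$ is $\gpar(t)$-unit and $\gpar(t)$-norms are equivalent to the Euclidean norm with uniform constants (one may cite Lemma~\ref{lemma:ndiff} here, whose proof already supplies the relevant bound on $\gn_{\gpar(t)}=(\gpar(t)^{\nv\nv})^{-1/2}\gpar(t)^{-1}\nv$).

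The only mildly delicate point — the ``main obstacle,'' though it is not severe — is bookkeeping the metric dependence that is hidden inside $\mapUA$ through the inner product $\langle\cdot,\cdot\rangle$ and through the Hodge stars, and making sure the difference estimate genuinely produces the factor $\|\gex-\gappr\|_{\Linf(D)}$ rather than something involving derivatives of the metric. Passing to the explicit coordinate formula of Proposition~\ref{prop:mapA_coo} sidesteps this entirely: there, $\mapUA_{\gpar(t)}U$ is manifestly a polynomial in the entries of $\gpar(t)$ and in $(\det\gpar(t))^{-1/2}$ times $U$, with \emph{no} derivatives of the metric appearing, so both the boundedness and the Lipschitz-in-metric estimates follow from elementary multilinear algebra together with the uniform spectral bounds on $\gpar(t)$. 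The remaining care is purely notational — tracking that $D$ may be $T$, $F$, or $E$ and that all the bounds are uniform over the (shape-regular) family of meshes, which is guaranteed by the convention preceding the lemma.
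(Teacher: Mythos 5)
Your proposal is correct and follows essentially the same route as the paper, whose proof is a one-line appeal to the assumptions on $\gex$ and $\gappr$ together with the coordinate expression \eqref{eq:mapping_U_A_coo}; you simply spell out the details (pointwise polynomial dependence on the metric entries and $(\det\gpar(t))^{-1}$, telescoping for the difference bound, and the uniform spectral and $W^{2,\infty}$ bounds) that the paper leaves implicit.
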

\begin{proof}
 Follows directly by the assumptions on $\gex$ and $\gappr$ \change{and Lemma~\ref{lem:est_T} with $\mathbb{G}= \mapUA$.}
\end{proof}
The following estimates on boundary facets are crucial for the analysis.
\begin{lemma}
  \label{lem:appr_Weingarten_jmp}
 Let $U\in H^2_0(\Omega,\R^{\tilde{N}\times\tilde{N}}_{\mathrm{sym}})$, $\gpar(t)= \gex+(\gappr-\gex)t$, and $A_{\gpar(t)}=\mapUA_{\gpar(t)}U$. There holds for all $F\in\Fint_h$ and $t\in [0,1]$
  \begin{align*}
    \|\jmp{\sff_{\gpar(t)}}\|_{\Linf[F]}&\lesssim \|\jmp{\gappr-\gex}\|_{\Winf[F]},\\
    \|\jmp{(\div_{\gpar(t)} \change{S}A_{\gpar(t)}+\divFR_{\gpar(t)}SA_{\gpar(t)})_{\gn_{\gpar(t)}}}\|_{\Ltwo[F]}&\lesssim \change{\|\jmp{\gex-\gappr}\|_{\Winf[F]}\|U\|_{\Hone[F]}.}
  \end{align*}
If $\gappr$ is piecewise constant, the $\Winf[F]$-norm in both inequalities can be replaced by the $\Linf[F]$-norm.
\end{lemma}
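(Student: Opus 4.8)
The cornerstone of both estimates is that each quantity inside the jump brackets \emph{vanishes identically when $\gpar(t)$ is replaced by the smooth exact metric $\gex$}. Indeed, for the globally smooth $\gex$ the two inward $\gex$-normals on an interior facet are exactly opposite, $\gn_F^{T_+}(\gex)=-\gn_F^{T_-}(\gex)$; since $\sff^{\gn}$ is linear in $\gn$ by~\eqref{eq:sff-def}, this gives $\jmp{\sff_{\gex}}=\sff^{\gn_F^{T_+}(\gex)}_{\gex}+\sff^{\gn_F^{T_-}(\gex)}_{\gex}=0$ on every $F\in\Fint_h$. Likewise, for $U\in H^2_0(\Omega,\R^{\tilde{N}\times\tilde{N}}_{\mathrm{sym}})$ the tensor $\mapUA_{\gex}U$ lies in $H^2(\Omega)$ globally, because $\mapUA_g$ depends algebraically on $g$ with no derivatives (cf.~\eqref{eq:mapping_U_A_coo}); hence $\div_{\gex}\mapUA_{\gex}U$ and the intrinsic $\divF_{\gex}S\mapUA_{\gex}U$ have single-valued traces on $F$, and combined with $\gn_F^{T_+}(\gex)=-\gn_F^{T_-}(\gex)$ and linearity in the $\gn$-slot, the jump $\jmp{(\div_{\gex}\mapUA_{\gex}U+\divF_{\gex}S\mapUA_{\gex}U)_{\gn_{\gex}}}$ vanishes on every $F\in\Fint_h$. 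Thus we may subtract the ($\gex$-)quantity from inside each jump bracket at no cost, turning both inequalities into bounds on how the bracketed quantity changes as the metric moves from $\gex$ to $\gpar(t)$.

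With this reduction in hand, I would express that change as $\int_0^t\frac{d}{ds}(\cdots)\,ds$ along $\gpar(s)=\gex+s(\gappr-\gex)$, whose derivative is $\gappr-\gex$; the integrand is then linear in $\gappr-\gex$ and in at most one covariant derivative of it, with all coefficients (the metric, its inverse, the normals $\gn_{\gpar(s)}$, the Christoffel symbols, the maps $\mapUA_{\gpar(s)}$ and $\divF_{\gpar(s)}$) bounded uniformly in $s$ and $h$ by the consequences~\eqref{eq:gtbound}--\eqref{eq:eigbound} of our standing assumptions, and Lipschitz in the metric with constant controlled by those same bounds (for the $\gn$-dependence I would use Lemma~\ref{lemma:ndiff} together with its elementary $W^{1,\infty}$-analogue). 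The decisive point is the cancellation produced by the $tt$-continuity of $\gappr-\gex$, whose tangential--tangential part, and therefore all of whose surface jets on $F$, vanish, together with the (skew-)symmetries of $\sff^{\gn}$ in $\gn$ and of $B=S\mapUA_{\gpar(s)}U$ in~\eqref{eq:sym_B}: after summing the contributions from the two elements sharing $F$, which enter with opposite facet orientations, the terms built purely from the single-valued $tt$-data on $F$ drop out, and the surviving contributions are controlled by $\jmp{\gappr-\gex}$ through its pointwise values and surface gradient on $F$ --- which produces the $\Winf[F]$-norm in the first inequality. For the second inequality the same mechanism applies inside $(\div A+\divF SA)_{\gn_{\gpar(t)}}$ with $A=\mapUA_{\gpar(t)}U$; here the test function contributes $U$ itself wherever a first derivative lands on the metric (those terms pair with a first derivative of $\jmp{\gappr-\gex}$, yielding the summand $\|\jmp{\gappr-\gex}\|_{\Winf[F]}\|U\|_{\Ltwo[F]}$) and $\nabla^F U$ wherever no derivative lands on the metric (those terms pair with $\jmp{\gappr-\gex}$ undifferentiated, yielding $\|\jmp{\gappr-\gex}\|_{\Linf[F]}\,|U|_{\Hone[F]}$); the $L^2(F)$- rather than $L^\infty(F)$-norms of $U$ and $\nabla^F U$ are unavoidable because $U$ is only of class $H^2_0$ and one derivative is spent by the divergence. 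Throughout, traces over $\d T$ are controlled elementwise using the shape-regularity of $\T_h$.

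For the last claim, if $\gappr$ is piecewise constant then its coordinate components are constant on each element, so $\jmp{\gappr-\gex}$ has constant coordinate components on $F$ (since the smooth $\gex$ contributes no jump), whence $\|\jmp{\gappr-\gex}\|_{\Winf[F]}=\|\jmp{\gappr-\gex}\|_{\Linf[F]}$; equivalently, in this case the terms above that carried a derivative of $\jmp{\gappr-\gex}$ have vanishing jump, as $\jmp{\partial(\gappr-\gex)}=\jmp{\partial\gappr}$ and $\partial\gex$ is continuous, leaving only the algebraic contributions measured in $\Linf[F]$.

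\textbf{Main obstacle.} The real work is the term-by-term bookkeeping just outlined: one must carry out the normal/tangential decomposition of each jump on the polynomially curved facet $F$, verify that every contribution depending only on the single-valued $tt$-data cancels once the two one-sided restrictions are added with opposite orientations, and check that the residual ``facet-geometry'' terms arising because $F$ is curved are harmless, being controlled through $\|\gex\|_{\Wtinf}$, $\|\gex^{-1}\|_{\Linf}$, and the shape-regularity constant. This is the higher-dimensional, tensor-valued counterpart of the mean-curvature jump estimates of~\cite{GN2023} (see also~\cite{GNSW2023} for the two-dimensional geodesic-curvature case), and most of the difficulty lies in organizing those cancellations cleanly rather than in any single inequality.
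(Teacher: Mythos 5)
Your central observation---that each bracketed quantity has vanishing jump when $\gpar(t)$ is replaced by the smooth $\gex$, so that everything reduces to controlling the deviation by $\jmp{\gappr-\gex}$---is exactly the engine of the paper's proof, and your treatment of the piecewise-constant case coincides with the paper's. Where you differ is in the mechanics: you propose to linearize along the path $s\mapsto\gpar(s)$ and integrate $\tfrac{d}{ds}$ of the bracketed quantity, whereas the paper avoids any differentiation in $s$ and instead writes the covariant divergence in coordinates, substitutes the explicit formula \eqref{eq:mapping_U_A_coo} for $\mapUA_{\tg}U$, and applies the discrete product rule $\jmp{ab}=\jmp{a}\{\!\{b\}\!\}+\{\!\{a\}\!\}\jmp{b}$ term by term, bounding $\jmp{\tg}$, $\jmp{\partial\tg}$, $\jmp{\Gamma}$, and $\jmp{\gn_{\tg}}$ (the last via Lemma~\ref{lemma:ndiff}) by inserting the corresponding zero jumps of the $\gex$-quantities; the first inequality is simply cited from \cite{GN2023}. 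The paper's route is more elementary and sidesteps having to verify uniform-in-$s$ Lipschitz bounds for the linearized normal and Christoffel symbols. Two points in your plan deserve correction: (i) no cancellation coming from the $tt$-continuity of $\gappr-\gex$ is needed or used here---the estimate is stated in terms of the \emph{full} jump $\jmp{\gappr-\gex}$, and the only structural fact exploited is that $U$ is single-valued, so after the product rule every jump lands on a metric-dependent coefficient; and (ii) the $\Winf[F]$-norm of the jump must control the jumps of \emph{all} first partials of $\gappr-\gex$ traced on $F$ (the divergence $g^{rs}\partial_r(\cdots)$ sums over every direction, including the normal one), not merely the surface gradient as you suggest. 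Neither point invalidates your plan, but the bookkeeping you defer is precisely where these distinctions matter.
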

\begin{proof}
  \change{The first claim follows from \eqref{eq:est_grad_S_diff_F} with $\mathbb{G} = \frac{1}{\sqrt{g^{\nv\nv}}} g^{-1}$ and $V=\nv$ and $l=1$, noting that $\|\nv\|_{W^{1,\infty}(F)}=1$ for the Euclidean unit normal $\nv$.}
  
\change{To prove the second claim, we first note that 
\begin{align*}
  \|\jmp{(\div_{\gpar(t)} \change{S}A_{\gpar(t)}+\divFR_{\gpar(t)}SA_{\gpar(t)})_{\gn_{\gpar(t)}}}\|_{\Ltwo[F]} \lesssim \| \jmp{\nabla_{\gpar(t)} SA_{\gpar(t)}}\|_{\Ltwo[F]}.
\end{align*}
Using \eqref{eq:est_grad_S_diff_F} with $\mathbb{G} = S\circ \mapUA$ and $V=U$ and $l=1$ the proof is completed.
}

\end{proof}

\subsection{Estimating bilinear form $a$}
We start by estimating the terms involved in \eqref{eq:def_ah}. Note that the appearing inner products have to be understood with respect to the metric $\gpar(t)= \gex+(\gappr-\gex)t$ and recall that $\sigma = \gappr-\gex$.
\begin{lemma}
  \label{lem:est_a_vol}
 There holds for the volume term of \eqref{eq:def_ah} for all $t\in [0,1]$
  \begin{align*}
 \Big|\sum_{T\in\T_h}\int_T\big(
    \langle \Lsigma^{(1)}\Riemann_{\gpar(t)},A_{\gpar(t)}\rangle-\frac{1}{2}\tr[\gpar(t)]{\sigma}\langle\Riemann_{\gpar(t)}&,A_{\gpar(t)}\rangle\big)\vol[\gpar(t)]{T}\Big|\!\lesssim\! \|\gex-\gappr\|_{\Ltwo[]}\|U\|_{\Ltwo[]},
  \end{align*}
where $\Lsigma^{(1)}$, cf. \eqref{eq:Lsig-defn}, has to be understood with respect to $\gpar(t)$.
\end{lemma}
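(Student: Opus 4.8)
The plan is to estimate the integrand pointwise on each element and then sum, relying on the uniform bounds already recorded in \eqref{eq:gtbound}--\eqref{eq:Lpequiv} and Lemmas~\ref{lem:est_curv_quant} and~\ref{lemma:ndiff}. The key observation is that every factor in the volume integrand of \eqref{eq:def_ah} is bounded in $\Linf$ \emph{except} for the single factor $\sigma=\gappr-\gex$, which carries the smallness. Concretely, first I would note that by Lemma~\ref{lem:est_curv_quant} the Riemann tensor satisfies $\|\Riemann_{\gpar(t)}\|_{\Linf[T]}\lesssim 1$ (using the stability assumption $\max_T|\gpar(t)|_{W^{2,\infty}(T)}\lesssim 1$, which controls the two derivatives of the metric appearing in the Christoffel symbols and their derivatives), the volume form satisfies $\|\vol[\gpar(t)]{T}\|_{\Linf[T]}\lesssim 1$, and the test tensor satisfies the pointwise bound $|A_{\gpar(t)}(X_1,\dots,X_4)|\lesssim |U|$ in any $\gpar(t)$-orthonormal frame, since $\mapUA_{\gpar(t)}$ has bounded coordinate coefficients by \eqref{eq:mapping_U_A_coo} and \eqref{eq:eigbound}.

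Next I would treat the operator $L_\sigma^{(1)}$ and the trace $\tr[\gpar(t)]{\sigma}$. By definition \eqref{eq:Ldefn}, the endomorphism $L_\sigma$ has operator norm (with respect to $\gpar(t)$) bounded by $\|\gpar(t)^{-1}\|_{\Linf}\,|\sigma|$, so $\|L_\sigma^{(1)}\Riemann_{\gpar(t)}\|_{\Linf[T]}\lesssim|\sigma|\,\|\Riemann_{\gpar(t)}\|_{\Linf[T]}\lesssim|\sigma|$ pointwise; similarly $|\tr[\gpar(t)]{\sigma}|=|\gpar(t)^{ij}\sigma_{ij}|\lesssim|\sigma|$ by \eqref{eq:trace-2-tensor} and \eqref{eq:eigbound}. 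Combining with the bounds from the previous paragraph, the full integrand is pointwise bounded by a constant times $|\gex-\gappr|\,|U|$ on each element $T$. Summing over $T\in\T_h$, integrating, and applying the Cauchy--Schwarz inequality on $\Omega$ gives
\begin{align*}
	\Big|\sum_{T\in\T_h}\int_T\big(\langle L_{\sigma}^{(1)}\Riemann_{\gpar(t)},A_{\gpar(t)}\rangle-\tfrac12\tr[\gpar(t)]{\sigma}\langle\Riemann_{\gpar(t)},A_{\gpar(t)}\rangle\big)\vol[\gpar(t)]{T}\Big| &\lesssim \int_\Omega |\gex-\gappr|\,|U|\,dx \\
	&\lesssim \|\gex-\gappr\|_{\Ltwo[]}\|U\|_{\Ltwo[]},
\end{align*}
where the last step uses the $\Linf$-equivalence of $\gpar(t)$-weighted and Euclidean $L^2$-norms from \eqref{eq:Lpequiv}. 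This is exactly the claimed bound.

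The only mildly delicate point—and the step I expect to need the most care—is making precise the pointwise bound on $A_{\gpar(t)}$ and on $L_\sigma^{(1)}\Riemann_{\gpar(t)}$ when the inner products $\langle\cdot,\cdot\rangle$ are taken with respect to the varying metric $\gpar(t)$: one must check that contracting with $\gpar(t)^{-1}$'s (to raise indices for the tensor inner product \eqref{eq:innerprod-extended}) does not introduce $h$-dependence, which is guaranteed precisely by the two-sided bound on the eigenvalues of $\gpar(t)$ in \eqref{eq:eigbound}. Once this is dispatched, everything else is a routine product of $\Linf$ bounds times one $L^2$ factor. No integration by parts or jump terms arise here, so there is no geometric subtlety beyond uniform control of the metric and its first two derivatives, which is exactly what the hypotheses of Theorem~\ref{thm:conv_Riemann} supply.
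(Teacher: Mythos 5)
Your proposal is correct and follows essentially the same route as the paper: the paper's proof is a one-line application of H\"older's inequality together with the uniform bounds of Lemma~\ref{lem:est_curv_quant} on $\Riemann_{\gpar(t)}$, the volume form, and $\mapUA_{\gpar(t)}U$, placing $\sigma$ and $U$ in $L^2$ exactly as you do. Your additional remarks on bounding $L_\sigma^{(1)}$, the trace, and the index-raising via \eqref{eq:eigbound} are just a more explicit spelling-out of the same estimate.
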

\begin{proof}
 Follows directly by H\"older inequality and Lemma~\ref{lem:est_curv_quant}, e.g.,
  \begin{align*}
 \left|\int_T
  \langle \Lsigma^{(1)}\Riemann_{\gpar(t)},A_{\gpar(t)}\rangle\,\vol[\gpar(t)]{T}\right|&\lesssim \|\Riemann_{\gpar(t)}\|_{\Linf[T]}\|\sigma\|_{\Ltwo[T]}\|A_{\gpar(t)}\|_{\Ltwo[T]}\\
  &\lesssim \|\sigma\|_{\Ltwo[T]}\|U\|_{\Ltwo[T]}.
  \end{align*}
\end{proof}
\begin{lemma}
  \label{lem:est_a_bnd}
 There holds for the codimension 1 term of \eqref{eq:def_ah} for all $t\in [0,1]$
  \begin{align*}
\begin{split}
 \left|\sum_{F \in \Fint_h}\int_F \jmp{\sff_{\gpar(t)}}:\mathbb{S}_{\Fr}(\sigma):A_{\gpar(t),F\gn_{\gpar(t)}\gn_{\gpar(t)}F}\,\vol[\gpar(t)]{F}\right|&\lesssim  \max_{T \in \T_h}\left(h_T^{-1}\|\gappr-\gex\|_{\Winf[T]}\right)\\
      &\qquad \times \nrm{\gappr-\gex}_2\nrm{U}_2.
\end{split}
  \end{align*}
 If $\gappr$ is piecewise constant, the $\Winf[T]$-norm can be replaced by the $\Linf[T]$-norm.
\end{lemma}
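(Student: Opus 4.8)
The plan is to bound the codimension~1 term in~\eqref{eq:def_ah} by a straightforward application of the H\"older inequality on each facet, combined with the geometric estimates already established, followed by a scaled trace inequality to convert facet norms into volume norms and absorb the $h_T^{-1}$ factor. First I would fix $F\in\Fint_h$ shared by $T_\pm\in\T_h$, write $\tg:=\gpar(t)$ and $A_{\tg}:=\mapUA_{\tg}U$, and apply the pointwise Cauchy--Schwarz-type bound for the triple product~\eqref{eq:triple-prod} together with Lemma~\ref{lem:est_curv_quant} (which gives $\|\vol[\tg]{F}\|_{\Linf[F]}+\|\gn_{\tg}\|_{\Linf[F]}\lesssim 1$ and $\|A_{\tg}\|_{\Ltwo[F]}\lesssim\|U\|_{\Ltwo[F]}$, hence also control of $A_{\tg,F\gn\gn F}$). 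This yields
\[
  \Big|\int_F \jmp{\sff_{\tg}}:\mathbb{S}_F(\sigma):A_{\tg,F\gn_{\tg}\gn_{\tg}F}\,\vol[\tg]{F}\Big|
  \lesssim \|\jmp{\sff_{\tg}}\|_{\Linf[F]}\,\|\sigma\|_{\Ltwo[F]}\,\|U\|_{\Ltwo[F]},
\]
since $\mathbb{S}_F(\sigma)=\sigma_F-\tr[\tg]{\sigma_F}\tg_F$ is controlled in $L^2(F)$ by $\|\sigma\|_{\Ltwo[F]}$ using the uniform bounds~\eqref{eq:gtbound}. Next I would invoke the first estimate of Lemma~\ref{lem:appr_Weingarten_jmp}, namely $\|\jmp{\sff_{\tg}}\|_{\Linf[F]}\lesssim\|\jmp{\gappr-\gex}\|_{\Winf[F]}$, and bound this by $\max_{T}\|\gappr-\gex\|_{\Winf[T]}$ over the two elements adjacent to $F$.

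The remaining task is to handle the two $L^2(F)$ norms. For $\|\sigma\|_{\Ltwo[F]}$ and $\|U\|_{\Ltwo[F]}$ I would use the scaled (elementwise) trace inequality: for a shape-regular element $T$ and $v\in H^1(T)$,
\[
  \|v\|_{\Ltwo[\d T]}^2 \lesssim h_T^{-1}\|v\|_{\Ltwo[T]}^2 + h_T\,|v|_{\Hone[T]}^2,
\]
so that $\|v\|_{\Ltwo[F]} \lesssim h_T^{-1/2}\nrm{v}_{2,T}$ where $\nrm{v}_{2,T}^2 := \|v\|_{\Ltwo[T]}^2 + h_T^2\,|v|_{\Hone[T]}^2$ is the elementwise piece of the $\nrm{\cdot}_2$ norm. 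Applying this to both $\sigma$ and $U$ on $F\subset\d T_+\cup\d T_-$, summing over $F\in\Fint_h$, and using a discrete Cauchy--Schwarz over the (finite, shape-regularly bounded) number of facets per element, I obtain the factor $h_T^{-1}$ together with $\nrm{\gappr-\gex}_2\nrm{U}_2$, which, combined with the $\Winf$ bound on $\jmp{\sff_{\tg}}$, is exactly the claimed estimate. The last sentence of the lemma (piecewise constant $\gappr$) follows because then $\|\jmp{\gappr-\gex}\|_{\Winf[F]}=\|\jmp{\gappr-\gex}\|_{\Linf[F]}$ by the smoothness of $\gex$, as already noted at the end of the proof of Lemma~\ref{lem:appr_Weingarten_jmp}.

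The main obstacle I anticipate is bookkeeping rather than anything deep: one must be careful that all inner products, traces, triple products, and the projection $\mathbb{S}_F$ are taken with respect to $\tg=\gpar(t)$ and not $\gex$, but since $\tg$ and $\tg^{-1}$ are uniformly bounded above and below by~\eqref{eq:gtbound}--\eqref{eq:Lpequiv}, and the norm equivalences~\eqref{eq:Lpequiv} let one freely pass between $L^p$ norms in different metrics, the constants absorbed into $\lesssim$ remain $h$-independent (depending only on $N$, $C_0$, $C_1$, and the smooth metric data). A secondary point is that the scaled trace inequality introduces both an $h_T^{-1/2}$ and an $h_T^{1/2}$ weighted $H^1$ contribution; one must check the $h$-powers combine to give precisely $h_T^{-1}\nrm{\cdot}_2\nrm{\cdot}_2$ and not a worse negative power, which they do because each of $\sigma$ and $U$ contributes one factor $h_T^{-1/2}$ and the $\nrm{\cdot}_2$ norm already carries the compensating $h$-weight on its $H^1$ seminorm term. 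No density or regularity subtleties arise here since $U\in H^2_0(\Omega)$ and $\sigma=\gappr-\gex$ is elementwise smooth.
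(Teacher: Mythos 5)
Your proposal is correct and follows essentially the same route as the paper's proof: H\"older on each facet with the geometric bounds of Lemma~\ref{lem:est_curv_quant}, the jump estimate $\|\jmp{\sff_{\gpar(t)}}\|_{\Linf[F]}\lesssim\|\jmp{\gappr-\gex}\|_{\Winf[F]}$ from Lemma~\ref{lem:appr_Weingarten_jmp}, the scaled trace inequality applied to both $\sigma$ and $U$ (each contributing $h_T^{-1/2}$), and shape-regularity to sum over facets. The $h$-power bookkeeping and the piecewise-constant remark match the paper exactly.
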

\begin{proof}
 Using Lemma~\ref{lem:est_curv_quant} and Lemma~\ref{lem:appr_Weingarten_jmp} there holds noting that $\|\mathbb{S}_{\Fr}(\sigma)\|_{\Ltwo[F]}\lesssim \|\sigma_{\Fr}\|_{\Ltwo[F]}$ 
  \begin{align*}
 \left|\! \int_F\jmp{\sff_{\gpar(t)}}\!:\!\mathbb{S}_{\Fr}(\sigma)\!:\!A_{\gpar(t),F\gn_{\gpar(t)}\gn_{\gpar(t)}F}\,\vol[\gpar(t)]{F}\right|&\!\lesssim \!\|\mathbb{S}_{\Fr}(\sigma)\|_{\Ltwo[F]}\|\|\jmp{\sff_{\gpar(t)}}\|_{\Linf[F]}\|U\|_{\Ltwo[F]}\\
    &\!\lesssim\! \|\sigma_{\Fr}\|_{\Ltwo[F]}\|\jmp{\gappr-\gex}\|_{\Winf[F]}\|U\|_{\Ltwo[F]}.
  \end{align*}
With the trace inequality \change{\eqref{eq:trace_inequ_standard}} 
we get
\begin{align*}
&\|\sigma_{\Fr}\|_{\Ltwo[F]}^2\|\jmp{\gappr-\gex}\|_{\Winf[F]}^2\|U\|_{\Ltwo[F]}^2\\
&\lesssim\!\change{\Big(\sum_{i=1}^2 \|\gappr-\gex\|_{W^{1,\infty}(T_i)}^2\Big)  h_{T_1}^{-2}( \|\sigma\|_{L^2(T_1)}^2 \!+\! h^2_{T_1} |\sigma|_{H^1(T_1)}^2 ) ( \|U\|_{L^2(T_1)}^2 \!+\! h^2_{T_1} |U|_{H^1(T_1)}^2 ).}
\end{align*}
Due to the shape-regularity of $\T_h$, we have $C^{-1} \le h_{T_1}/h_{T_2} \le C$ for some constant $C$ independent of $h$ and $F$, and thus
\begin{align*}
 \left| \int_F\jmp{\sff_{\gpar(t)}}:\mathbb{S}_{\Fr}(\sigma):A_{\gpar(t),F\gn_{\gpar(t)}\gn_{\gpar(t)}F}\,\vol[\gpar(t)]{F}\right|&\lesssim \max_{T \in \T_h}\left(h^{-1}_T\|\gappr-\change{\gex}\|_{\Winf[T]}\right)\\
  &\quad\times\nrm{\gappr-\gex}_2\nrm{U}_2.
\end{align*}
If $\gappr$ is piecewise constant, there holds by the smoothness of the exact metric $\gex$ that \\ $\|\jmp{\gappr-\gex}\|_{\Winf[F]}=\|\jmp{\gappr-\gex}\|_{\Linf[F]}$.
\end{proof}
\begin{lemma}
  \label{lem:est_a_bbnd}
 There holds for the codimension 2 term of \eqref{eq:def_ah} for all $t\in [0,1]$
  \begin{align*}
    &\left|\sum_{E \in \Eint_h}\int_E\tr[\gpar(t)]{\sigma_E}\Theta_E(\gpar(t))(A_{\gpar(t)})_{\gcn_{\gpar(t)}\gn_{\gpar(t)}\gn_{\gpar(t)}\gcn_{\gpar(t)}}\,\vol[\gpar(t)]{E}\right|\\
    &\qquad\lesssim \left(\max_{T \in \T_h}h_T^{-2} \|\gappr-\gex\|_{\Linf[T]}\right) \nrmt{\gappr-\gex}_2\nrmt{U}_2.
  \end{align*}
\end{lemma}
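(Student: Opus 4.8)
The plan is to follow the template of the proof of Lemma~\ref{lem:est_a_bnd}, with the codimension~1 scaled trace inequality \eqref{eq:trace_inequ} replaced by its codimension~2 analogue and with the role of $\jmp{\sff}$ taken over by the angle deficit $\Theta_E$. The one new structural ingredient is the fact that $\Theta_E(\gex)=0$ for a globally smooth metric: around an interior codimension~2 simplex $E$, and for $h$ small enough that all dihedral wedges are convex, the $\gex$-tangent wedges of the elements $T\supset E$ tile the $\gex$-orthogonal $2$-plane to $T_pE$ at each $p\in E$, so $\sum_{T\supset E}\sphericalangle_E^T(\gex)=2\pi$; this is the codimension~2 analogue of $\jmp{\sff}(\gex)=0$ (cf.\ \cite{GN2023}) and is the reason the constant part of $a(\dots)$ poses no obstruction. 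Combining this with the fundamental theorem of calculus along $\gpar(s)=\gex+s(\gappr-\gex)$, the angle-deficit evolution formula $\frac{d}{ds}\Theta_E(\gpar(s))=-\frac{1}{2}\sum_{F\in\F_E}\jmp{\sigma_{\gn\gcn}}^E_F$ with $\sigma=\gappr-\gex$ (recalled from \cite[Lemma~3.2]{GN2023} inside the proof of Lemma~\ref{lem:evolution_bbnd}), and the uniform bounds $\|\gn_{\gpar(s)}\|_{\Linf[F]},\|\gcn_{\gpar(s)}\|_{\Linf[F]}\lesssim1$ of Lemma~\ref{lem:est_curv_quant}, one obtains for all $t\in[0,1]$ and any $T\supset E$
\[
 \|\Theta_E(\gpar(t))\|_{\Linf[E]}\;\lesssim\;\sum_{T'\supset E}\|\gappr-\gex\|_{\Linf[T']}\;\lesssim\;h_T^{2}\,\max_{T'\in\T_h}\bigl(h_{T'}^{-2}\|\gappr-\gex\|_{\Linf[T']}\bigr),
\]
where the last inequality uses that shape regularity makes the finitely many elements around $E$, together with $\mathrm{diam}(E)$, mutually comparable in size.

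For fixed $E\in\Eint_h$ pick any $T=T_E\supset E$. Hölder's inequality on $E$, together with the pointwise bounds $\|\vol[\gpar(t)]{E}\|_{\Linf[E]}\lesssim1$, $|\tr[\gpar(t)]{\sigma_E}|\lesssim|\sigma_E|$ and $|(\mapUA_{\gpar(t)}U)_{\gcn\gn\gn\gcn}|\lesssim|U|$ on $E$ (the last two from \eqref{eq:gtbound}--\eqref{eq:Lpequiv} and Lemma~\ref{lem:est_curv_quant}; note that $(\mapUA_{\gpar(t)}U)_{\gcn\gn\gn\gcn}$ only involves the single-valued tangential restriction of $U$ to $E$, by Lemma~\ref{lem:cont_A_codim2} and the remark after \eqref{eq:testspace_U}), gives
\[
 \Bigl|\int_E\tr[\gpar(t)]{\sigma_E}\,\Theta_E(\gpar(t))\,(\mapUA_{\gpar(t)}U)_{\gcn\gn\gn\gcn}\,\vol[\gpar(t)]{E}\Bigr|\;\lesssim\;\|\Theta_E\|_{\Linf[E]}\,\|\sigma_E\|_{\Ltwo[E]}\,\|U\|_{\Ltwo[E]}.
\]
Next I would invoke the scaled codimension~2 trace inequality $\|u\|_{\Ltwo[E]}^2\lesssim h_T^{-2}\bigl(\|u\|_{\Ltwo[T]}^2+h_T^2|u|_{H^1(T)}^2+h_T^4|u|_{H^2(T)}^2\bigr)$ — the codimension~2 counterpart of \eqref{eq:trace_inequ}, obtained by scaling the reference-element embedding $H^2(\hat T)\hookrightarrow L^2(\hat E)$ — applied to $\sigma|_T$ and to $U|_T$. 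The two factors $h_T^{-1}$ so produced cancel exactly against the $h_T^2$ in the bound for $\|\Theta_E\|_{\Linf[E]}$ from the previous paragraph, so that $|\int_E\cdots|$ is bounded by $\max_{T'}(h_{T'}^{-2}\|\gappr-\gex\|_{\Linf[T']})$ times the elementwise triple-norm quantities of $\gappr-\gex$ and of $U$ on $T_E$.

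It then remains to sum over $E\in\Eint_h$, factor out the $h$-independent maximum, apply the Cauchy--Schwarz inequality in the index $E$, and use that each element is charged to at most $\binom{N}{N-2}$ codimension~2 faces (so that $\sum_E\bigl(\|\gappr-\gex\|_{\Ltwo[T_E]}^2+h_{T_E}^2|\gappr-\gex|_{H^1(T_E)}^2+h_{T_E}^4|\gappr-\gex|_{H^2(T_E)}^2\bigr)\lesssim\nrmt{\gappr-\gex}_2^2$, and likewise for $U$); this produces the asserted bound $\lesssim\max_{T'\in\T_h}\bigl(h_{T'}^{-2}\|\gappr-\gex\|_{\Linf[T']}\bigr)\,\nrmt{\gappr-\gex}_2\,\nrmt{U}_2$. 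The crux is the first paragraph: establishing that a globally smooth metric produces no conical defect at $E$ and that this smallness survives the homotopy $\gpar(s)$. Without this gain one would lose two net powers of $h$ with nothing to absorb them, and norm convergence would fail (consistently with the fact that in $N\ge3$ at least linear Regge metrics are needed). Everything downstream of it is routine scaling and Hölder estimation, in direct analogy with Lemmas~\ref{lem:est_a_vol}--\ref{lem:est_a_bnd}.
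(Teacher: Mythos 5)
Your proof is correct and follows the same route the paper intends (the paper's own ``proof'' is a one-line citation to the analogous \cite[Lemma~4.13]{GN2023}): the gain of two powers of $h$ comes from $\Theta_E(\gex)=0$ combined with the evolution formula for the angle deficit, which yields $\|\Theta_E(\gpar(t))\|_{\Linf[E]}\lesssim\max_{T'\supset E}\|\gappr-\gex\|_{\Linf[T']}$, and this exactly compensates the $h_T^{-1}$ factors from the two codimension-2 trace inequalities applied to $\sigma$ and $U$. The only (inconsequential) slip is the count of codimension-2 subsimplices per element, which should be $\binom{N+1}{2}$ rather than $\binom{N}{N-2}$; it is a bounded constant either way.
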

\begin{proof}
  \change{We follow the idea of \cite[Lemma 4.13]{GN2023}. First, we rewrite the difference of the angle defects
  \begin{align*}
    \cos\sphericalangle_{E}^T(\gex)-\cos\sphericalangle_{E}^T(\gpar(t))
    &= \gpar(t)(\gn_{\gpar(t)}^{(1)}, \gn_{\gpar(t)}^{(2)}) - \gex(\gn_{\gex}^{(1)}, \gn_{\gex}^{(2)}) \\
    &= \gpar(t)( \gn_{\gpar(t)}^{(1)} - \gn_{\gex}^{(1)}, \gn_{\gpar(t)}^{(2)} - \gn_{\gex}^{(2)} ) + \gpar(t)( \gn_{\gpar(t)}^{(1)} - \gn_{\gex}^{(1)}, \gn_{\gex}^{(2)} ) \\
    & + \gpar(t)( \gn_{\gex}^{(1)}, \gn_{\gpar(t)}^{(2)} - \gn_{\gex}^{(2)} ) + \gpar(t)(\gn_{\gex}^{(1)},\gn_{\gex}^{(2)}) - \gex(\gn_{\gex}^{(1)},\gn_{\gex}^{(2)}),
  \end{align*}
 to obtain with Lemma~\ref{lemma:ndiff} 
  \begin{align*}
    \|\sphericalangle_{E}^T(\gex)-\sphericalangle_{E}^T(\gpar(t))\|_{\Linf[E]}\lesssim \|\gpar(t)-\gex\|_{\Linf[E]}\lesssim \|\gappr-\gex\|_{\Linf[E]}.
  \end{align*}
 Thus, using the fact that the angle defect is zero for the exact smooth metric $\gex$, we have
  \begin{align*}
    \|\Theta_E(\gpar(t))\|_{\Linf[E]} &= \|\Theta_E(\gpar(t))-\Theta_E(\gex)\|_{\Linf[E]} \\
    &\lesssim \sum_{T\supset E}\|\sphericalangle_{E}^T(\gex)-\sphericalangle_{E}^T(\gpar(t))\|_{\Linf[E]} \lesssim \|\gappr-\gex\|_{\Linf[T]}.
  \end{align*}
 Using H\"older inequality and the co-dimension 2 trace inequality \eqref{eq:trace_inequ_codim2} we obtain for each $E \in \Eint_h$ that
  \begin{align*}
    &\left|\int_E\tr[\gpar(t)]{\sigma_E}\Theta_E(\gpar(t))(A_{\gpar(t)})_{\gcn_{\gpar(t)}\gn_{\gpar(t)}\gn_{\gpar(t)}\gcn_{\gpar(t)}}\,\vol[\gpar(t)]{E}\right|\\
    &
    \lesssim \|\Theta_E(\gpar(t))\|_{\Linf[E]}\|\sigma_E\|_{\Ltwo[E]}\|U\|_{\Ltwo[E]}\\
    &\lesssim \sum_{i=1}^m\|\gappr-\gex\|_{\Linf[T_i]}h_{T_1}^{-2}\nrmt{\sigma}_{2,{T_1}}\nrmt{U}_{2,{T_1}}.
  \end{align*}
 Due to the shape-regularity of $\T_h$, the number $m$ of adjacent elements $T$ to an edge $E$ is uniformly bounded, and the claim follows by summing over all edges $E\in\Eint_h$.
 }
\end{proof}
\begin{proposition}
  \label{prop:conv_a_term}
 Let $\gpar(t)= \gex+(\gappr-\gex)t$, $\sigma = \gappr-\gex$, and $U\in\Htwoz[\Omega,\R^{\tilde{N}\times\tilde{N}}_{\mathrm{sym}}]$. There holds for all $t\in[0,1]$
  \begin{align*}
 \left|a(\gpar(t);\sigma,U)\right|&\lesssim \left(1 + \max_{T \in \T_h}h_T^{-1} \|\gappr-\gex\|_{\Winf[T]}+\max_{T \in \T_h}h_T^{-2} \|\gappr-\gex\|_{\Linf[T]} \right)\\
          &\qquad\times\nrmt{\gappr-\gex}_2\|U\|_{\Htwo}.
  \end{align*}
 Assume that $\gappr=\OptInt[k]\gex$ is an interpolant fulfilling assumptions \eqref{eq:interp_cond} with a $p\in [2,\infty]$. Then for $k\geq 1$
  \begin{align*}
 \big|a(\gpar(t);\sigma,U)\big|\lesssim \left(\sum_{T\in\T_h}h_T^{p(k+1)}|\gex|^{\change{p}}_{W^{k+1,p}(T)}\right)^{1/p}\|U\|_{\Htwo}.
  \end{align*}
\end{proposition}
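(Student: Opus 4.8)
The plan is to bound $a(g(t);\sigma,U)$ by estimating the three groups of terms in its definition \eqref{eq:def_ah}---the codimension~$0$ volume integral, the codimension~$1$ facet integral, and the codimension~$2$ edge integral---separately, and then assembling. First I would invoke Lemma~\ref{lem:est_a_vol} for the volume term, Lemma~\ref{lem:est_a_bnd} for the facet term, and Lemma~\ref{lem:est_a_bbnd} for the edge term. Adding the three bounds and using the triangle inequality presents $|a(g(t);\sigma,U)|$ as a sum of three products whose prefactors are $1$, $\max_{T\in\T_h}(h_T^{-1}\|\gappr-\gex\|_{\Winf[T]})$, and $\max_{T\in\T_h}(h_T^{-2}\|\gappr-\gex\|_{\Linf[T]})$, respectively; these combine into the single prefactor in the statement. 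Since $U\in H^2_0(\Omega,\R^{\tilde{N}\times\tilde{N}}_{\mathrm{sym}})$ is globally of class $H^2$, the piecewise norms $\|U\|_{H^1_h}$, $\|U\|_{H^2_h}$ coincide with $\|U\|_{\Hone}$, $\|U\|_{\Htwo}$, so for $h\le h_0$ (in particular $h\lesssim 1$) one has $\|U\|_{\Ltwo}\le\nrm{U}_2\le\nrmt{U}_2\lesssim\|U\|_{\Htwo}$, and similarly $\|\gappr-\gex\|_{\Ltwo}\le\nrm{\gappr-\gex}_2\le\nrmt{\gappr-\gex}_2$. Substituting these into the three bounds yields the first claimed inequality.

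For the interpolation estimate, I would take $\gappr=\OptInt[k]\gex$ with $k\ge 1$ and $\OptInt[k]$ satisfying \eqref{eq:interp_cond} for some $p\in[2,\infty]$. Using \eqref{eq:interp_cond} with $s=\min\{k+1,2\}=2$ and exponents $t=0,1$, together with $|\gex|_{\Wtinf[T]}\lesssim 1$, gives $\|\gappr-\gex\|_{\Winf[T]}\lesssim h_T$ and $\|\gappr-\gex\|_{\Linf[T]}\lesssim h_T^2$, whence the prefactor $1+\max_T(h_T^{-1}\|\gappr-\gex\|_{\Winf[T]})+\max_T(h_T^{-2}\|\gappr-\gex\|_{\Linf[T]})$ is bounded uniformly in $h$. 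It then remains to bound $\nrmt{\gappr-\gex}_2$. For $p=2$, \eqref{eq:interp_cond} with $t=0,1,2$ immediately gives $\nrmt{\gappr-\gex}_2^2\lesssim\sum_{T\in\T_h}h_T^{2(k+1)}|\gex|_{H^{k+1}(T)}^2$; for general $p\in[2,\infty)$ one additionally uses the local embedding $\|v\|_{\Ltwo[T]}\lesssim h_T^{N(1/2-1/p)}\|v\|_{\Lp[T]}$ and $\mathrm{diam}(T)\sim h_T$ to get each elementwise contribution bounded by $h_T^{N(1-2/p)}h_T^{2(k+1)}|\gex|_{W^{k+1,p}(T)}^2$, and then a discrete Hölder inequality with conjugate exponents $p/2$ and $p/(p-2)$---for which the companion factor telescopes to $\sum_T h_T^N\sim|\Omega|<\infty$---produces $\nrmt{\gappr-\gex}_2\lesssim(\sum_{T\in\T_h}h_T^{p(k+1)}|\gex|_{W^{k+1,p}(T)}^p)^{1/p}$; the case $p=\infty$ follows from the analogous, simpler estimate. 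Combining this with the bounded prefactor and with $\nrmt{U}_2\lesssim\|U\|_{\Htwo}$ from the first part proves the second inequality.

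The one genuinely delicate point is that the negative powers of $h_T$ in front of the facet and edge contributions of $a$ must be absorbed by the interpolation error: the facet term carries $h_T^{-1}\|\gappr-\gex\|_{\Winf[T]}$, which is the binding constraint and forces $k\ge 1$ (interpolation order at least linear) so that the first-order interpolation estimate supplies the compensating factor $h_T$; the edge term needs the quadratic factor $h_T^2$ from the $L^\infty$ interpolation error, which is available for $k\ge 1$ as well. Once one has confirmed that \eqref{eq:interp_cond} indeed delivers $W^{1,\infty}$-control of the interpolation error of order $h_T$---this is exactly the purpose of the additional hypothesis in \eqref{eq:interp_cond}---everything else is routine bookkeeping combining Hölder's inequality with the trace inequality \eqref{eq:trace_inequ}, the inverse-type estimates, and the geometric bounds already folded into Lemmas~\ref{lem:est_a_vol}--\ref{lem:est_a_bbnd}.
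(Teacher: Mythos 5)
Your proposal is correct and follows essentially the same route as the paper, whose proof is precisely ``combine Lemmas~\ref{lem:est_a_vol}--\ref{lem:est_a_bbnd} with the interpolation assumptions \eqref{eq:interp_cond} and the $L^2$-to-$L^p$ conversion \eqref{eq:trafo_2_p}''; your elementwise H\"older plus discrete H\"older argument is just a local rephrasing of \eqref{eq:trafo_2_p}. One minor remark: the paper identifies the codimension-$2$ term of Lemma~\ref{lem:est_a_bbnd} (with its $h_T^{-2}\|\gappr-\gex\|_{\Linf[T]}$ prefactor) as the critical obstruction at $k=0$, since for piecewise constants the facet term's $\Winf$-norm may be replaced by the $\Linf$-norm, but this does not affect the validity of your argument for $k\ge 1$.
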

\begin{proof}
Combine Lemmas~\ref{lem:est_a_vol}--\ref{lem:est_a_bbnd} and the approximation assumptions \eqref{eq:interp_cond} on the interpolation operator together with
\begin{align}
  \label{eq:trafo_2_p}
\begin{split} 
    &\|\gappr-\gex\|_{\Ltwo[\Omega]}\leq |\Omega|^{\frac{1}{2}-\frac{1}{p}}\|\gappr-\gex\|_{L^p(\Omega)},\\
    &\left(\sum_{T\in \T_h}h^2_T|\gappr-\gex|^2_{\Hone[T]}\right)^{\frac{1}{2}}\leq |\Omega|^{\frac{1}{2}-\frac{1}{p}}\left(\sum_{T\in \T_h}h_T^p|\gappr-\gex|^p_{W^{1,p}(\Omega)}\right)^{\frac{1}{p}},\\
    &\left(\sum_{T\in \T_h}h^4_T|\gappr-\gex|^2_{\Htwo[T]}\right)^{\frac{1}{2}}\leq |\Omega|^{\frac{1}{2}-\frac{1}{p}}\left(\sum_{T\in \T_h}h_T^{2p}|\gappr-\gex|^p_{W^{2,p}(\Omega)}\right)^{\frac{1}{p}},
\end{split} 
\end{align}
for $p\in [2,\infty)$ and standard modifications for $p=\infty$.
\end{proof}

\begin{remark}

  \change{Proposition~\ref{prop:conv_a_term} shows optimal $\Hmtwo$-convergence for degrees $k\geq 1$. In contrast, for the lowest-order $k=0$ (piecewise constant metrics), the estimate of the codimension 2 contribution in Lemma~\ref{lem:est_a_bbnd}—which relies on a codimension 2 trace inequality—becomes the rate-limiting term and prevents optimal convergence. The same behavior occurs for the scalar curvature and Einstein tensor in \cite{GN2023, GN2023b} for $N\geq3$, where the analogous codimension 2 term leads to suboptimal rates. Our numerical computations in \S\ref{sec:num_examples} confirm that Lemma~\ref{lem:est_a_bbnd}, and hence Proposition~\ref{prop:conv_a_term}, are sharp. In two dimensions, $a(\cdot;\cdot,\cdot)$ vanishes; see \S\ref{sec:spec_2d}.}
\end{remark}

\subsection{Estimating bilinear form $b$}
Next, we estimate the terms involved in \eqref{eq:def_bh}, or, more precisely, its adjoint \eqref{eq:distr_adjoint_inc} of Theorem~\ref{thm:distr_covariant_adjoint_inc} 
\begin{align*}
 b(\gpar(t);\sigma, U)=-2\,\widetilde{\INC\sigma}(B) &= 2\,(\widetilde{\div\div B})(\sigma),\qquad B = S\mapUA_{\gpar(t)}(U),
\end{align*} 
starting with the volume terms.
\begin{lemma}
  \label{lem:est_b_vol}
 There holds for the volume term of \eqref{eq:distr_adjoint_inc} for all $t\in [0,1]$
  \begin{align*}
 \left|\sum_{T\in\T_h}\int_T\langle \sigma,\div_{\gpar(t)}\div_{\gpar(t)} SA_{\gpar(t)}\rangle \,\vol[\gpar(t)]{T}\right|\lesssim \|\sigma\|_{\Ltwo[\Omega]}\|U\|_{\Htwo[\Omega]}.
  \end{align*}
\end{lemma}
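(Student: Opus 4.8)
The plan is to bound the volume term of \eqref{eq:distr_adjoint_inc}, namely $\sum_{T}\int_T\langle\sigma,\div_{\gpar(t)}\div_{\gpar(t)}SA_{\gpar(t)}\rangle\,\vol[\gpar(t)]{T}$, by a straightforward application of the Cauchy--Schwarz (H\"older) inequality combined with the basic estimates already collected in \eqref{eq:Lpequiv} and Lemma~\ref{lem:est_curv_quant}. The key observation is that $\div_{\gpar(t)}\div_{\gpar(t)}SA_{\gpar(t)}$ involves two covariant derivatives of $A_{\gpar(t)} = \mapUA_{\gpar(t)}U$, and since $\mapUA_{\gpar(t)}$ depends on $\gpar(t)$ only algebraically (through the components $g_{ij}$, $g^{ij}$, and $\det g$, via the coordinate expression \eqref{eq:mapping_U_A_coo}), the second covariant divergence of $A_{\gpar(t)}$ is a sum of terms each of which is a product of (i)~components of $\gpar(t)$, $\gpar(t)^{-1}$, their first and second spatial derivatives, or Christoffel symbols of $\gpar(t)$ (and derivatives thereof), all of which are bounded in $L^\infty$ uniformly in $h$ and $t$ by \eqref{eq:gtbound}, times (ii)~$U$ and its first and second spatial derivatives.

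Concretely, I would first write out, in the coordinates of the chart, the covariant double-divergence $(\div_{\tg}\div_{\tg}SA_{\tg})_{kl}$ (with $\tg := \gpar(t)$) in terms of partial derivatives $\partial^2 (A_{\tg})_{ijkl}$, $\partial\Gamma$, $\Gamma\cdot\Gamma$, $\Gamma\cdot\partial A_{\tg}$, $\Gamma\cdot\Gamma\cdot A_{\tg}$, and the raised-index factors $g^{rs}$. Using \eqref{eq:mapping_U_A_coo}, each occurrence of $(A_{\tg})_{ijkl}$ becomes $\hat\veps\,\hat\veps\,U_{\alpha\beta}\,g_{\cdot}\cdots g_{\cdot}$, so differentiating once or twice produces at worst $\partial^2 U$ plus lower-order terms carrying derivatives of $\det\tg$ and of the $g_{pi}$ factors. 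Invoking the uniform bounds $\|\tg\|_{L^\infty}, \|\tg^{-1}\|_{L^\infty}, \max_T|\tg|_{W^{2,\infty}(T)}\lesssim 1$ of \eqref{eq:gtbound}, together with the $L^p$-norm equivalences \eqref{eq:Lpequiv} to pass between the $\gpar(t)$-metric volume form and the Euclidean one, this gives the elementwise bound $\|\div_{\tg}\div_{\tg}SA_{\tg}\|_{L^2(T)}\lesssim \|U\|_{H^2(T)}$ (with the implicit constant depending only on the admissible quantities). Then
\[
\left|\sum_{T\in\T_h}\int_T\langle\sigma,\div_{\tg}\div_{\tg}SA_{\tg}\rangle\,\vol[\tg]{T}\right|
\lesssim \sum_{T\in\T_h}\|\sigma\|_{L^2(T)}\|\div_{\tg}\div_{\tg}SA_{\tg}\|_{L^2(T)}
\lesssim \|\sigma\|_{L^2(\Omega)}\|U\|_{H^2(\Omega)},
\]
using a discrete Cauchy--Schwarz inequality over the elements at the end. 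Since $\sigma=\gappr-\gex$, this is exactly the claimed estimate.

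The main obstacle here is purely bookkeeping: one must be careful that all the ``coefficient'' factors arising from differentiating $\mapUA_{\gpar(t)}U$ and from the two covariant divergences are genuinely controlled in $L^\infty$ uniformly in $h$ and $t$ --- in particular that no inverse powers of $h$ appear. This is guaranteed because no trace or jump operation is involved in this (purely interior) term, so no trace inequality \eqref{eq:trace_inequ} is needed; the estimate is the ``clean'' one without negative powers of $h_T$. The only genuinely quantitative inputs are the uniform bounds \eqref{eq:gtbound}--\eqref{eq:Lpequiv} and Lemma~\ref{lem:est_curv_quant}, all already established, so the proof reduces to the displayed H\"older chain. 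I would therefore state the proof as: expand $\div_{\tg}\div_{\tg}SA_{\tg}$ in coordinates using \eqref{eq:mapping_U_A_coo}, apply \eqref{eq:gtbound}--\eqref{eq:Lpequiv} and Lemma~\ref{lem:est_curv_quant} to obtain $\|\div_{\tg}\div_{\tg}SA_{\tg}\|_{L^2(T)}\lesssim\|U\|_{H^2(T)}$ elementwise, and conclude with the Cauchy--Schwarz inequality over $T\in\T_h$.
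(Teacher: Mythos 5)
Your proposal is correct and follows essentially the same route as the paper, whose proof of this lemma is simply ``Follows by H\"older inequality and Lemma~\ref{lem:est_curv_quant}''; you have merely spelled out the bookkeeping (coordinate expansion of the double covariant divergence via \eqref{eq:mapping_U_A_coo}, uniform control of all metric-dependent coefficients via \eqref{eq:gtbound}--\eqref{eq:Lpequiv}) that the paper leaves implicit. Your observation that no trace inequality and hence no negative powers of $h_T$ enter this purely interior term is exactly the right point.
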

\begin{proof}
 Follows by H\"older inequality and Lemma~\ref{lem:est_curv_quant}.
\end{proof}

\begin{lemma}
  \label{lem:est_b_bnd}
 There holds for the codimension 1 term of \eqref{eq:distr_adjoint_inc} for all $t\in [0,1]$
  \begin{align}
 \Big|\sum_{F\in\Fint_h}&\int_{F}\big\llbracket\langle\sigma_{\Fr},(\div_{\gpar(t)} SA_{\gpar(t)}+\divFR_{\gpar(t)}SA_{\gpar(t)})_{\gn_{\gpar(t)}}\rangle\nonumber\\
      &\qquad + \sigma_{\Fr}:\overline{\sff}_{\gpar(t)}:(SA)_{{\gpar(t)},\gn_{\gpar(t)}\gn_{\gpar(t)}}- \langle\sff_{\gpar(t)}\otimes\sigma_{\Fr},SA_{\gpar(t)}\rangle\big\rrbracket\,\vol[\gpar(t)]{F}\Big|\label{eq:est_b_bnd}\\
    \lesssim
      \nrm{\sigma}_2&\change{ \max_{T \in \T_h}(h_T^{-1}\|\gex-\gappr\|_{\Winf[T]})\big(\nrm{U}_2+\nrm{\nabla_{\Eucl} U}_2\big)}\nonumber,
  \end{align}
where $\nabla_{\Eucl} U$ denotes the Euclidean gradient instead of the covariant one.
If $\gappr$ is piecewise constant, the $\Winf[T]$-norm can be replaced by the $\Linf[T]$-norm.
\end{lemma}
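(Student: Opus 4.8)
The plan is to estimate each of the three groups of terms appearing in the jump integrand of \eqref{eq:est_b_bnd} separately, exploiting that the jump vanishes for the exact metric $\gex$ so that every quantity can be bounded by a difference $\gappr - \gex$. First I would recall that $\jmp{\,\cdot\,}$ applied to a product decomposes via $\jmp{ab}=\jmp{a}\{\!\{b\}\!\}+\{\!\{a\}\!\}\jmp{b}$, and that for the smooth exact metric $\gex$ the tensors $\sff_{\gex}$, $\gn_{\gex}$, $(\div_{\gex}SA_{\gex})_{\gn_{\gex}}$, etc.\ are all single-valued across any interior facet, so all jumps reduce to jumps of $\gappr-\gex$ and its derivatives. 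Using Lemma~\ref{lem:appr_Weingarten_jmp} (including its second estimate on $\jmp{(\div A + \divF SA)_{\gn}}$), Lemma~\ref{lem:est_curv_quant}, and Lemma~\ref{lemma:ndiff}, the three contributions to the integrand over $F$ are bounded pointwise as follows: the first term, $\bigl\|\jmp{\langle\sigma_F,(\div SA+\divF SA)_{\gn}\rangle}\bigr\|_{L^2(F)}$, by $\|\sigma_F\|_{L^2(F)}$ times $\bigl(\|\jmp{\gappr-\gex}\|_{W^{1,\infty}(F)}\|U\|_{L^2(F)}+\|\jmp{\gappr-\gex}\|_{L^\infty(F)}|U|_{H^1(F)}\bigr)$ using Lemma~\ref{lem:appr_Weingarten_jmp}; the second term, involving $\overline{\sff}_{\gpar(t)}=\mathbb{S}_F(\sff_{\gpar(t)})$, is controlled by $\|\sigma_F\|_{L^2(F)}\,\|\jmp{\sff_{\gpar(t)}}\|_{L^\infty(F)}\,\|U\|_{L^2(F)}\lesssim \|\sigma_F\|_{L^2(F)}\,\|\jmp{\gappr-\gex}\|_{W^{1,\infty}(F)}\,\|U\|_{L^2(F)}$ via the first estimate of Lemma~\ref{lem:appr_Weingarten_jmp}; and the third term, $\jmp{\langle\sff_{\gpar(t)}\otimes\sigma_F,SA_{\gpar(t)}\rangle}$, is handled the same way, by distributing the jump onto $\sff$, $\gn$ (inside $\sigma_F$ and $A$), and using $\|\jmp{\sff_{\gpar(t)}}\|_{L^\infty(F)}$ and $\|\jmp{\gn_{\gpar(t)}}\|_{L^\infty(F)}\lesssim\|\jmp{\gappr-\gex}\|_{L^\infty(F)}$ together with $\|A_{\gpar(t)}\|_{L^2(F)}\lesssim\|U\|_{L^2(F)}$.

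Next I would sum these pointwise facet bounds over all $F\in\Fint_h$ and pass from facet norms to element norms. The key tool here is the trace inequality \eqref{eq:trace_inequ}, applied to $\sigma$, to $U$, and to $\nabla_{\Eucl}U$: for $F\subset\partial T$,
\[
\|U\|_{L^2(F)}^2\lesssim h_T^{-1}\|U\|_{L^2(T)}^2+h_T|U|_{H^1(T)}^2, \qquad
|U|_{H^1(F)}^2\lesssim h_T^{-1}|U|_{H^1(T)}^2+h_T|U|_{H^2(T)}^2,
\]
and similarly for $\sigma$. After a Cauchy--Schwarz in the facet sum, this converts $\|\jmp{\gappr-\gex}\|_{W^{1,\infty}(F)}\|\sigma_F\|_{L^2(F)}\|U\|_{L^2(F)}$ into $\max_{T}\bigl(h_T^{-1}\|\gappr-\gex\|_{W^{1,\infty}(T)}\bigr)$ times $\bigl(\|\sigma\|_{L^2(T)}^2+h_T^2|\sigma|_{H^1(T)}^2\bigr)^{1/2}\bigl(\|U\|_{L^2(T)}^2+h_T^2|U|_{H^1(T)}^2\bigr)^{1/2}$, which is exactly $\max_T(h_T^{-1}\|\gex-\gappr\|_{W^{1,\infty}(T)})\,\nrm{\sigma}_2\,\nrm{U}_2$ up to the shape-regularity factor $C^{-1}\le h_{T_1}/h_{T_2}\le C$ for the two elements sharing $F$. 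The term carrying $|U|_{H^1(F)}$ is the one that, after the trace inequality, produces the $h_T^2|U|_{H^2(T)}^2$ contribution; but since we want to keep the estimate at the level of $\nrm{\,\cdot\,}_2$ rather than $\nrmt{\,\cdot\,}_2$ for $U$, I would instead route this term through $\nrm{\nabla_{\Eucl}U}_2$: write $|U|_{H^1(F)}=\|\nabla_{\Eucl}U\cdot\text{(tangential)}\|_{L^2(F)}\lesssim\|\nabla_{\Eucl}U\|_{L^2(F)}$ and apply the trace inequality to $\nabla_{\Eucl}U$ in place of $U$, giving $\max_T(h_T^{-1}\|\gex-\gappr\|_{L^\infty(T)})\,\nrm{\sigma}_2\,\nrm{\nabla_{\Eucl}U}_2$, which is the second term on the right-hand side of \eqref{eq:est_b_bnd}. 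For the piecewise constant case I would note, as in the proof of Lemma~\ref{lem:appr_Weingarten_jmp}, that $\|\jmp{\gappr-\gex}\|_{W^{1,\infty}(F)}=\|\jmp{\gappr-\gex}\|_{L^\infty(F)}$ because $\gex$ is smooth and $\gappr$ is constant on each element, so the $W^{1,\infty}$-norm may be replaced by the $L^\infty$-norm throughout.

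The main obstacle I anticipate is the careful bookkeeping in the first group of terms, where $\jmp{(\div_{\gpar(t)}SA_{\gpar(t)}+\divF_{\gpar(t)}SA_{\gpar(t)})_{\gn_{\gpar(t)}}}$ appears: the divergence involves Christoffel symbols of $\gpar(t)$, so its jump genuinely sees first derivatives of $\gappr-\gex$ through $\jmp{\Gamma}$ and also derivatives of $U$ through $\partial_i U_{\alpha\beta}$ in the coordinate formula \eqref{eq:mapping_U_A_coo}. Fortunately this is precisely the content of the second estimate of Lemma~\ref{lem:appr_Weingarten_jmp}, which already packages the bound as $\|\jmp{\gex-\gappr}\|_{W^{1,\infty}(F)}\|U\|_{L^2(F)}+\|\jmp{\gappr-\gex}\|_{L^\infty(F)}|U|_{H^1(F)}$, so the real work is just to apply that lemma, multiply by $\|\sigma_F\|_{L^2(F)}$, and then run the same trace-inequality and shape-regularity argument as above. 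A minor subtlety is that one must track which norm of $U$ ($L^2$ versus $H^1$) each error factor multiplies, so that exactly the claimed split between the $\|\gex-\gappr\|_{W^{1,\infty}}\nrm{U}_2$ and $\|\gex-\gappr\|_{L^\infty}\nrm{\nabla_{\Eucl}U}_2$ contributions emerges; this is bookkeeping rather than a genuine difficulty. Assembling the three bounds and taking the maximum over $T$ of the error prefactors yields \eqref{eq:est_b_bnd}.
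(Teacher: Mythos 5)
Your proposal is correct and follows essentially the same route as the paper's proof: H\"older plus the second estimate of Lemma~\ref{lem:appr_Weingarten_jmp} for the divergence term, the argument of Lemma~\ref{lem:est_a_bnd} for the two second-fundamental-form terms, and then the trace inequality \eqref{eq:trace_inequ} with shape regularity to pass to element norms. The only difference is that you spell out explicitly the routing of the $|U|_{H^1(F)}$ contribution through $\nrm{\nabla_{\Eucl}U}_2$ and the jump-product bookkeeping, which the paper leaves implicit.
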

\begin{proof}
 The last two terms of \eqref{eq:est_b_bnd} involving the second fundamental form can be estimated as in the proof of Lemma~\ref{lem:est_a_bnd}. For the first term we use H\"older and Lemma~\ref{lem:appr_Weingarten_jmp}
\begin{align*}
  &\left|\int_F\langle\sigma_{\Fr},\jmp{(\div_{\gpar(t)} SA_{\gpar(t)}+\divFR_{\gpar(t)}SA_{\gpar(t)})_{\gn_{\gpar(t)}}}\rangle\,\vol[\gpar(t)]{F}\right| \\
  &\qquad\lesssim\|\sigma_{\Fr}\|_{\Ltwo[F]}\|\jmp{(\div_{\gpar(t)} SA_{\gpar(t)}+\divFR_{\gpar(t)}SA_{\gpar(t)})_{\gn_{\gpar(t)}}}\|_{\Ltwo[F]}\\
  &\qquad\lesssim \|\sigma_{\Fr}\|_{\Ltwo[F]}\change{\|\jmp{\gex-\gappr}\|_{\Winf[F]}\|U\|_{\Hone[F]}}.
\end{align*}
Applying trace inequality \change{\eqref{eq:trace_inequ_standard}} and using the shape-regularity, as in the proof of Lemma~\ref{lem:est_a_bnd}, gives the desired result
\begin{align*}
&\left|\sum_{F \in \Fint_h}\int_F\langle\sigma_{\Fr},\jmp{(\div_{\gpar(t)} SA_{\gpar(t)}+\divFR_{\gpar(t)}SA_{\gpar(t)})_{\gn_{\gpar(t)}}}\rangle\,\vol[\gpar(t)]{F}\right| \\
&\qquad\lesssim \nrm{\sigma}_2\change{ \max_{T \in \T_h}(h_T^{-1}\|\gex-\gappr\|_{\Winf[T]})\big(\nrm{U}_2+\nrm{\nabla_{\Eucl} U}_2\big)}.
\end{align*}
\end{proof}

The codimension 2 term of \eqref{eq:distr_adjoint_inc} is zero for dimension $N=2$, $\sigma_E=0$ for 0-dimensional $E$. In higher spatial dimensions, it has to be considered, leading to a lower convergence rate than Lemma~\ref{lem:est_b_vol} and Lemma~\ref{lem:est_b_bnd}, comparable to the result of Lemma~\ref{lem:est_a_bbnd} for bilinear form $a(\cdot;\cdot,\cdot)$.
\begin{lemma}
  \label{lem:est_b_bbnd}
 There holds for the codimension 2 term of \eqref{eq:distr_adjoint_inc} for all $t\in [0,1]$
  \begin{align*}
      &\left|\sum_{E\in\Eint_h}\sum_{F\supset E}\int_E\langle\sigma_E,\jmp{(SA_{\gpar(t)})_{\gn_{\gpar(t)}\gcn_{\gpar(t)}EE}}^E_F\rangle\,\vol[\gpar(t)]{E}\right|\\
      &\hspace*{2cm}\lesssim \max_{T\in\T_h}\left(h_{T}^{-2}\|g_h-\change{\gex}\|_{\Linf[T]}\right) \nrmt{\sigma}_2\nrmt{U}_2.
  \end{align*}
\end{lemma}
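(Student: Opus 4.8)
The plan is to reduce the whole sum to the metric error by first observing that the codimension~2 integrand vanishes identically when the piecewise-smooth metric $\gpar(t)$ is replaced by the globally smooth exact metric $\gex$. Indeed, $A_{\gex}=\mapUA_{\gex}U$ is algebraic in $U$ with coefficients built from $\gex$ and $1/\sqrt{\det\gex}$ (see \eqref{eq:mapping_U_A_coo}), so for $U\in\Htwoz[\Omega,\R^{\tilde{N}\times\tilde{N}}_{\mathrm{sym}}]$ it is a globally $H^2$ tensor field and $SA_{\gex}$ has matching traces from the two elements $T_\pm$ sharing a facet $F\supset E$; moreover $\gn_F^{T_+}(\gex)=-\gn_F^{T_-}(\gex)$, while the $g$-conormal $\gcn_E^F(\gex)$ and the tangential projection onto $\Xm E$ depend only on the single-valued $tt$-components of the metric on $F$ and on $E$. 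By linearity of $SA_{\gex}$ in its first argument, $\jmp{(SA_{\gex})_{\gn_{\gex}\gcn_{\gex}EE}}^E_F=0$ for every $E\in\Eint_h$ and $F\in\F_E$, so the integrand equals $\langle\sigma_E,\ \jmp{(SA_{\gpar(t)})_{\gn_{\gpar(t)}\gcn_{\gpar(t)}EE}}^E_F-\jmp{(SA_{\gex})_{\gn_{\gex}\gcn_{\gex}EE}}^E_F\rangle$.

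Next I would telescope this difference over its metric-dependent ingredients: $SA_{\gpar(t)}|_{T_\pm}$ versus $SA_{\gex}$, the $g$-normals $\gn_F^{T_\pm}(\gpar(t))$ versus $\gn_{\gex}$, the $g$-conormal $\gcn_E^F(\gpar(t))$ versus $\gcn_E^F(\gex)$, and the projections onto $\Xm E$. Using $\gpar(t)-\gex=t(\gappr-\gex)$, Lemma~\ref{lem:est_curv_quant} bounds $\|\mapUA_{\gpar(t)}U-\mapUA_{\gex}U\|_{\Linf[E]}\lesssim\|\gappr-\gex\|_{\Linf[T]}\|U\|_{\Linf[E]}$ and keeps every remaining factor bounded by $1$ or by $\|U\|_{\Linf[E]}$, while Lemma~\ref{lemma:ndiff} (and its straightforward conormal/projection analogue) gives $|\gn_{\gpar(t)}-\gn_{\gex}|+|\gcn_{\gpar(t)}-\gcn_{\gex}|\lesssim\|\gappr-\gex\|_{\Linf[T]}$ pointwise on $E$. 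Since no derivatives of $U$ enter $SA$, this produces the pointwise estimate $\|\jmp{(SA_{\gpar(t)})_{\gn_{\gpar(t)}\gcn_{\gpar(t)}EE}}^E_F\|_{\Ltwo[E]}\lesssim\|\gappr-\gex\|_{\Linf[T]}\|U\|_{\Ltwo[E]}$, consistent with the fact that only the $\Linf$-norm (and not a $\Winf$-norm) of the metric error appears in the statement.

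Finally I would apply Hölder on $E$ (dropping $\vol[\gpar(t)]{E}$ by Lemma~\ref{lem:est_curv_quant}), then push the $\Ltwo[E]$-norms of $\sigma_E$ and of $U$ down to a neighboring volume element $T$ by iterating the scaled trace inequality \eqref{eq:trace_inequ} through an intermediate facet $F$, using additionally the $\Hone[F]\hookrightarrow\Ltwo[F]$ trace (controlled by $h_T^{-1}|\cdot|_{\Hone[T]}+h_T|\cdot|_{\Htwo[T]}$); this gives $\|\sigma_E\|_{\Ltwo[E]}\lesssim h_T^{-1}(\|\sigma\|_{\Ltwo[T]}^2+h_T^2|\sigma|_{\Hone[T]}^2+h_T^4|\sigma|_{\Htwo[T]}^2)^{1/2}$ and the analogous bound for $U$, where the shape-regularity equivalences $h_F\sim h_T$ for the finitely many $T\supset E$ and $F\in\F_E$ collapse all local mesh-size powers into a single $h_T^{-2}$. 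Multiplying the two estimates, summing over $E$ and $F\in\F_E$, pulling $\max_{T\in\T_h}(h_T^{-2}\|\gappr-\gex\|_{\Linf[T]})$ out front, and closing with Cauchy--Schwarz over elements yields the claimed bound, which parallels Lemma~\ref{lem:est_a_bbnd} for the $a$-form. The main obstacle is precisely this trace bookkeeping: confirming that reaching codimension~2 genuinely consumes the full $\nrmt{\cdot}_2$ norm (second derivatives included) and that the local $h$-powers assemble into the clean $h_T^{-2}$ factor under shape regularity, together with checking that the conormal and tangential-projection perturbations are Lipschitz in the metric in the same $L^\infty$ sense as the $g$-normal.
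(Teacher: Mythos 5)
Your proposal is correct and follows essentially the same route as the paper: subtract the exact-metric jump (which vanishes since $SA_{\gex}$ is single-valued and $\gn_F^{T_+}(\gex)=-\gn_F^{T_-}(\gex)$), bound the resulting difference by $\|\gappr-\gex\|_{\Linf}\,\|\sigma_E\|_{\Ltwo[E]}\|U\|_{\Ltwo[E]}$ via the Lipschitz dependence of $\mapUA$, the normals and conormals on the metric, and then invoke the codimension-2 trace inequality $\|v\|_{\Ltwo[E]}^2\lesssim h_T^{-2}\|v\|_{\Ltwo[T]}^2+|v|_{\Hone[T]}^2+h_T^2|v|_{\Htwo[T]}^2$ to produce the $h_T^{-2}$ factor and the $\nrmt{\cdot}_2$ norms. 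The only cosmetic difference is that you note each individual facet jump of the exact-metric term vanishes, while the paper only records that the sum over $F\supset E$ vanishes; your stronger observation is valid and implies theirs.
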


\begin{proof}
 By the shape-regularity of $\T_h$, the number of facets attached to $E\in\Eint$ is bounded by a constant $C$ independent of $h$. Using that $\sum_{F\supset E}\jmp{ (SA_{\gex})_{\gn_{\gex}\gcn_{\gex}EE}}^E_F=0$ for smooth $\gex$ and $U\in H^2_0(\Omega,\R^{\tilde{N}\times \tilde{N}}_{\mathrm{sym}})$ there holds
\begin{align*}
&\left\| \sum_{F \supset E} \langle\sigma_E,\jmp{(SA_{\gpar(t)})_{\gn_{\gpar(t)}\gcn_{\gpar(t)}EE}}^E_F\rangle\right\|_{L^1(E)} \\
&= \left\| \sum_{F \supset E} \langle\sigma_E,\jmp{(SA_{\gpar(t)})_{\gn_{\gpar(t)}\gcn_{\gpar(t)}EE}-(SA_{\gex})_{\gn_{\gex}\gcn_{\gex}EE}}^E_F\rangle\right\|_{L^1(E)}\\
&\lesssim \|\gappr-\gex\|_{\Linf[E]}\|\sigma_E\|_{\Ltwo[E]}\|U\|_{\Ltwo[E]}
\end{align*}
and \change{thus}
\begin{align*}
 \left|\int_E \sum_{F \supset E} \langle\sigma_E,\jmp{(SA_{\gpar(t)})_{\gn_{\gpar(t)}\gcn_{\gpar(t)}EE}}^E_F\rangle\,\vol[\gpar(t)]{E}\right|\lesssim \|g_h-\change{\gex}\|_{\Linf[E]}\|U\|_{\Ltwo[E]}\|\sigma_E\|_{\Ltwo[E]}.
\end{align*}
With the codimension 2 trace inequality \change{\eqref{eq:trace_inequ_codim2}}
the claim follows.
\end{proof}

\begin{proposition}
  \label{prop:conv_b_term}
 Let $\gpar(t)= \gex + (\gappr-\gex)t$, $\sigma=\gappr-\gex$, and $U\in\Htwoz[\Omega,\R^{\tilde{N}\times\tilde{N}}_{\mathrm{sym}}]$. There holds for all $t\in[0,1]$ for dimension $N\geq3$
\begin{align*}
 \big|b(\gpar(t);\sigma,U)\big|&\lesssim \left(1 +\max_{T \in \T_h}h_T^{-2} \|\gappr-\gex\|_{\Linf[T]}+ \max_{T \in \T_h}h_T^{-1} \|\gappr-\gex\|_{\Winf[T]} \right)\\
    &\quad\times\nrmt{\gappr-\gex}_2\|U\|_{\Htwo}
\end{align*}
and for $N=2$
\begin{align*}
 \left|b(\gpar(t);\sigma,U)\right|\lesssim \left(1 + \change{\max_{T \in \T_h}h_T^{-1} \|\gappr-\gex\|_{W^{1,\infty}(T)}} \right)\nrmt{\gappr-\gex}_2\|U\|_{\Htwo}.
\end{align*}
Assume that $\gappr=\OptInt[k]\gex$ is an interpolant fulfilling assumptions \eqref{eq:interp_cond} with a $p\in [2,\infty]$. Then for $k\geq 0$ for $N=2$ and $k\geq 1$ for $N=3$
\begin{align*}
\big|b(\gpar(t);\sigma,U)\big|\lesssim \left(\sum_{T\in\T_h}h_T^{p(k+1)}|\gex|^{\change{p}}_{W^{k+1,p}(T)}\right)^{1/p}\|U\|_{\Htwo}.
\end{align*}
\end{proposition}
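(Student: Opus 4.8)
The plan is to build on the adjoint identity already recorded in the roadmap, namely $b(\gpar(t);\sigma,U)=-2\,\widetilde{\INC\sigma}(B)=2\,(\widetilde{\div\div B})(\sigma)$ with $B=S\mapUA_{\gpar(t)}U$. This is legitimate because $\gpar(t)=\gex+t(\gappr-\gex)\in\Regge^+(\T_h)$ for $h\le h_0$ (by~\eqref{eq:eigbound}, since the $tt$-continuity of $\gex$ and $\gappr$ is inherited by $\gpar(t)$), and because Theorem~\ref{thm:distr_covariant_adjoint_inc} and Definition~\ref{def:div-div-dist} require no smoothness of $g$ or $\sigma$. I would then insert the explicit mesh representation~\eqref{eq:distr_adjoint_inc} of $(\widetilde{\div\div B})(\sigma)$, which decomposes into an elementwise volume integral, an integral over interior facets $F$, and an integral over codimension-$2$ skeleton entities $E$, and bound these three pieces separately using Lemma~\ref{lem:est_b_vol}, Lemma~\ref{lem:est_b_bnd}, and Lemma~\ref{lem:est_b_bbnd}, respectively, each of which is already stated in terms of $\nrm{\sigma}_2$, $\nrmt{\sigma}_2$ and norms of $U$ entering through $A_{\gpar(t)}=\mapUA_{\gpar(t)}U$.

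Next I would replace the mesh-dependent norms of the test function in those lemmas by $\|U\|_{\Htwo}$: since $h\le h_0$ and $U\in\Htwoz[\Omega,\R^{\tilde{N}\times\tilde{N}}_{\mathrm{sym}}]$ has no interelement jumps, $\nrm{U}_2\lesssim\|U\|_{\Htwo}$, $\nrmt{U}_2\lesssim\|U\|_{\Htwo}$, and (crucially for the facet term) $\nrm{\nabla_{\Eucl}U}_2^2=|U|_{\Hone}^2+h^2(|U|_{\Hone}^2+|U|_{\Htwo}^2)\lesssim\|U\|_{\Htwo}^2$. Collecting the three prefactors then gives the first displayed estimate in dimension $N\ge3$. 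For $N=2$ the codimension-$2$ integral in~\eqref{eq:distr_adjoint_inc} is absent because $E$ is $0$-dimensional, so $\sigma_E\equiv0$; moreover on a one-dimensional facet the trace-reversed second fundamental form $\overline{\sff}^\gn=\sff^\gn-H^\gn g_F$ vanishes and $\langle\sff^\gn\otimes\sigma_F,B\rangle=0$ by the skew symmetries~\eqref{eq:sym_B} of $B$, so only the $(\div B+\divF B)_{\gn}$ term of Lemma~\ref{lem:est_b_bnd} survives; handling it (with an additional tangential integration by parts along the one-dimensional facets using the $tt$-continuity of $\sigma$, and with the piecewise-constant sharpening of Lemmas~\ref{lem:appr_Weingarten_jmp} and~\ref{lem:est_b_bnd} in the case $k=0$) produces the stated $N=2$ prefactor, in which $\|\gappr-\gex\|_{\Winfh}$ appears, in the spirit of the two-dimensional analysis of~\cite{GNSW2023}.

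For the interpolation statement, set $\gappr=\OptInt[k]\gex$ and feed~\eqref{eq:interp_cond} into both the prefactors and $\nrmt{\gappr-\gex}_2$. The $W^{s,\infty}$ part of~\eqref{eq:interp_cond}, with $s=\min\{k+1,2\}$, shows that each quantity $h_T^{-2}\|\gappr-\gex\|_{\Linf[T]}$, $h_T^{-1}\|\gappr-\gex\|_{\Winf[T]}$, and $\|\gappr-\gex\|_{\Winfh}$ is bounded $h$-uniformly provided $k\ge1$ when $N\ge3$ and $k\ge0$ when $N=2$ (no $h_T^{-2}$ weight occurs for $N=2$); the hypothesis $k\ge1$ in dimensions $\ge3$ is needed precisely for the codimension-$2$ contribution estimated in Lemma~\ref{lem:est_b_bbnd}. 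The $W^{k+1,p}$ part of~\eqref{eq:interp_cond} together with the scaling relations~\eqref{eq:trafo_2_p} then gives $\nrmt{\gappr-\gex}_2\lesssim\big(\sum_{T\in\T_h}h_T^{p(k+1)}|\gex|_{W^{k+1,p}(T)}\big)^{1/p}$ (with the standard modification for $p=\infty$, and, in the lowest-order case $k=0$, using that $\nabla_{\Eucl}^2\gappr=0$ elementwise to absorb the $H^2_h$-seminorm term into a harmless factor $h^2$), which is the claimed rate.

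The hard part is the codimension-$1$ facet term: controlling $\jmp{(\div_{\gpar(t)}SA_{\gpar(t)}+\divF_{\gpar(t)}SA_{\gpar(t)})_{\gn_{\gpar(t)}}}$ in $\Ltwo[F]$ requires differentiating the metric-dependent objects $\gn_{\gpar(t)}$, the Christoffel symbols of $\gpar(t)$, and $\mapUA_{\gpar(t)}$, while carefully isolating the factors whose jump vanishes at the smooth metric $\gex$ — exactly the content of Lemma~\ref{lem:appr_Weingarten_jmp}, which rests on Lemma~\ref{lemma:ndiff}, Lemma~\ref{lem:est_curv_quant}, and the trace inequality~\eqref{eq:trace_inequ}. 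The attendant subtlety is that this estimate unavoidably produces a full Euclidean gradient $\nabla_{\Eucl}U$ on the test function, which is why the $H^2$-regular test space is needed and why the $h^{-1}$-weight on $\|\gex-\gappr\|_{\Linf}$ enters the $N\ge3$ prefactor; its codimension-$2$ analogue, the genuinely worse term of Lemma~\ref{lem:est_b_bbnd}, is what ultimately limits the convergence rate and forces $k\ge1$ in dimensions above two.
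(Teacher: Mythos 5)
Your proposal is correct and follows essentially the same route as the paper: the paper's proof likewise invokes the adjoint identity $b(\gpar(t);\sigma,U)=2(\widetilde{\div\div B})(\sigma)$, combines Lemmas~\ref{lem:est_b_vol}--\ref{lem:est_b_bbnd} with \eqref{eq:interp_cond} and \eqref{eq:trafo_2_p}, drops the codimension-2 term for $N=2$ since $\sigma_E=0$ on $0$-dimensional $E$, and uses the piecewise-constant sharpening of Lemma~\ref{lem:est_b_bnd} for $k=0$. Your additional observations for $N=2$ (vanishing of $\overline{\sff}^\gn$ and of $\langle\sff^\gn\otimes\sigma_F,B\rangle$ on one-dimensional facets) go beyond the paper's terse argument but are consistent with it.
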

\begin{proof}
 Analogously to Proposition~\ref{prop:conv_a_term}, combine Lemmas~\ref{lem:est_b_vol}--\ref{lem:est_b_bbnd} and the approximation properties \eqref{eq:interp_cond} of the interpolant together with \eqref{eq:trafo_2_p}. In the case of $N=2$, Lemma~\ref{lem:est_b_bbnd} does not need to be considered as the codimension 2 terms are zero. 
 Together with the replacement of the $W^{1,\infty}$ by the $\Linf[]$ norm in Lemma~\ref{lem:est_b_bnd}, we obtain convergence also for piecewise constant metrics $k=0$.
\end{proof}

\subsection{Completing the convergence proofs}

\begin{proof}[{\bf{Proof of Theorem~\ref{thm:conv_Riemann}}}]
 For two dimensions $N=2$ we use Proposition~\ref{prop:conv_b_term} noting that $a(\cdot;\cdot,\cdot)=0$ in this case, see \change{Proposition}~\ref{prop:spec_2d}. In the case $N\geq 3$ combine Proposition~\ref{prop:conv_a_term} and Proposition~\ref{prop:conv_b_term}.
\end{proof}

\begin{proof}[{\bf{Proof of Corollary~\ref{cor:conv_Riemann_int}}}]
 This also follows from Proposition~\ref{prop:conv_a_term} and Proposition~\ref{prop:conv_b_term}.
\end{proof}

\section{The two- and three-dimensional cases}
\label{sec:spec}
\subsection{Specialization to 2D}
\label{sec:spec_2d}

In two dimensions, test space \eqref{eq:testspace_U} consists of piecewise 0-forms, which are globally continuous, $\Utest = \W^0(\T)\cap C^0(\Omega)$, and the elements of \eqref{eq:testspace_A} can be characterized, noting that $\star(X^\flat\wedge Y^\flat) = \volform(X,Y)$, via \eqref{eq:mapping_U_A}. Namely, they are all of the form in~\eqref{eq:Achoice2D} for some $v \in \Utest$. 
In \cite{GNSW2023} the covariant incompatibility operator $\inc:\W^1(T)\odot\W^1(T)\to \R$ has been defined in coordinates
\begin{align*}
  {\inc(\sigma)}
	& =   \gveps^{qi} \gveps^{jk}\Big( \pder{\sigma_{ik}}{jq}
	-  \pder{( \Gamma_{ji}^m \sigma_{mk})}{q}- \Gamma_{lq}^l
	(\pder{\sigma_{ik}}{j} - \Gamma_{ji}^m \sigma_{mk})
	\Big).
\end{align*}
Additionally define the covariant curl of a 2-tensor and 1-form, \change{$\curl:\TT^2_0(T)\to \W^1(T)$, $\curl:\W^1(T)\to\R$} in the right-handed \change{$g$-orthonormal} frame $\{\gt,\gn\}$ 
by 
\change{\begin{align*}
&\curl(\sigma)(X)=
(\nabla_{\gt}\sigma)(\gn,X)-(\nabla_{\gn}\sigma)(\gt,X),\quad &&\sigma\in \TT^2_0(T), X\in \Xm{T}.\\
&\curl(\alpha) = (\nabla_{\gt}\alpha)(\gn)-(\nabla_{\gn}\alpha)(\gt), \quad&& \alpha\in \W^1(T).
\end{align*}}
A coordinate expression can be found in~\cite{GNSW2023}. \change{Then there holds $\inc=\curl\curl$.}

\begin{proposition}
	\label{prop:spec_2d}
	The distributional densitized Riemann curvature tensor simplifies in 2D to the densitized distributional Gauss curvature (after rescaling by a factor $4$)
	\begin{align*}
\widetilde{\Gauss\volform}(u) = \sum_{T \in \T} \int_T \Gauss u\, \vo{T} + \sum_{F \in \Fint} \int_F \llbracket \GeodCurv \rrbracket u\, \vo{F} + \sum_{E \in \Eint} \Theta_E u(E),\qquad \forall u\in \Utesto(\T),
\end{align*}
	and for the bilinear forms \eqref{eq:def_ah}--\eqref{eq:def_bh} there holds for all $\sigma\in\Regge(\T)$ and $u\in\Utest$
	\begin{align*}
		a(g;\sigma,u)&=0,\\
		b(g;\sigma,u)&= -2\sum_{T\in\T}\int_T\inc\sigma\,u\,\vo{T}+2\sum_{F \in \Fint}\int_F\jmp{\curl(\sigma)(\gt)+\nabla_{\gt}(\sigma_{\gn\gt})}u\,\vo{F}\nonumber\\
		&\qquad- 2\sum_{E \in \Eint}\sum_{F\supset E}\jmp{\sigma_{\gn\gcn}}^E_Fu(E).
	\end{align*}
Especially, $b(g;\cdot,\cdot)$ coincides with the distributional covariant incompatibility operator defined in \cite{GNSW2023} up to a factor $-2$.
\end{proposition}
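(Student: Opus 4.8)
\textbf{Proof plan for Proposition~\ref{prop:spec_2d}.}
The plan is to specialize each of the three building blocks of the general theory to $N=2$ and simply read off the claimed formulas. First, for the statement on $\widetilde{\Gauss\volform}$, there is nothing new to do: it is exactly \eqref{eq:distr_Gauss} from \S\ref{subsec:spec_gauss_curv}, and I would simply point to that computation, noting that in $N=2$ the test space $\Utest$ is $\W^0(\T)\cap C^0(\Omega)$ and that the bijection $\mapUA$ of Lemma~\ref{lem:testspaceA} sends $v\mapsto A=-v\,\volform\otimes\volform$ since $\star(X^\flat\wedge Y^\flat)=\volform(X,Y)$ (a scalar in $N=2$), so that $\RogU(v)=\Rog(\mapUA v)=\Rog(-v\,\volform\otimes\volform)$, which \eqref{eq:distr_Gauss} evaluates to $4\,\widetilde{\Gauss\volform}(v)$.

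Next, to show $a(g;\sigma,u)=0$, I would examine the three terms of \eqref{eq:def_ah} for the $N=2$ test function $A=-u\,\volform\otimes\volform$. The codimension~$2$ term vanishes trivially because $\sigma_E$ is a $2$-tensor restricted to a $0$-dimensional simplex, hence $\tr{\sigma_E}=0$. The codimension~$1$ term involves $\mathbb{S}_F(\sigma)=\sigma_F-\tr{\sigma_F}g_F$ on the one-dimensional facet $F$; since $g_F$ is $1$-dimensional, $\mathbb{S}_F(\sigma)=\sigma_F-\sigma_F=0$, killing that term. The volume term is the one requiring a genuine (but short) computation: for $A=-u\,\volform\otimes\volform$ one has, using $\volform=\sqrt{\det g}\,dx^1\wedge dx^2$ and \eqref{eq:innerprod-extended}, that $\langle L_\sigma^{(1)}\Riemann,A\rangle=\tfrac12\tr\sigma\,\langle\Riemann,A\rangle$ in two dimensions (this is the same kind of identity underlying \eqref{eq:K-1}, where $\langle\Riemann,A\rangle = 4Ku$; applying $L_\sigma^{(1)}$ to the first slot of $\Riemann$ and contracting against the two copies of $\volform$ in $A$ produces exactly half the trace of $\sigma$ times $4Ku$). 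Hence the integrand $\langle L_\sigma^{(1)}\Riemann,A\rangle-\tfrac12\tr\sigma\langle\Riemann,A\rangle$ vanishes pointwise, and $a(g;\sigma,u)=0$.

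For the formula for $b(g;\sigma,u)$, I would start from \eqref{eq:def_bh} with $A=-u\,\volform\otimes\volform$ and reduce each term. The volume term: $2\langle\nabla^2\sigma,SA\rangle$ integrated over $T$ must, by construction of $\INC$ in \eqref{eq:1} and the identity \eqref{eq:6}, equal $-2\langle\INC\sigma,A\rangle$; and in $N=2$ a direct check (or appeal to \cite{GNSW2023}) shows $\langle\INC\sigma,-u\,\volform\otimes\volform\rangle=(\inc\sigma)\,u$ with the scalar operator $\inc$ as displayed, giving $-2\int_T\inc\sigma\,u\,\vo{T}$. The codimension~$1$ term: on the one-dimensional facet, the normal-tangential derivative combination $\sigma_{\gn\gn}\sff^\gn+\nablans\sigma{F\gn F}+\nabla(\gn\lrcorner\sigma)-\nablans\sigma{\gn FF}$ paired against $\Atnnt=u\,\gt^\flat\otimes\gt^\flat$ collapses, after expanding $\gn\lrcorner\sigma$ via \eqref{eq:141} and using that $F$ is $1$-dimensional, to $\curl(\sigma)(\gt)+\nabla_{\gt}(\sigma_{\gn\gt})$ times $u$ (matching the known 2D incompatibility boundary term of \cite{GNSW2023}; here I would use the defining formula $\curl(\sigma)(X)=(\nabla_{\gt}\sigma)(\gn,X)-(\nabla_{\gn}\sigma)(\gt,X)$). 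The codimension~$2$ term is immediate since $A_{\gcn\gn\gn\gcn}=u$ in 2D (as in \eqref{eq:K-3}), yielding $-2\sum_E\sum_{F\supset E}\jmp{\sigma_{\gn\gcn}}^E_F\,u(E)$. Finally, comparing the resulting three-term expression with Definition~\ref{def:distr-incomp-oper} (or with \cite{GNSW2023} directly) identifies $b(g;\cdot,\cdot)$ with $-2\,\widetilde{\INC\sigma}$ as claimed.

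\textbf{Main obstacle.} The routine but slightly delicate part is the reduction of the codimension~$1$ integrand of $b$: one must carefully track the split \eqref{eq:sigma-split} of $\sigma$ into $\gn$- and $\gt$-components on a $1$-dimensional facet, use $\sff^\gn(\gt,\gt)=\GeodCurv^\gn$ and the antisymmetry of $\Atnnt$'s partner, and verify that the terms organize into the covariant curl plus $\nabla_{\gt}(\sigma_{\gn\gt})$ exactly as in \cite{GNSW2023}. Everything else is a bookkeeping specialization where the vanishing of traces and wedge products on low-dimensional simplices does most of the work.
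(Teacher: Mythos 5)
Your proposal is correct and follows essentially the same route as the paper's proof: citing \S\ref{subsec:spec_gauss_curv} for the Gauss-curvature identity, verifying the pointwise cancellation $\langle L_\sigma^{(1)}\Riemann,A\rangle=\tfrac12\tr{\sigma}\langle\Riemann,A\rangle$ for the volume part of $a$, killing the codimension-1 and codimension-2 parts of $a$ via the degeneracy of $\mathbb{S}_F$ and $\tr{\sigma_E}$ on low-dimensional simplices, and reducing the facet integrand of $b$ by expanding $\gn\lrcorner\sigma$ so that the two $\sigma_{\gn\gn}\GeodCurv^\gn$ terms cancel. Your observation that $\mathbb{S}_F(\sigma)=0$ identically on a one-dimensional facet is a slightly more direct way of seeing what the paper obtains by writing out the triple product, but the argument is the same.
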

\begin{proof}
  Let $A = -u\,\og \otimes \og$.
  The reduction to the stated expression of the distributional Gauss curvature was already shown in \S\ref{subsec:spec_gauss_curv}.
	
	For proving that $a(g;\sigma,u)=0$ we start with the first volume term of \eqref{eq:def_ah}
	\begin{align*}
		\langle \Lsigma^{(1)}\Riemann,A\rangle &= \Riemann_{1212}\sigma^1_{\phantom{1}1}A^{1212}+\Riemann_{1221}\sigma^1_{\phantom{1}1}A^{1221}+\Riemann_{2112}\sigma^2_{\phantom{2}2}A^{2112}+\Riemann_{2121}\sigma^2_{\phantom{2}2}A^{2121}\\
		&= 2(\Riemann_{1212}\sigma^1_{\phantom{1}1}A^{1212}+\Riemann_{2112}\sigma^2_{\phantom{2}2}A^{2112})\\
		&=2\Riemann_{1212}A^{1212}\tr{\sigma}=\frac{1}{2}\tr{\sigma}\,\langle\Riemann,A\rangle,
	\end{align*}
which cancels with the second volume term of \eqref{eq:def_ah}. Next, we consider the boundary terms
recalling that $\mathbb{S}_{\Fr}(\sigma)=\sigma_{\Fr}-\tr{\sigma_{\Fr}}g_{\Fr}$
\begin{align*}
\jmp{\sff}:\mathbb{S}_{\Fr}(\sigma):\Atnnt& = (\sigma_{\gt\gt}\jmp{\GeodCurv}-\jmp{\GeodCurv}\sigma_{\gt\gt})A_{\gt\gn\gn\gt}=0.
\end{align*}
The claim therefore follows together with $\tr{\sigma_E}=0$ on 0-dimensional vertices.

For proving the stated  expression for $b(g;\sigma,u)$, we can verify as in~\cite{GNSW2023} that 
\begin{align*}
\langle \nabla^2\sigma,SA\rangle = -(\inc \sigma)\, u.
\end{align*}
Further, on each facet $F$ we have with $A_{\gt\gn\gn\gt}=u$ and $\GeodCurv^{\gn}=\sff^\gn(\gt,\gt)$
\begin{align*}
	&\langle\jmp{\sigma_{\gn\gn}\sff+(\nabla\sigma)_{\Fr\gn \Fr}+\nabla(\gn\lrcorner \sigma)-(\nabla\sigma)_{\gn \Fr\Fr}},\Atnnt\rangle \\
	&\qquad\qquad= \jmp{\sigma_{\gn\gn}\sff_{\gt\gt}+(\nabla_{\gt}\sigma)_{\gn\gt}+\nabla_{\gt}(\gn\lrcorner \sigma)_\gt-(\nabla_{\gn}\sigma)_{\gt\gt}}u\\
	&\qquad\qquad=\jmp{\sigma_{\gn\gn}\,\GeodCurv+(\nabla_{\gt}\sigma)_{\gn\gt}-(\nabla_{\gn}\sigma)_{\gt\gt}+\nabla_{\gt}(\sigma_{\gn\gt})-\sigma_{\gn\gn}\GeodCurv}u\\
	&\qquad\qquad = \jmp{(\nabla_{\gt}\sigma)_{\gn\gt}-(\nabla_{\gn}\sigma)_{\gt\gt}+\nabla_{\gt}(\sigma_{\gn\gt})}u=\jmp{\curl(\sigma)(\gt)+\nabla_{\gt}(\sigma_{\gn\gt})}u,
\end{align*}
where we used \change{$g(\nabla_{\gt}\gt,\gt)=0$ since $\|\gt\|=1$ along $F$ and also Leibniz' rule for $\nabla$ and \eqref{eq:contraction}, thus} $\nabla_{\gt}\gt = g(\nabla_{\gt}\gt,\gn)\gn=\GeodCurv^\gn\gn$ there holds
\begin{align*}
\nabla_{\gt}(\gn\lrcorner \sigma)_\gt &= \nabla_{\gt}(\sigma_{\gn\gt})-\sigma(\gn,\nabla_{\gt}\gt)=\nabla_{\gt}(\sigma_{\gn\gt})-\sigma_{\gn\gn}g(\nabla_{\gt}\gt,\gn)=\nabla_{\gt}(\sigma_{\gn\gt})-\sigma_{\gn\gn}\GeodCurv^\gn.
\end{align*}
The claim follows now with $A_{\gcn\gn\gn\gcn}=A_{\gt\gn\gn\gt}=u$.
\end{proof}

\subsection{Specialization to 3D}
\label{sec:spec_3d}

On a 3-dimensional manifold, using a natural notion of the curl of a 2-tensor
field, one can define a covariant incompatibility operator. In this
subsection, we show that our $N$-dimensional incompatibility operator
coincides with it when $N=3$. We then show simplifications of our
$N$-dimensional generalized curvature formula in 3D, and display
coordinate formulas, which are also useful for 3D numerical
experimentation (in Section~\ref{sec:num_examples}).

We start by defining the covariant curl of 2-tensors, 
$\curl:\TT^2_0(T)\to \TT^2_0(T)$, by
  \change{\begin{align}
    \label{eq:cov_curl_def}
    (\curl \sigma)(X,Y\times Z) &= (\nabla_Y\sigma)(X,Z)-(\nabla_Z\sigma)(X,Y),    
  \end{align}
  for any $\sigma \in \TT^2_0(T)$ and $X,Y,Z \in \Xm T.$  
 In Appendix~\ref{sec:geometrical-prelims} we relate the covariant curl to the \emph{exterior covariant derivative}, which will be useful to prove integration by parts formulas later. A relationship between the covariant curl and the exterior covariant derivative in 2D can be found in \cite[Eq.~(4.3) and Remark~4.1]{GNSW2023}.}
 Combining two curl operations in succession, with an intervening transpose,
 we define the
 {\em three-dimensional  covariant incompatibility} operator
  $ \inc:\Sc(T)\to \Sc(T) $ by
  \begin{align*}
 (\inc \sigma)(X,Y) & =  (\curl(\curl \sigma)^T)(X,Y),
    && \sigma \in \Sc(T),\;
 X,Y\in \Xm T,
  \end{align*}
where $(\curl \sigma)^T(X,Y):=(\curl \sigma)(Y,X)$.
\change{For completeless,} the covariant curl and incompatibility operator read \change{in coordiantes with the covariant Levi-Civita permuting symbol $\gveps^{ijk}$ \eqref{eq:Levi-Civita-symbol} and the Christoffel symbols of the second kind $\Gamma_{ij}^k$}
    \begin{align*}
      \mt{\curl \sigma}_{ij}
      & =
      \gveps^{pql}g_{lj}
 (\pder{\sigma_{iq}}{p} - \Gamma_{pi}^m \sigma_{mq}),\quad\mt{\curl \sigma}_{i}^{\phantom{i}j}
 =
      \gveps^{pqj}
 (\pder{\sigma_{iq}}{p} - \Gamma_{pi}^m \sigma_{mq}),\\
      \mt{\inc\sigma}_{ij}
      & = \gveps^{pql}\gveps^{rst}g_{lj}
 \Big((\d_pg_{ti}-g_{ti}\Gamma_{vp}^v)(\d_r\sigma_{qs}-\Gamma_{rq}^u\sigma_{us})
       \\ 
      & \hspace{2.5cm}
      +g_{ti}\d_p(\d_r\sigma_{qs}-\Gamma_{rq}^u\sigma_{us})-\Gamma_{pi}^mg_{tm}(\d_r\sigma_{qs}-\Gamma_{rq}^u\sigma_{us})\Big),\\
      \mt{\inc\sigma}^{ij}
      & = \gveps^{pqj}
 \big(\gveps^{rsi}(\d_p(\d_r\sigma_{qs}-\Gamma_{rq}^u\sigma_{us})  -\Gamma_{lp}^l(\d_r\sigma_{qs}-\Gamma_{rq}^u\sigma_{us}))\\
                          &\hspace{2.5cm} +\gveps^{rst}\Gamma_{pt}^i(\d_r\sigma_{qs}-\Gamma_{rq}^u\sigma_{us})\big).
    \end{align*}
The Euclidean version of this 3D incompatibility operator (which can be
obtained from the above by setting the metric $g$ to the
identity) has appeared extensively in the elasticity
literature---see e.g.,
\cite{AG20,ArnolAwanoWinth08,ChrisGopalGuzma24}.

Define the metric-dependent cross product of vector fields yielding 1-forms by \change{($\omega$ denotes the volume form)}
\begin{equation}
  \label{eq:3d-cross}
 g( X \times Y, Z) = \og(X, Y, Z), \qquad X, Y, Z \in \Xm \T.  
\end{equation}
\change{It is easily seen to satisfy $X\times Y = (\star(\change{X^\flat\wedge Y^\flat}))^\sharp$, and it} reads in coordinates as
$(X\times Y)^i = \gveps^{ijk}(X^\flat)_j(Y^\flat)_k$.  Similar
formulas yield the tensor cross product \cite{BGO16} 
a matrix $A$ and vector fields $u,v\in\Xm{\T}$ as follows:
\begin{align}
  \label{eq:tensor-cross}
  \begin{split}
    &\change{(A\times u)(X,Y):=A(X,u\times Y),\quad (u\times A)(X,Y):=A(X\times u,Y),} \\
    &\change{(u\times A\times v)(X,Y):=A(X\times u,v\times Y).}
  \end{split}
\end{align}

\change{
Next, we proceed to examine the terms of the generalized Riemann curvature
in 3D.  We use the mapping \eqref{eq:mapping_U_A} to identify the test functions $A=\mapUA U\in\Atest$ with $U\in\Utest$ via
\begin{align}
   \label{eq:id_testfunction_3d}
  \begin{split}
 (\mapUA U)(X,Y,Z,W) &= \langle U, \star(X^\flat\wedge Y^\flat)\odot\star(W^\flat\wedge Z^\flat)\rangle
    \\  
    & = \langle U, \star(X^\flat\wedge Y^\flat)\otimes\star(W^\flat\wedge Z^\flat)\rangle \\  
    & = U\big((\star(X^\flat\wedge Y^\flat))^\sharp,(\star(W^\flat\wedge Z^\flat))^\sharp\big)
    \\
      &= - U(X\times Y, Z\times W),
  \end{split}
\end{align}
where we used \eqref{eq:innerprod-extended} and the 3D cross product of \eqref{eq:3d-cross}. It is useful to relate the covariant curl operator \eqref{eq:cov_curl_def} to the  \emph{exterior covariant derivative} $d^{\nabla}$, cf. \eqref{eq:exterior-covariant-derivative}, namely  $(\curl \sigma)(X,Y) = \ip{(\star d^{\nabla}\Lsigma)(X),Y}$ (see Lemma~\ref{lem:ext_cov_der_curl} below), for $\sigma \in \TT^2_0(T), \; X,Y \in \Xm T$. Here $\Lsigma \in\TT^1_1(T)$ is the result of conversion of
$\sigma \in \TT^2_0(T) $ into an endomorphism per \eqref{eq:Ldefn}. The exterior covariant derivative is acting on
  $\Lsigma$ considered as a vector-valued 1-form in $\W^1(T, \Xm
 T)$. Then $d^{\nabla}\Lsigma$ is a vector-valued 2-form in
  $\W^2(T, \Xm T)$.  Using the Hodge dual operator in 3D, we convert it to
 the vector-valued 1-form $\star d^{\nabla}\Lsigma$.}

\change{
}

\change{
  \begin{lemma}
  \label{lem:ext_cov_der_curl}
 There holds for vector fields $X,Y,Z,V,W\in\Xm T$ and $\sigma\in\TT^2_0(T)$
  \begin{subequations}
    \begin{align}
    & \ip{(d^{\nabla}\Lsigma)(X,Y),Z} = (\curl\sigma^T)(Z,X\times Y),\\
    & (\curl \sigma)(X,Y) = \ip{(\star d^\nabla \Lsigma)(X),Y},\quad \text{i.e. }L_{\curl \sigma} = \star d^\nabla \Lsigma.\label{eq:curl_ext_cov_der_rel}
    \end{align}
  \end{subequations}
 Further, there holds the integration by parts formula for $\sigma,U\in\TT^2_0(T)$
  \begin{align}
    \label{eq:ibp_curl}
    \int_T \ip{\curl \sigma, U} \,\vo{T} = \int_T  \ip{ \sigma, \curl U}\,\vo{T} + \int_{\d T}  \ip{ \sigma, U \times\gn} \,\vo{\d T}.
  \end{align}
\end{lemma}
\begin{proof}
Using the definition of $d^{\nabla}$, \eqref{eq:exterior-covariant-derivative-alt}, and the Leibniz rule we have
\begin{align*}
 (d^{\nabla}\Lsigma)(X,Y) &= (\nabla_X \Lsigma)(Y) - (\nabla_Y \Lsigma)(X)\\
  &= \nabla_X(\Lsigma(Y)) - \Lsigma(\nabla_X Y) - \nabla_Y(\Lsigma(X)) + \Lsigma(\nabla_Y X).
\end{align*}
Taking the derivative in direction $Z$ of \eqref{eq:Ldefn}
\begin{align*}
 Z(\sigma(X,Y)) = \ip{\nabla_Z(\Lsigma X), Y} + \ip{\Lsigma X, \nabla_Z Y} = \ip{\nabla_Z(\Lsigma X), Y} + \sigma(X, \nabla_Z Y)
\end{align*}
we can express $\nabla_Z(\Lsigma X)$ yielding the first claim
\begin{align*}
  &\ip{(d^{\nabla}\Lsigma)(X,Y),Z} = \ip{\nabla_X(\Lsigma(Y)) - \Lsigma(\nabla_X Y) - \nabla_Y(\Lsigma(X)) + \Lsigma(\nabla_Y X),Z} \\
  &=  X(\sigma(Y,Z))-\sigma(Y, \nabla_X Z) - \sigma(\nabla_X Y, Z) - Y(\sigma(X,Z)) + \sigma(X, \nabla_Y Z) + \sigma(\nabla_Y X, Z) \\ 
  &= (\nabla_X\sigma)(Y,Z)-(\nabla_Y\sigma)(X,Z)\\
  &= (\nabla_X\sigma^T)(Z,Y)-(\nabla_Y\sigma^T)(Z,X) = (\curl\sigma^T)(Z,X\times Y).
\end{align*}
For the second we use that for 2-forms $\star\star=\idop$ \eqref{eq:hodge_star_twice} and the isometry \eqref{eq:hodge_star_iso}
\begin{align*}
 d^{\nabla}\Lsigma(X,Y)&=\star\star d^{\nabla}\Lsigma(X,Y) = \frac{1}{2}\ip{\star\star d^{\nabla}\Lsigma, X^\flat\wedge Y^\flat} \\
  &= \ip{\star d^{\nabla}\Lsigma, \star(X^\flat\wedge Y^\flat)} = \star d^{\nabla}\Lsigma(X\times Y). 
\end{align*}
Thus, there holds
\begin{align*}
 (\curl\sigma^T)(Z,X\times Y) = \ip{(d^\nabla \Lsigma)(X,Y),Z} = \ip{\star d^{\nabla}\Lsigma(X\times Y),Z}
\end{align*}
and renaming variables yields the claim \eqref{eq:curl_ext_cov_der_rel}.

To prove the integration by parts formula \eqref{eq:ibp_curl} we use \eqref{eq:ibp_ext_cov_der} with $\sigma=\Lsigma\in\W^1(T,\Xm{T})$ and $\rho=\star L_U\in \W^2(T,\Xm{T})$, and $\delta^\nabla$ denoting the exterior covariant coderivative \eqref{eq:codiff_ext_cov_der}
\begin{align*}
  \int_T\frac{1}{2}\ip{d^\nabla\Lsigma,\star L_U}\,\omega_T = \int_T\ip{\Lsigma,\delta^\nabla(\star L_U)}\,\omega_T + \int_{\partial T}\ip{(\Lsigma)_{\Fr},(\nu\lrcorner (\star L_U))_{\Fr}}\,\omega_{\partial T}.
\end{align*}
For $N=3$ and $k=2$ there holds $(-1)^{N(k-1)+1}=1$ and for $k=2$ by \eqref{eq:hodge_star_twice} $\star\star=1$ such that by \eqref{eq:codiff_ext_cov_der} $\delta^\nabla\star L_U = \star d^\nabla L_U$. Further, with \eqref{eq:hodge_star_iso} we have
\begin{align*}
  \int_T\frac{1}{2}\ip{d^\nabla\Lsigma,\star L_U}\,\omega_T = \int_T\ip{\star d^\nabla\Lsigma, L_U}\,\omega_T= \int_T\ip{L_{\curl\sigma}, L_U}\,\omega_T= \int_T\ip{\curl\sigma, U}\,\omega_T
\end{align*}
for the left-hand side and for the first term on the right-hand side
\begin{align*}
\int_T\ip{\Lsigma,\delta^\nabla(\star L_U)} &= \int_T\ip{\Lsigma,\star d^\nabla L_U}\,\omega_T = \int_T\ip{\Lsigma, L_{\curl U}}\,\omega_T = \int_T\ip{\sigma, \curl U}\,\omega_T.  
\end{align*}
The boundary term can be rewritten by using a $g$-orthonormal basis $\{E_i\}_{i=1}^2$ of the tangent space of $\partial T$ 
\begin{align*}
  \int_{\partial T}\ip{(\Lsigma)_{\Fr},(\nu\lrcorner (\star L_U))_{\Fr}}\,\omega_{\partial T} &= \int_{\partial T}\sum_{i=1}^2\ip{\Lsigma(E_i), (\star L_U)(\nu,E_i)}\,\omega_{\partial T}.
\end{align*}
We compute with \eqref{eq:4}, \eqref{eq:hodge_star_iso}, and the definition of the cross product
\begin{align*}
 (\star L_U)(\nu,E_i) = \ip{\star L_U, \nu^\flat\otimes E_i^\flat} =\frac{1}{2}\ip{\star L_U, \nu^\flat\wedge E_i^\flat} = \ip{L_U,\star(\nu^\flat\wedge E_i^\flat)} =  L_U(\nu\times E_i)
\end{align*}
and thus, by \eqref{eq:tensor-cross},
\begin{align*}
  \int_{\partial T}\ip{(\Lsigma)_{\Fr},(\nu\lrcorner (\star L_U))_{\Fr}}\,\omega_{\partial T} &= \int_{\partial T}\sum_{i=1}^2\ip{\Lsigma(E_i), L_U(\nu\times E_i)}\,\omega_{\partial T} \\
  &= \int_{\partial T}\ip{\sigma, U\times \nu}\,\omega_{\partial T},
\end{align*}
finishing the proof.
\end{proof}
}

The next lemma shows the relationship between the above-defined 3D
$\inc \sigma$ and the previously defined $N$-dimensional $\INC\sigma$
of \eqref{eq:1}. Further, we simplify parts of the boundary terms.
\begin{lemma}
    \label{lem:inc_curl_3d_id}
 Let $T \in \T$, $U\in\Utest$, $A=\mapUA U\in \Atest$ and $F\in\triangle_{-1}T$ with $g$-normal $\gn$. Then, at a point $p$, 
        \begin{align}
          \label{eq:17}
          &\langle\INC\sigma,A\rangle= \langle\inc\sigma,U\rangle,&\quad p\in T,\\
          \label{eq:18}
      &\langle(\nabla\sigma)_{\Fr\gn \Fr}-(\nabla\sigma)_{\gn \Fr\Fr},{\Atnnt}\rangle=\langle ((\curl\sigma)^T\times \gn)_{\Fr},U_{\Fr}\rangle,&\quad p\in F.
        \end{align}
\end{lemma}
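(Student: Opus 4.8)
The statement consists of two pointwise identities, \eqref{eq:17} for a point $p\in T$ and \eqref{eq:18} for a point $p\in F$, and the natural strategy is to verify each by a coordinate computation in a well-chosen $g$-orthonormal frame. The key structural fact to use is the coordinate representation of $\mapUA$ from Proposition~\ref{prop:mapA_coo}, which in $N=3$ reads $[\mapUA U]^{ijkl} = -\hat\veps^{ij\alpha}\hat\veps^{kl\beta}U_{\alpha\beta}$ with $\alpha,\beta$ single indices; this is exactly the same $\hat\veps$-contraction pattern that appears in the coordinate formulas for $\curl$ and $\inc$ recalled just above the lemma, so the two identities should fall out of contracting $\hat\veps$-symbols. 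Since everything in the lemma is pointwise and tensorial, I would fix a point $p$ and work in normal coordinates for $g$ at $p$ (so the metric is the identity and the Christoffel symbols vanish \emph{at} $p$, though not their derivatives) — for \eqref{eq:17} this is legitimate because both sides involve at most second derivatives of $\sigma$ and first derivatives of $g$, and the $\Gamma^m_{ij}$ themselves occur undifferentiated only in combinations that are coordinate-covariant.

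\textbf{Proof of \eqref{eq:17}.} First I would expand $\langle\INC\sigma,A\rangle = -\langle SP\nabla^2\sigma, A\rangle = -\langle\nabla^2\sigma, SA\rangle$ using \eqref{eq:6}, then write this in coordinates as $-(\nabla^2\sigma)_{ijkl}(SA)^{ijkl}$, and substitute $(SA)^{ijkl} = A^{ikjl} = -\hat\veps^{ik\alpha}\hat\veps^{jl\beta}U_{\alpha\beta}$ from the $N=3$ form of Proposition~\ref{prop:mapA_coo}. On the other side, $\langle\inc\sigma,U\rangle = [\inc\sigma]^{ij}U_{ij}$ with the coordinate expression for $[\inc\sigma]^{ij}$ displayed above. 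The task is then the purely algebraic check that, after relabeling, $-\hat\veps^{ik\alpha}\hat\veps^{jl\beta}(\nabla^2\sigma)_{ikjl} = [\inc\sigma]^{\alpha\beta}$ (symmetrized in $\alpha,\beta$, which is automatic since $U_{\alpha\beta}=U_{\beta\alpha}$). This amounts to recognizing that the iterated covariant second derivative $(\nabla^2\sigma)_{ikjl}$, when antisymmetrized over $(i,k)$ and over $(j,l)$ via the two $\hat\veps$'s, reproduces exactly the curl–transpose–curl structure in $[\inc\sigma]^{ij}$; one must be careful that the non-commutativity $(\nabla^2\sigma)_{ikjl}\ne(\nabla^2\sigma)_{kijl}$ is handled by the antisymmetrization, and that the extra $\Gamma$-terms hidden in $\nabla^2$ versus $\nabla\nabla$ match the $-\Gamma^l_{lp}(\cdots)$ and $+\Gamma^i_{pt}(\cdots)$ terms in the displayed $[\inc\sigma]^{ij}$ formula. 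This is essentially the same computation carried out in \cite{GNSW2023} in 2D, lifted one dimension, and I would cite that source for the template.

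\textbf{Proof of \eqref{eq:18}.} Here I would pick a $g$-orthonormal frame $\{\gt_1,\gt_2,\gn\}$ adapted to the facet $F$, with $\gt_1,\gt_2$ spanning $\Xm F$ and $\gn$ the $g$-normal. In this frame $U_F$ has components $U_{ab}$ with $a,b\in\{1,2\}$ (restricting the $(N-2)=1$-form arguments to $F$), and ${\Atnnt}(\gt_a,\gt_b) = A(\gt_a,\gn,\gn,\gt_b)$. Using the $N=3$ coordinate form of $\mapUA$ (or, more cleanly, the frame identities $\star(\gt_a^\flat\wedge\gn^\flat) = \pm\gt_b^\flat$ used in the proof of the lemma preceding Lemma~\ref{lem:testspaceA}), one computes that $A(\gt_a,\gn,\gn,\gt_b)$ equals $\pm U_{a'b'}$ where $a\mapsto a'$ is the transposition of $\{1,2\}$; the precise signs come from $\star(\gt_1^\flat\wedge\gn^\flat)=-\gt_2^\flat$ and $\star(\gt_2^\flat\wedge\gn^\flat)=\gt_1^\flat$ (up to orientation). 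On the right-hand side, $\Proj(\curl\sigma)^T\times\gn$ is a symmetric $2\times 2$ tensor on $F$ — using \eqref{eq:tensor-cross} for the matrix–vector cross product $A\times u$ with $u=\gn$ and $A$ the tangentially-projected transposed curl — whose components I would read off in the same frame. Simultaneously, the left-hand side $\langle(\nabla\sigma)_{F\gn F}-(\nabla\sigma)_{\gn FF},{\Atnnt}\rangle$ expands as $\sum_{a,b}\big((\nabla_{\gt_a}\sigma)(\gn,\gt_b) - (\nabla_{\gn}\sigma)(\gt_a,\gt_b)\big)A(\gt_a,\gn,\gn,\gt_b)$, and by the definition of the covariant curl, $(\nabla_{\gt_a}\sigma)(\gn,\gt_b)-(\nabla_{\gn}\sigma)(\gt_a,\gt_b)$ is, up to the $\hat\veps$-contraction in the frame, a component of $\curl\sigma$ evaluated with one slot normal; matching these against $(\curl\sigma)^T\times\gn$ is the content of the identity. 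The main bookkeeping subtlety is keeping the transpose $(\curl\sigma)^T$ and the swap induced by the Hodge star consistent, and confirming the overall sign.

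\textbf{Expected main obstacle.} The conceptual content is light — both identities are ``the same'' as known 2D/3D incompatibility facts — so the real difficulty is entirely in the sign and index bookkeeping: correctly tracking (i) the orientation-dependent signs in $\star(\gt_a^\flat\wedge\gn^\flat)$, (ii) the antisymmetrization over pairs $(i,k)$ and $(j,l)$ interacting with the non-commutativity of second covariant derivatives, and (iii) the transposes appearing in $(\curl\sigma)^T$ and in $SA$. I would organize the computation so that the $\hat\veps\hat\veps$ contraction identity $\hat\veps^{ij\alpha}\hat\veps_{kl\alpha} = \delta^i_k\delta^j_l-\delta^i_l\delta^j_k$ (and its $N=3$ consequences) is applied once and cleanly, reducing both sides to the same explicit combination of $\partial\partial\sigma$, $\Gamma\,\partial\sigma$, and $(\partial\Gamma)\sigma$ terms, and then invoke the analogous verification in \cite{GNSW2023} to avoid rewriting the elementary algebra in full.
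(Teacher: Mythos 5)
Your plan is correct and follows essentially the same route as the paper: Riemann normal coordinates at $p$, the reduction $\langle\INC\sigma,A\rangle=-\langle\nabla^2\sigma,SA\rangle$ via \eqref{eq:6} combined with the coordinate formula of Proposition~\ref{prop:mapA_coo}, and, for \eqref{eq:18}, expansion of both sides in an adapted orthonormal frame $\{\gt_1,\gt_2,\gn\}$ using that $A(\gt_a,\gn,\gn,\gt_b)$ reduces to $U$ evaluated on $\gn\times\gt_a$, $\gn\times\gt_b$ (equivalently your $\star(\gt_a^\flat\wedge\gn^\flat)$ identities). The only difference is that the paper carries out the resulting index algebra explicitly rather than deferring it to the 2D template in \cite{GNSW2023}, but the strategy and all key ingredients coincide.
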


\begin{proof}
\change{Let $U$, and therefore $A=\mapUA U$, have compact support. We integrate by parts and use a $g$-orthonormal frame $\{E_i\}_{i=1}^3$ to get
\begin{align*}
  \int_{T}&\ip{\INC \sigma, A}\,\vo{T} = - \ip{\nabla^2 \sigma, SA}\,\vo{T} =  \int_{T}\ip{\nabla\sigma, \div (SA)}\,\vo{T}\\
  &= \int_{T}\sum_{i,j,k,l=1}^3(\nabla_{E_i}\sigma)(E_j,E_k)(\nabla_{E_l}(SA))(E_l,E_i,E_j,E_k) \,\vo{T}\\
  &= \int_{T}\sum_{i,j,k,l=1}^3(\nabla_{E_i}\sigma)(E_j,E_k)(\nabla_{E_l}A)(E_l,E_j,E_i,E_k)\,\vo{T}\\
  &=\int_{T}\sum_{i,j,k,l=1}^3\frac{1}{2}((\nabla_{E_i}\sigma)(E_j,E_k)-(\nabla_{E_k}\sigma)(E_j,E_i))(\nabla_{E_l}U)(E_l\times E_j, E_k\times E_i)\,\vo{T}.
\end{align*}
From \eqref{eq:curl_ext_cov_der_rel} and the symmetry of $\sigma$ and $U$, we have
\begin{align*}
  &(\nabla_{E_i}\sigma)(E_j,E_k)-(\nabla_{E_k}\sigma)(E_j,E_i) = (\curl\sigma)(E_j,E_i\times E_k)),\\
  &\sum_{l=1}^3(\nabla_{E_l}U)(E_l\times E_j, E_k\times E_i) = -(\curl U)(E_k\times E_i,E_j).
\end{align*}
Inserting these into the above expression yields, with integration by parts \eqref{eq:ibp_curl},
\begin{align*}
  \int_{T}\ip{\INC \sigma, A}\,\vo{T} &= -\int_{T}\sum_{i,j,k=1}^3\frac{1}{2}(\curl\sigma)(E_j, E_i\times E_k))(\curl U)(E_k\times E_i,E_j)\,\vo{T} \\
  &= \int_{T}\ip{\curl\sigma, (\curl U)^T}\,\vo{T} = \int_{T}\ip{\inc \sigma, U}\,\vo{T}.
\end{align*}
As this identity holds for all compactly supported $U$, the first claim follows.

For the second claim, we consider a $g$-orthonormal frame $\{E_i\}_{i=1}^2$ of $\Xm{F}$
\begin{align*}
  \ip{(\nabla\sigma)_{\Fr\gn\Fr}-(\nabla\sigma)_{\gn\Fr\Fr},A_{\Fr\gn\gn\Fr}} &= \left((\nabla\sigma)_{E_i\gn E_j}-(\nabla\sigma)_{\gn E_iE_j}\right)A_{E_i\gn\gn E_j}\\
  &=(\curl \sigma)(E_j,E_i\times \gn)
\end{align*}
and by relation \eqref{eq:mapping_U_A}
\begin{align*}
 A_{E_i\gn\gn E_j} = \langle U, (E_i\times \gn)\odot (E_j\times \gn)\rangle = U(E_i\times \gn, E_j\times \gn).
\end{align*}
Defining $E_a = E_i\times \gn$ and $E_b = E_j\times \gn$, yields the claim with \eqref{eq:tensor-cross}
\begin{align*}
  \ip{(\nabla\sigma)_{\Fr\gn\Fr}-(\nabla\sigma)_{\gn\Fr\Fr},A_{\Fr\gn\gn\Fr}} &= (\curl\sigma)( \gn \times E_b,E_a)U(E_a,E_b) \\
  &= (\curl\sigma)^T( E_a, \gn \times E_b)U(E_a,E_b)\\
  &= \ip{((\curl\sigma)^T\times \gn)_{\Fr}, U_{\Fr}}.
\end{align*}} 
\end{proof}

\begin{lemma}
 When $N=3$, the metric-independent test space equals the Regge space, i.e., 
  \[
    \Utesto = \oRegge(\T).
  \]
\end{lemma}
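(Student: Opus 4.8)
The plan is to unwind the definitions of both spaces in the special case $N=3$ and observe that they literally coincide as subsets of $\W^1(\T)^{\odot 2}$. First I would recall that when $N=3$ we have $N-2=1$, so $\W^{N-2}(\T)^{\odot 2}=\W^1(\T)^{\odot 2}$, whose smooth sections are symmetric $(2,0)$-tensor fields on $\om$ that are smooth within each element of $\T$, i.e.\ exactly the ambient space underlying $\Sc(\T)$ (after the standard musical identification of 1-forms with vectors via $g$, or equivalently working with covariant 2-tensors throughout). Thus both $\Utest$ and $\Regge(\T)$ live inside the same space, and it only remains to match their respective inter-element continuity constraints.

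Next I would compare the constraints. For $\Utest$, definition \eqref{eq:testspace_U} with $N-2=1$ demands that $U|_F(X,Y)$ be single-valued for all $X,Y\in\Xm F$ at every interior facet $F\in\Fint$ — this is precisely the $tt$-continuity condition defining $\Regge(\T)$ in \eqref{eq:ttspace}. Likewise $\Utesto$ adds the requirement that $U|_F(X,Y)=0$ for $X,Y\in\Xm F$ on boundary facets $F\in\Fbnd$, which is exactly the extra condition in \eqref{eq:Regg-zero} defining $\oRegge(\T)$. Hence $\Utest=\Regge(\T)$ and $\Utesto=\oRegge(\T)$ as claimed. I would present this as a short chain of equalities of defining conditions rather than as a calculation.

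The only genuinely substantive point to address — and the one I expect to be the main (mild) obstacle — is the identification of $\W^1(\T)^{\odot 2}$ with $\Sc(\T)$: one must note that the symmetric dyadic product $u\odot v$ of two 1-form fields is a symmetric covariant 2-tensor, that every symmetric 2-tensor arises this way (it is a linear combination of such dyads, e.g.\ by diagonalizing pointwise in a $g$-orthonormal coframe), and that the smoothness class ``piecewise smooth with respect to $\T$'' matches on both sides. This is exactly the content already used implicitly in \S\ref{ssec:metr-indep-test}, where $U\in\W^{N-2}(\T)^{\odot 2}$ was described as a linear combination of dyads $u\odot v$; in the $N=3$ case this vector bundle is just $\mathrm{Sym}^2(T^*\om)$. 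Once this identification is in place, the equality of continuity conditions is immediate and the lemma follows. One may optionally remark that under the alternative representation $\mapUA=-\star^{\odot 2}$ of Remark~\ref{rem:alternative_repr}, with $\star$ the 3D Hodge star sending 1-forms to 2-forms, the bijection $\mapUA_g:\Utest\to\Atest_g$ of Lemma~\ref{lem:testspaceA} becomes, after the musical identification, simply the tensor-cross-product-with-$g$ type operation, consistent with \eqref{eq:id_testfunction_3d}; but this is not needed for the statement.
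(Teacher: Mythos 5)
Your proposal is correct and follows essentially the same route as the paper: both unwind the definitions for $N=3$, identify $\W^{1}(\T)^{\odot 2}$ with the space of piecewise smooth symmetric covariant $2$-tensors, and observe that the single-valuedness and boundary conditions in \eqref{eq:testspace_U} coincide with the $tt$-continuity of \eqref{eq:ttspace} and the vanishing-trace condition of \eqref{eq:Regg-zero}. The paper's proof is just a more compressed version of yours, writing $U=U_{ij}(dx^i\otimes dx^j+dx^j\otimes dx^i)$ and noting the constraints match; your extra care about why every symmetric $2$-tensor is a combination of dyads is harmless and fills in the same implicit step.
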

\begin{proof}
 By the definition of $\Utest$ in~\eqref{eq:testspace_U}, any
  $U \in \Utest = \W^1(\T) \odot \W^1(\T)$ is a linear combination of
 symmetric dyadic products of coordinate 1-forms $dx^i$
 and any element of $\oRegge(\T)$ takes the same form. Since the
 interface continuity conditions and boundary conditions of $ \Utesto$
 and $\oRegge(\T)$ match, they must be in the same space. 
\end{proof}

The curvature operator $\Curvature$ defined in Remark~\ref{rem:curv-oper} simplifies to a symmetric $2$-tensor in three-dimensions. In coordinates, it reads, by  \eqref{eq:mapping_U_A_inv_coo}, 
\begin{equation}
  \label{eq:5-Curv-3d}
\Curvature^{ij} = (\mapUA^{-1}\Riemann)^{ij}= -\frac{1}{4}\gveps^{i\,kl}\gveps^{j\,mn}\Riemann_{klmn}.  
\end{equation}
Motivated by~\eqref{eq:17}, let us define a generalized 3D
incompatibility operator $\widetilde{\inc}$ as a linear functional on
the Regge space by
\begin{equation}
  \label{eq:gen-inc-3d}
  \widetilde{\inc \sigma}(U) = \widetilde{\INC \sigma}( \mapUA U),
  \qquad U \in \oRegge(\T),
\end{equation}
where $\widetilde{\INC \sigma}$ is as in Definition~\ref{def:distr-incomp-oper}.

\begin{proposition}
  \label{prop:3d}
 The distributional densitized Riemann curvature tensor  in 3D  yields
 (after rescaling by a factor of $4$) the following densitized distributional curvature $\widetilde{\Curvature \og}$ as a functional on the Regge space:
\begin{align}
  \label{eq:distr_Curvature_op_3d}
  \begin{split}
          \widetilde{{\Curvature}\volform}(U)
 =\sum_{T\in\T}\int_T\langle {\Curvature},U\rangle\,\vo{T}
          & -\sum_{F\in\Fint}\int_F\langle\jmp{\overline{\sff}}, U_{\Fr}\rangle\,\vo{F}
          +\sum_{E\in\Eint}\int_E \Theta_E\, U_{\gt\gt}\,\vo{E},
  \end{split}
\end{align}
for all $U\in\oRegge(\T)$, where $\overline{\sff}^\gn=\mathbb{S}_{\Fr}\sff^\gn=\sff^\gn-H^\gn g_{\Fr}$ is the trace-reversed second fundamental form with $\mathbb{S}_{\Fr}V = V_{\Fr} - \tr{V_{\Fr}}g_{\Fr}$ the trace-reversed part of a 2-tensor $V$ restricted to the facet $F$,
and $\gt = \gn \times \gcn$ is a tangent vector along the edge $E$. 
The   bilinear forms \eqref{eq:def_ah}--\eqref{eq:def_bh} read,  for all $\sigma\in\Regge(\T)$ and $U\in\oRegge(\T)$, as follows:
  \begin{align*}
 a(g;\sigma,U)&=-2\sum_{T\in\T}\int_T{\Curvature}:\sigma:U\,\vo{T}\nonumber\\
    &\quad-2\sum_{F \in \Fint}\int_F \jmp{\sff}:\mathbb{S}_{\Fr}(\sigma):\mathbb{S}_{\Fr}(U)\,\vo{F}\nonumber\\
    &\quad- 2\sum_{E \in \Eint}\int_E\Theta_E\,\sigma_{\gt\gt}\,U_{\gt\gt}\,\vo{E},\\
 b(g;\sigma,U)&= -2\sum_{T\in\T}\int_T\langle \inc \sigma ,U\rangle\,\vo{T} \\
    &\quad+ 2\sum_{F \in \Fint}\int_{F}\langle \llbracket((\curl \sigma)^T \times \gn)_{\Fr}-\sigma_{\gn\gn} \overline{\sff}-\mathbb{S}_{\Fr}\nabla (\gn \lrcorner \sigma) \rrbracket ,U_{\Fr}\rangle \,\vo{F}\\
    &\quad -2\sum_{E\in\Eint}\int_{E}\sum_{F\supset E}\jmp{\sigma_{\gn\gcn}}^E_F U_{\gt\gt}\,\vo{E}.
  \end{align*}
        
 Moreover, the latter
 expression is related to  the generalized 3D covariant incompatibility operator of~\eqref{eq:gen-inc-3d} by 
        \begin{equation}
          \label{eq:b-inc-3d}
 b(g;\sigma,U)=-2\,\widetilde{\inc \sigma}(  U ).
        \end{equation}
\end{proposition}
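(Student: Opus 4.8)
The plan is to prove Proposition~\ref{prop:3d} in three stages: first the curvature formula \eqref{eq:distr_Curvature_op_3d}, then the specialized forms of $a$ and $b$, and finally the identity \eqref{eq:b-inc-3d}. Throughout, the main tool is the identification \eqref{eq:id_testfunction_3d}, which lets us rewrite every occurrence of $\mapUA U$ applied to four vectors as $-U(X\times Y, Z\times W)$ using the metric cross product, together with the elementary observation that for a $g$-orthonormal frame $\{\gt_1,\gt_2,\gn\}$ on a facet $F$ we have $\gn\times\gt_1 = \pm\gt_2$, $\gn\times\gt_2 = \mp\gt_1$, and $\gt_1\times\gt_2 = \pm\gn$. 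The latter means that the facet-restricted objects $(\mapUA U)_{\cdot\gn\gn\cdot}$ and $(\mapUA U)_{\gcn\gn\gn\gcn}$ simply recover tangential components of $U$: a direct computation gives $(\mapUA U)_{F\gn\gn F}(X,Y) = -U(\gn\times X, \gn\times Y)$, and since $\gn\times\cdot$ maps $\Xm F$ isometrically to $\Xm F$ (being a rotation by $\pi/2$ in the tangent plane), this equals $-U(\Proj X,\Proj Y)$ up to sign — so in fact $(\mapUA U)_{F\gn\gn F} = U_F$ after accounting for the sign $(-1)$ appearing in \eqref{eq:id_testfunction_3d} versus the two $\pi/2$-rotations contributing $(-1)$. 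Similarly $(\mapUA U)_{\gcn\gn\gn\gcn} = U(\gn\times\gcn, \gn\times\gcn) = U(\gt,\gt) = U_{\gt\gt}$ with $\gt=\gn\times\gcn$; one must check the orientation conventions carefully so the signs come out as stated.

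For the curvature formula, I would insert $A = \mapUA U$ into Theorem~\ref{thm:distr_Riemann_U} (equivalently Definition~\ref{def:gen-riemann-curv-regge}). The volume term becomes $\langle \Riemann, \mapUA U\rangle$, which by $\mapUA^{-1}\Riemann = \Curvature$ (see Remark~\ref{rem:curv-oper} and \eqref{eq:5-Curv-3d}) and the fact that $\mapUA$ is an isometry-like map for the inner product equals $\langle \Curvature, U\rangle$ — more precisely, $\langle \Riemann, \mapUA U\rangle = \langle \mapUA\Curvature, \mapUA U\rangle$ and one verifies $\langle \mapUA V, \mapUA U\rangle = \langle V, U\rangle$ using the coordinate formula \eqref{eq:mapping_U_A_coo} and $\hat\veps^{pq\alpha}\hat\veps^{rs\beta}g_{pr}g_{qs} = $ (a Kronecker-delta-like expression), the standard contraction identity for the Levi-Civita symbol in 3D. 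The facet term $4\langle\jmp\sff, (\mapUA U)_{\cdot\gn\gn\cdot}\rangle$ becomes $\pm 4\langle\jmp\sff, U_F\rangle$; but the stated formula has $-\langle\jmp{\overline\sff}, U_F\rangle$ with the \emph{trace-reversed} second fundamental form and coefficient $1$ not $4$, so the rescaling by $\tfrac14$ absorbs the $4$, and the replacement $\jmp\sff \rightsquigarrow \jmp{\overline\sff}$ must come from the sign structure: $\langle\jmp\sff, U_F\rangle$ versus $\langle\jmp{\overline\sff}, U_F\rangle = \langle\jmp\sff, \mathbb{S}_F U_F\rangle$, and one would need the test functions effectively trace-reversed — this is the subtle point, and it works because on a 2-dimensional facet $(\mapUA U)_{F\gn\gn F}$ picks up a rotation that turns $U_F$ into (minus) its cofactor/trace-reversal, i.e. in 2D $\mathbb{S}_F$ is (up to sign) the $\pi/2$-rotation conjugation. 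The codimension-2 term $4\Theta_E (\mapUA U)_{\gcn\gn\gn\gcn} = 4\Theta_E U_{\gt\gt}$ gives, after the $\tfrac14$ rescaling, $\Theta_E U_{\gt\gt}$ as claimed.

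For the forms $a$ and $b$, I would specialize \eqref{eq:def_ah} and \eqref{eq:def_bh} of Theorem~\ref{thm:distr_Riem_evol} the same way. In $a$: the volume integrand $\langle L_\sigma^{(1)}\Riemann, A\rangle - \tfrac12\tr\sigma\langle\Riemann,A\rangle$ becomes $-\Curvature:\sigma:U$ (using the same isometry identity plus the contraction of $L_\sigma$ against one slot, which in the $\Curvature$-representation is exactly the triple product); the facet term $2\jmp\sff : \mathbb{S}_F(\sigma) : \Atnnt$ becomes $-2\jmp\sff:\mathbb{S}_F(\sigma):\mathbb{S}_F(U)$ by the same trace-reversal-by-rotation on $\Atnnt = U_F$ rotated; the edge term $-2\tr{\sigma_E}\Theta_E A_{\gcn\gn\gn\gcn}$ becomes $-2\tr{\sigma_E}\Theta_E U_{\gt\gt}$, and on a 1-dimensional edge $\tr{\sigma_E} = \sigma_{\gt\gt}$. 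In $b$: the volume term $2\langle\nabla^2\sigma, SA\rangle$ becomes $-2\langle\inc\sigma, U\rangle$ by \eqref{eq:17} of Lemma~\ref{lem:inc_curl_3d_id}; the facet integrand, after using \eqref{eq:18} of that lemma to convert $(\nabla\sigma)_{F\gn F}-(\nabla\sigma)_{\gn FF}$ into $\Proj(\curl\sigma)^T\times\gn$ tested against $U_F$, and converting the remaining $\sigma_{\gn\gn}\sff$ and $\nabla(\gn\lrcorner\sigma)$ terms into their trace-reversed facet counterparts $\sigma_{\gn\gn}\overline\sff$ and $\mathbb{S}_F\nabla(\gn\lrcorner\sigma)$ (again via the 2D rotation = trace-reversal identity on the facet), gives the stated bracket; the edge term maps directly. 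Finally, \eqref{eq:b-inc-3d} is immediate: by definition \eqref{eq:gen-inc-3d}, $\widetilde{\inc\sigma}(U) = \widetilde{\INC\sigma}(\mapUA U)$, and by \eqref{eq:rel_b_inc} of Definition~\ref{def:distr-incomp-oper}, $b(g;\sigma,U) = -2\widetilde{\INC\sigma}(\mapUA U) = -2\widetilde{\inc\sigma}(U)$.

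The main obstacle I anticipate is getting all the signs and the trace-reversal bookkeeping exactly right simultaneously — specifically, verifying that in $N=3$ the facet restriction of $\mapUA U$ produces $U_F$ (for the $b$ and curvature terms) but that the same geometric rotation, when it lands in the \emph{other} slots} of a bilinear pairing such as $\langle\jmp\sff, \cdot\rangle$, effectively trace-reverses the second fundamental form. Concretely, the identity needed is that on a 2-dimensional inner-product space with $\pi/2$-rotation $J$, one has $J A J^T = \tr(A)\,\mathrm{id} - A = \mathbb{S}(A)$ for symmetric $A$; applying this to $\sff^\gn$ (whose rotation by $\gn\times\cdot$ on both arguments appears when we pull $\mapUA U$ through the pairing) yields $\overline\sff^\gn$. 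I would isolate this as a short lemma or inline computation with Riemann normal coordinates on $F$, as is already done for Lemma~\ref{lem:inc_curl_3d_id}, so that the cross-product-to-rotation dictionary is made fully explicit before assembling the four displayed formulas.
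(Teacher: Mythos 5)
Your strategy is the same as the paper's: insert $A=\mapUA U$, exploit the three\-dimensional identification \eqref{eq:id_testfunction_3d} (equivalently the coordinate formula of Proposition~\ref{prop:mapA_coo} and the tensor cross product \eqref{eq:tensor-cross}), invoke Lemma~\ref{lem:inc_curl_3d_id} for the volume and facet derivative terms of $b$, and obtain \eqref{eq:b-inc-3d} directly from \eqref{eq:rel_b_inc} and \eqref{eq:gen-inc-3d}. That last step is indeed immediate, and your identification of the ``rotation by $\pi/2$ conjugation equals trace reversal'' mechanism as the heart of the facet computations is exactly the role played in the paper by the identity $(\gn\otimes\gn)\times V=-\mathbb{S}_FV^T$.

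However, two of your concrete intermediate claims are wrong, and one of them is contradicted by your own later discussion. First, $\mapUA$ is \emph{not} an isometry: by \eqref{eq:2} and \eqref{eq:hodge_star_iso} with $N=3$, $k=1$, each Hodge star contributes a factor $(N-k)!/k!=2$, so $\ip{\mapUA V,\mapUA U}=4\ip{V,U}$. This factor of $4$ is precisely what cancels the rescaling by $\tfrac14$ in the volume term; with your claimed $\ip{\mapUA V,\mapUA U}=\ip{V,U}$ the element integrand would come out as $\tfrac14\ip{\Curvature,U}$ rather than $\ip{\Curvature,U}$. Second, your first paragraph asserts $(\mapUA U)_{F\gn\gn F}=U_F$, but the correct statement is $(\mapUA U)(X,\gn,\gn,Y)=U(\gn\times X,\gn\times Y)$, i.e.\ $(\mapUA U)_{F\gn\gn F}=\tr{U_F}\,g_F-U_F=-\mathbb{S}_F U$, which is what your second paragraph (correctly) uses to turn $\jmp{\sff}$ into $\jmp{\overline{\sff}}$ and to insert $\mathbb{S}_F$ into the $\sigma_{\gn\gn}\sff$ and $\nabla(\gn\lrcorner\sigma)$ terms of $b$. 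The two statements cannot both hold, and only the second is compatible with the signs in \eqref{eq:distr_Curvature_op_3d}. Relatedly, with the paper's convention $\mathbb{S}_F V=V_F-\tr{V_F}g_F$ your rotation identity should read $JAJ^T=\tr{A}\,\mathrm{id}-A=-\mathbb{S}_F(A)$. You flag sign bookkeeping as the anticipated obstacle and propose isolating the cross\-product\-to\-rotation dictionary as a lemma; doing so would expose and fix both errors, but as written the proposal does not yet yield the stated constants.
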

\begin{proof}
 To prove \eqref{eq:distr_Curvature_op_3d}, we start with the
 identity of Theorem~\ref{thm:distr_Riemann_U}. Note that within each
 element, using \eqref{eq:2} and the definition of $\Curvature$
 in~\eqref{eq:Q-defn}
  \begin{align*}
    \ip{ \Riemann, \mapUA U}
    & = \ip{ \mapUA \Curvature, \mapUA U} =
      \ip{ \star^{\odot 2} \Curvature, \star^{\odot 2} U} =
      4 \ip{ \Curvature,  U},
  \end{align*}
 where the last identity followed from~\eqref{eq:hodge_star_iso} with $N=3$.
 This produces the first term on the right-hand side
 of~\eqref{eq:distr_Curvature_op_3d}.
 To obtain the facet term in \eqref{eq:distr_Curvature_op_3d},
 we use
 Proposition~\ref{prop:mapA_coo}, \eqref{eq:tensor-cross}, and the identities $\change{\gn\times V\times\gn= \mathbb{S}_{\Fr}V^T}$ and $\langle V_F,\mathbb{S}_{\Fr}W\rangle=\langle \mathbb{S}_{\Fr}V,W_{\Fr}\rangle$ for any 2-tensors $V,W$, which can be \change{simply shown by computing in an orthonormal basis $E_1$, $E_2$, $E_3$, such that $\gn=E_3$,}
  \begin{align}
    \label{eq:simpl_facet_3d}
    \begin{split}
      \langle\jmp{\sff},(\mapUA U)_{\cdot\gn\gn\cdot}\rangle &= \change{-\sum_{i,j=1}^3\jmp{\sff}(E_i,E_j)U(E_i\times \gn,\gn\times E_j) = -\ip{\jmp{\sff}, \gn\times U\times \gn}}\\
      &=-\langle\jmp{\sff},\mathbb{S}_{\Fr}U\rangle=\change{-}\langle\jmp{\overline{\sff}},U_{\Fr}\rangle.
    \end{split}
  \end{align}
 Finally, the codimension 2 term in \eqref{eq:distr_Curvature_op_3d} also
 follows \change{with \eqref{eq:id_testfunction_3d}}
  \[
 (\mapUA U)_{\gcn\gn\gn\gcn} = \change{-U(\gcn\times\gn,\gn\times\gcn)=U(\gt,\gt),}
  \]
 where $\gt=\gn\times\gcn$ is the tangent vector of the edge
  $E$.

 Next, let us prove the stated expressions for $a$ and $b$.
 By \eqref{eq:id_testfunction_3d} and \eqref{eq:5-Curv-3d},
  \begin{align*}
    \langle \Lsigma^{(1)}\Riemann,\mapUA U\rangle &= \gveps_{ijo}\gveps_{klp}{\Curvature}^{op}\sigma^i_{\phantom{i}a}\gveps^{ajm}\gveps^{kln}U_{mn} \\
    &=2\big(\delta_i^{\phantom{i}a}\delta_{o}^{\phantom{o}m}-\delta_i^{\phantom{i}m}\delta_{o}^{\phantom{o}a}\big){\Curvature}^{on}\sigma^{i}_{\phantom{i}a}U_{mn}\\
    &= 2\tr{\sigma}\langle {\Curvature},U\rangle -2\,{\Curvature}:\sigma:U,
  \end{align*}
where we used the identities $\eveps_{klp}\eveps^{kln}=2\delta_{p}^{\phantom{p}n}$ and $\eveps_{ijo}\eveps^{ajm}=\delta_i^{\phantom{i}a}\delta_{o}^{\phantom{o}m}-\delta_i^{\phantom{i}m}\delta_{o}^{\phantom{o}a}$. The first term cancels with
\[
-\frac{1}{2}\tr{\sigma}\langle\Riemann,\mapUA U\rangle = -2\tr{\sigma}\langle {\Curvature},U\rangle.
\]
For the codimension 1 term we have \change{by a computation very close} to \eqref{eq:simpl_facet_3d}
\begin{align*}
  &\jmp{\sff}:\mathbb{S}_{\Fr}(\sigma):\Atnnt=-\jmp{\sff}:\mathbb{S}_{\Fr}(\sigma):\mathbb{S}_{\Fr}(U).
\end{align*}
 There holds $(\mapUA U)_{\gcn\gn\gn\gcn}= U_{\gt\gt}$ and $\tr{\sigma_E}=\sigma_{\gt\gt}$, proving the expression of $a(g;\sigma,U)$.
The stated expression for  $b(g;\sigma,U)$ follows from Lemma~\ref{lem:inc_curl_3d_id} and 
\begin{align*}
  \langle\change{\jmp{\sigma_{\gn\gn}\sff+\nabla (\gn \lrcorner \sigma)}},\Atnnt\rangle=-\langle\jmp{\sigma_{\gn\gn}\overline{\sff}+\mathbb{S}_{\Fr}\nabla (\gn \lrcorner \sigma)}, U_{\Fr}\rangle.
\end{align*}
Finally, \eqref{eq:b-inc-3d} follows from \eqref{eq:rel_b_inc}.
\end{proof}
\change{
\begin{remark}[Relation of Einstein tensor and curvature operator in 3D]
  The curvature operator $\Curvature$ is a $(0,2)$-tensor acting on 1-forms, whereas the Einstein tensor $\Einstein$ from \S\ref{ssec:spec-einst-tens} is defined as a $(2,0)$-tensor acting on vector fields. In 3D both contain the same curvature information and are related by
  \begin{align}
    \label{eq:rel_curv_einstein_3d}
    \Curvature = -\Einstein^{\sharp\sharp},\qquad \Curvature^{ij} = -g^{ik}\Einstein_{kl}g^{lj}.
  \end{align}
  That means, by lowering both indices of the curvature operator with the metric $g$, one obtains the negative of the Einstein tensor. Further, the $a$ and $b$ bilinear forms from Proposition~\ref{prop:3d} can be rewritten to fit the linearization of the Einstein tensor in \cite[Proposition 4]{GN2023b}.
\end{remark}}
  
\begin{remark}[Distributional incompatibility in the 3D Euclidean case]
 Assume that the triangulation $\T$ consists of non-curved simplices. In the Euclidean case, when the metric is the identity, 
 Proposition~\ref{prop:3d} and \eqref{eq:b-inc-3d}, after simplifications,
 yield 
  \begin{align}
    \nonumber 
    \widetilde{\inc \sigma}(U) &= \sum_{T\in\T}\int_T \langle \inc \sigma, U\rangle\,dx \\  \nonumber 
                               &\qquad- \sum_{F \in \Fint}\int_{F}\langle \llbracket((\curl \sigma)^T \times \nv)_{\Fr}-\mathbb{S}_{\Fr}(\grad^{\Fr}\sigma_{\nv})\rrbracket,U_{\Fr}\rangle \,ds
    \\ \label{eq:19}
    &\qquad+\sum_{E\in\Eint}\int_{E}\sum_{F\supset E}\jmp{\sigma_{\nv\cnv}}^E_F U_{\tv\tv}\,dl,
  \end{align}
 where $dx$, $ds$, and $dl$ are the volume, surface, and line elements, respectively, and all involved differential operators and tangent, normal, and conormal vectors are the standard Euclidean ones in 3D. 
 Since $\inc$ is a constant-coefficient linear differential operator in the
 Euclidean case, it has a classical generalization as a
 distributional derivative when applied to a  $\sigma$ that is only piecewise smooth, given by
  \[
 (\inc \sigma)^{\mathrm{\change{distr}}} (\varphi) = \int_\om \sigma : \inc \varphi \; dx
  \]
 for all $\varphi$ in $\D(\om)^{3 \times 3}$ with components in the
 Schwartz space of smooth compactly supported test functions. \change{Here, $\sigma : \inc \varphi$ denotes the Euclidean Frobenius inner product to distinguish from the $g$-inner product.} Observe that 
  $(\inc \sigma)^{\mathrm{\change{distr}}} (\varphi)$ equals
  $\widetilde{ \inc \sigma }(\varphi) = \widetilde{\INC \sigma}(
  \mapUA \varphi)$ by Theorem~\ref{thm:distr_inc_curved}. Hence, the linear
 functional in~\eqref{eq:19} gives the distributional inc when
 applied with  $U = \varphi$.
 When $\sigma$ is in the lowest-order (piecewise constant) Regge
 space, equation~\eqref{eq:19} reduces to the formula for (Euclidean)
 distributional incompatibility of $\sigma$ derived in
  \cite{Christiansen11}.
\end{remark}

\section{Numerical examples}
\label{sec:num_examples}

In this section we show that the theoretical convergence rates from
Corollary~\ref{cor:conv_Riemann_int} are sharp, in as far as can be
confirmed by numerical experiments.  All experiments were performed in
the open source finite element software
NGSolve\footnote{\href{www.ngsolve.org}{www.ngsolve.org}}
\cite{Sch97,Sch14}, where the Regge elements are available. \change{For reproducibility, the code and computational results are publicly available in~\cite{GNSW2026}.}

\begin{figure}
	\centering
	\resizebox{0.32\textwidth}{!}{
		\begin{tikzpicture}
			\begin{loglogaxis}[
				legend style={at={(0,0)}, anchor=south west},
				xlabel={ndof},
				ylabel={error},
				ymajorgrids=true,
				grid style=dotted,
				]
				
				\addlegendentry{$k=0$}
				\addplot[color=red, mark=*, style=solid] coordinates {
          ( 19,0.1735945878157066 )
          ( 98,0.1056338764706522 )
          ( 604,0.041566277769064144 )
          ( 4184,0.017331318735084915 )
          ( 31024,0.007968506297828625 )
          ( 238688,0.003840425598163186 )
          ( 1872064,0.0018772505771210096 )
				};
			
				\addlegendentry{$k=1$}
				\addplot[color=blue, mark=square, style=solid] coordinates {
          ( 92,0.10411183998473542 )
          ( 556,0.026680712573369153 )
          ( 3800,0.012171551590059986 )
          ( 27952,0.0038088797898044677 )
          ( 214112,0.0010196778606744962 )
          ( 1675456,0.00026512340491882306 )
          ( 13255040,6.76940799655966e-05 )
				};
			
				\addlegendentry{$k=2$}
				\addplot[color=olive, mark=asterisk, style=solid] coordinates {
          ( 255,0.013642216690081433 )
          ( 1662,0.01810793201311883 )
          ( 11892,0.002398290997098561 )
          ( 89736,0.00032783517957240627 )
          ( 696720,4.3564231097657597e-05 )
          ( 5489952,5.631758766755594e-06 )
				};
			
				\addplot[color=black, mark=none, style=dashed] coordinates {
					( 1500, {0.5*1500^(-1/3)} )
					( 1500000, {0.5*1500000^(-1/3)} )
				};
				
				\addplot[color=black, mark=none, style=dashed] coordinates {
					( 20000, {4.8*20000^(-2/3)} )
					( 6000000, {4.8*6000000^(-2/3)} )
				};
				\addplot[color=black, mark=none, style=dashed] coordinates {
					( 2000, {10*2000^(-3/3)} )
					( 1000000, {10*1000000^(-3/3)} )
				};
				
			\end{loglogaxis}
			
			\node (A) at (4.1, 4.3) [] {$\mathcal{O}(h)$};
			\node (B) at (6., 2.5) [] {$\mathcal{O}(h^2)$};
			\node (C) at (3.3, 1.9) [] {$\mathcal{O}(h^3)$};
	\end{tikzpicture}}
  \resizebox{0.32\textwidth}{!}{
		\begin{tikzpicture}
			\begin{loglogaxis}[
				legend style={at={(0,0)}, anchor=south west},
				xlabel={ndof},
				ylabel={error},
				ymajorgrids=true,
				grid style=dotted,
				]
				
				\addlegendentry{$k=0$}
				\addplot[color=red, mark=*, style=solid] coordinates {
          ( 19,0.15675687375128403 )
          ( 98,0.19708572593311852 )
          ( 604,0.07692285724282293 )
          ( 4184,0.030933448662126092 )
          ( 31024,0.01389985175909611 )
          ( 238688,0.006436391986807926 )
          ( 1872064,0.0030392063087964715 )
				};
			
				\addlegendentry{$k=1$}
				\addplot[color=blue, mark=square, style=solid] coordinates {
          ( 92,0.15524643189045076 )
          ( 556,0.0580620689593139 )
          ( 3800,0.024501279985712 )
          ( 27952,0.007513270149880071 )
          ( 214112,0.001949348144236988 )
          ( 1675456,0.000498750440839081 )
          ( 13255040,0.0001260940929733197 )
				};
			
				\addlegendentry{$k=2$}
				\addplot[color=olive, mark=asterisk, style=solid] coordinates {
          ( 255,0.018738480196020094 )
          ( 1662,0.02648419388659905 )
          ( 11892,0.003651546247308392 )
          ( 89736,0.000502765202136504 )
          ( 696720,6.686208978839148e-05 )
          ( 5489952,8.668883837873378e-06 )
				};
			
				\addplot[color=black, mark=none, style=dashed] coordinates {
					( 1500, {0.5*1500^(-1/3)} )
					( 1500000, {0.5*1500000^(-1/3)} )
				};
				
				\addplot[color=black, mark=none, style=dashed] coordinates {
					( 20000, {3*20000^(-2/3)} )
					( 5000000, {3*5000000^(-2/3)} )
				};
				\addplot[color=black, mark=none, style=dashed] coordinates {
					( 2000, {10*2000^(-3/3)} )
					( 1000000, {10*1000000^(-3/3)} )
				};
				
			\end{loglogaxis}
			
			\node (A) at (6.1, 3.5) [] {$\mathcal{O}(h)$};
			\node (B) at (6.3, 1.4) [] {$\mathcal{O}(h^2)$};
			\node (C) at (3.4, 1.9) [] {$\mathcal{O}(h^3)$};
	\end{tikzpicture}}
  \resizebox{0.32\textwidth}{!}{
		\begin{tikzpicture}
			\begin{loglogaxis}[
				legend style={at={(0,0)}, anchor=south west},
				xlabel={ndof},
				ylabel={error},
				ymajorgrids=true,
				grid style=dotted,
				]
				
				\addlegendentry{$k=0$}
				\addplot[color=red, mark=*, style=solid] coordinates {
          ( 19,0.08681582022700111 )
          ( 98,0.15150999301006415 )
          ( 604,0.07920239305749883 )
          ( 4184,0.0339898038868941 )
          ( 31024,0.014746393023173874 )
          ( 238688,0.006425835570241218 )
          ( 1872064,0.0028596004509526584 )
				};
			
				\addlegendentry{$k=1$}
				\addplot[color=blue, mark=square, style=solid] coordinates {
          ( 92,0.16135411037843114 )
          ( 556,0.11520261404791875 )
          ( 3800,0.035052777354636404 )
          ( 27952,0.008946751093607548 )
          ( 214112,0.0023452729651180157 )
          ( 1675456,0.0006019269106392641 )
          ( 13255040,0.00015263309030639677 )
				};
			
				\addlegendentry{$k=2$}
				\addplot[color=olive, mark=asterisk, style=solid] coordinates {
          ( 255,0.03158042255873671 )
          ( 1662,0.024910140458075186 )
          ( 11892,0.004917377736299004 )
          ( 89736,0.0006341032420584942 )
          ( 696720,8.662024609649966e-05 )
          ( 5489952,1.1367445970430913e-05 )
				};
			
				\addplot[color=black, mark=none, style=dashed] coordinates {
					( 1500, {0.5*1500^(-1/3)} )
					( 1500000, {0.5*1500000^(-1/3)} )
				};
				
				\addplot[color=black, mark=none, style=dashed] coordinates {
					( 20000, {3*20000^(-2/3)} )
					( 5000000, {3*5000000^(-2/3)} )
				};
				\addplot[color=black, mark=none, style=dashed] coordinates {
					( 2000, {10*2000^(-3/3)} )
					( 1000000, {10*1000000^(-3/3)} )
				};
				
			\end{loglogaxis}
			
			\node (A) at (6.1, 3.5) [] {$\mathcal{O}(h)$};
			\node (B) at (6.3, 1.4) [] {$\mathcal{O}(h^2)$};
			\node (C) at (3.4, 1.9) [] {$\mathcal{O}(h^3)$};
	\end{tikzpicture}}
	
	\caption{Convergence of the distributional curvature operator ${\Curvature}$ \change{for the first (left), second (middle), and third (right) setting} in the $H^{-2}(\Omega)$-norm for $N=3$ with respect to the number of degrees of freedom (ndof) of $\gappr\in\Regge_h^k$ for $k=0,1,2$.}
	\label{fig:conv_plot}
\end{figure}

\begin{table}
	\centering
  \change{
    \begin{tabular}{cccc}
                & $k=0$ & $k=1$ & $k=2$\\
                \hline
                $h$ & \begin{tabular}{@{}ll@{}}
                        Error &\hspace{0.2in} Order \\
                \end{tabular} & \begin{tabular}{@{}ll@{}}
                        Error &\hspace{0.2in} Order \\
                \end{tabular} & \begin{tabular}{@{}ll@{}}
                        Error &\hspace{0.2in} Order \\
                \end{tabular} \\
                \hline
                \begin{tabular}{@{}l@{}}
                        $3.46\cdot 10^{-0}$\\
                        $1.77\cdot 10^{-0}$\\
                        $9.99\cdot 10^{-1}$\\
                        $5.00\cdot 10^{-1}$\\
                        $2.69\cdot 10^{-1}$\\
                        $1.36\cdot 10^{-1}$\\
                        $6.85\cdot 10^{-2}$\\
                \end{tabular} & \begin{tabular}{@{}ll@{}}
                $1.74\cdot 10^{-1}$ & \\
                $1.06\cdot 10^{-1}$ & 0.74\\
                $4.16\cdot 10^{-2}$ & 1.64\\
                $1.73\cdot 10^{-2}$ & 1.26\\
                $7.97\cdot 10^{-3}$ & 1.25\\
                $3.84\cdot 10^{-3}$ & 1.07\\
                $1.88\cdot 10^{-3}$ & 1.04\\
                \end{tabular} & \begin{tabular}{@{}ll@{}}
                $1.04\cdot 10^{-1}$ & \\
                $2.67\cdot 10^{-2}$ & 2.02\\
                $1.22\cdot 10^{-2}$ & 1.38\\
                $3.81\cdot 10^{-3}$ & 1.68\\
                $1.02\cdot 10^{-3}$ & 2.13\\
                $2.65\cdot 10^{-4}$ & 1.97\\
                $6.77\cdot 10^{-5}$ & 1.99\\
                \end{tabular} & \begin{tabular}{@{}ll@{}}
                $1.36\cdot 10^{-2}$ & \\
                $1.81\cdot 10^{-2}$ & -0.42\\
                $2.40\cdot 10^{-3}$ & 3.54\\
                $3.28\cdot 10^{-4}$ & 2.88\\
                $4.36\cdot 10^{-5}$ & 3.26\\
                $5.63\cdot 10^{-6}$ & 3.00\\\\
                \end{tabular} \\\\
\end{tabular}
\begin{tabular}{cccc}
                & $k=0$ & $k=1$ & $k=2$\\
                \hline
                $h$ & \begin{tabular}{@{}ll@{}}
                        Error &\hspace{0.2in} Order \\
                \end{tabular} & \begin{tabular}{@{}ll@{}}
                        Error &\hspace{0.2in} Order \\
                \end{tabular} & \begin{tabular}{@{}ll@{}}
                        Error &\hspace{0.2in} Order \\
                \end{tabular} \\
                \hline
                \begin{tabular}{@{}l@{}}
                        $3.46\cdot 10^{-0}$\\
                        $1.77\cdot 10^{-0}$\\
                        $9.99\cdot 10^{-1}$\\
                        $5.00\cdot 10^{-1}$\\
                        $2.69\cdot 10^{-1}$\\
                        $1.36\cdot 10^{-1}$\\
                        $6.85\cdot 10^{-2}$\\
                \end{tabular} & \begin{tabular}{@{}ll@{}}
                $1.57\cdot 10^{-1}$ & \\
                $1.97\cdot 10^{-1}$ & -0.34\\
                $7.69\cdot 10^{-2}$ & 1.65\\
                $3.09\cdot 10^{-2}$ & 1.32\\
                $1.39\cdot 10^{-2}$ & 1.29\\
                $6.44\cdot 10^{-3}$ & 1.13\\
                $3.04\cdot 10^{-3}$ & 1.10\\
                \end{tabular} & \begin{tabular}{@{}ll@{}}
                $1.55\cdot 10^{-1}$ & \\
                $5.81\cdot 10^{-2}$ & 1.46\\
                $2.45\cdot 10^{-2}$ & 1.51\\
                $7.51\cdot 10^{-3}$ & 1.71\\
                $1.95\cdot 10^{-3}$ & 2.18\\
                $4.99\cdot 10^{-4}$ & 2.00\\
                $1.26\cdot 10^{-4}$ & 2.01\\
                \end{tabular} & \begin{tabular}{@{}ll@{}}
                $1.87\cdot 10^{-2}$ & \\
                $2.65\cdot 10^{-2}$ & -0.51\\
                $3.65\cdot 10^{-3}$ & 3.47\\
                $5.03\cdot 10^{-4}$ & 2.87\\
                $6.69\cdot 10^{-5}$ & 3.25\\
                $8.67\cdot 10^{-6}$ & 2.99\\\\
                \end{tabular} \\\\
\end{tabular}
\begin{tabular}{cccc}
                & $k=0$ & $k=1$ & $k=2$\\
                \hline
                $h$ & \begin{tabular}{@{}ll@{}}
                        Error &\hspace{0.2in} Order \\
                \end{tabular} & \begin{tabular}{@{}ll@{}}
                        Error &\hspace{0.2in} Order \\
                \end{tabular} & \begin{tabular}{@{}ll@{}}
                        Error &\hspace{0.2in} Order \\
                \end{tabular} \\
                \hline
                \begin{tabular}{@{}l@{}}
                        $3.46\cdot 10^{-0}$\\
                        $1.77\cdot 10^{-0}$\\
                        $9.99\cdot 10^{-1}$\\
                        $5.00\cdot 10^{-1}$\\
                        $2.69\cdot 10^{-1}$\\
                        $1.36\cdot 10^{-1}$\\
                        $6.85\cdot 10^{-2}$\\
                \end{tabular} & \begin{tabular}{@{}ll@{}}
                $8.68\cdot 10^{-2}$ & \\
                $1.52\cdot 10^{-1}$ & -0.83\\
                $7.92\cdot 10^{-2}$ & 1.14\\
                $3.40\cdot 10^{-2}$ & 1.22\\
                $1.47\cdot 10^{-2}$ & 1.35\\
                $6.43\cdot 10^{-3}$ & 1.22\\
                $2.86\cdot 10^{-3}$ & 1.18\\
                \end{tabular} & \begin{tabular}{@{}ll@{}}
                $1.61\cdot 10^{-1}$ & \\
                $1.15\cdot 10^{-1}$ & 0.50\\
                $3.51\cdot 10^{-2}$ & 2.09\\
                $8.95\cdot 10^{-3}$ & 1.97\\
                $2.35\cdot 10^{-3}$ & 2.16\\
                $6.02\cdot 10^{-4}$ & 1.99\\
                $1.53\cdot 10^{-4}$ & 2.00\\
                \end{tabular} & \begin{tabular}{@{}ll@{}}
                $3.16\cdot 10^{-2}$ & \\
                $2.49\cdot 10^{-2}$ & 0.35\\
                $4.92\cdot 10^{-3}$ & 2.84\\
                $6.34\cdot 10^{-4}$ & 2.96\\
                $8.66\cdot 10^{-5}$ & 3.21\\
                $1.14\cdot 10^{-5}$ & 2.98\\\\
                \end{tabular} 
\end{tabular}
  }
	\caption{Convergence of the distributional curvature operator $\Curvature$ \change{in the first (top), second (middle), and third (bottom) setting.} Same as Figure~\ref{fig:conv_plot}, but in tabular form.}
	\label{tab:error_N3}
\end{table}

\change{We consider three examples in dimension $N=3$ on the unit cube $\Omega=(-1,1)^3$. In the first two cases, the Riemannian metric $g$ is induced by an embedding into $\R^4$. More precisely, we consider embeddings of the form 
$$(x,y,z)\mapsto (x,y,z,f_i(x,y,z)).$$ 
For the first example, we take 
$$f_1(x,y,z):= \frac{1}{2}(x^2+y^2+z^2)-\frac{1}{12}(x^4+y^4+z^4)$$ 
as proposed in \cite{GN2023}. The second embedded manifold is obtained by perturbing this embedding function and is given by 
$$f_2(x,y,z):= f_1(x,y,z) + \alpha(xy+yz+xz)+\beta xyz$$ 
with $\alpha=0.2$ and $\beta=0.12$. In contrast to these two examples, the third manifold is not given via an embedding but is specified directly through its Riemannian metric. Here, we consider the conformally flat metric 
$$g := \exp(2u)I_{3\times 3},\qquad u:=\epsilon(xy+yz+zx+yxz),$$ 
with $\epsilon=0.23$. In all three cases, we employ the equivalent formulation of the curvature operator \eqref{eq:distr_Curvature_op_3d}. We emphasize that the test function $A$ and formulation \eqref{eq:distr_Riemann} can also be used.} The components of the curvature operator in the first setting read
\begin{align*}
	\change{\Curvature^{xx}} &= \frac{9(z^2-1)(y^2-1)}{\det(\gex)\big(q(x)+q(y)+q(z)+9\big)},\\
	\change{\Curvature^{yy}} &= \frac{9(z^2-1)(x^2-1)}{\det(\gex)\big(q(x)+q(y)+q(z)+9\big)},\\
	\change{\Curvature^{zz}} &= \frac{9(x^2-1)(y^2-1)}{\det(\gex)\big(q(x)+q(y)+q(z)+9\big)},\\
	\change{\Curvature^{xy}}&=\change{\Curvature^{xz}}=\change{\Curvature^{yz}}=0,
\end{align*}
where $q(x) = x^2 (x^2-3)^2$. \change{Note that this curvature operator is a diagonal matrix. The second curvature operator, $\Curvature=\Curvature^{ij}\d_i\odot \d_j$, has also off-diagonal entries with
\begin{align*}
  \Curvature = \frac{1}{(1+S)^2}\begin{pmatrix}BC-r^2 & qr - Cp & pr-Bq\\
    qr - Cp&AC-q^2 & pq-Ar\\
    pr-Bq &pq-Ar & AB-p^2
  \end{pmatrix},
\end{align*}
where $A=1-x^2$, $B=1-y^2$, $C=1-z^2$, $p=\alpha +\beta z$, $q=\alpha +\beta y$, $r=\alpha +\beta x$, and $S = \|\nabla f_2\|^2$ with
\begin{align*}
  \nabla f_2 = \begin{pmatrix}
    x\frac{2+A}{3}+\alpha(y+z) +\beta yz \\ y\frac{2+B}{3}+\alpha(x+z) +\beta xz \\ z\frac{2+C}{3}+\alpha(x+y) +\beta xy
  \end{pmatrix}.
\end{align*} 
The curvature operator of the third example reads
\begin{align*}
  \Curvature = e^{-4u}\begin{pmatrix}
    -\epsilon^2 A^2 & \epsilon(1+z) - \epsilon^2AB & \epsilon(1+y) - \epsilon^2AC \\
                     \epsilon(1+z) - \epsilon^2AB& -\epsilon^2 B^2 & \epsilon(1+x) - \epsilon^2 BC \\
                   \epsilon(1+y) - \epsilon^2AC  &  \epsilon(1+x) - \epsilon^2 BC & -\epsilon^2 C^2
  \end{pmatrix},
\end{align*}
with $A = y+z+yz$, $B = x+z+xz$, and $C = x+y+xy$. Note that for $\epsilon<1$ the off-diagonal entries are dominant compared to the diagonal ones.}

We compute the $H^{-2}(\Omega)$-norm of the error $f:= \widetilde{{\Curvature}\omega}(\gappr)-{\Curvature}\omega(\gex)$ by using that $\|f\|_{H^{-2}(\Omega,\R^{3\times 3}_{\mathrm{sym}})}$ is equivalent to $\|V\|_{\Htwo[\Omega,\R^{3\times 3}_{\mathrm{sym}}]}$, where $V\in \Htwoz[\Omega,\R^{3\times 3}_{\mathrm{sym}}]$ solves the biharmonic equation $\Delta^2 V = f$ applied to each component. This equation will be solved numerically using the (Euclidean) Hellan--Herrmann--Johnson (HHJ) method \cite{Com89} for each component of $V$ (Although the HHJ method was originally defined only for two-dimensional domains, it is straightforward to extend it to arbitrary dimensions, see e.g. \cite{li18}.). To avoid that the discretization error spoils the real error, we use for $V_h$ two polynomial orders more than for $\gappr\in\Regge_h^k$.

We consider a structured mesh consisting of $6\cdot 2^{3i}$ tetrahedra, with $\tilde{h}=\max_T h_T=\sqrt{3}\,2^{1-i}$ (and minimal edge-length $2^{1-i}$) for $i=0,1,\dots$. We perturb each component of the inner mesh vertices by a random number drawn from a uniform distribution in the range $[-\tilde{h}\,2^{-3.5},\tilde{h}\,2^{-3.5}]$ to avoid possible superconvergence due to mesh symmetries. As shown in Figure~\ref{fig:conv_plot} and displayed in Table~\ref{tab:error_N3}, we obtain linear convergence when $\gappr\in\Regge_h^k$ has polynomial degree $k=0$. For $k=1$ and $k=2$, higher convergence rates are obtained as expected. Therefore, Corollary~\ref{cor:conv_Riemann_int} is sharp. For $k=0$ we observe numerically linear convergence in all settings, which is better than predicted by Corollary~\ref{cor:conv_Riemann_int}. Further investigations suggest, however, that the observed linear convergence for $k=0$ is only pre-asymptotic, \change{i.e., the final rate of convergence (or in this case no convergence) has not been reached yet. A detailed test if Lemma~\ref{lem:est_a_bbnd}, Lemma~\ref{lem:est_b_bbnd}, and the sum of both are sharp has been performed in \cite{GN2023b} for the first setting with the embedded manifold $f_1$ in terms of the Einstein tensor, which coincides up to an index change with the curvature operator in three dimensions, cf.  \eqref{eq:rel_curv_einstein_3d}, and is therefore omitted here.}

\appendix

\section{Summary of geometric notions used}
\label{sec:geometrical-prelims}

Let $(\Omega, g)$ be an oriented Riemannian manifold with $\Omega\subset \R^N$. In this appendix (only), the
metric $g$ is smooth. Here we gather the standard notions in
Riemannian geometry that we have used in previous sections, point out
our choice of conventions or normalizations when multiple
options exist, and provide references.

The value of a $(k, l)$-tensor field $\rho \in \TM{l}{k}$ acting on
$k$ vectors $X_i \in {\XM}$ and $l$ covectors $\mu_j\in \WM{1}$ is
denoted by $\rho(X_1, \ldots, X_k,\mu_1, \ldots, \mu_l)$.  Note that
$\WM{1} = \TM{0}{1}$ and ${\XM} = \TM{1}{0}$. Note also that it is
standard to extend the Levi-Civita connection $\nabla$ from vector
fields to tensor fields (see e.g., \cite{Lee18}) so that
Leibniz rule holds, i.e. for $A\in\TM{l}{k}$
  \begin{align*}
 (\nabla_XA)(Y_1,\dots&,Y_k,\alpha_1,\dots,\alpha_l)= X(A(Y_1,\dots,Y_k,\alpha_1,\dots,\alpha_l))\\
    &-A(\nabla_XY_1,\dots,Y_k,\alpha_1,\dots,\alpha_l)-\dots -A(Y_1,\dots,Y_k,\alpha_1,\dots,\nabla_X\alpha_l).
  \end{align*}
Notice that any symmetries of $A$ are preserved by $\nabla_X A$. We define the $(k+1,l)$-tensor $\nabla A$ by $(\nabla A)(X,\dots) = (\nabla_XA)(\dots)$.
Higher order operators can be defined inductively via $\nabla^{k+1}A = \nabla(\nabla^{k}A)$. We frequently use the second  covariant derivative
\begin{equation}
  \label{eq:2ndDerivative}
 (\nabla^2_{X,Y}A)(\dots):=(\nabla^2A)(X,Y,\dots)  
\end{equation}
where we have selected the convention of placing the subscripts  $X, Y$ as the first two arguments (rather than the last two, as done in \cite[p.~99]{Lee18}).

We use standard operations such as the tensor product $\otimes:\TM{l}{k}\times\TM{q}{p} \to\TM{l+q}{k+p}$,
the tangent to cotangent isomorphism
$\flat: {\XM} \to \WM{1}$, the reverse operation
$\sharp: \WM{1} \to {\XM}$, 
and  the wedge product $\wedge$ with the normalization convention set to the so-called determinant convention,  so that, e.g., 
\begin{equation}
  \label{eq:4}
  \varphi \wedge \eta = \varphi \otimes \eta - \eta \otimes \varphi,
  \qquad \varphi, \eta \in \W^{1}(\om).
\end{equation}
All these are exactly as in
standard texts~\cite{Lee18, Peter16}, where one can also
find the definition of the Hodge dual (Hodge star) $\star:\W^k(\om)\to\W^{N-k}(\om)$, namely, 
\begin{equation}
  \label{eq:Hodgestar}
 g^{-1}( \star \eta, \change{\zeta} )\, \omega = \eta \wedge \change{\zeta}, \qquad  \eta \in \W^k(\om),
  \change{\zeta} \in \W^{N-k}(\om).
\end{equation}
The unique volume form over an oriented Riemannian manifold $D$ is denoted by $\vol{D}$, and when $D=\om$, we simply abbreviate $\omega_\om$ to $\omega$, as in~\eqref{eq:Hodgestar}. There, as usual, the inner product 
$g^{-1}(\varphi^i, \phi^j) = g((\varphi^i)^\sharp, (\phi^j)^\sharp)$ between  co-vectors $\varphi^i$ and $\phi^j$ is extended to $k$-forms
by
$
 g^{-1}(\varphi^1 \wedge \cdots \wedge \varphi^k,
  \phi^1 \wedge \cdots \wedge \phi^k) = \det ( g^{-1}( \varphi^i, \phi^j)).
$
One can prove, using the properties of the wedge product, that 
\eqref{eq:Hodgestar} implies 
\begin{align*}
 g^{-1}(\star \eta, \change{\zeta}) = (-1)^{k(N-k)} g^{-1}( \eta, \star \change{\zeta}), 
  \qquad  \eta \in \W^k(\om),
  \change{\zeta} \in \W^{N-k}(\om).
\end{align*}
Given a $g^{-1}$-orthonormal co-vector basis $e^i$ of matching
orientation, \eqref{eq:Hodgestar} implies
\begin{equation}
  \label{eq:Hodge-on-frame}
  \star ( e^1 \wedge \cdots \wedge e^k) = e^{k+1} \wedge \cdots \wedge e^N.
\end{equation}

\change{Denote with $\eveps^{i_1\dots i_{N}}=\eveps_{i_1\dots i_{N}}$ the standard permutation
symbol, whose value is $1$, $-1$, or $0$, when $(i_1,\dots, i_{N})$ is
an even, odd, or not a permutation of $(1,\dots,N)$, respectively. Then the covariant version of the Levi-Civita symbol is defined as
\begin{align}
  \label{eq:Levi-Civita-symbol}
  \gveps^{i_1\dots i_{N}} = \sqrt{\det
 g}^{-1}\eveps^{i_1\dots i_{N}} ,\qquad \gveps_{i_1\dots i_{N}} = \sqrt{\det
g}\,\eveps_{i_1\dots i_{N}}.
\end{align} 
}

Given a general $\eta \in \W^k(\om)$, a coordinate frame $\d_i$ and its
coframe $dx^j$,
the Hodge dual can be computed in terms of
components $\eta_{i_1 \dots i_k} = \eta(\d_{i_1}, \dots, \d_{i_k})$ to get 
\begin{equation}
  \label{eq:Hodgestar-comp}
  \star\eta
 = 
  \frac{\sqrt{\det g}}{ (N-k)! k!}\;
  \eta_{m_1\dots m_k} 
 g^{m_1 i_1} \dots g^{m_k i_k}
  \eveps_{i_1\dots i_k j_1\dots j_{N-k}}\;
 dx^{j_1} \wedge\dots\wedge dx^{j_{N-k}}.
\end{equation}
In particular, \change{using $\eta=dx^p \wedge dx^q\in \W^2(\Omega)$ and that $(dx^p\wedge dx^q)_{mn}=\delta_m^p\delta_n^q-\delta_m^q\delta_n^p$ yields after contracting the Kronecker deltas}
\begin{equation}
  \label{eq:Hodge-dxpdxq}
  \star( dx^p \wedge dx^q )
 = \frac{\sqrt{\det g}}{(N-2)!} \, g^{rp}g^{sq}\,
    \eveps_{rs j_1 \dots j_{N-2}} dx^{j_1} \wedge \dots \wedge dx^{j_{N-2}}.
\end{equation}
Using \eqref{eq:Hodgestar-comp},
one can prove that for vector fields $X_i, Y_j \in \Xm \Omega$,
\begin{equation}
  \label{eq:Hodgestar-vecs}
  \star ( X_1^\flat \wedge X_2^\flat \wedge \cdots \wedge \change{X_k^\flat})
 (Y_1, \ldots, Y_{N-k}) =
  \og(X_1, X_2, \ldots, X_k,Y_1, \ldots, Y_{N-k}).
\end{equation}
\change{Applying the Hodge-star twice yields $\star\circ\star=\pm 1$, more precisely,
\begin{align}
  \label{eq:hodge_star_twice}
  \star\star \eta = (-1)^{k(N-k)},\qquad \eta \in \W^k(\om).
\end{align}
}
The notation $\langle X,Y\rangle:=g(X,Y)$ denotes the $g$-inner product for
two vector fields $X,Y\in{\XM}$, as well as its extension to general
tensors $A,B\in\TM{l}{k}$ through tensor product compatibility. Then, 
using coordinates of $k$-covariant tensors $A$ and $B$,
\begin{equation}
  \label{eq:innerprod-extended}
  \ip{A, B} = A_{i_1 \ldots i_k} g^{i_1 j_1} \,\ldots\, g^{i_k j_k} B_{j_1 \ldots j_k}.
\end{equation}
Note that for $k$-forms $\eta, \varphi \in \W^k(\om)$, this inner
product and previously mentioned $g^{-1}$~inner product are related by~\cite[Exercise~2-17]{Lee18}
\begin{equation}
  \label{eq:factor-ip}
 g^{-1}( \eta, \varphi)= \frac{1}{k!} \ip{\eta, \varphi}.
\end{equation}
It is easy to see from~\eqref{eq:Hodgestar} that 
the Hodge dual is an isometry in the $g^{-1}$~inner
product. Hence,~\eqref{eq:factor-ip} implies that for $k$-forms
$\eta, \vphi$, we have
$$\ip{ \star \eta,\star \vphi} = (N-k)! \; g^{-1}( \star \eta,\star \vphi)
= (N-k)! \; g^{-1}(  \eta, \vphi).$$
Therefore,
\begin{align}
  \label{eq:hodge_star_iso}
  \langle \star \eta,\star \vphi\rangle = \frac{(N-k)!}{k!}\langle \eta,\vphi\rangle,\qquad \eta, \varphi \in \W^k(\om),
\end{align}
showing that the Hodge dual is a quasi-isometry in the tensor inner product.

The \emph{exterior covariant derivative}, see e.g. \change{\cite[Section 2.2.2.2]{Peter16}},
\begin{align}
  \label{eq:exterior-covariant-derivative}
 d^\nabla:\W^k(\Omega,\XM)\to \W^{k+1}(\Omega,\XM)
\end{align}
extends the exterior derivative $d$ from differential forms to e.g., vector-valued differential forms. For $k=0$ it coincides with the exterior derivative, $d^\nabla=d$, and the exterior covariant derivative fulfills the \change{Leibniz} rule
\begin{align*}
 d^\nabla (\alpha\wedge \eta) = d\alpha\wedge \eta + (-1)^{k} \alpha\wedge d^\nabla \eta,\qquad\forall \alpha \in \W^k(\Omega),\, \eta \in \W^l(\Omega,\XM),
\end{align*}
allowing also for an inductive definition of $d^\nabla$. The Hodge dual
in \eqref{eq:Hodgestar} can be readily extended to   $\star:\W^k(\Omega,\XM)\to \W^{N-k}(\Omega,\XM)$ by noting that 
\[
  \W^k(\Omega,\XM)\simeq \W^k(\Omega)\otimes \XM \text{ and } \W^{N-k}(\Omega,\XM)\simeq \W^{N-k}(\Omega)\otimes \XM,
\]
and performing the standard Hodge star operation on the alternating part. \change{
An alternative but equivalent definition of the exterior covariant derivative skew-symmetrizes the covariant derivative in all components \cite[Equ. (2.16)]{Taylor2011}, i.e., for $\eta \in \W^k(\Omega,\XM)$, 
\begin{align*}
 (d^\nabla \eta)(X_0,X_1,\dots,X_k) &= \sum_{i=0}^k (-1)^i X_{i}(\eta(X_{0},\dots,\widehat{X}_i,\dots,X_{k}))\\
  &\quad + \sum_{i<j}(-1)^{i+j}\eta([X_i,X_j],X_0,\dots,\widehat{X}_i,\dots,\widehat{X}_j,\dots,X_k),
\end{align*}
where $\widehat{X}_j$ denotes to omit the $X_j$ argument. Because $\nabla$ is a torsion-free connection, the exterior covariant derivative reads more compactly \cite[p. 691]{kupfermanDoubleFormsRegular2024}
\begin{align}
  \label{eq:exterior-covariant-derivative-alt}
 (d^\nabla \eta)(X_0,X_1,\dots,X_k) &= \sum_{i=0}^k (-1)^i (\nabla_{X_{i}}\eta)(X_{0},\dots,\widehat{X}_i,\dots,X_{k}).
\end{align}

The inner product on $\W^k(\Omega,\XM)$ is defined as, cf. \eqref{eq:Hodgestar} and \eqref{eq:factor-ip},
\begin{align*}
  \frac{1}{k!}\ip{\sigma,\rho}\,\omega=g^{-1}(\sigma,\rho)\,\omega = \sigma\wedge \star \rho,
\end{align*} 
where the wedge pairing $\wedge: \W^{k}(T,\Xm{T})\times \W^{l}(T,\Xm{T})\to \W^{k+l}(T)$ is defined for $\sigma= X\otimes \alpha\in\W^{k}(T,\Xm{T})$ and $\rho = Y\otimes \beta\in\W^{l}(T,\Xm{T})$ by 
\begin{align*}
  \sigma\wedge \rho = \ip{X,Y}\, \alpha\wedge \beta.
\end{align*}

There holds for $\sigma\in\W^{k-1}(T,\Xm{T})$ and $\rho\in\W^{k}(T,\Xm{T})$ the integration by parts formula (the $1/k!$ factors are due to \eqref{eq:factor-ip}) \cite[eq. (2.6)]{kupfermanDoubleFormsRegular2024}
\begin{align}
  \label{eq:ibp_ext_cov_der}
  \int_M\frac{1}{k!}\ip{d^\nabla\sigma,\rho}\,\omega_M = \int_M\frac{1}{(k-1)!}\ip{\sigma,\delta^\nabla\rho}\,\omega_M + \int_{\partial M}\frac{1}{(k-1)!}\ip{\sigma_{\Fr},(\nu\lrcorner \rho)_{\Fr}}\,\omega_{\partial M},
\end{align}
where the \emph{exterior covariant coderivative} $\delta^\nabla$ is the formal adjoint of $d^\nabla$ with respect to the $L^2$ inner product on $M$ \cite[p. 693]{kupfermanDoubleFormsRegular2024}, i.e. \eqref{eq:ibp_ext_cov_der} with compactly supported functions $\sigma$ and $\rho$ such that the boundary term vanishes. Further, $\sigma_{\Fr}$ denotes the restriction of $\sigma$ on the tangent space of $\partial M$. There holds for the exterior covariant coderivative the relation \cite[pp. 10--11]{eellsSelectedTopicsHarmonic1983}
\begin{align}
  \label{eq:codiff_ext_cov_der}
  \delta^\nabla = (-1)^{N(k-1)+1}\star d^\nabla \star.
\end{align}
}
The covariant divergence of a tensor $A \in \TT_0^k(\om)$, with $k \ge 1$,  is defined as the trace of the covariant derivative of $A$ in its first two components, $\div A := \tr[12]{\nabla A}$. (Note that with this convention, in components, the divergence is applied to the first index of $A$.) We neglect the subscripts of the trace operator when there is no possibility of confusion. Equivalently, given a $g$-orthonormal basis $E_i$,
\begin{equation}
  \label{eq:div-def}
 (\div A )(X_1, X_2, \ldots) = \sum_{i=1}^N (\nabla A)( E_i, E_i, X_1, X_2, \ldots)
\end{equation}
for any $X_i \in \Xm \om$.

Next, recall the classical Stokes theorem \change{\cite[Theorem~16.11]{Lee12b}, \cite[Chapter 8]{Spi1999}}
for $(N-1)$-forms \change{on domains $\Omega$ with a piecewise smooth boundary}. Applying it to the Hodge dual of a 1-form $\alpha$,
we get the divergence theorem for smooth $1$-forms on Riemannian
manifolds:
\begin{align}
  \label{eq:divergence-thm}
  \int_{\Omega} (\div \alpha)\; \og_\om = -\int_{\d \Omega} \alpha(\gn)\,\og_{\d \Omega},\quad \forall \alpha\in\WM{1},
\end{align}
where $\gn$ is the inward $g$-normalized unit normal of the boundary $\d\Omega$
with the induced orientation. (Further standard assumptions needed for the 
existence of the integrals in the Stokes theorem,
such as either the support of $\alpha$ or
$\om$ is compact, are tacitly placed throughout.) Now, for arbitrary tensors
$A\in\TT_0^{k+1}(\Omega)$ and $B\in\TT_0^k(\Omega),$ we can produce a
1-form by 
$\theta(X) = \ip{ X \lrcorner A, B}$, using the standard interior product 
\begin{equation}
  \label{eq:contraction}
 (X \lrcorner A)(Y, \dots) = A(X, Y, \dots).
\end{equation}
Then 
applying~\eqref{eq:divergence-thm} to $\theta$, we obtain the
integration by parts formula
\begin{align} \label{eq:ibp_volume}
  \int_{\Omega} \langle A,\nabla B\rangle\, \omega_\om
 =-\int_{\Omega}\langle \div A,B\rangle\, \og_\om
  -\int_{\d \Omega}\langle A,\gn^\flat\otimes B\rangle\,\og_{\d \Omega}.
\end{align}

Next, consider an $(N-1)$-dimensional submanifold $F$ of $\Omega$
with unit normal vector $\gn$ and let $\{\gt_1,\dots,\gt_{N-1},\gn\}$
be an oriented $g$-orthonormal frame on $F$.  Then, the surface
divergence on $F$ of any $A \in \TT_0^k(\om)$, $k \ge 1$, can be
calculated using this basis after omitting the last summand
in~\eqref{eq:div-def}, i.e.,
\begin{equation}
  \label{eq:divF-def}
 (\divFR A )(X_1, X_2, \ldots) =
  \sum_{i=1}^{N-1} (\nabla A)( \gt_i, \gt_i, X_1, X_2, \ldots)
\end{equation}
for any $X_i \in \Xm \om$.
\change{It equals the trace of the surface covariant derivative $\nablaFR$ generated by $g_{\Fr}$, the restriction of $g$ to $F$, cf. \eqref{eq:restriction},
\begin{align*}
  \divFR A = \tr[12]{\nablaFR A}.
\end{align*}
Using the surrounding space and metric $g$, we define the projected surface derivative by subtracting the normal component from the covariant derivative, cf. \eqref{eq:nFnF-notation},
\begin{equation}
  \label{eq:11}
  \nablaF A := (\nabla A)_{\Fp\dots}= \nabla A -\gn^\flat\otimes \nabla_{\gn}A. 
\end{equation}
If $g$ is smooth $\nablaF$ and $\nablaFR$ coincide. For Regge metrics, they are related by Lemma~\ref{lem:restr_proj_relation}. Further, there holds for each element-boundary $\d T$
\begin{align}
  \label{eq:divF-div_relation}
 (\div A)|_{\d T} = (\divFR A)|_{\d T} + (\nabla_{\gn}A)(\gn,\dots).
\end{align}}
Integration by parts formula on $F$, in contrast to
\eqref{eq:ibp_volume}, additionally involves a term with the mean
curvature $H^{\gn}$ of $F$. To see this, expressing the second fundamental
form (in~\eqref{eq:sff-def}) as $\sff^{\gn} = -\nablaFR \gn^\flat$ \change{(or $-\nablaF\gn^\flat$ if $g$ is a Regge metric)}, 
the mean curvature of $F$ is given by
\begin{equation}
  \label{eq:H}
  \begin{aligned}
    &H^{\gn} = \tr{\sff^{\gn}}=-\sum_{i=1}^{N-1}g(\nabla_{\gt_i}\gn,\gt_i)=-\divFR\gn^\flat.
  \end{aligned}          
\end{equation}
Here, we have used the sign convention that makes $H^{\gn}$ positive for a sphere
with an inward-pointing normal vector. 
To obtain a surface integration by parts formula for 1-forms
$\alpha$ on $F$, we split $\alpha = \alpha_F + \alpha(\gn)
\gn^\flat$ where $\alpha_F = \sum_{i=1}^{N-1} \alpha(\gt_i)
\gt_i^\flat$ represents the form restricted to the
surface. By~\eqref{eq:divergence-thm} applied to
$\alpha_F$, we have
\begin{equation}
  \label{eq:12}
  \int_F (\divFR \alpha_F)\; \og_F = \change{-}\int_{\d F} \alpha (\gcn)\; \og_{\d F},
\end{equation}
where $\gcn$ denotes the {\em inward}-pointing
$g$-normalized conormal vector on $\d
F$. Since the splitting of $\alpha$ implies
\[
  \divFR \alpha = \divFR \alpha_F + \alpha(\gn) \, \divFR \gn^\flat,
\]
equations~\eqref{eq:H} and~\eqref{eq:12} yield
\begin{align}
  \label{eq:surface_stokes}
  \int_F(\divFR\alpha)\,  \og_F = -\int_F H^{\gn}\,\alpha(\gn)\,\og_F
  -\int_{\d F} \alpha(\gcn)\,\og_{\d F}, \quad \forall \alpha\in \WM{1}.
\end{align}
Now, by an argument similar to what we used to go
from~\eqref{eq:divergence-thm} to~\eqref{eq:ibp_volume}, we obtain the following 
surface integration by parts formula from~\eqref{eq:surface_stokes}:
\begin{equation}
  \label{eq:ibp_surface}
  \begin{aligned}    
    \int_F \langle A,\nablaFR B\rangle\, \og_F
 = &  -\int_F \langle \divFR A,B\rangle\,\og_{F}
      -\int_{\d F}\langle A,\gcn^\flat\otimes B\rangle\,\og_{\d F}
    \\
    & -\int_FH^{\gn}\,\langle A,\gn^\flat\otimes B\rangle\, \og_F, 
  \end{aligned}  
\end{equation}
for any smooth tensors $A\in\TM{0}{k+1}$ and $B\in\TM{0}{k}$. \change{Using a Regge metric \eqref{eq:ibp_surface} still holds at element-boundaries $\d T$ with the projected surface derivative $\nablaF$ in place of $\nablaFR$, cf. Lemma~\ref{lem:restr_proj_relation}.}

\change{
\section{Summary of numerical analysis notions used}
\label{sec:numerical-analysis-prelims}

Standard references to numerical analysis and finite element methods include \cite{Cia1978, BS2008,ernFiniteElementsApproximation2021,agmonLecturesEllipticBoundary2010, Bra2007}.

Let $\Omega\subset\R^N$ be an open, bounded Lipschitz domain with piecewise smooth boundary $\partial\Omega$.
For $s\ge 0$ and $1\le p\le\infty$, $W^{s,p}(\Omega)$ denotes the Sobolev--Slobodeckij space of differentiability index $s\in [0,\infty)$ and integrability index $p\in [1,\infty]$ with norm and seminorm
$\|\cdot\|_{W^{s,p}(\Omega)}$ and $|\cdot|_{W^{s,p}(\Omega)}$ (with respect to the Euclidean metric) \cite{Ada2003,Gri1985}.
We write $L^p(\Omega)=W^{0,p}(\Omega)$ and $H^s(\Omega)=W^{s,2}(\Omega)$.
For $k\in\N$, $H_0^k(\Omega)$ is the closure of $\D(\Omega)$ (Schwartz space of smooth compactly supported test functions) with respect to the $H^k$-norm and $H^{-k}(\Omega)=(H_0^k(\Omega))'$ denote the topological dual space with the canonical duality pairing $\langle \cdot,\cdot\rangle_{H^{-k}, H_0^k}$ and norm
\begin{align}
  \label{eq:hmknorm}
  \|f\|_{H^{-k}(\Omega)}
 = \sup_{0\neq u\in H_0^k(\Omega)} \frac{\langle f,u\rangle_{H^{-k},H_0^k}}{\|u\|_{H^k(\Omega)}}.
\end{align}
The definitions extend componentwise to vector/tensor fields, e.g.\ $H^2(\Omega,\R^{N\times N})$.

Let $(\Omega,g)$ be an $N$-dimensional Riemannian manifold with metric tensor $g$. If $D\subset\Omega$ is a submanifold with induced metric $g|_D$, then for an $(l,k)$-tensor $\rho\in\TT^l_k(D)$ we set
\begin{align}
  \label{eq:lp-norm}
  \|\rho\|_{L^p(D,g)}=
  \begin{cases}
 \bigl(\int_D |\rho|_g^p\,\omega_D(g)\bigr)^{1/p}, & 1\le p<\infty,\\[0.25em]
    \operatorname*{ess\,sup}_D |\rho|_g, & p=\infty,
  \end{cases}
\end{align}
where $\omega_D(g)$ is the induced volume form and $|\rho|_g=\langle \rho,\rho\rangle^{1/2}$ the norm induced by the $g$-inner product. When we work with the Euclidean case we abbreviate $\|\cdot\|_{L^p(D)}=\|\cdot\|_{L^p(D,\delta)}$.

Let $\T$ be a shape-regular triangulation of $\Omega$, and we write $T\in\T$ for elements, $F\subset\partial T$ for $(N\!-\!1)$-facets, and $E$ for $(N\!-\!2)$-entities (e.g. edges in 3D, vertices in 2D). Let $h_T=\operatorname{diam}(T)$ and $h=\max_{T\in\T} h_T$ the local and global meshsize, respectively. Shape-regularity means there is $\gamma\ge1$ such that $h_T/\rho_T\le \gamma$ for all $T$, where $\rho_T$ is the inradius.

For $k\in\N_0$, denote by $\Pol^k(T)$ the polynomials of degree $\le k$ on $T$, and by $\Pol^k(T,X)$ their $X$-valued counterparts. Standard affine pullbacks to a fixed reference simplex $\hat T$ yield the usual scaling estimates. For example, let $0\le m\le s$ and $1\le p\le\infty$. Then, for $v\circ\Phi =\hat v$ with $v\in W^m,p(T)$, $\hat v\in W^{s,p}(\hat T)$, and $\Phi:\hat{T}\to T$ there holds, see e.g. \cite[Section 4.3--4.4]{BS2008}
\begin{align}
  \label{eq:scaling}
 |v|_{W^{m,p}(T)} \simeq h_T^{m-\frac{N}{p}}\,|\hat v|_{W^{m,p}(\hat T)},
  \qquad
 |\hat v|_{W^{s,p}(\hat T)} \lesssim h_T^{s-m}\,|v|_{W^{m,p}(T)},
\end{align}
with constants hidden in $\simeq$ and $\lesssim$ depending only on $N$, $m$, $s$, $p$, and the shape-regularity of $\T$.

On a single element $T\in\T$, the (continuous) trace inequality for $v\in H^1(T)$ and any facet $F\subset\partial T$ is \cite[Theorem 3.10]{agmonLecturesEllipticBoundary2010}
\begin{align}
  \label{eq:trace_inequ_standard}
  \|v\|_{L^2(F)} \le C\Big(h_T^{-\frac12}\|v\|_{L^2(T)} + h_T^{\frac12}|v|_{H^1(T)}\Big),
\end{align}
with a constant $C$ independent of $h_T$. Repeating \eqref{eq:trace_inequ_standard} and assuming $H^2$-regularity of $v$ on $T$ yields the codimension 2 trace inequality for $E\in\triangle_{-2}(T)$,
\begin{align}
  \label{eq:trace_inequ_codim2}
  \|v\|_{L^2(E)}
  \le C\Big(h_T^{-1}\|v\|_{L^2(T)} + |v|_{H^1(T)} + h_T\,|v|_{H^2(T)}\Big).
\end{align}

Let $v_h\in \Pol^k(T)$.
For $0\le r\le m$ and $1\le q\le p\le\infty$, there holds the inverse estimate, cf. \cite[Lemma 4.5.3]{BS2008},
\begin{align*}
 |v_h|_{W^{m,p}(T)}
  \le C\, h_T^{\,r-m+N\bigl(\frac1p-\frac1q\bigr)}\;|v_h|_{W^{r,q}(T)}
\end{align*}
and the discrete trace inequality
\begin{align*}
  \|v_h\|_{L^2(F)} \le C\, h_T^{-\frac12}\, \|v_h\|_{L^2(T)}, \qquad F\subset\partial T.
\end{align*}

Let $m\in\N$, $1\le p\le\infty$, and $T\in\T$. For $v\in W^{m,p}(T)$, there holds the Bramble--Hilbert lemma \cite{BH1970}
\begin{align*}
  \inf_{q\in \Pol^{m-1}(T)} \|v-q\|_{W^{r,p}(T)}
  \le C\, h_T^{\,m-r}\,|v|_{W^{m,p}(T)}, \qquad 0\le r\le m,
\end{align*}
with $C$ depending only on $N,m,p$ and the shape-regularity of $\T$.
This is the underpinning of all local finite element approximation estimates.

For $k\in\N_0$, the Lagrange finite element space is
\begin{align}
  \label{eq:lag_fe}
  \VV_h^k := \{\, v \in C^0(\Omega) : \forall T\in\T v|_T\circ\Phi_T\in\Pol^k(\hat{T})\} \subset \Hone[\Omega].
\end{align}
When the function $u$ is smooth enough, the canonical Lagrange (nodal) interpolant $\mathcal{I}^{V,k}_h g\in\VV_h^k$ satisfies, for $T\in\T$, \cite[Theorem 4.4.4]{BS2008},
\begin{align*}
 |u - \mathcal{I}^{V,k}_h u|_{W^{t,p}(T)} \le C\, h_T^{\,s-t}\, |u|_{W^{s,p}(T)},
  \qquad 0\le t\le s\le k+1,\quad 1\le p\le\infty,
\end{align*}
with real (possibly non-integer) $t$ and $s$, and $C$ depending only on $N,k,p$ and the shape-regularity. This yields the global estimate
\begin{align}
  \label{eq:lagrange-global}
  \|u - \mathcal{I}^{V,k}_h u\|_{W^{t,p}(\Omega)} \le C\, h^{\,s-t}\, |u|_{W^{s,p}(\Omega)}.
\end{align}

If $u$ has low regularity, one can use local, stable quasi-interpolation operators $\Pi_h$ (Cl\'ement \cite{Cle1975}) or $\mathcal{Z}_h$ (Scott--Zhang \cite{SZ1990}) defined via local averaging.
For every element $T$ and its patch $\omega_T=\bigcup\{\,K\in\T:\overline{K}\cap\overline{T}\neq\emptyset\,\}$, \cite[eq. (4.8.10)]{BS2008},
\begin{align}
  \label{eq:clement-sz}
 |u - \mathcal{I}_h u|_{W^{t,p}(T)} \le C\, h_T^{\,s-t}\, |u|_{W^{s,p}(\omega_T)},
  \quad \mathcal{I}_h\in\{\Pi_h,\mathcal{Z}_h\},\quad 0\le t\le \min\{s,k+1\},
\end{align}
with stability $\|\mathcal{I}_h u\|_{W^{t,p}(T)}\le C\|u\|_{W^{t,p}(\omega_T)}$ for $t=0,1$.
Combining \eqref{eq:clement-sz} over all $T$ gives the same global order as in \eqref{eq:lagrange-global}.

Let $Q_h^k:=\{v\in L^2(\Omega): \forall T\in\T v|_T\circ\Phi_T\in\Pol^k(\hat{T})\}$ be the discontinuous space. The Oswald operator $O_h: Q_h^k\to \VV_h^k$ \cite{oswaldBPXpreconditionerP1Elements1993} \cite[Section 22.2]{ernFiniteElementsApproximation2021} is defined by averaging degrees of freedom associated with shared mesh entities.
It is locally stable, for $1\le p\le\infty$,
\begin{align*}
\|O_h v_h\|_{L^p(\Omega)} \le C \|v_h\|_{L^p(\Omega)},\qquad
 |O_h v_h|_{H^1(\Omega)} \le C\, |v_h|_{H^1_h} ,
\end{align*}
where $|v_h|_{H^1_h}^2:=\sum_{T\in\T} |v_h|_{H^1(T)}^2$ is the broken $H^1$ seminorm. The idea of averaging degrees of freedom to construct Oswald-type interpolation operators for different finite element spaces, see e.g. \cite[Appendix A]{GN2023} for the Regge finite element space.

Let $\Sc$ denote the space of symmetric $N\times N$ tensors and $\Regge_h^k$ the Regge space of order $k$, cf. \eqref{eq:regge_fem_space}.
The canonical Regge interpolant $\RegInt[k]: W^{s,p}(\Omega,\Sc)\to\Regge_h^k$ (defined for $s>1/p$) is characterized on each triangle $T$ by the edge and cell moment conditions \cite{li18}
\begin{subequations}
  \label{eq:RegInt}
  \begin{align}
    \int_E (\RegInt[k]\sigma)_{\tv\tv}\, q \, \mathrm{d}l
    &= \int_E \sigma_{\tv\tv}\, q \, \mathrm{d}l
    &&\forall q\in\Pol^k(E),\ \forall E\subset\partial T, \\
    \int_T \RegInt[k]\sigma : \rho \, \mathrm{d}x
    &= \int_T \sigma : \rho \, \mathrm{d}x
    &&\forall \rho\in \Pol^{k-1}(T,\Sc),\, T\in\T,
  \end{align}
\end{subequations}
where $\tv$ is a Euclidean unit tangent vector along edge $E$. The extension to $N\ge 3$ is straightforward. By Bramble--Hilbert and scaling, $\RegInt[k]$ is locally stable and provides the approximation, for $p\in [1,\infty]$, $s\in (1/p,k+1]$, $t\in [0,s]$, and $\sigma\in W^{s,p}(T,\Sc)$, \cite[Theorem 2.5]{li18}
\begin{align*}
  \|\sigma - \RegInt[k]\sigma\|_{W^{t,p}(T)}
  \le C\, h_T^{\,s-t}\, \|\sigma\|_{W^{s,p}(T)},
\end{align*}
with constants depending only on $k,s,t$ and the shape-regularity of $T$.
}

\section*{Acknowledgments}
This work was supported in part by the Austrian Science Fund (FWF) projects \href{https://doi.org/10.55776/F65}{10.55776/F65} and \href{https://doi.org/10.55776/J4824}{10.55776/J4824},
and the National Science Foundation (USA) grant DMS-2409900. For open-access purposes, the authors have applied a CC BY public copyright license to any Author Accepted Manuscript (AAM) version arising from this submission. \change{We thank the two anonymous reviewers for their careful reading of the manuscript and their valuable comments and suggestions that helped to improve the presentation of the paper.}

\bibliographystyle{acm}
\bibliography{cites}

\begin{thebibliography}{10}

\bibitem{Ada2003}
{\sc Adams, R.~A., and Fournier, J. J.~F.}
\newblock {\em Sobolev Spaces}, 2nd ed~ed.
\newblock No.~v. 140 in Pure and Applied Mathematics. Academic Press, Amsterdam Boston, 2003.

\bibitem{agmonLecturesEllipticBoundary2010}
{\sc Agmon, S.}, Ed.
\newblock {\em Lectures on Elliptic Boundary Value Problems}.
\newblock AMS Chelsea Pub, Providence, R.I, 2010.

\bibitem{aleksandrovIntrinsicGeometrySurfaces}
{\sc Aleksandrov, A.~D., and Zalgaller, V.~A.}
\newblock {\em Intrinsic {{Geometry}} of {{Surfaces}}}.
\newblock American Mathematical Society, 1967.

\bibitem{alexandrovAlexandrovSelectedWorks2006}
{\sc Alexandrov, A.~D.}
\newblock {\em A. {{D}}. {{Alexandrov}} Selected Works. {{Part II}}: Intrinsic geometry of convex surfaces; Edited by S. S. Kutateladze; Translated from the Russian by S. Vakhrameyev.}
\newblock Chapman \& Hall/CRC, Boca Raton, FL, 2006.

\bibitem{AG20}
{\sc Amstutz, S., and Van~Goethem, N.}
\newblock Existence and asymptotic results for an intrinsic model of small-strain incompatible elasticity.
\newblock {\em Discrete \& Continuous Dynamical Systems - B 25}, 10 (2020), 3769--3805.

\bibitem{ArnolAwanoWinth08}
{\sc Arnold, D.~N., Awanou, G., and Winther, R.}
\newblock Finite elements for symmetric tensors in three dimensions.
\newblock {\em Math. Comp. 77}, 263 (2008), 1229--1251.

\bibitem{arnold10}
{\sc Arnold, D.~N., Falk, R., and Winther, R.}
\newblock Finite element exterior calculus: from {H}odge theory to numerical stability.
\newblock {\em Bulletin of the American Mathematical Society 47}, 2 (2010), 281--354.

\bibitem{arnold06}
{\sc Arnold, D.~N., Falk, R.~S., and Winther, R.}
\newblock Finite element exterior calculus, homological techniques, and applications.
\newblock {\em Acta numerica 15\/} (2006), 1--155.

\bibitem{AH21}
{\sc Arnold, D.~N., and Hu, K.}
\newblock Complexes from complexes.
\newblock {\em Foundations of Computational Mathematics 21}, 6 (2021), 1739--1774.

\bibitem{BOW18}
{\sc Barrett, J.~W., Oriti, D., and Williams, R.~M.}
\newblock Tullio {R}egge's legacy: {R}egge calculus and discrete gravity.
\newblock {\em arXiv preprint arXiv:1812.06193\/} (2018).

\bibitem{BKG21}
{\sc Berchenko-Kogan, Y., and Gawlik, E.~S.}
\newblock Finite element approximation of the {L}evi-{C}ivita connection and its curvature in two dimensions.
\newblock {\em Foundations of Computational Mathematics\/} (2022).

\bibitem{BGO16}
{\sc Bonet, J., Gil, A.~J., and Ortigosa, R.}
\newblock On a tensor cross product based formulation of large strain solid mechanics.
\newblock {\em International Journal of Solids and Structures 84\/} (2016), 49--63.

\bibitem{BCM03}
{\sc Borrelli, V., Cazals, F., and Morvan, J.-M.}
\newblock On the angular defect of triangulations and the pointwise approximation of curvatures.
\newblock {\em Computer Aided Geometric Design 20}, 6 (2003), 319--341.

\bibitem{Bra2007}
{\sc Braess, D.}
\newblock {\em Finite {{Elements}}: {{Theory}}, {{Fast Solvers}}, and {{Applications}} in {{Solid Mechanics}}}, 3rd edition~ed.
\newblock Cambridge University Press, Cambridge, 2007.

\bibitem{BH1970}
{\sc Bramble, J.~H., and Hilbert, S.~R.}
\newblock Estimation of {{Linear Functionals}} on {{Sobolev Spaces}} with {{Application}} to {{Fourier Transforms}} and {{Spline Interpolation}}.
\newblock {\em SIAM Journal on Numerical Analysis 7}, 1 (1970), 112--124.

\bibitem{BS2008}
{\sc Brenner, S.~C., and Scott, L.~R.}
\newblock {\em The {{Mathematical Theory}} of {{Finite Element Methods}}}, vol.~15 of {\em Texts in {{Applied Mathematics}}}.
\newblock Springer New York, New York, NY, 2008.

\bibitem{cheegerHodgeTheoryRiemannian1980}
{\sc Cheeger, J.}
\newblock On the {{Hodge}} theory of {{Riemannian}} pseudomanifolds.
\newblock In {\em Proc. {{Sympos}}. {{Pure Math}}\/} (1980), vol.~36, pp.~91--146.

\bibitem{cheegerSpectralGeometrySingular1983}
{\sc Cheeger, J.}
\newblock Spectral geometry of singular {{Riemannian}} spaces.
\newblock {\em Journal of differential geometry 18}, 4 (1983), 575--657.

\bibitem{Cheeger84}
{\sc Cheeger, J., M{\"u}ller, W., and Schrader, R.}
\newblock On the curvature of piecewise flat spaces.
\newblock {\em Communications in Mathematical Physics 92}, 3 (1984), 405--454.

\bibitem{ChrisGopalGuzma24}
{\sc Christiansen, S., Gopalakrishnan, J., Guzm{\'{a}}n, J., and Hu, K.}
\newblock A discrete elasticity complex on three-dimensional {Alfeld} splits.
\newblock {\em Numer. Math. 156\/} (2024), 159--204.

\bibitem{Christiansen04}
{\sc Christiansen, S.~H.}
\newblock A characterization of second-order differential operators on finite element spaces.
\newblock {\em Mathematical Models and Methods in Applied Sciences 14}, 12 (2004), 1881--1892.

\bibitem{Christiansen11}
{\sc Christiansen, S.~H.}
\newblock On the linearization of {R}egge calculus.
\newblock {\em Numerische Mathematik 119}, 4 (2011), 613--640.

\bibitem{Christiansen13}
{\sc Christiansen, S.~H.}
\newblock Exact formulas for the approximation of connections and curvature.
\newblock {\em arXiv preprint arXiv:1307.3376\/} (2013).

\bibitem{Chr2024}
{\sc Christiansen, S.~H.}
\newblock On the definition of curvature in {{Regge}} calculus.
\newblock {\em IMA Journal of Numerical Analysis\/} (2024), drad095.

\bibitem{Cia1978}
{\sc Ciarlet, P.~G.}
\newblock {\em The Finite Element Method for Elliptic Problems}.
\newblock North-Holland Pub. Co., Amsterdam; New York; Oxford, 1978.

\bibitem{Cle1975}
{\sc Cl{\'e}ment, P.}
\newblock Approximation by finite element functions using local regularization.
\newblock {\em Revue fran{\c c}aise d'automatique, informatique, recherche op{\'e}rationnelle. Analyse num{\'e}rique 9}, R2 (1975), 77--84.

\bibitem{Com89}
{\sc Comodi, M.~I.}
\newblock The {H}ellan--{H}errmann--{J}ohnson method: Some new error estimates and postprocessing.
\newblock {\em Mathematics of Computation 52}, 185 (1989), 17--29.

\bibitem{eellsSelectedTopicsHarmonic1983}
{\sc Eells, J., and Lemaire, L.}
\newblock {\em Selected Topics in Harmonic Maps}, vol.~50.
\newblock American Mathematical Soc., 1983.

\bibitem{ernFiniteElementsApproximation2021}
{\sc Ern, A., and Guermond, J.-L.}
\newblock {\em Finite {{Elements I}}: {{Approximation}} and {{Interpolation}}}, vol.~72 of {\em Texts in {{Applied Mathematics}}}.
\newblock Springer International Publishing, Cham, 2021.

\bibitem{Fritz13}
{\sc Fritz, H.}
\newblock Isoparametric finite element approximation of {R}icci curvature.
\newblock {\em IMA Journal of Numerical Analysis 33}, 4 (2013), 1265--1290.

\bibitem{Fritz15}
{\sc Fritz, H.}
\newblock Numerical {R}icci--{D}e{T}urck flow.
\newblock {\em Numerische Mathematik 131}, 2 (2015), 241--271.

\bibitem{Gaw20}
{\sc Gawlik, E.~S.}
\newblock High-order approximation of {G}aussian curvature with {R}egge finite elements.
\newblock {\em SIAM Journal on Numerical Analysis 58}, 3 (2020), 1801--1821.

\bibitem{GN2023}
{\sc Gawlik, E.~S., and Neunteufel, M.}
\newblock Finite element approximation of scalar curvature in arbitrary dimension.
\newblock {\em Mathematics of Computation 94}, 356 (2025), 2685--2722.

\bibitem{GN2023b}
{\sc Gawlik, E.~S., and Neunteufel, M.}
\newblock Finite element approximation of the {E}instein tensor.
\newblock {\em IMA Journal of Numerical Analysis\/} (2025), draf004.

\bibitem{GNSW2023}
{\sc Gopalakrishnan, J., Neunteufel, M., Sch{\"o}berl, J., and Wardetzky, M.}
\newblock Analysis of curvature approximations via covariant curl and incompatibility for {{Regge}} metrics.
\newblock {\em The SMAI Journal of computational mathematics 9\/} (2023), 151--195.

\bibitem{GNSW2026}
{\sc Gopalakrishnan, J., Neunteufel, M., Sch{\"o}berl, J., and Wardetzky, M.}
\newblock Computational results and {P}ython files for the work ``{G}eneralizing {R}iemann curvature to {R}egge metrics''.
\newblock \url{https://doi.org/10.5281/zenodo.18185112}, 2026.

\bibitem{Gri1985}
{\sc Grisvard, P.}
\newblock {\em Elliptic Problems in Nonsmooth Domains}.
\newblock No.~24 in Monographs and Studies in Mathematics. Pitman Advanced Pub. Program, Boston, 1985.

\bibitem{Ham1986}
{\sc Hamilton, R.~S.}
\newblock Four-manifolds with positive curvature operator.
\newblock {\em Journal of Differential Geometry 24}, 2 (1986), 153--179.

\bibitem{Hauret13}
{\sc Hauret, P., and Hecht, F.}
\newblock A discrete differential sequence for elasticity based upon continuous displacements.
\newblock {\em SIAM Journal on Scientific Computing 35}, 1 (2013), B291--B314.

\bibitem{Isr1966}
{\sc Israel, W.}
\newblock Singular hypersurfaces and thin shells in general relativity.
\newblock {\em Il Nuovo Cimento B (1965-1970) 44}, 1 (1966), 1--14.

\bibitem{kupfermanDoubleFormsRegular2024}
{\sc Kupferman, R., and Leder, R.}
\newblock Double forms: {{Regular}} elliptic bilaplacian operators.
\newblock {\em Journal d'Analyse Math{\'e}matique 153}, 2 (2024), 683--758.

\bibitem{Lee12b}
{\sc Lee, J.~M.}
\newblock {\em Introduction to Smooth Manifolds}, 2~ed.
\newblock Springer, 2012.

\bibitem{Lee18}
{\sc Lee, J.~M.}
\newblock {\em Introduction to Riemannian Manifolds}, second~ed.
\newblock Springer, 2018.

\bibitem{li18}
{\sc Li, L.}
\newblock {\em Regge Finite Elements with Applications in Solid Mechanics and Relativity}.
\newblock PhD thesis, University of Minnesota, 2018.
\newblock \url{https://www-users.cse.umn.edu/~arnold/papers/LiThesis.pdf}.

\bibitem{LX07}
{\sc Liu, D., and Xu, G.}
\newblock Angle deficit approximation of {G}aussian curvature and its convergence over quadrilateral meshes.
\newblock {\em Computer-Aided Design 39}, 6 (2007), 506--517.

\bibitem{lottRicciMeasureSingular2016}
{\sc Lott, J.}
\newblock Ricci measure for some singular {{Riemannian}} metrics.
\newblock {\em Mathematische Annalen 365}, 1 (2016), 449--471.

\bibitem{Neun21}
{\sc Neunteufel, M.}
\newblock {\em Mixed Finite Element Methods for Nonlinear Continuum Mechanics and Shells}.
\newblock PhD thesis, TU Wien, 2021.

\bibitem{NS21}
{\sc Neunteufel, M., and Sch\"oberl, J.}
\newblock Avoiding membrane locking with {R}egge interpolation.
\newblock {\em Computer Methods in Applied Mechanics and Engineering 373\/} (2021), 113524.

\bibitem{oswaldBPXpreconditionerP1Elements1993}
{\sc Oswald, P.}
\newblock On a {{BPX-preconditioner}} for {{P1}} elements.
\newblock {\em Computing 51}, 2 (1993), 125--133.

\bibitem{Peter16}
{\sc Petersen, P.}
\newblock {\em Riemannian Geometry}, third~ed.
\newblock Springer, 2016.

\bibitem{Regge61}
{\sc Regge, T.}
\newblock General relativity without coordinates.
\newblock {\em Il Nuovo Cimento (1955-1965) 19}, 3 (1961), 558--571.

\bibitem{RW00}
{\sc Regge, T., and Williams, R.~M.}
\newblock Discrete structures in gravity.
\newblock {\em Journal of Mathematical Physics 41}, 6 (2000), 3964--3984.

\bibitem{Sch97}
{\sc Sch{\"o}berl, J.}
\newblock {NETGEN} an advancing front 2{D}/3{D}-mesh generator based on abstract rules.
\newblock {\em Computing and Visualization in Science 1}, 1 (1997), 41--52.

\bibitem{Sch14}
{\sc Sch{\"o}berl, J.}
\newblock C++ 11 implementation of finite elements in {NGS}olve.
\newblock {\em Institute for Analysis and Scientific Computing, Vienna University of Technology\/} (2014).

\bibitem{SZ1990}
{\sc Scott, L.~R., and Zhang, S.}
\newblock Finite element interpolation of nonsmooth functions satisfying boundary conditions.
\newblock {\em Mathematics of Computation 54}, 190 (1990), 483--493.

\bibitem{Sorkin75}
{\sc Sorkin, R.}
\newblock Time-evolution problem in {R}egge calculus.
\newblock {\em Phys. Rev. D 12\/} (1975), 385--396.

\bibitem{Spi1999}
{\sc Spivak, M.}
\newblock {\em A {{Comprehensive Introduction}} to {{Differential Geometry}}}, 3. edition~ed., vol.~1.
\newblock Publish or Perish, Inc., Houston, Texas, 1999.

\bibitem{Str2020}
{\sc Strichartz, R.~S.}
\newblock Defining {{Curvature}} as a {{Measure}} via {{Gauss}}\textendash{{Bonnet}} on {{Certain Singular Surfaces}}.
\newblock {\em The Journal of Geometric Analysis 30}, 1 (2020), 153--160.

\bibitem{Taylor2011}
{\sc Taylor, M.~E.}
\newblock {\em Connections and Curvature}.
\newblock Springer New York, New York, NY, 2011, pp.~539--609.

\bibitem{Top2006}
{\sc Topping, P.}
\newblock {\em Lectures on the {{Ricci}} Flow}, vol.~325.
\newblock {Cambridge University Press}, 2006.

\bibitem{williams92}
{\sc Williams, R.~M., and Tuckey, P.~A.}
\newblock Regge calculus: a brief review and bibliography.
\newblock {\em Classical and Quantum Gravity 9}, 5 (1992), 1409--1422.

\bibitem{Xu06}
{\sc Xu, G.}
\newblock Convergence analysis of a discretization scheme for {G}aussian curvature over triangular surfaces.
\newblock {\em Computer Aided Geometric Design 23}, 2 (2006), 193--207.

\bibitem{XX09}
{\sc Xu, Z., and Xu, G.}
\newblock Discrete schemes for {G}aussian curvature and their convergence.
\newblock {\em Computers \& Mathematics with Applications 57}, 7 (2009), 1187--1195.

\end{thebibliography}

\end{document}